\documentclass[10pt, reqno]{amsart}
\usepackage[utf8x]{inputenc}
\usepackage{amssymb,amscd,amsmath}
\usepackage{amsthm,mathtools,xypic}
\usepackage{amsfonts}
\usepackage{mathrsfs}
\usepackage{enumerate}
\usepackage{multicol}
\usepackage[bookmarks]{hyperref}
\usepackage{bbm}
\usepackage{xspace,color}
\usepackage{verbatim}
\usepackage{todonotes}
\usepackage{xy}
\usepackage[T1]{fontenc}
\usepackage{lmodern}
\usepackage{amsxtra}
\usepackage{nicefrac,mathtools}

\newcommand{\al}{\alpha}

\newcommand{\fac}[1]{{\leavevmode\color{red!70!blue}#1}}

\newcommand{\be}{\beta}

\newcommand{\dsh}{\mbox{-}}
\newcommand{\FH}{Hilbert }
\newcommand{\ilt}{\text{ilt}}
\newcommand{\kernel}{\operatorname{ker}}

\newcommand{\la}{\langle}
\newcommand{\laa}{{}_\Acal\langle}
\newcommand{\lab}{\langle}
\newcommand{\lac}{\langle}

\newcommand{\id}{\operatorname{id}}
\newcommand{\plaua}{{{}_\Acal^\Ucal\langle}}
\newcommand{\plauc}{{\langle}}
\newcommand{\praua}{{\rangle}}
\newcommand{\prauc}{{\rangle_\Ccal^\Ucal}}
\newcommand{\pmu}{{p^{-1}}}
\newcommand{\qmu}{{q^{-1}}}
\newcommand{\ra}{\rangle}
\newcommand{\raa}{{\rangle}}
\newcommand{\rab}{{\rangle_\Bcal}}
\newcommand{\rac}{{\rangle_\Ccal}}
\newcommand{\rmu}{{r^{-1}}}
\newcommand{\smu}{{s^{-1}}}
\newcommand{\supp}{\operatorname{supp}}
\newcommand{\tmu}{{t^{-1}}}

\newcommand{\spn}{\operatorname{span}}
\newcommand{\spncl}{\overline{\spn}\, }

\newcommand{\Acal}{\mathcal{A}}
\newcommand{\Bcal}{\mathcal{B}}
\newcommand{\Ccal}{\mathcal{C}}
\newcommand{\Dcal}{\mathcal{D}}
\newcommand{\Ecal}{\mathcal{E}}
\newcommand{\Fcal}{\mathcal{F}}
\newcommand{\Hcal}{\mathcal{H}}
\newcommand{\Kcal}{\mathcal{K}}

\newcommand{\Ncal}{\mathcal{N}}

\newcommand{\Ucal}{\mathcal{U}}
\newcommand{\Vcal}{\mathcal{V}}
\newcommand{\Wcal}{\mathcal{W}}
\newcommand{\Xcal}{\mathcal{X}}
\newcommand{\Ycal}{\mathcal{Y}}
\newcommand{\Zcal}{\mathcal{Z}}

\newcommand{\Bb}{\mathbb{B}}
\newcommand{\C}{\mathbb{C}}
\newcommand{\fb}{\mathbbm{f}}
\newcommand{\gb}{\mathbbm{g}}

\newcommand{\Kb}{\mathbb{K}}
\newcommand{\Lb}{\mathbb{L}}
\newcommand{\Mb}{\mathbb{M}}
\newcommand{\N}{\mathbb{N}}
\newcommand{\pb}{\mathbbm{p}}
\newcommand{\R}{\mathbb{R}}
\newcommand{\rb}{\mathbbm{r}}
\newcommand{\tb}{\mathbbm{t}}
\newcommand{\Xb}{\mathbb{X}}
\newcommand{\Yb}{\mathbb{Y}}

\newcommand{\pr}[2]{\ensuremath{\langle {#1},{#2}\rangle}}
\newcommand{\norm}[1]{\ensuremath{\|#1\|}}

\theoremstyle{plain}
\newtheorem{theorem}{Theorem}[section]
\newtheorem{lemma}[theorem]{Lemma}
\newtheorem{corollary}[theorem]{Corollary}
\newtheorem{proposition}[theorem]{Proposition}
\theoremstyle{definition}
\newtheorem{definition}[theorem]{Definition}
\theoremstyle{remark}
\newtheorem{remark}[theorem]{Remark}
\newtheorem{example}[theorem]{Example}

\numberwithin{equation}{section}

\title[Equivalence of Fell bundles over groups]{Equivalence of\\
  Fell bundles over groups}  
\author[F. Abadie]{Fernando Abadie}
\address{Centro de Matemática, Facultad de Ciencias, Universidad de la
  República\\ Igu\'a 4225\\ Montevideo, Uruguay 11400} 
\email{fabadie@cmat.edu.uy}

\author[D. Ferraro]{Dami\'an Ferraro}
\address{Departamento de Matemática y Estadística del Litoral, CENUR
  Litoral Norte, Universidad de la República\\Gral. Rivera 1350\\
  Salto, Uruguay 50000} 
\email{dferraro@unorte.edu.uy}

\date{August 18, 2016}


\subjclass[2010]{Primary 46L08. Secondary 46L55.}
\keywords{Fell bundles, Morita equivalence}

\makeindex
\begin{document}
\begin{abstract}
  We give a notion of equivalence for Fell bundles over
  groups, not necessarily saturated nor separable, and show that
  equivalent Fell bundles have Morita-Rieffel equivalent
  cross-sectional $C^*$-algebras. Our notion is originated in the
  context of partial actions and their enveloping actions. The
  equivalence between two Fell bundles   
  is implemented by a bundle of Hilbert bimodules with some extra
  structure. Suitable cross-sectional spaces of such a bundle turn 
  out to be imprimitivity bimodules for the cross-sectional
  $C^*$-algebras of the involved Fell bundles. 
  We show that amenability is preserved under this equivalence
  and, by means of a convenient notion of internal tensor product
  between Fell bundles, we show that equivalence of Fell bundles is
  an equivalence relation.
\end{abstract}

\maketitle

\tableofcontents

\section{Introduction}
\par Towards the end of the sixth decade of the past century,
  J.~M.~G.~Fell introduced the notion of Banach *-algebraic bundle
  over a group, to study and extend the Mackey normal subgroup
  analysis (see \cite{FlDr88} and the references therein). In
  particular he defined $C^*$-algebraic bundles, today known as Fell
  bundles. A Fell bundle can be thought of as the abstraction of a
  grading of a $C^*$-algebra over a locally compact group, and its
  cross-sectional $C^*$-algebras generalize crossed products by
  actions, even by twisted actions. The Fell bundles that have
  received more attention are those called \textit{saturated}, in
  which every fiber $B_t$ (over a group element $t$) is naturally a 
  $B_e\dsh B_e\dsh$imprimitivity bimodule ($e$ being the unit of the
  group).
  The main reason for this is probably that
  such bundles were enough to study most of the examples. That
  situation changed with the introduction of partial crossed products
  in the nineties (see \cite{Ex94Circle},\cite{MC95} and \cite{Ex97}), since
  partial crossed products give rise to non-saturated Fell bundles.
\par In the present work we introduce and study a notion of 
equivalence between arbitrary Fell bundles over groups, these
understood as $C^*$-algebraic bundles as in~\cite{FlDr88}. This notion is
already present, implicitly, in \cite{Ab03} and \cite{AbMr09} (see the 
examples \ref{subsection enveloping actions} and
\ref{subsection Morita equivalence of partial actions} below), where
it was used to prove Morita-Rieffel equivalences between 
several crossed products by partial actions or their enveloping
actions (in this paper we use the expression Morita-Rieffel equivalence to mean strong Morita equivalence, as defined by Rieffel). In those works, in general at least one of the involved Fell 
bundles is 
not saturated, because it is the Fell bundle associated to
a partial action, which is saturated if and only if the action is
global. Still, 
both the reduced and the universal cross-sectional algebras of these Fell
bundles are shown to be Morita-Rieffel equivalent, due to a kind of Morita
equivalence between 
the bundles themselves. It is precisely this notion of 
equivalence between Fell bundles that we study in this article.   
\smallskip 
\par Many authors have studied equivalence of Fell bundles
over \textit{groupoids} (see \cite{muhly2001bundles, muhly2008equivalence} and
the references therein). In this setting the bundles considered 
are such that each fiber $B_\gamma$ over any element $\gamma$ of the
groupoid is an 
imprimitivity bimodule that establishes a Morita-Rieffel equivalence
between 
the fibers over $s(\gamma)$ and $r(\gamma)$, the source and range of
$\gamma$ respectively. In case the groupoid is actually a group, this
means that the bundle is saturated, which is equivalent to the fact
that, for all elements $r$, $s$ in the 
group, the linear span of $B_rB_s$ is dense in $B_{rs}$. Therefore
this theory does not apply to non-saturated Fell bundles over groups, for instance those associated to partial actions.   
\par An equivalence between groupoid based Fell bundles
is possible even when the base groupoids, say $G$ and $H$, are not
isomorphic groupoids, but there exists a $(G\dsh H)\dsh$equivalence
\cite[Definition 2.1]{muhly1987equivalence} between them. 
In case $G$ and $H$ are groups, the 
existence of a $(G\dsh H)\dsh$equivalence implies that $G$ and $H$ are
isomorphic as topological groups. Thus, when dealing with 
equivalence of Fell bundles over groups, we may suppose that the bundles
have the same group as a base space. 
\par Another feature of Fell bundles over groupoids (in the sense of
\cite{muhly2008equivalence}) is that the norm is supposed to be only
upper semicontinuous instead of continuous, as must be the case for
$C^*$-algebraic bundles \cite{FlDr88}.  Once again, this difference is
only apparent when the base space is a group, because then the norm is
automatically continuous (see \cite[Lemma~3.30]{bmz}).
\par On the other hand the notion of Fell
bundle over a groupoid includes some separability conditions. The
result is that, when the base space 
is actually a group, the usual notion of Fell bundle over a groupoid
ammounts to a separable and saturated $C^*$-algebraic bundle over a second
countable group. Our aim is to remove all these restrictions,
specially that of saturation, and even so to develop a useful notion
of equivalence between Fell bundles.     
\medskip
\par The organization of our exposition is the following. 
\par The next section is devoted
to the introduction of equivalence 
bundles between Fell bundles, 
which can be thought of as the abstraction of a grading of 
an imprimitivity bimodule over a
group. At the beginning we fix notations and 
review some aspects of the theory of Fell bundles, as well as some
functors associated to them, specially the functors $C^*$ and $C^*_r$,
which to every Fell bundle associate its full (also called universal)
and reduced cross-sectional $C^*$-algebras respectively. Then we
present a couple of examples from 
\cite{Ab03} which have guided us, not only to the definition of Hilbert
$\Bcal$-bundles and equivalence bundles, but also to the proof that
equivalent Fell bundles have Morita-Rieffel equivalent cross-sectional
$C^*$-algebras. After the proof of an  
important technical result, Lemma~\ref{lemma approximate units}, we
define morphisms between equivalence bundles, thus obtaining a
category, and we prove that there exist two special functors from this
category into the category of Fell bundles.            
\par The aim of the third section is to pave the way to the proof
  that equivalent Fell bundles give rise to Morita-Rieffel equivalent
  cross-sectional $C^*$-algebras, to be accomplished in
  Section~4. 
Namely, we proceed to 
the construction of the ``linking Fell bundle''
associated to an equivalence bundle. In fact, as it is shown later in
the same section, 
it is possible to start just with a right 
Hilbert bundle $\Xcal$ over the Fell bundle $\Bcal$, because it is
automatically a $\Kb(\Xcal)-\Bcal\dsh$equivalence bundle, where
$\Kb(\Xcal)$ is a graded version of the usual $C^*$-algebra of
generalized compact operators of a
Hilbert module. The assignment of a linking Fell bundle to each 
equivalence bundle is itself a functor, which  
later will be shown, in Section 4, to commute with the functor $C^*$, 
suitably extended from the category of Fell bundles to the category
of equivalence bundles.  
Essentially, what happens is the following. If $\Xcal$ is an
equivalence bundle between the Fell bundles $\Acal$ and $\Bcal$, 
then there exists a $C^*(\Acal)\dsh C^*(\Bcal)$ imprimitivity bimodule
$C^*(\Xcal)$, which is a certain completion of $C_c(\Xcal)$, such that
if $\mathbb{L}(\Xcal)$ is the linking Fell bundle of $\Xcal$, then
$C^*(\mathbb{L}(\Xcal))=\mathbb{L}(C^*(\Xcal))$ (see
Theorem~\ref{theorem construction of the equivalence bimodule} and  
Corollary~\ref{corollary linking algebra of cross-sectional
  equivalence module}).    
\par A more general situation can be considered -and this is done at
the end of Section~4- in which the functor $C^*$ is replaced by other
type of functors from the category of Fell bundles to the category of 
$C^*$-algebras, for instance the functor $C^*_r$. The functors
we consider are a generalization of the crossed product functors
considered in \cite{buss2015exotic}. 
\par The last section of the article is quite technical. Its main
objective is to prove that
the equivalence of Fell bundles is transitive, thus an equivalence
relation. To this end internal tensor products of Hilbert bundles are
defined, and it is shown that their cross-sectional algebras are
isomorphic to the internal tensor product of the corresponding 
cross-sectional algebras of the Hilbert bundles.

\section{Equivalence bundles.}  

\subsection{Some preliminaries and notations.}\label{sub section
  notation} 
\par In this paper we will be dealing with Fell bundles over
groups. Along the work $G$ will always denote a fixed locally compact
and Hausdorff group with unit element $e.$
We also fix a left invariant Haar measure, $dt,$ of $G$ and denote
$\Delta$ the modular function of $G.$
We understand by a Fell bundle a $C^*$-algebraic
bundle in the sense of Fell \cite{FlDr88}. 
If $\Acal$ and $\Bcal$ are
Fell bundles over $G$, a \textit{morphism} $\pi\colon \Acal\to \Bcal$
is a continuous, multiplicative and $*$-preserving map, such that for
each $t\in G,$ $\pi(A_t)\subset B_t$ and $\pi|_{A_t}$ is linear. Fell
bundles with these morphisms form a category $\mathscr{F}$. 
Note that every fiber of a Fell bundle is a $C^*$-ternary ring (in the
sense of \cite{Zl83}) with
the product $(a,b,c)\mapsto ab^*c$, and the restriction of a morphism
of Fell bundles to a fiber is a homomorphism of $C^*$-ternary rings  
(see \cite{abadie2016applications,Zl83}). See below for some
  additional information about $C^*$-ternary rings and their
  homomorphisms.
\par There are several functors of interest to us defined on the
category $\mathscr{F}$. It is the purpose of this section to review
some of them, and to introduce some notation along the
way. The reader is referred to \cite{Ab03} for more details.     
\par Recall that if $\Bcal$ is a Fell bundle, then $C_c(\Bcal)$ is a
$*$-algebra. Besides, given a morphism $\phi:\Acal\to\Bcal$ of Fell
bundles, we have a homomorphism of $*$-algebras: 
$\phi_c:C_c(\Acal)\to C_c(\Bcal)$, $\phi_c(f)=\phi\circ f.$
This functor ($\phi\mapsto \phi_c$) extends to a functor from Fell bundles into Banach $*$-algebras:
$(\Acal\stackrel{\phi}{\to}\Bcal)\longmapsto
\big(L^1(\Acal)\stackrel{\phi_1}{\to}L^1(\Bcal)\big)$, where 
$L^1(\Acal)$ and $L^1(\Bcal)$ are the Banach $*$-algebras obtained by
completing respectively the $*$-algebras $C_c(\Acal)$ and $C_c(\Bcal)$
with respect to the norm $\|f\|_1=\int_G\|f(t)\|dt$, and $\phi_1$ is
the continuous extension of $\phi_c$. Composing the latter functor
with the functor from the category of Banach $*$-algebras into the
category of $C^*$-algebras which consists of taking the enveloping
$C^*$-algebra, we obtain another functor
$C^*:\mathscr{F}\to\mathscr{C}$: $(\Acal\stackrel{\phi}{\to}\Bcal)\longmapsto 
\big(C^*(\Acal)\stackrel{\phi^*}{\to}C^*(\Bcal)\big)$,
from the category of Fell bundles into the category $\mathscr{C}$ of
$C^*$-algebras (with their usual homomorphisms). We call 
$C^*(\Acal)$ the \textit{universal or full cross-sectional algebra} of $\Acal$.  

\par The universal $C^*$-algebra of a Fell bundle has the property
that its non\-degenerate representations are in a bijective 
correspondence with the nonde\-gene\-rate representations of the bundle
\cite[VIII 12.8]{FlDr88}.

\par Given a Fell bundle $\Acal$, let $L^2(\Acal)$ be the right
Hilbert $A_e$-module obtained by completing $C_c(\Acal)$ with respect
to the inner product $\pr{f}{g}:=\int_Gf(t)^*g(t)dt.$
If $\Acal\stackrel{\phi}{\to}\Bcal$ is a morphism of Fell
bundles, we have a map $\phi_2:L^2(\Acal)\to L^2(\Bcal)$, which is the
continuous extension of $\phi_c$ (note
$\pr{\phi_c(f)}{\phi_c(g)}_{L^2(\Bcal)}=\phi(\pr{f}{g}_{L^2(\Acal)})$,
$\forall f,g\in C_c(\Acal)$). The map $\phi_2$ is a morphism of
$C^*$-ternary rings (see \cite{Ab03},
\cite{abadie2016applications}). Thus
$(\Acal\stackrel{\phi}{\to}\Bcal)\longmapsto   
\big(L^2(\Acal)\stackrel{\phi_2}{\longrightarrow}L^2(\Bcal)\big)$ is a
functor from the category of Fell bundles into the category of
$C^*$-ternary rings.  
\par Among the representations of a Fell bundle $\Acal=(A_t)_{t\in G}$
there is the  
so called (left) regular representation, which we briefly recall
now. If $a_t\in A_t$ and $\xi\in C_c(\Acal)$, we define
$\Lambda_{a_t}\xi\in C_c(\Bcal)$ by $\Lambda_{a_t}\xi(s):=a_t\xi(t^{-1}s)$. Then
$\Lambda_{a_t}$ extends 
to an adjointable map $\Lambda_{a_t}\in\Bb(L^2(\Acal))$, where
$\Bb(L^2(\Acal))$ denotes the $C^*$-algebra of adjointable maps of
$L^2(\Acal)$. Besides, the map
$\Lambda:\Acal\to\Bb(L^2(\Acal))$
is a representation of the bundle $\Acal$, called the regular
representation of $\Acal$. The integrated form of $\Lambda$, that is,
its associated representation
$\Lambda^{\Acal}:C^*(\Acal)\to\Bb(L^2(\Acal))$ is 
also called regular representation, and its image $C^*_r(\Acal)$ is
called the \textit{reduced cross-sectional algebra} of
$\Acal$. We say that 
$\Acal$ is amenable when $\Lambda^{\Acal}:C^*(\Acal) \to C^*_r(\Acal)$ is an
isomorphism. Since $\Lambda^{\Acal}$ is injective on $C_c(\Acal)$, we
will consider $C^*_r(\Acal)$ as a completion of $C_c(\Acal)$. 
\par If $\phi:\Acal\to\Bcal$ is a morphism, it is easily
checked that, for all $x\in C^*(\Acal)$, $\phi_2$ intertwines
$\Lambda^{\Bcal}_{\phi^*(x)}$ and $\Lambda_x^{\Acal}$: 
$\Lambda^{\Bcal}_{\phi^*(x)}\phi_2=\phi_2\Lambda_x^{\Acal}$.  
In case $\phi$ is surjective, both $\phi_2$ and $\phi^*$ are also
surjective, because they have closed and dense ranges, since
$\phi_c(C_c(\Acal))$ is uniformly dense in $C_c(\Bcal)$ by
\cite[14.1]{FlDr88}. So in this case 
$\phi^*(\ker\Lambda^{\Acal})\subseteq\ker\Lambda^{\Bcal}$, and          
$\phi^*$ induces a unique homomorphism $\phi_r:C^*_r(\Acal)\to
C^*_r(\Bcal)$ such that the following diagram commutes:  
\begin{align}
\xymatrix{C^*(\Acal)\ar[d]_{\phi^*}\ar[r]^{\Lambda_{\Acal}}&C^*_r(\Acal)\ar[d]^{\phi_r}\\ 
C^*(\Bcal)\ar[r]_{\Lambda_{\Bcal}}&C^*_r(\Bcal)
}\end{align}
\par In case $\phi$ is not surjective, let $\Ccal$ be its image. Then
$\Ccal$ is a Fell subbundle of $\Bcal$, and $\psi:\Acal\to\Ccal$
such that $\psi(a):=\phi(a)$, $\forall a\in \Acal$, is a surjective
morphism, and $\phi=\iota\psi$, where $\iota:\Ccal\to\Bcal$ is the
natural inclusion. By \cite[Proposition~3.2]{Ab03}, $C^*_r(\Ccal)$ is
isomorphic to the closure of $C_c(\Ccal)$ in $C^*_r(\Bcal)$, so we
have an injective homomorphism $\iota_r:C^*_r(\Ccal)\to C^*_r(\Bcal)$
such that $\iota_r\Lambda_{\Ccal}=\Lambda_{\Bcal}\iota^*$. Then the
map $\phi_r:=\iota_r\psi_r: 
C^*_r(\Acal)\to C^*_r(\Bcal)$ also makes commutative the diagram
above, so again
$\phi^*(\ker\Lambda^{\Acal})\subseteq\ker\Lambda^{\Bcal}$. Therefore
we have another functor 
$C^*_r:\mathscr{F}\to\mathscr{C}$, such that 
$(\Acal\stackrel{\phi}{\to}\Bcal)\longmapsto  
\big(C^*_r(\Acal)\stackrel{\phi_r}{\longrightarrow}C^*_r(\Bcal)\big)$,
and $\Lambda:C^*\to C^*_r$ is a natural transformation. 
\medskip
\par A particular type of Fell bundle, which is a guiding example for
us, is that associated to a partial 
action $\alpha=(\{D_t\}_{t\in G},\{\alpha_t\}_{t\in G})$ of a group
$G$ on a $C^*$-algebra $A$. This bundle, denoted $\Bcal_\alpha$, has
total space $\{(t,a):\,a\in D_t\}\subseteq G\times A$ with the
topology inherited from the product topology on $G\times A$. We set
$\norm{(t,a)}:=\norm{a}$. The operations on 
$\Bcal_\alpha$ are defined as follows:
$(t,a)^*:=(t^{-1},\alpha_{t^{-1}}(a^*))$, and
$(r,a)(s,b):=(rs,\alpha_{r}(\alpha_r^{-1}(a)b))$.  
\medskip
\par If $X$ is an $A-B$ imprimitivity bimodule (see
  \cite{Raeburn1998morita}) between the $C^*$algebras $A$ and $B$, we
  will say that $A$ and $B$ are Morita-Rieffel equivalent (rather than
  strong Morita equivalent, the original terminology used by Rieffel),
  and $X$ will be referred just as an equivalence bimodule between $A$
  and $B$.
\medskip
\par When convenient, Hilbert modules will be regarded as ternary
  $C^*$-rings 
($C^*$-trings for short) and often we will work with homomorphisms of
$C^*$-trings as in \cite{Ab03,abadie2016applications}, where the reader
is referred to for more information. Here we just recall the basic
definitions and properties. A $C^*$-tring is a Banach space $X$ with a
ternary product $X\times X\times X\to X$ that is linear in the odd
variables and conjugate linear in the second variable, satisfies a 
certain associativity property, and moroever
$\|(x,y,z)\|\leq\|x\|\,\|y\|\,\|z\|$ and $\|(x,x,x)\|=\|x\|^3$,
$\forall x,y,z\in X$. For instance a right Hilbert $B$-module $X$ can
be seen as a $C^*$-tring with the product $(x,y,z):=x\langle
y,z\rangle$. As shown in \cite{Zl83}, this is almost the general
case: for every $C^*$-tring $X$ there exist a $C^*$-algebra $X^r$ and
a map $\langle\,,\,\!\rangle_r: X\times X\to X^r$ such that $X$ is a
full right Hilbert $X^r$-module (except that $\langle x,x\rangle_r$ does not
need to be a positive element of $X^r$) that satisfies:
$(x,y,z)=x\langle y,z\rangle_r$, $\forall x,y,z\in X$. Besides the pair
$(\langle\,,\,\!\rangle_r,X^r)$ is unique up to canonical
isomorphisms. There also exists an essentially unique pair
$(\langle\,,\,\!\rangle_l,X^l)$ with similar properties, but now
$X$ is a left $X^l$-module, and $(x,y,z)=\langle x,y\rangle_lz$,
$\forall x,y,z\in X$. A homomorphism $\pi:X\to Y$ of $C^*$-trings is a
linear map that preserves the ternary products. Such a homomorphism
$\pi$ induces homomorphisms of $C^*$-algebras $\pi^l:X^l\to Y^l$ and
$\pi^r:X^r\to Y^r$ determined by the properties $\pi^l(\langle
x,y\rangle_l)\pi(z)=(\pi(x),\pi(y),\pi(z))=\pi(y)\pi^r(\langle
y,z\rangle_l)$, $\forall x,y,z\in X$. In this way we have two functors
from the category of $C^*$-trings $\mathscr{T}$ into the category of
$C^*$-algebras $\mathscr{C}$: the left functor
$(X\stackrel{\pi}{\to}Y)\longmapsto (X^l\stackrel{\pi^l}{\to}Y^l)$, 
and the right functor: $(X\stackrel{\pi}{\to}Y)\longmapsto (X^r\stackrel{\pi^r}{\to}Y^r)$. We specially want to call the
attention of the reader to the following facts: 
every homomorphism of $C^*$-trings is contractive; for such maps, being
injective is equivalent to being an isometry and, finally, the image
of a $C^*$-tring homomorphism is closed, and so a $C^*$-tring itself.  


\par Finally we fix some more notation. Letters $\Hcal$ and $\Kcal$
will be used to denote Hilbert spaces,  
while (right) Hilbert modules \cite{Raeburn1998morita} will be denoted
$X,$ or $X_A$ if it is necessary to indicate the involved algebra $A$.   
On the set of adjointable operators from $X_A$ to $Y_A,$
$\Bb(X_A,Y_A),$ we consider the operator norm and we regard this set
as a ternary $C^*$-ring \cite{Zl83} with the operation $(R,S,T)\mapsto
RS^*T.$ We denote by $\Kb(X_A,Y_A)$ the set of generalized compact
operators, which is a Banach subspace of $\Bb(X_A,Y_A)$ (in fact an
ideal of $\Bb(X_A,Y_A)$ in the sense of
\cite{Ab03,abadie2016applications}).  

\par The letters $\Acal,\Bcal,\Ccal$ and $\Dcal$ will be used to
represent Fell bundles over $G$ and $\Xcal,\Ycal,\Zcal$ to denote
Banach bundles over $G$ (in the sense of \cite{FlDr88}). 
The fiber over $t$ of $\Acal,\Bcal,\ldots,\Xcal,\ldots$ will be
denoted $A_t,B_t,\ldots,X_t,\ldots$ respectively. The space of
compactly supported continuous sections of the Banach bundle $\Xcal$
will be denoted $C_c(\Xcal)$.

\subsection{Motivating examples}
\par In \cite{Ab03} (see also \cite{AbMr09}) it was shown that Morita
equivalent partial actions give rise to Morita-Rieffel equivalent
crossed products, and also that the crossed product of a partial
action is Morita-Rieffel equivalent to the crossed product by its
enveloping action (and even to the crossed product by its Morita
enveloping action). These results were obtained as particular cases of
more general results about cross-sectional algebras of Fell
bundles. The involved Fell bundles are related by a kind of
equivalence bundles, whose properties inspired our
Definition~\ref{defi equivalence bundle}, and which we briefly review
below. 

\subsubsection{Enveloping actions}\label{subsection enveloping actions}
  Let $\beta$ be a continuous action by automorphisms of $G$ on the
  $C^*$-algebra $B$ and $A$ an ideal of $B$, with $B=\spncl
  \{\be_t(A)\colon t\in G\}.$ 
  Now let $\al$ be the partial action obtained as the restriction of
  $\be$ to $A$ \cite{Ab03}, so that $\be$ is an enveloping action for
  $\al.$ 
  
  We think of $\Bcal_\al$ and $\Xcal:=A\times G$ as Banach
  subbundles 
  of $\Bcal_\be.$ 
  Since $\Bcal_\al \Xcal\subset \Xcal,$ $\Xcal\Bcal_\be\subset \Xcal$ and $\Xcal\Xcal^*\subset \Bcal_\al,$ we have the operations
  \begin{align}
     \Bcal_\al \times \Xcal\to \Xcal\ (a,x)\mapsto ax,\ {}_{\Bcal_\al}\la \ ,\ \ra\colon \Xcal\times \Xcal\to \Bcal_\al \ (x,y)\mapsto xy^*,\label{equ left bimodule operations enveloping algebras}\\
     \Xcal\times \Bcal_\be\to \Xcal\ (x,b)\mapsto xb,\mbox{ and } \la \ ,\ \ra_{\Bcal_\be}\colon \Xcal\times \Xcal\to \Bcal_\be \ (x,y)\mapsto x^*y.\label{equ right bimodule operations enveloping algebras}
  \end{align}
  \indent Note that, if $X_r:=\{(a,r):\,a\in A\}$, and $A_r$, $B_r$ are respectively the fibers
 of $\Bcal_\alpha$ and $\Bcal_\beta$ at $r$, then we have
 that ${}_{\Bcal_\al}\la X_r,X_s \ra\subseteq A_{rs^{-1}}$, $\la
 X_r,X_s \ra_{\Bcal_\beta}\subseteq B_{r^{-1}s}$,  
 $A_rX_s\subseteq X_{rs}$, $X_rB_s\subseteq X_{rs}$,  
 $A_t=\overline{\textrm{span}}\{ {}_{\Bcal_\al}\la X_{ts},X_s \ra:\,
 s\in G\}$, and $B_t=\overline{\textrm{span}}\{\la X_{r},X_{rt} \ra_{\Bcal_\beta}:\,
 s\in G\}$, $\forall t\in G $. These are precisely the properties that
inspire the definition of equivalence between Fell bundles in
\ref{defi equivalence bundle}.      
 \par It is shown in \cite{Ab03} that $C^*_r(\Bcal_\al)$ is a full hereditary $C^*$-subalgebra of $C^*_r(\Bcal_\be).$
  Moreover, the canonical equivalence bimodule is the closure of
  $C_c(\Xcal)$ within $C^*_r(\Bcal_\be).$ 
  In terms of the operations described in \eqref{equ left bimodule operations enveloping algebras} and \eqref{equ right bimodule operations enveloping algebras}, the bimodule structure is given by
  \begin{align}
    uf(t) & = \int_G u(tr)f(\rmu)\, dr\label{equ left action}\\
    fv(t) & =\int_G f(r)v(\rmu t)\, dr\label{equ right action}\\
    {}_{C_c(\Bcal_\alpha)}\la f,g\ra(t) &=\int_G {}_{\Bcal_\alpha}\la f(tr),g(r)\ra\Delta(r)\, dr\label{equ left inner product}\\
    \la f,g\ra_{C_c(\Bcal_\beta)}(t) &=\int_G \la f(r),g(rt)\ra_{\Bcal_\beta}\, dr\label{equ rigt inner product}
  \end{align}
  for $u\in C_c(\Bcal_\al),$ $f,g\in C_c(\Xcal)$ and $v\in C_c(\Bcal_\be).$
Note that $\Bcal_\beta$ is a saturated Fell bundle (i.e.:
$\textrm{span}B_rB_s$ is dense in $B_{rs}$, $\forall r,s\in G$), while 
$\Bcal_\alpha$ is saturated only if $\alpha$ is a global action, that is: $A=B$ and $\alpha=\beta$.   
\subsubsection{Morita equivalence of partial
  actions}\label{subsection 
  Morita equivalence of partial actions} 
\par Assume now that $\alpha$ and $\beta$ are Morita equivalent partial actions of $G$
on the $C^*$-algebras $A$ and $B,$ respectively. 
  Thus (using the notation of \cite{Ab03}) there exists a
  partial action $\gamma=\left(\{X_t\}_{t\in G},\{\gamma_t\}_{t\in
      G}\right)$ of $G$ on the $A\dsh B\dsh$equivalence bimodule $X$
  with $\alpha=\gamma^l$ and $\beta=\gamma^r.$ If $\Lb(\gamma)$ is the
  linking partial action of $\gamma,$ then 
  $\Bcal_\alpha$ and $\Bcal_\beta$ are Fell subbundles of
  $\Bcal_{\Lb(\gamma)}$ and we may think of $\Bcal_{\Lb(\gamma)}$ as a
  Banach subbundle of $G\times \Lb(X).$ 
  Recall that in Exel's notation the element $(t,S)\in G\times \Lb(X)$
  is represented by $S\delta_t.$ 
  Consider
  \begin{equation}\label{equation bundle of a pa on a Hilber module}
  \Xcal_\gamma :=\left\{ \left(\begin{array}{cc} 0 & x\\ 0 & 0\end{array}
  \right)\delta_t\colon x\in X_t,\ t\in G \right\}
  \end{equation}
  Then $\Xcal_\gamma$ is a Banach subbundle of $\Bcal_{\Lb(\gamma)}.$ 

  In this situation it is possible to define the following operations: 
  \begin{align}
    \Bcal_\al \times \Xcal_\gamma\to \Xcal_\gamma\colon\ (a,x)\mapsto ax,\
    {}_{\Bcal_\al}\la \ ,\ \ra\colon \Xcal_\gamma\times
    \Xcal_\gamma\to \Bcal_\al\colon \ (x,y)\mapsto xy^*,\label{equation
      inspiration left \FH bundle}\\ 
    \Xcal_\gamma\times \Bcal_\be\to \Xcal_\gamma\colon\ (x,b)\mapsto
    xb,\mbox{ and } \la \ ,\ \ra_{\Bcal_\be}\colon \Xcal_\gamma\times
    \Xcal_\gamma\to \Bcal_\be\colon \ (x,y)\mapsto x^*y.\label{equation
      inspiration right \FH bundle} 
  \end{align}

  Using the product and involution of $C_c(\Bcal_{\Lb(\gamma)})$ we
  can make $C_c(\Xcal_\gamma)$ into a pre $C_c(\Bcal_\alpha)\dsh
  C_c(\Bcal_\beta)$ Hilbert bimodule. 
  More precisely, the operations are given by 
  $$ uf:=u*f,\ fv:=f*v,\ {}_{C_c(\Bcal_\al)}\la f,g\ra : = f*g^*\mbox{
    and } \la f,g\ra_{C_c(\Bcal_\al)} : =f^**g,$$ 
  for $u\in C_c(\Bcal_\al),$ $v\in C_c(\Bcal_\beta)$ and $f,g\in
  C_c(\Xcal_\gamma).$ 
  It is easy to check that the operations above correspond to the 
  expressions (\ref{equ left action} - \ref{equ rigt inner product}). 
 \par  The techniques of \cite[Section 3]{Ab03} can be used to show that
  the completion of $C_c(\Xcal_\gamma)$ in $C^*(\Bcal_{\Lb(\gamma)}),$
  $C^*(\Xcal_\gamma),$ is a $C^*(\Bcal_\alpha)\dsh
  C^*(\Bcal_\beta)$ Hilbert module. 
  Moreover, $C^*(\Bcal_{\Lb(\gamma)})$ is isomorphic to the linking
  algebra of $C^*(\Xcal_\gamma)$ (see \cite[Theorem 1.1]{AbMr09} and
  \cite[Corollary 5.3]{abadie2016applications} for details)  
  This will turn out to be a particular case of a general situation (Theorem \ref{theorem construction of the equivalence bimodule}).
 
\subsection{Equivalence bundles}

A Fell bundle is an abstraction of a grading of a $C^*$-algebra over a
group. In the same way, a Hilbert bundle over a Fell bundle, as we
define below, can be thought of as an abstraction of a graded Hilbert
module.   
\par Let $\Acal$ and $\Bcal$ be Fell bundles over $G.$
Considering our motivating examples and the discussion in the
Introduction we state the following. 

\begin{definition}\label{definition right \FH}
  A \textit{right} \textit{\FH  $\Bcal\dsh$bundle} is a complex Banach bundle over $G,$ $\Xcal:=\{X_t\}_{t\in G},$ with continuous maps 
  $$\Xcal\times \Bcal \to \Xcal,\ (x,b)\mapsto xb,\mbox{ and }\lab \ ,\ \rab\colon \Xcal\times \Xcal\to \Bcal,\ (x,y)\mapsto \lab x,y\rab$$
  such that:
  \begin{enumerate}[(1R)]
   \item $X_r B_s\subset X_{rs}$ and $\lab X_r,X_s\rab \subset B_{\rmu
       s},$ for all $r,s\in G.$ 
   \item $X_r\times B_s\to X_{rs},\ (x,b)\mapsto xb,$ is bilinear for
     all $r,s\in G.$ 
   \item $X_s\to B_{\rmu s},\ y\mapsto \lab x,y\rab,$ is linear for
     all $x\in X_r$ and $s\in G.$ 
   \item $\lab x,yb\rab =\lab x,y\rab b$ and $\lab x,y\rab^*=\lab
     y,x\rab$ for all $x,y\in \Xcal$ and $b\in \Bcal.$ 
   \item $\lab x,x\rab\geq 0,$ for all $x\in \Xcal,$ and $\lab
     x,x\rab=0$ implies $x=0.$ 
   Besides, each fiber $X_t$ is complete with respect to the
   norm 
   $x\mapsto \|\lab x,x\rab\|^{1/2}.$
   \item For all $x\in \Xcal,$ $\|x\|^2=\|\lab x,x\rab\|.$
   \item\label{right density} $\spncl\{ \lab X_s,X_s\rab \colon s\in G\}=B_e.$
  \end{enumerate}
\end{definition}

Condition (6R) expresses the compatibility of the norm of the fibers
of $\Xcal$ with the norm given by considering each one of the fibers
as a right pre Hilbert $B_e$-module (with the action and inner
product specified in the definition). 
\par A word must be said about condition (\ref{right density}R). As
mentioned before, we may think of a Hilbert bundle as a grading of
a Hilbert module. In this sense, conditions (1R)--(6R) would be
enough to define a Hilbert $\Bcal$-bundle. However, along
this work we are mainly interested in obtaining equivalence bimodules
as completions of cross-sectional spaces of Hilbert bundles, so we
consider just Hilbert bundles analogous to $\textit{full}$ Hilbert
modules, that is, Hilbert bundles satisfying (7R). On the other hand,
it is this crucial property that will allow to have, for instance,  
an equivalence between a saturated Fell bundle with a non-saturated
one. Note that we are not requiring, unlike the usual notion in the
Fell bundles over groupoids context, that 
$\spncl \lab X_s,X_s\rab=B_e$, $\forall s\in G$, but only that the sum
of \textit{all} these spaces is dense in $B_e$. Note that this also
implies $B_t=\spncl\{\lab X_r,X_{rt}\rab\colon r\in G\}$, $\forall
t\in G$ (a proof of this fact will be provided in (6) of
Lemma~\ref{lemma first tools}).          

\textit{Left \FH bundles} are defined similarly: properties
(1R)-(7R) are changed to:  
  \begin{enumerate}[(1L)]
   \item $A_r X_s\subset X_{rs}$ and $\laa X_r,X_s\raa \subset
     A_{r\smu},$ for all $r,s\in G.$ 
   \item $A_r\times X_s\to X_{rs},\ (a,x)\mapsto ax,$ is bilinear for
     all $r,s\in G.$ 
   \item $X_s\to A_{s\rmu },\ y\mapsto \laa y,x\raa,$ is linear for
     all $x\in X_r$ and $s\in G.$ 
   \item $\laa ax,y\raa =a\laa x,y\raa$ and $\laa x,y\raa^*=\laa
     y,x\raa$ for all $x,y\in \Xcal$ and $a\in \Acal.$ 
   \item $\laa x,x\raa\geq 0,$ for all $x\in \Xcal,$ and $\laa
     x,x\raa=0$ implies $x=0.$ 
   Besides, each fiber $X_t$ is complete with respect to the norm
   $x\mapsto \|\laa x,x\raa\|^{1/2}.$ 
   \item For all $x\in \Xcal,$ $\|x\|^2=\|\laa x,x\raa\|.$
   \item $\spncl\{ \laa X_s,X_s\raa \colon s\in G\}=A_e.$
  \end{enumerate}

\begin{definition}\label{defi equivalence bundle}
  We say that $\Xcal$ is an $\Acal\dsh \Bcal$ \textit{equivalence
    bundle} if it is both a left Hilbert $\Acal\dsh$bundle, a right
  Hilbert $\Bcal\dsh$bundle and $\laa x,y\raa z = x\lab y,z\rab,$ 
  for all $x,y,z\in \Xcal.$ 
  Besides, we say that $\Acal$ is equivalent to $\Bcal$ if there
  exists an $\Acal\dsh \Bcal\dsh$equivalence bundle. 
\end{definition}

\begin{example}\label{subsection enveloping actions 2}
  In the context and with the notation of \ref{subsection enveloping
    actions}, $\Xcal$ is a $\Bcal_\al\dsh \Bcal_\be\dsh$equivalence
  bundle. 
\end{example}

\begin{example}\label{example morita equivalence of pa 2}
  Suppose that $\gamma$ is a partial action on the $A\dsh B$ Hilbert
  bimodule $X$, as in \ref{subsection Morita equivalence of partial
    actions}. 
  Then the bundle $\Xcal_\gamma$ is a $\Bcal_{\gamma^l}\dsh
  \Bcal_{\gamma^r}\dsh$equivalence bundle. 
\end{example}

\begin{example}\label{example reflexive}
  If $\pi\colon \Acal\to \Bcal$ is an isomorphism of Fell bundles, 
  then the bundle $\Xcal=\Bcal$ is an $\Acal\dsh \Bcal\dsh$equivalence
  bundle with the operations
  $$ \Acal\times \Xcal\to \Xcal:\ (a,x)\mapsto \pi(a)x,\ \laa \ ,\ \raa\colon \Xcal\times \Xcal\to \Acal:\ (x,y)\mapsto \pi^{-1}(xy^*),  $$
  $$ \Xcal\times \Bcal\to \Xcal:\ (x,b)\mapsto xb,\mbox{ and } \lab \ ,\ \rab\colon \Xcal\times \Xcal\to \Bcal: \ (x,y)\mapsto x^*y. $$
  Then isomorphic Fell bundles are equivalent and every Fell bundle is
  equivalent to itself. 
\end{example}

There are two questions that can be immediately answered using our
motivating examples: Do equivalent Fell bundles have Morita-Rieffel 
equivalent unit fibers? Can a non-saturated Fell bundle be 
equivalent to a saturated Fell bundle? 

To answer both questions at once consider the action $\be$ of $G=\R$
on $B=C_0(\R)$ given by $\be_t(f(\cdot))=f(\cdot +t)$ and let $A$ be
the C*-ideal of $B$ corresponding to the open set $(0,1)\cup (1,2).$ Since
$\beta$ is the enveloping action of $\alpha$, $\Bcal_\alpha$ and
$\Bcal_\beta$ are equivalent by \ref{subsection enveloping actions}.   
Now the unit fibers of $\Bcal_\al$ and $\Bcal_\be$ are $C_0((0,1)\cup
(1,2))$ and $C_0(\R),$ respectively, which are not Morita-Rieffel 
equivalent because they are commutative and not isomorphic. 
Also note that $\Bcal_\beta$ is saturated but $\Bcal_\alpha$ is not.

An interesting fact to be proved later in
Corollary~\ref{cor:partial-saturated}, which will follow from the main
result of \cite{Ab03} and the transitivity of 
equivalence of Fell bundles, is that every Fell
bundle associated to a partial action is equivalent to the Fell
bundle associated to an action.  

As a preparation for future sections we prove some basic facts about
right \FH bundles. Of course left \FH bundles will have similar 
properties. A way of translating results from left to right and vice 
versa is to consider adjoint bundles, which we introduce next.  

Assume that $\Xcal$ is a left \FH $\Acal\dsh$bundle over $G$, and let
$\widetilde{\Xcal}$ be the retraction of $\Xcal$ by the inversion map of
$G$ (according to \cite[II 13.3]{FlDr88}), except that the product by
scalars is given by $\lambda \tilde{x}=\widetilde{\bar\lambda x}$,
where $\lambda\in\C$ and $\tilde{x}$ is the element $x\in\Xcal$ seen
as an element of $\widetilde{X}$. Thus the fiber of $\widetilde{\Xcal}$ over
$t\in G$ is $\tilde{X}_{t^{-1}}$, where $\tilde{X}_t$ is the
complex-conjugate Banach space of $X_t$. Then the \textit{adjoint} of
$\Xcal$ is the right \FH $\Acal$-bundle $\widetilde{\Xcal}$ where the 
action $\widetilde{\Xcal}\times  \Acal\to  \widetilde{\Xcal}$ is given
by $(\widetilde{x},a)\mapsto \widetilde{a^*x}$, and the inner product 
$\widetilde{\Xcal} \times \widetilde{\Xcal}\to \Acal$ by 
$(\widetilde{x},\widetilde{y})\mapsto \laa x,y\raa.$ 

\begin{remark}\label{remark symmetric}
  A similar construction can be performed on a right \FH bundle to
  obtain a left \FH bundle. In case $\Xcal$ is an $\Acal\dsh
  \Bcal\dsh$equivalence bundle, $\widetilde{\Xcal}$ is a $\Bcal\dsh
  \Acal\dsh$equivalence bundle. Also note that\  \ 
  $\widetilde{\!\!\widetilde{\Xcal}}=\Xcal$.  
\end{remark}

\begin{lemma}\label{lemma first tools}
  Given a right \FH $\Bcal\dsh$bundle $\Xcal,$ for all $x,y\in \Xcal$
  and $b,c\in \Bcal$, the following  relations hold: 
  \begin{enumerate} 
 \item $\lab xb,y\rab= b^*\lab x,y \rab$ 
 \item $\|xb\|\leq \|x\|\|b\|$
 \item $\|\lab x,y\rab\|\leq \|x\|\|y\|$
 \item $(xb)c=x(bc)$ 
 \item $\|b\|=\sup\{ \|zb\|\colon z\in \Xcal, \|z\|\leq 1\}$
 \item $B_t=\spncl\{\lab X_r,X_{rt}\rab\colon r\in G\}$
  \end{enumerate}
  In case $\Xcal$ is an $\Acal\dsh \Bcal\dsh$equivalence bundle, the
  following equalities hold for all $a\in \Acal,$ $x,y\in \Xcal$, and
  $b\in \Bcal:$ 
  \begin{enumerate}  
  \setcounter{enumi}{6}
  \item $(ax)b=a(xb)$
  \item $\laa xb,y\raa = \laa x,yb^*\raa$
  \item $\lab ax,y\rab=\lab x,a^*y\rab. $
  \end{enumerate} 
\end{lemma}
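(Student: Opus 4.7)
The plan is to work through the nine identities in the order stated, invoking axioms (1R)--(7R), the compatibility identity in the equivalence case, and the standard construction of approximate units in $B_e$ from the dense ideal supplied by (7R). Items (1)--(4) are formal manipulations: (1) is the adjoint of $\lab y,xb\rab=\lab y,x\rab b$ from (4R); (2) expands $\|xb\|^2=\|b^*\lab x,x\rab b\|$ via (1) and (4R) and uses $b^*\lab x,x\rab b\leq\|\lab x,x\rab\|\cdot b^*b$ (via $\lab x,x\rab\leq\|\lab x,x\rab\|\cdot 1_{B_e}$); (3) mimics Cauchy--Schwarz by letting $M=\|\lab x,x\rab\|$ and $w=z-x\lab x,z\rab/M$ (in the same fibre as $z$), expanding $\lab w,w\rab\geq 0$ via (1), (4R) and $\lab x,x\rab\leq M\cdot 1_{B_e}$ to deduce $\lab z,x\rab\lab x,z\rab\leq\|\lab x,x\rab\|\,\lab z,z\rab$, hence $\|\lab x,z\rab\|^2\leq\|x\|^2\|z\|^2$ by the $C^*$-identity in $\Bcal$; (4) follows by setting $w=(xb)c-x(bc)$ and observing that each of the four terms in $\lab w,w\rab$ collapses, via (1) and (4R), to $(bc)^*\lab x,x\rab(bc)$, cancel, and then $w=0$ by (5R).

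For (6), each $\spn\lab X_r,X_r\rab$ is a two-sided $*$-ideal of $B_e$ (by (1) and (4R)), and their sum $\mathcal{J}$ is dense in $B_e$ by (7R); hence $B_e$ admits a positive contractive approximate unit $(e_\lambda)$ with $e_\lambda=\sum_i\lab x_i^\lambda,x_i^\lambda\rab\in\mathcal{J}$. The Fell-bundle $C^*$-identity gives $\|e_\lambda b-b\|^2=\|(e_\lambda b-b)(e_\lambda b-b)^*\|\to 0$ for each $b\in B_t$, so $e_\lambda b\to b$; by (4R), $e_\lambda b=\sum_i\lab x_i^\lambda,x_i^\lambda b\rab\in\spn\lab X_{r_i},X_{r_i t}\rab$, proving (6). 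For (5), the upper bound is (2); for the lower bound one notes $\|b\|^2=\lim_\lambda\|b^*e_\lambda b\|=\lim_\lambda\|\sum_i\lab x_i^\lambda b,x_i^\lambda b\rab\|$, and then extracts a single $z\in X_r$ whose rank-one contribution $\lab z,z\rab$ approximates enough of the spectral mass of $b^*b\in B_e^+$ to push $\|b^*\lab z,z\rab b\|$ arbitrarily close to $\|b\|^2$.

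For the equivalence-bundle items, I would prove (8) first: setting $w=\laa xb,y\raa-\laa x,yb^*\raa$, compatibility gives $\laa xb,y\raa z=(xb)\lab y,z\rab=x(b\lab y,z\rab)$ (using item (4)), and $\laa x,yb^*\raa z=x\lab yb^*,z\rab=x(b\lab y,z\rab)$ (using item (1)); hence $wz=0$ for every $z\in\Xcal$. Then $w\laa z,y'\raa=\laa wz,y'\raa=0$ by (4L), and since $\laa z,y'\raa$ spans a dense subset of $A_e$ by (7L), the symmetric approximate-unit argument in $A_e$ forces $w=0$. Item (9) is the mirror argument on the right. For (7), chain (8) and (4L): $\laa (ax)b,y\raa=\laa ax,yb^*\raa=a\laa x,yb^*\raa=a\laa xb,y\raa=\laa a(xb),y\raa$ for every $y\in\Xcal$; taking $y=(ax)b-a(xb)$ then gives $\laa y,y\raa=0$ and $y=0$ by (5L).

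The main expected obstacle is item (5). The upper estimate is effortless, but the lower estimate is where the care sits: the sum $e_\lambda=\sum_i\lab x_i^\lambda,x_i^\lambda\rab$ yields $\|b^*e_\lambda b\|\to\|b\|^2$, yet the sup in (5) ranges only over single-fibre $z\in\Xcal$ and a finite sum of inner products is not itself an inner product $\lab z,z\rab$. The content is therefore to pass from the approximate-unit sum to a single summand, or equivalently to approximate the top spectral data of $b^*b$ by a single $\lab z,z\rab$; this is the graded counterpart of the standard full-Hilbert-module fact $\|a\|=\sup_{\|x\|\leq 1}\|xa\|$, and its correct transposition to the bundle setting is where the bulk of the technical work goes.
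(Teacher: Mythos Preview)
Your treatment of items (1)--(4), (6), and (7)--(9) is correct and close in spirit to the paper's proof, though the routes differ in minor ways: for (3) you run the standard Cauchy--Schwarz argument via $w=z-x\lab x,z\rab/M$, while the paper iterates $\|\lab x,y\rab\|^2=\|\lab x\lab x,y\rab,y\rab\|$ and uses the Hilbert-module Cauchy--Schwarz in a single fibre; for (7)--(9) you argue directly via $wz=0$ and kill $w$ with an approximate unit in $A_e$, whereas the paper first invokes (6) to reduce to generators $a=\laa u,v\raa$, $b=\lab z,w\rab$ and checks the identities by hand. Both approaches are fine.

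The genuine gap is item (5). You correctly isolate the lower bound as the crux and reach $\|b^*e_\lambda b\|\to\|b\|^2$ with $e_\lambda=\sum_i\lab x_i^\lambda,x_i^\lambda\rab$, but the step ``extract a single $z\in X_r$ whose rank-one contribution $\lab z,z\rab$ approximates enough of the spectral mass of $b^*b$'' is not a proof: nothing guarantees that any individual summand $\|x_i^\lambda b\|^2$ (after normalising $x_i^\lambda$) comes close to $\|b\|^2$, and there is no obvious way to combine summands from different fibres into a single $z\in\Xcal$. Your final paragraph concedes as much. The paper closes this gap by a representation-theoretic argument: for each $t$, regard $X_t$ as a left $\Kb(X_t)$-module and form the $*$-homomorphism $\phi_t\colon B_e^{\mathrm{op}}\to\Bb_{\Kb(X_t)}(X_t)$, $\phi_t(c)u=uc$; then $\|\phi_t(c)\|=\sup\{\|xc\|:x\in X_t,\ \|x\|\leq 1\}$. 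Condition (7R) forces $\bigoplus_t\phi_t$ to be faithful, hence isometric, which gives $\|c\|=\tau(c)$ for $c\in B_e$ immediately. The extension to arbitrary $b\in\Bcal$ is then the one-line estimate $\|b\|^2=\|bb^*\|=\tau(bb^*)\leq\tau(b)\|b\|$. This faithful-representation step is precisely the ``correct transposition'' you allude to but do not supply.
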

\begin{proof}
  The proof of (1) is left to the reader.
  Meanwhile (2) holds because
  \begin{equation}\label{equation norm bound of action}
   \|xb\|^2=\|\lab xb,xb\rab \| = \|b^*\lab x,x\rab b\|\leq \|x\|^2\|b\|^2.
  \end{equation}
  
  Note that $x\lab x,y\rab$ and $y$ belong to the same fiber of
  $\Xcal,$ say $X_t.$ 
  Regarding $X_t$ as a right Hilbert $B_e\dsh$module we have:   
  $$ \|\lab x,y\rab\|^2=\| \lab x\lab x,y\rab,y\rab \|\leq \|x\lab
  x,y\rab \|\|y\|\leq \|\lab x,y\rab\|\|x\|\|y\|, $$ 
  so (3) follows. 
  To prove (4) note that, since $(xb)c$ and $x(bc)$ belong to the same
  fiber, say $X_t,$ the 
  identity $(xb)c=x(bc)$ holds if and only if $\lab z,(xb)c\rab= \lab
  z,x(bc)\rab,$ for all $z\in X_t$, and the latter equality holds,
  because 
  \[\lab 
  z,(xb)c\rab = \lab z,xb\rab c= \lab z,x \rab bc = \lab z,x(bc)\rab,
  \forall z\in X_t.\] 
\fac 
 \par To prove (5) set $\tau(b):=\sup\{
  \|xb\|\colon x\in \Xcal,\|x\|\leq 1\}.$ 
  From (2) follows that $\tau(b)\leq
  \|b\|.$ 
  Consider each fiber $X_t$ as a left $\Kb(X_t)\dsh$ Hilbert
  module, and let $B_e^{\textrm{op}}$ be the opposite C*-algebra of
  $B_e$. Then we have a representation $\phi_t\colon
  B_e^{\textrm{op}}\to \Bb_{\Kb(X_t)}(X_t),$ $\phi_t(c)u=uc.$ Note that 
\begin{equation}\label{eqn:truco}
\|\phi_t(c)\|^2
=\sup\{\|{}_{\Kb(X_t)}\langle xc,xc\rangle\|:\| x\|\leq
1\}
=\sup\{\|\langle xc,xc\rangle_{\Bcal}\|:\| x\|\leq
1\}
\end{equation} 
  Since condition (\ref{right density}R) together with $\phi_t(c)=0$ for
  all $t\in G$ 
  implies $B_ec=0$, thus $c=0$,  
  the direct sum $\phi=\oplus_{t\in G}\phi_t$ is injective, and
  therefore isometric.
  This implies $\|b\|=\tau(b)$ because by \eqref{eqn:truco} we have 
  $$\|b\| = \sup_{t\in G}\|\phi_t(b)\| = \sup_{t\in
    G}\sup\{\| xb \|\colon x\in X_t,\ \|x\|\leq 1\}=\tau (b).$$
Finally, if $b$ is any element of $\Bcal$, we have 
$\|b\|^2=\tau(bb^*)\leq\tau(b)\| b\|$, which shows that
$\|b\|\leq\tau(b)$.     
  \par As for (6), from \cite[VIII 16.3]{FlDr88} and (7R) it follows
  that  $$B_t = \spncl \{ 
  \lab X_r, X_r\rab B_t\colon r\in G \}= \spncl \{ \lab X_{r},
  X_{rt}\rab\colon r\in G \}\subset B_t.$$ 
  
  For the rest of the proof we assume that $\Xcal$ is an $\Acal\dsh
  \Bcal\dsh$equivalence bundle. 
  Using the conclusion of the last paragraph together with continuity
  arguments, we see that it suffices to show (7)--(9)
  for elements $a$ and $b$ of the form $a=\laa u,v\raa$ and $b=\lab
  z,w\rab.$  
  The proof finishes after we observe that
  $$a(xb) = \laa u,v\raa (x\lab z,w\rab) =\laa \laa u,v\raa x,z\raa w
  = (\laa u,v\raa x)\lab z,w\rab = (ax)b. $$ 
  $$ \laa xb,y\raa = \laa x\lab z,w\rab,y\raa =  \laa x, z\raa \laa
  w,y\raa = \laa x, y\lab w,\rab \raa = \laa x,yb^*\raa.$$ 
  $$\lab ax,y\rab = \lab u\lab v,x\rab,y\rab =\lab x,v\rab\lab u,y\rab
  = \lab x,\laa v,u\raa
  y\rab =   \lab x,a^*y\rab.$$ 
\end{proof}

Approximate units of Fell bundles are a powerful tool. In what follows
we construct a special kind of approximate units, which will prove to
be extremely useful.  
\par The expression $\Mb_n(X)$ stands for the $n\times n$ matrices with
entries in the set $X.$ 
The $(i,j)$ entry of $M\in \Mb_n(X)$ will be denoted $M_{i,j}.$

\begin{lemma}[cf. \textit{\cite[Lemma 5.1]{abadie2016applications}}]\label{lemma approximate units}
  Let $\Xcal$ be an $\Acal\dsh \Bcal\dsh$equivalence bundle.
  Then
  \begin{enumerate}
    \item\label{item approximate units} $\Acal$ and $\Bcal$ have
      approximate units, $\{a_i\}_{i\in I}$ and $\{b_j\}_{j\in J},$
      such that for all $i\in I$ and $j\in J$ there exist
      $x_1,\ldots,x_{n_i},y_1,\ldots,y_{n_j}\in \Xcal$ such that   
    $$a_i = \sum_{k=1}^{n_i}\laa x_k,x_k\raa\mbox{ and } b_j
    =\sum_{k=1}^{n_{j}} \lab y_k,y_k\rab.$$ 
    \item For all $\tb=(t_1,\ldots,t_n)\in G^n,$ the set 
    $$\Mb_\tb(\Bcal):=\{ M\in \Mb_n(\Bcal)\colon M_{i,j}\in
    B_{{t_i}^{-1}t_j}\ \forall\ i,j=1,\ldots,n\}$$ 
    is a $C^*$-algebra with entrywise vector space operations, matrix
    multiplication as product and *-transpose
    (${M^*}_{i,j}={M_{j,i}}^*$) as involution. 
    Moreover, its $C^*$-norm is equivalent to the supremum norm 
    $\|M\|_\infty:=\max_{i,j}\|M_{i,j}\|.$ 
    \item For all $t,r_i\in G,$ $x_i\in X_{r_i}$ and $y_i\in X_{r_i
        t}$ ($i=1,\ldots,n$): 
    \begin{equation}\label{equation the hard bound}
      \| \sum_{i=1}^n \lab x_i,y_i\rab \|^2\leq  \|\sum_{i=1}^n \lab x_i,x_i\rab\|\|\sum_{i=1}^n \lab y_i,y_i\rab\|.
    \end{equation}
  \end{enumerate}
\end{lemma}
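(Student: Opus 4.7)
The plan splits along the three assertions.

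For part (1), I construct the approximate unit in $B_e$ only; applying the same argument to the adjoint bundle $\widetilde{\Xcal}$ from Remark~\ref{remark symmetric} yields the corresponding unit in $A_e$. Let $T := \{\sum_{k=1}^{n}\lab y_k, y_k\rab : n\in\N,\ y_k \in \Xcal\}$. I first verify that $\spn T$ is a dense two-sided $*$-ideal of $B_e$: density follows from (7R) together with polarization in each fibre; $*$-invariance is immediate from $\lab y, y\rab^{*} = \lab y, y\rab$; and the ideal property follows from $\lab y, y\rab b = \lab y, yb\rab$ and $b\lab y, y\rab = \lab yb^{*}, y\rab$ (consequences of (4R) and the $*$-structure), combined with polarization. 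Next, $T$ is a convex cone in $B_e^{+}$ closed under the sandwich operation $t \mapsto b^{*} t b$ because $b^{*}\lab y, y\rab b = \lab yb, yb\rab$, and it generates $B_e$ as a closed ideal, so the standard hereditary-cone argument gives $\overline{T} = B_e^{+}$. Classical dense-ideal theory now supplies an approximate unit $\{u_\lambda\}$ of $B_e$ inside $\spn T \cap B_e^{+}$; since $u_\lambda \in B_e^{+} = \overline{T}$, each $u_\lambda$ can be approximated in norm by some $b_\lambda \in T$, and a triangle-inequality check shows that $\{b_\lambda\}$ is itself an approximate unit, now of the required form.

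For part (2), the algebraic checks are routine: $(MN)_{i,j} \in \sum_k B_{t_i^{-1}t_k}B_{t_k^{-1}t_j} \subset B_{t_i^{-1}t_j}$ and $(M^{*})_{i,j} = M_{j,i}^{*} \in B_{t_i^{-1}t_j}$, so $\Mb_\tb(\Bcal)$ is a $*$-algebra. I equip it with a $C^{*}$-norm by embedding it into $M_n(C^{*}(\Bcal))$ via the canonical isometric inclusions $B_t \hookrightarrow C^{*}(\Bcal)$; the identities above make this an injective $*$-homomorphism, and the image is closed since each fibre $B_t \subset C^{*}(\Bcal)$ is closed. The inherited norm is equivalent to $\|\cdot\|_\infty$ since $\|M_{i,j}\| \leq \|M\|_{C^{*}}$ (matrix entries are contractive) while $\|M\|_{C^{*}} \leq n\|M\|_\infty$ by the standard estimate for operator matrices acting on $H^{n}$.

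For part (3), I will avoid any reference to the not-yet-constructed linking Fell bundle by working inside the right Hilbert $B_e$-module $W := \bigoplus_{i=1}^{n} X_{r_i}$ with its standard direct-sum inner product. Set $x := (x_1, \ldots, x_n) \in W$, so $\langle x, x\rangle_W = A$ where $A := \sum_i \lab x_i, x_i\rab$. For any $c \in B_{t^{-1}}$, the element $y'(c) := (y_1 c, \ldots, y_n c)$ lies in $W$ because $y_i c \in X_{r_i t}B_{t^{-1}} \subset X_{r_i}$, and a direct computation using (4R) and Lemma~\ref{lemma first tools}(1) gives $\langle x, y'(c)\rangle_W = Bc$ and $\langle y'(c), y'(c)\rangle_W = c^{*} D c$, where $B := \sum_i \lab x_i, y_i\rab$ and $D := \sum_i \lab y_i, y_i\rab$. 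The module Cauchy--Schwarz inequality then yields $\|Bc\|^{2} \leq \|A\|\,\|c^{*} D c\| \leq \|A\|\|D\|\|c\|^{2}$. Specialising to $c := B^{*} \in B_{t^{-1}}$ and invoking the $C^{*}$-identity $\|BB^{*}\| = \|B\|^{2}$ forces $\|B\|^{4} \leq \|A\|\|D\|\|B\|^{2}$, and the desired inequality is immediate (the case $B = 0$ being trivial). I expect the main subtlety to be the identification $\overline{T} = B_e^{+}$ in part~(1), which is what forces the approximate units to consist of \emph{positive} sums of inner products rather than arbitrary real combinations.
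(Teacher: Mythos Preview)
Your arguments for parts (2) and (3) are correct. Part (2) is essentially the paper's proof with $C^*(\Bcal)$ playing the role of the faithful Hilbert-space representation. Part (3) is genuinely different and in fact more elementary: the paper brings in the \emph{left} $\Acal$-structure, assembling the matrix $(\laa x_i,x_j\raa)_{i,j}\in\Mb_{\rb^{-1}}(\Acal)$ from part~(2) and letting it act on the column module $\Xb_{\rb t}$, whereas your Cauchy--Schwarz computation in $W=\bigoplus_i X_{r_i}$ followed by the substitution $c=B^*$ uses only the right $B_e$-module structure. So your proof of (3) never touches $\Acal$ or part~(2), and it goes through verbatim for an arbitrary right Hilbert $\Bcal$-bundle.

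Part (1), however, has a genuine gap at the line ``the standard hereditary-cone argument gives $\overline{T}=B_e^+$.'' Closure under the sandwich $t\mapsto b^*tb$ together with dense linear span is \emph{not} the hypothesis of any standard positive-cone result; you seem to be conflating sandwich-invariance with order-heredity ($0\le a\le c\in T\Rightarrow a\in T$), and it is only the latter that feeds into the classical hereditary-cone machinery. The conclusion $\overline{T}=B_e^+$ is true, but its proof uses the Hilbert-module structure of $M=\bigoplus_t X_t$ rather than abstract cone properties: e.g.\ approximate $p^{1/2}$ by $q=\la\alpha,\beta\ra_{M^m}$ and note $q^*q=\la\theta_{\alpha,\alpha}^{1/2}\beta,\theta_{\alpha,\alpha}^{1/2}\beta\ra_{M^m}\in T$. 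The paper avoids this detour altogether: it shows directly that $\Lambda=\{b\in T:\|b\|<1\}$ is upward directed via the functional-calculus trick $b\mapsto b(1-b)^{-1}$ (which stays inside $T$ because $x(1-b)^{-1/2}\in\Xcal$), and then checks the approximate-unit property using \cite[Lemma~7.2]{lance1995hilbert} applied to the full module $M$. Even after fixing $\overline{T}=B_e^+$, your last step needs a reindexing: merely choosing $b_\lambda\in T$ close to $u_\lambda$ does not make $\{b_\lambda\}$ an approximate unit unless you also force $\|u_\lambda-b_\lambda\|\to 0$ along the net.
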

\begin{proof}
  Let $\Lambda$ be the set $$\{ b\in B_e\colon \|b\|< 1,\ \exists \
  y_1,\ldots,y_n\in \Xcal \mbox{ such that } b=\sum_{j=1}^n \lab
  y_k,y_k\rab \}.$$ 
  
  To show that $\Lambda$ is a directed set take $b_1,b_2\in \Lambda,$ with
  $b_j = \sum_{k=1}^n\lab x^j_k,x^j_k\rab.$ 
  Each fiber $X_t$ is a Hilbert $B_e\dsh$module, so we may think of
  $X_t$ as a Hilbert module over the unitization of $B_e.$ 
  Set
  $$ c':= b_1(1-b_1)^{-1} + b_2(1-b_2)^{-1} = \sum_{j=1}^2\sum_{k=1}^n
  \lab x^j_k(1-b_j)^{-1/2}, x^j_k(1-b_j)^{-1/2}\rab.$$ 
  Then $c'\geq 0$ and, using functional calculus, we see that $c:=
  c'(1+c')^{-1}$ equals 
  $$ \sum_{j=1}^2\sum_{k=1}^n  \lab  x^j_k(1-b_j)^{-1/2}(1+c')^{-1/2},
  x^j_k(1-b_j)^{-1/2}(1+c')^{-1/2}\rab,$$ 
  and belongs to $\Lambda.$
  Moreover it can be shown that $b_1,b_2\leq c$ (see \cite[page
  78]{Mpy90}).  
  
  To show that $\{\lambda\}_{\lambda\in \Lambda}$ is an approximate unit of
  $B_e$ (and so also of $\Bcal$) it suffices to show that $\|
  b-b\lambda \|\to 0,$ for all $b\in B_e^+$ with $\|b\|<1.$ 
  From the proof of Theorem 3.1.1 of \cite{Mpy90} we know
  $\lambda\mapsto \|b-b\lambda\|$ is decreasing, so it suffices to
  show that, given $\varepsilon>0,$ there is $\lambda\in\Lambda$ such
  that $\|b-b\lambda\|<\varepsilon.$ To this end, fix $\varepsilon>0$
  and consider the right Hilbert $B_e\dsh$module obtained as the
  direct sum of all the right $B_e\dsh$Hilbert modules $X_t$,  
  $M:=\oplus_{t\in G}X_t$, which is full by (7R).  
  From \cite[Lemma 7.2]{lance1995hilbert} we know there exist 
  $\xi_1,\ldots,\xi_n\in M$ such that $\|b-b\sum_{k=1}^n\la
  \xi_k,\xi_k\ra\|<\varepsilon$ and $\|\sum_{k=1}^n\la
  \xi_k,\xi_k\ra\|<1.$ 
  Since $\sum_{k=1}^n\la \xi_k,\xi_k\ra$ lies in the closure of
  $\Lambda,$ there exists $\lambda\in \Lambda$ such that $\|b-b\lambda
  \|<\varepsilon.$  
  
  As for the *-algebra structure of $\Mb_\tb(\Bcal),$ note
  that the product and involution are defined, because given $M,N\in
  \Mb_\tb(\Bcal)$ we have $M_{i,k}N_{k,j}\in  B_{ t_i {t_j}^{-1} }$ 
  and ${M_{i,j}}^*\in {B_{ t_i {t_j}^{-1} }}^*=B_{ t_j {t_i}^{-1} }. $  
  The routine algebraic verifications needed to see that $\Mb_\tb(\Bcal)$
  is a *-algebra are left to the reader. 
  
  In order to define a $C^*$-norm on $\Mb_\tb(\Bcal),$ take a
  representation $T\colon \Bcal\to \Bb(\Hcal)$ such that $T|_{B_e}$ is
  faithful (and so an isometry). 
  Then the restriction of $T$ to each fiber is an isometry and we have
  a *-representation $T^\tb \colon \Mb_\tb(\Bcal)\to M_n(\Bb(\Hcal))\cong\Bb(\Hcal^n),$
  given by $T^\tb (M)_{i,j}=T_{M_{i,j}}.$ 
  Observe that $\Mb_\tb(\Bcal)$ is $\|\ \|_\infty\dsh$complete and 
that $T^\tb$ is an isometry when we consider on its domain and range
the supremum norm. 
  Thus $T^\tb(\Mb_\tb(\Bcal))$ is a C*-subalgebra of $M_n(\Bb(\Hcal))$ and its
  C*-norm is equivalent to $\|\ \|_\infty.$
  Hence $\Mb_\tb(\Bcal)$ is a C*-algebra and its C*-norm is equivalent
  to the supremum norm.
  
  To prove claim (3) we start by noticing that
  \begin{equation*}
    \| \sum_{i=1}^n \lab x_i,y_i\rab \|^2  = \|  \sum_{i,j=1}^n \lab
    x_i\lab x_j,y_j\rab,y_i\rab  \| = \|  \sum_{i,j=1}^n \lab \laa
    x_i, x_j\raa y_j,y_i\rab  \| 
  \end{equation*}
  and that the last term looks like the norm of a matrix multiplication.
  Given $\rb:=({r_1},\ldots,{r_n})\in G^n$, consider the sum
  of right Hilbert $B_e\dsh$modules
  $\Xb_{\rb}:=X_{r_1}\oplus\cdots\oplus X_{r_n}$.  
  Writing the elements of $\Xb_{\rb}$ as column matrices, matrix
  multiplication gives an action of $\Mb_{\rb^{-1}}(\Acal)$ on
  $\Xb_{\rb}$ by adjointable operators.  
  Moreover, the formula ${}_{\Mb_{\rb^{-1}}(\Acal)}\la \eta,\zeta\ra :=
  (\laa \eta_i,\zeta_j\raa)_{i,j=1}^n$ defines a
  $\Mb_{\rb^{-1}}(\Acal)\dsh$valued inner product making
  $\Xb_{\rb}$ a $\Mb_{\rb^{-1}}(\Acal)\dsh B_e$ Hilbert
  bimodule (not full in general).
  We should warn the reader that we gave no justification of the
  positivity of the left inner product of $\Xb_{\rb}.$ 
  This actually follows from the positivity of the right inner product
  because, according to  \cite{abadie2016applications}, this implies
  $\Xb_{\rb}$ is a positive $C^*$-tring, so the left inner
  product must also be positive. Despite the previous comment, we give
  next a direct proof of this fact, for the convenience of the reader.  
  Note that $I:=\spn \{{}_{\Mb_{\rb^{-1}}(\Acal)}\la
  \eta,\zeta\ra\colon \eta,\zeta\in \Xb_{\rb^{-1}}\}$ is a *-ideal of
  $\Mb_{\rb^{-1}}(\Acal)$ and that \cite[VI 19.11]{FlDr88} implies it
  has a unique $C^*$-norm (see also \cite[Corollary
  3.8]{abadie2016applications} for a complete argumentation). 
    If $\phi:\Mb_{\rb^{-1}}(\Acal)\to \Bb(\Xb_{\rb})$ is the
  homomorphism corresponding to the action of $\Mb_{\rb^{-1}}(\Acal)$
  on $\Xb_{\rb}$ by adjointable operators, its   
 restriction $\phi|_I\colon I\to \Kb(\Xb_{\rb})$ is
  injective, because for every $a\in I$ the condition $\phi(a)\xi=0,$
  $\forall\ \xi\in \Xb_{\rb},$ implies $aa^*=0.$ 
  Then the closure of $I$ in $\Mb_{\rb^{-1}}(\Acal)$ is isomorphic, as
  a $C^*$-algebra, to the closure of $\phi(I)$ in
  $\Kb(\Xb_{\rb}).$ 
  Finally note that $\phi({}_{\Mb_{\rb^{-1}}(\Acal)}\la \eta,\eta\ra)$
  is the generalized compact operator $\zeta\mapsto \eta\la
  \eta,\zeta\ra_{B_e},$ which is positive. So $
  {}_{\Mb_{\rb^{-1}}(\Acal)}\la \eta,\eta\ra $ is positive in
  $\Mb_{\rb^{-1}}(\Acal).$ 
  
  The discussion above implies that
  \begin{equation}\label{equation norm identity for columnn vector}
    \|(\laa x_i,x_j\raa)_{i,j=1}^n\| = \|(x_1,\ldots,x_n)^t\|^2 =
    \|\sum_{i=1}^n\lab x_i,x_i\rab\|. 
  \end{equation}
  
  On the other hand, if $\rb t:=(r_1t,\ldots,r_nt)$, so $\Xb_{\rb
    t}=X_{r_1t}\oplus \cdots\oplus X_{r_nt}$, matrix multiplication
  gives a representation $\varphi\colon 
  \Mb_{\rb^{-1}}(\Acal)\to \Bb(\Xb_{\rb t})$
  and, if
  $x=(x_1,\ldots,x_n)^t\in \Xb_{\rb}$ and 
  $y=(y_1,\ldots,y_n)^t\in \Xb_{\rb t}$:  
  \begin{equation}\label{equation norm inequality for inner products}
    \| \sum_{i=1}^n \lab x_i,y_i\rab \|^2 =\| \la \varphi({}_{\Mb_{\rb^{-1}}(\Acal)}\la x,x\ra)y,y\ra_{B_e} \|\leq \|y\|^2\|(\laa x_i,x_j\raa)_{i,j=1}^n\|.
  \end{equation}
  Finally, since $\|y\|^2 = \|\sum_{i=1}\lab y_i,y_i\rab\|,$
  \eqref{equation the hard bound} 
  follows from \eqref{equation norm identity for columnn vector} and
  \eqref{equation norm inequality for inner products}.   
\end{proof}

\begin{remark}
Given a Fell bundle $\Bcal$ over $G$, consider the complex vector
space  
\[\Bbbk_c(\Bcal):=\{k:G\times G\to\Bcal: k(r,s)\in B_{rs^{-1}}, \forall r,s\in
G,\textrm{ and
}k\textrm{ has compact support}\}\]
with the operations $k_1*k_2(r,s):=\int_G k_1(r,t)k_2(t,s)dt$ and
$k^*(r,s):=k(s,r)^*$. As shown in \cite{Ab03}, $\Bbbk_c(\Bcal)$ is a
$*$-algebra, and has a C*-completion $\Bbbk(\Bcal)$ (which in fact is
equal to $C^*_r(\Bcal)\rtimes_{\delta,r}G$, where $\delta$ is the 
dual coaction of $G$ on $C^*_r(\Bcal)$). Note that, given
$\tb=(t_1,\ldots,t_n)\in G^n$, we can think of every element of
$M_{\tb}$ as a function $M:G\times G\to \Bcal$ supported in   
$\{t_1,\ldots,t_n\}^2$ and such that $M(r^{-1}s)\in
B_{r^{-1}s}$, $\forall r,s\in G$. Then 
we have a natural inclusion of *-algebras $M_{\tb}\hookrightarrow 
\Bbbk_c(\Bcal)$, given by $M\mapsto k_{M}$, where
$k_{M}(r,s):=M(r^{-1},s^{-1})$, $\forall r,s\in G$. This is an
alternative way of proving that $M_{\tb}$ has a C*-algebra
structure. Besides it follows that, when $G$ is discrete, we have 
$C^*_r(\Bcal)\rtimes_{\delta,r}G=\varinjlim\limits_{\tb}M_{\tb}$. 
\end{remark}
\subsection{Morphisms of equivalence bundles}
In order to define a map between the equivalence bundles $\Xcal$ and
$\Ycal$ that takes into account the equivalence bundle structure, it
is convenient to think of $\Xcal$ and $\Ycal$ as bundles of
$C^*$-trings. Suppose $\Xcal$ and $\Ycal$ are an $\Acal\dsh \Bcal$ and
a $\Ccal\dsh \Dcal\dsh$equivalence bundle, respectively (all of them
bundles over $G$). 

\begin{definition}
  We say that $\rho\colon \Xcal\to \Ycal$ is a \textit{morphism of
    equivalence bundles} if it is a continuous map such that 

  \begin{enumerate}
  \item $\rho(X_t)\subset Y_t$, and the restriction
    of $\rho$ to $X_t$ is linear, for all $t\in G$. 
  \item $\rho(x\lab
  y,z\rab)=\rho(x)\la \rho(y),\rho(z)\ra_\Dcal,$ for all $x,y,z\in
  \Xcal.$
  \end{enumerate}

\end{definition}
\par It is easy to check that equivalence bundles and their morphisms
form a category. We denote this category by $\mathscr{E}$.   
\par We have a ``bundle version'' of \cite[Proposition 3.1]{Ab03}:

\begin{theorem}\label{theorem isomorphism of equivalence bundles}
  Let $\Xcal$ and $\Ycal$ be $\Acal\dsh \Bcal$ and $\Ccal\dsh
  \Dcal\dsh$equivalence bundles, respectively. 
  Assume $\rho\colon \Xcal\to \Ycal$ is a morphism of equivalence bundles.
  Then
  \begin{enumerate}
   \item $\|\rho(x)\|\leq \|x\|,$ for all $x\in \Xcal.$
   \item There are unique morphisms of Fell bundles over $G$,  
     denoted $\rho^l\colon \Acal\to \Ccal$
     and $\rho^r\colon \Bcal\to \Dcal,$ such that for all $x,y\in
     \Xcal$ we have: 
   $$\rho^l(\laa x,y\raa)={}_\Ccal\la \rho(x),\rho(y)\ra \mbox{ and }
   \rho^r( \lab x,y\rab)=\la \rho(x),\rho(y)\ra_\Dcal.$$ 
   \item $\rho^l(a)\rho(x)=\rho(ax)$ and $\rho(xb)=\rho(x)\rho^r(b),$
     for all $a\in \Acal,$ $x\in \Xcal$, and $b\in \Bcal.$ 
   \item\label{lateral isos of morphism} In case $\rho$ is bijective,
     it is an isomorphism of equivalence bundles over $G,$ $\rho^r$ and
     $\rho^l$ are isomorphisms, $(\rho^r)^{-1}=(\rho^{-1})^r$ and
     $(\rho^l)^{-1}=(\rho^{-1})^l.$ 
  \end{enumerate}
\end{theorem}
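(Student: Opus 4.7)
The plan is as follows. For part (1), each fiber $X_t$ is a $C^*$-tring under the ternary product $(x,y,z)\mapsto x\lab y,z\rab$ (note $\lab y,z\rab\in B_e$ when $y,z\in X_t$), and the morphism property of $\rho$ asserts precisely that $\rho|_{X_t}$ preserves this ternary product, i.e., is a $C^*$-tring homomorphism into $Y_t$. Since every $C^*$-tring homomorphism is contractive (as recalled in the preliminaries), $\|\rho(x)\|\leq\|x\|$ follows immediately.

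For part (2), I proceed in two stages, starting with the unit fibers. Form the full right Hilbert $B_e$-module $\hat\Xcal$ obtained by completing the algebraic direct sum $\bigoplus_{s\in G} X_s$ under the inner product $\la(x_s),(x'_s)\ra_{B_e}:=\sum_s\lab x_s,x'_s\rab$; fullness is exactly axiom (7R). Define $\hat\Ycal$ analogously over $D_e$. A short calculation, essentially the matrix-column computation of Lemma \ref{lemma approximate units}, shows that for every $\rb\in G^n$ the componentwise map $\Xb_\rb\to\Yb_\rb$, $(x_i)\mapsto(\rho(x_i))$, is a $C^*$-tring homomorphism and hence contractive. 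Passing to the limit yields a $C^*$-tring morphism $\hat\rho\colon\hat\Xcal\to\hat\Ycal$, and the right-algebra functor on $C^*$-trings then provides a $*$-homomorphism $\rho^r_e\colon B_e\to D_e$ satisfying $\rho^r_e(\lab x,y\rab)=\la\rho(x),\rho(y)\ra_\Dcal$ for all $x,y$ lying in a common fiber; the left-algebra functor produces $\rho^l_e\colon A_e\to C_e$ in the same way.

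Next, I extend $\rho^r$ to $B_t$ with $t\ne e$: by Lemma \ref{lemma first tools}(6), $B_t$ is the closed span of $\{\lab x,y\rab:x\in X_r,\,y\in X_{rt},\,r\in G\}$, and I define $\rho^r$ on this dense linear span by the formula $\rho^r(\sum_i\lab x_i,y_i\rab):=\sum_i\la\rho(x_i),\rho(y_i)\ra_\Dcal$. The key obstacle is well-definedness. Suppose $c=\sum_i\lab x_i,y_i\rab=0$ with $x_i\in X_{r_i}$, $y_i\in X_{r_i t}$; set $c'=\sum_i\la\rho(x_i),\rho(y_i)\ra_\Dcal\in D_t$. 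Using the equivalence identity $z\lab x_i,y_i\rab=\laa z,x_i\raa y_i$ in $\Xcal$ together with the morphism property of $\rho$, a direct computation shows $\rho(z)\,c'=\rho(zc)=0$ for every $z\in\Xcal$. Specializing $z=x_j$ and pairing with $\rho(y_j)$ via $\la\rho(y_j),\rho(x_j)c'\ra_\Dcal=\la\rho(y_j),\rho(x_j)\ra_\Dcal\,c'$, then summing over $j$, yields $(c')^*c'=0$ in the $C^*$-algebra $D_e$, so $c'=0$. Continuity of $\rho^r$ on the dense span then follows from the multiplicativity and $*$-preservation of $\rho^r$ (short equivalence-bundle computations) combined with the contractivity of $\rho^r_e$: for $b$ in the span, $\|\rho^r(b)\|^2=\|\rho^r(b)^*\rho^r(b)\|=\|\rho^r_e(b^*b)\|\leq\|b^*b\|=\|b\|^2$ since $b^*b\in B_e$. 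Hence $\rho^r$ extends continuously to $B_t\to D_t$, and a symmetric argument constructs $\rho^l$. Uniqueness in (2) is automatic, since $\rho^r$ and $\rho^l$ are determined on a dense subspace of each fiber by the defining identity; bundle-continuity on the total spaces follows from continuity of the structural maps on $\Bcal$ and $\Dcal$ plus a density argument.

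Part (3) is verified on generators by direct computation using the equivalence relations in both $\Xcal$ and $\Ycal$, then extended by continuity. For part (4), if $\rho$ is bijective, $\rho^{-1}\colon\Ycal\to\Xcal$ is easily checked to be a morphism of equivalence bundles, so the theorem applied to $\rho^{-1}$ produces $(\rho^{-1})^l$ and $(\rho^{-1})^r$; the uniqueness clause in (2) forces these to coincide with $(\rho^l)^{-1}$ and $(\rho^r)^{-1}$. The hardest step of the entire proof is the well-definedness of $\rho^r$ on the non-unit fibers, which rests crucially on the interplay between the equivalence bundle identities in $\Xcal$ and $\Ycal$ and the morphism property of $\rho$.
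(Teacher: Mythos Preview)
Your proof is correct and shares the paper's overall architecture (fiberwise $C^*$-tring contractivity for~(1); construct $\rho^r$ on a dense span of each $B_t$ via Lemma~\ref{lemma first tools}(6) and extend; verify~(3) on generators), but with one genuinely different sub-argument. For well-definedness of $\rho^r$ on $B_t$, the paper proves the single estimate $\|d\|\leq\|b\|$ directly by recognizing $d^*d\in D_e$ as the image of $b^*b\in B_e$ under the unit-fiber $*$-homomorphism, obtaining well-definedness and contractivity simultaneously. You instead separate the two: well-definedness comes from the clean algebraic observation that $\rho(z)c'=\rho(zc)=0$ for every $z\in\Xcal$, whence $(c')^*c'=0$; contractivity is then deduced once multiplicativity and $*$-preservation are checked on the span, via $\|\rho^r(b)\|^2=\|\rho^r_e(b^*b)\|\leq\|b\|^2$. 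Both routes work; your well-definedness step is more elementary and does not yet require the $C^*$-tring machinery for $\rho^r_e$, at the cost of having to verify multiplicativity before bounding the norm.

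Two places where the paper is explicit and you should be too. First, bundle-continuity of $\rho^r\colon\Bcal\to\Dcal$ is not a consequence of fiberwise contractivity plus an unspecified density argument; the paper invokes \cite[II~13.16]{FlDr88} applied to the family of sections $[f,g,t](r):=\lab f(t),g(tr)\rab$, and you need the analogous step. Second, in~(4) the phrase ``$\rho^{-1}$ is easily checked to be a morphism'' hides the continuity of $\rho^{-1}$: the paper observes that a bijective $\rho$ is fiberwise an injective, hence isometric, $C^*$-tring map, and then appeals to \cite[II~13.17]{FlDr88}. Neither point is a real obstacle, but each deserves a sentence.
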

\begin{proof}
  Take $t\in G$ and consider $X_t$ as a $C^*$-tring with the ternary
  operation $(x,y,z)=x\lab y,z\rab.$ 
  Then $\mu_t\colon X_t\to Y_t$ such that $x\mapsto \rho(x), $ is a homomorphism
  of $C^*$-trings and \cite[Proposition 3.1]{Ab03} implies
  $\|\rho(x)\|\leq \|x\|.$ 
  Moreover, if $I_t:=\spncl \lab X_t,X_t\rab$ and $J_t:=\spncl \la
  X_t,X_t\ra_\Dcal,$ the above cited proposition implies there exists
  a unique *-homomorphism $\mu_t^r\colon I_t\to J_t$ sending $\lab 
  x,y\rab$ to $\la \rho(x),\rho(y)\ra_\Dcal. $ 
  
  Set $B^0_t:=\spn\{ \lab X_{r}, X_{rt}\rab\colon r\in G \}.$
  We claim that there exists a unique linear contraction $\nu_t\colon
  B^0_t\to D_t$ such that $\nu_t(\lab x,y\rab)=\la \rho(x),
  \rho(y)\ra_{\Dcal}.$ 
  Take $b=\sum_{j=1}^n\lab x_j,y_j\rab\in B^0_t$ and set $d:=
  \sum_{j=1}^n\la \rho(x_j),\rho(y_j)\ra_\Dcal.$ 
  It suffices to show that $\|d\|\leq \|b\|,$ which follows from
  \begin{align*}
   \|d\|^2
      & = \|\la d,d\ra_\Dcal \|
          = \|\sum_{j,k=1}^n \la \rho(x_k\lab x_j,y_j\rab  ),\rho(y_k)\ra_\Dcal \|
          = \|\mu_t^r(b^*b)\|
          \leq \|b\|^2.
  \end{align*}

  Then $\nu_t$ has a unique extension to a linear contraction
  $\rho^r_t\colon B_t\to D_t.$ 
  Let $\rho\colon \Bcal\to \Dcal$ be such that $\rho|_{B_t}=\rho^r_t.$
  It is readily checked that, given $b\in B^0_s$ and $c\in B^0_t,$
  we have
  $\rho^r(bc)=\nu_{st}(bc)=\nu_s(b)\nu_t(c)=\rho^r(b)\rho^r(c)$ and 
  $\rho^r(b^*)=\nu_\smu(b^*)=\nu_s(b)^*=\rho^r(b)^*$, from where it
  follows that $\rho^r$ is multiplicative and *-preserving.  
    
  We use \cite[II 13.16]{FlDr88} to show that $\rho^r$ is continuous. 
  Given $f,g\in C_c(\Xcal)$ and $t\in G$, let $[f,g,t]\colon G\to
  \Bcal$ be defined as $[f,g,t](r)=\lab f(t),g(tr)\rab.$ 
  Then $\Gamma^\Xcal_\Bcal:=\{[f,g,t]\colon f,g\in C_c(\Xcal),\ t\in
  G\}$ satisfies:  
\begin{enumerate} 
 \item $B_s=\spncl \{ u(s)\colon u\in \Gamma_\Bcal \}$
  for all $s\in G$ (by Lemma \ref{lemma first tools}).
 \item 
  $\rho^r\circ f\in \Gamma^\Ycal_\Dcal,$ for all $f\in
  \Gamma^\Xcal_\Bcal.$
\end{enumerate} 
  Hence $\rho^r$ is continuous.
  
  From the last two paragraphs it follows that $\rho^r$ is a morphism of Fell bundles over $G$.
  In order to prove the existence of $\rho^l$ define
  $\widetilde{\rho}\colon \widetilde{\Xcal}\to \widetilde{\Ycal}$ as
  $\widetilde{\rho}=\rho$ (recall that, as topological spaces,
  $\widetilde{\Xcal}=\Xcal$). 
  Then $\rho^l$ is nothing but $\widetilde{\rho}^r.$
  We leave to the reader the routine verification of the claims in
  (3).
  
  Regarding (4), if $\rho$ is injective then each $\rho|_{X_t}$ is an injective homomorphism of C*-trings and thus an isometry.
  Hence, by \cite[II 13.17]{FlDr88}, $\rho^{-1}$ is continuous.
  The rest of the proof, which follows from the remark below, is left to the reader. 
\end{proof}
\begin{remark}
As a result of the second part of \ref{theorem isomorphism of
  equivalence bundles} we get two functors
$\mathscr{E}\to\mathscr{F}$: the left functor,  
given by $(\Xcal\stackrel{\rho}{\to}\Ycal)\longmapsto
(\Acal\stackrel{\rho^l}{\to}\Ccal)$, and the right functor, given by 
$(\Xcal\stackrel{\rho}{\to}\Ycal)\longmapsto
(\Bcal\stackrel{\rho^r}{\to}\Dcal)$.  
\end{remark} 
\section{Fell bundles associated to \FH bundles}

\subsection{The linking bundle}

Each of the equivalence bundles presented in our motivating
examples was constructed inside an ambient Fell bundle. 
Although this will turn out to be the general situation, a priori we
do not dispose of a Fell bundle that contains a given equivalence
bundle. The first purpose of this section is precisely to show that
any equivalence bundle can be included in a certain Fell bundle, the
so called linking Fell bundle, provided by 
Theorem~\ref{theorem construction of linking bundle} below.
To this
end we follow the idea used in \cite[Proposition~4.5]{Ab03} to define
the linking partial action of two Morita equivalent partial actions
(see Example~\ref{subsection Morita equivalence of partial actions}).
\par The next
result will help us to prove the continuity of the operations to be
defined along its construction.    

\begin{proposition}\label{prop continuity}
  Let $\Ucal,$ $\Vcal$ and $\Wcal$ be (real or complex) Banach bundles
  over the LCH spaces $X,$ $Y$ and $Z,$ respectively. 
  Assume $\Phi\colon \Ucal\times \Vcal \to \Wcal$ is a function for
  which there exist a continuous map $f\colon X\times Y \to Z,$ a
  constant $k\geq 0$ and sets of sections $\Gamma_Q \subset C(Q)$ for
  $Q \in \{\Ucal, \Vcal,\Wcal\}$ such that: 
  \begin{enumerate}[(1)]
    \item $\Phi(U_x\times V_y)\subset W_{f(x,y)}$ and $U_x\times V_y\to W_{f(x,y)},\ (u,v)\mapsto \Phi(u,v),$ is $\R\dsh$bilinear, for all $(x,y)\in X\times Y.$
    \item  $\|\Phi(u,v)\|\leq k\|u\|\|v\|,$ for all $u\in \Ucal $ and $v\in \Vcal.$  
    \item For all $x\in X,$ $\{\xi(x)\colon \xi \in \Gamma_\Ucal\}$ is dense in $U_x$ and analogous conditions hold for $\Vcal$ and $\Wcal.$
    \item For all $\xi \in \Gamma_\Ucal,$ $\eta \in \Gamma_\Vcal$ and $\zeta \in \Gamma_\Wcal$  the function 
    $$X\times Y \to \R,\ (x,y)\mapsto \|\Phi(\xi(x),\eta(y)) - \zeta(f(x,y))\|,$$
    is continuous.
  \end{enumerate}
  Then $\Phi$ is continuous.
\end{proposition}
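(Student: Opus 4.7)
The plan is to verify continuity of $\Phi$ at an arbitrary point $(u_0,v_0)\in\Ucal\times\Vcal$. Write $x_0:=p_\Ucal(u_0)$, $y_0:=p_\Vcal(v_0)$, $z_0:=f(x_0,y_0)$, and $w_0:=\Phi(u_0,v_0)\in W_{z_0}$. Since the topology of a Banach bundle (in the sense of \cite{FlDr88}) is generated by the tubular neighborhoods
\[T(N,\mu,\varepsilon)=\{w\in\Wcal : p_\Wcal(w)\in N,\ \|w-\mu(p_\Wcal(w))\|<\varepsilon\},\]
and hypothesis (3) says $\Gamma_\Wcal$ has fiberwise dense range, one gets a neighborhood base at $w_0$ by letting $\mu$ run over sections in $\Gamma_\Wcal$ with $\mu(z_0)$ close to $w_0$. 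Thus it suffices, given $\varepsilon>0$ and an open $N\ni z_0$, to produce $\mu\in\Gamma_\Wcal$ with $\|\mu(z_0)-w_0\|<\varepsilon/4$ and neighborhoods of $u_0$ and $v_0$ whose image under $\Phi$ lies in $T(N,\mu,\varepsilon)$.

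The key step is the following $\varepsilon/4$ splitting. Using (3) I first choose $\xi\in\Gamma_\Ucal$ and $\eta\in\Gamma_\Vcal$ with $\|\xi(x_0)-u_0\|<\delta$ and $\|\eta(y_0)-v_0\|<\delta$ for a small $\delta>0$ to be fixed later; by (2), the vector $\Phi(\xi(x_0),\eta(y_0))$ then differs from $w_0$ by at most $k\delta(\|v_0\|+\|\xi(x_0)\|)$, so I can use (3) once more to choose $\mu\in\Gamma_\Wcal$ with both
\[\|\mu(z_0)-w_0\|<\varepsilon/4\qquad\text{and}\qquad \|\mu(z_0)-\Phi(\xi(x_0),\eta(y_0))\|<\varepsilon/4.\]
Now for $(u,v)$ lying in the tube neighborhoods $\{u:p_\Ucal(u)\in N_X,\ \|u-\xi(p_\Ucal(u))\|<\delta\}$ and $\{v:p_\Vcal(v)\in N_Y,\ \|v-\eta(p_\Vcal(v))\|<\delta\}$ I use the algebraic identity
\[\Phi(u,v)-\mu(f(x,y))=\Phi(u-\xi(x),v)+\Phi(\xi(x),v-\eta(y))+\bigl[\Phi(\xi(x),\eta(y))-\mu(f(x,y))\bigr],\]
where $x=p_\Ucal(u)$, $y=p_\Vcal(v)$. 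By (2), the first two summands are bounded by $k\delta(\|v_0\|+\delta)$ and $k(\|\xi(x_0)\|+o(1))\delta$ respectively (both $<\varepsilon/4$ once $\delta$ is chosen small, using continuity of the norms of $\xi,\eta$ at $x_0,y_0$). The third summand is a continuous function of $(x,y)$ by hypothesis (4), hence stays within $\varepsilon/4$ of its value at $(x_0,y_0)$, which itself has norm $<\varepsilon/4$ by the choice of $\mu$, on some neighborhood of $(x_0,y_0)$ that I can shrink inside $N_X\times N_Y$.

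Finally, continuity of $f$ lets me arrange $f(N_X\times N_Y)\subset N$ to honor the base constraint in the tube $T(N,\mu,\varepsilon)$. Adding the four $\varepsilon/4$ estimates gives $\|\Phi(u,v)-\mu(f(x,y))\|<\varepsilon$, which is exactly the condition for $\Phi(u,v)\in T(N,\mu,\varepsilon)$.

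The main obstacle is purely organizational: coordinating the single parameter $\delta$ so that the first two terms in the split, the distance $\|\mu(z_0)-\Phi(\xi(x_0),\eta(y_0))\|$, and the neighborhoods $N_X,N_Y$ (shrunk via hypothesis (4)) are all simultaneously controlled by $\varepsilon/4$. No idea beyond density of $\Gamma_\Ucal,\Gamma_\Vcal,\Gamma_\Wcal$, the bilinear bound (2), continuity of $f$, and the continuity defect hypothesis (4) is needed.
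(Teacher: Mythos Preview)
Your argument is correct and follows essentially the same strategy as the paper's proof: approximate $(u_0,v_0)$ by $(\xi(x_0),\eta(y_0))$ using density (3), control the resulting error with the bilinear bound (2), and handle the section term $\Phi(\xi(x),\eta(y))-\mu(f(x,y))$ via hypothesis (4). The only real difference is packaging: the paper argues with nets and twice invokes the Fell--Doran convergence criterion \cite[II~13.12]{FlDr88} (once to pass from $\Phi(u_\lambda,v_\lambda)$ to $\Phi(\xi(x_\lambda),\eta(y_\lambda))$, and once more to show the latter converges using $\zeta\in\Gamma_\Wcal$), whereas you unroll this into a single $\varepsilon/4$ estimate computed directly inside a tubular neighborhood. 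Your version is slightly more self-contained; the paper's is shorter because \cite[II~13.12]{FlDr88} absorbs the bookkeeping of shrinking neighborhoods.
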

\begin{proof}
  We start by observing that the inequality 
  \begin{equation}\label{equ bound}
   \|\Phi(u_1,v_1)-\Phi(u_2,v_2)\|\leq k(\|u_1\|+\|v_2\|)(\|u_1-u_2\|+\|v_1-v_2\|)
  \end{equation}
  holds for all $u_1,u_2\in U_x,\ v_1,v_2\in V_y,\ x\in X$ and $y\in Y.$
  
   Let us show that given a converging net in $\Ucal\times \Vcal,$
  $(u_\lambda,v_\lambda)\to (u,v),$ it follows that
  $\Phi(u_\lambda,v_\lambda)\to \Phi(u,v).$ 
  Suppose $u_\lambda\in U_{x_\lambda},$ $v_\lambda\in U_{y_\lambda},$
  $u\in U_x$ and $v\in V_y.$ 
  Obviously, $\Phi(u_\lambda,v_\lambda)\in V_{f(x_\lambda,y_\lambda)}$
  and $f(x_\lambda,y_\lambda)\to f(x,y).$  
  Given $\varepsilon >0,$ choose $\delta\in (0,\varepsilon)$ such that
  $k(\|u\|+\|v\|+2\delta)2\delta<\varepsilon.$ 
  Then use condition (3) to find $\xi\in \Gamma_\Ucal$ and $\eta\in
  \Gamma_\Vcal$ such that $\| u - \xi(x) \|<\delta$ and
  $\|v-\eta(y)\|<\delta.$ 
  Then Equation \eqref{equ bound} implies $\| \Phi(\xi(x),\eta(y)) -
  \Phi(u,v) \|<\varepsilon.$ 
  A direct continuity argument implies that there exists $\lambda_0\in
  \Lambda$ such that $\|u_\lambda-\xi(x_\lambda)\|<\delta$ and
  $\|v_\lambda-\xi(y_\lambda)\|<\delta,$ for all $\lambda\geq
  \lambda_0.$ 
  Using inequality \eqref{equ bound} again we get $\|
  \Phi(\xi(x_\lambda),\eta(y_\lambda)) - \Phi(u_\lambda,v_\lambda)
  \|<\varepsilon,$ for all $\lambda\geq \lambda_0.$ 
  In case $\Phi(\xi(x_\lambda),\eta(y_\lambda))\to
  \Phi(\xi(x),\eta(y)),$ \cite[II 13.12]{FlDr88} implies that
  $\Phi(u_\lambda,v_\lambda)\to \Phi(u,v).$ 
  
  Now we show $\Phi(\xi(x_\lambda),\eta(y_\lambda))\to
  \Phi(\xi(x),\eta(y)).$ 
  Fix $\varepsilon>0,$ condition (3) implies the existence of $\zeta \in \Gamma_\Wcal$ such that
  $\|\Phi(\xi(x),\eta(y))-\zeta(f(x,y))\|<\varepsilon.$ 
  Then there exists $\lambda_0$ such that $\|
  \Phi(\xi(x_\lambda),\eta(y_\lambda)) - \zeta(f(x_\lambda,y_\lambda))
  \|<\varepsilon$ for all $\lambda\geq \lambda_0.$ 
  It is clear that $\zeta(f(x_\lambda,y_\lambda))\to \zeta(f(x,y)),$
  thus \cite[II 13.12]{FlDr88} implies
  $\Phi(\xi(x_\lambda),\eta(y_\lambda))\to \Phi(\xi(x),\eta(y)).$ 
\end{proof}

\begin{theorem}\label{theorem construction of linking bundle}
  Given an $\Acal\dsh \Bcal\dsh$equivalence bundle, $\Xcal,$ there is
  a unique Fell bundle $\Lb(\Xcal)=\{L_t\}_{t\in G}$ such that: 
  \begin{enumerate}
   \item For all $t\in G,$ $L_t 
   =\begin{pmatrix}
    A_t & X_t\\
    \widetilde{X}_\tmu & B_t
   \end{pmatrix}$ with entrywise vector space operations.
  \item Product and involution are given by
  $$ \begin{pmatrix}
    a & x\\
    \widetilde{y} & b
   \end{pmatrix}
    \begin{pmatrix}
    c & u\\
    \widetilde{v} & d
      \end{pmatrix}
    =\begin{pmatrix}
    ac + \laa x,v\raa & au+xd\\
    \widetilde{c^*y} + \widetilde{vb^*} & \lab y,u\rab + bd
   \end{pmatrix} \quad \mbox{and}
   $$
   $$     
   \begin{pmatrix}
    a & x\\
    \widetilde{y} & b
   \end{pmatrix}^*
    =\begin{pmatrix}
    a^* & y\\
    \widetilde{x} & b^*
   \end{pmatrix}.$$
   \item Given $\xi\in C_c(\Acal),$ $\eta\in C_c(\Bcal)$ and $f,g\in C_c(\Xcal)$ the function
   $$\begin{pmatrix}
    \xi & f\\
    \widetilde{g} & \eta
   \end{pmatrix}\colon G\to \Lb(\Xcal), \ t\mapsto 
   \begin{pmatrix}
    \xi(t) & f(t)\\
    \widetilde{g(\tmu)} & \eta(t)
   \end{pmatrix} $$
   is a continuous section.
  \end{enumerate}
\end{theorem}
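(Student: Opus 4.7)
The plan is to construct $\Lb(\Xcal)$ in three stages: equip each fiber $L_t$ with an algebraic $*$-structure, fit a C*-norm on it using a common faithful representation of the whole bundle on a Hilbert module, and then place a Banach bundle topology on the disjoint union and check continuity of the bundle operations.

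\textbf{Algebraic layer and C*-norm on each fiber.}
Define $L_t$ as the vector space $A_t\oplus X_t\oplus\widetilde{X}_{\tmu}\oplus B_t$, with product and involution given by~(2). A routine verification shows the formulas produce elements in the correct fiber (using~(1L), (1R) and $\lab X_r,X_s\rab\subset B_{r^{-1}s}$, $\laa X_r,X_s\raa\subset A_{rs^{-1}}$) and satisfy the $*$-algebra axioms, the only nontrivial identity being associativity, which reduces entrywise to parts~(1), (4), (7)--(9) of Lemma~\ref{lemma first tools} combined with the compatibility relation $\laa x,y\raa z=x\lab y,z\rab$ from Definition~\ref{defi equivalence bundle}. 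To produce a C*-norm I build, for each $s\in G$, the right Hilbert $B_e\dsh$module $M_s:=X_s\oplus B_s$ (inner product $\lab\cdot,\cdot\rab$ on the first summand, $c_1^*c_2$ on the second, zero cross terms), and define
\[
\Psi_s\!\left(\begin{smallmatrix}a&x\\ \widetilde y&b\end{smallmatrix}\right)(u,c):=(au+xc,\,\lab y,u\rab+bc)\in M_{ts},
\]
for $\ell=(a,x,\widetilde y,b)\in L_t$. A direct computation---whose crucial step is the equivalence relation $\laa x,y\raa z=x\lab y,z\rab$---shows $\Psi_s(\ell)$ is adjointable with $\Psi_s(\ell)^*=\Psi_{ts}(\ell^*)$ and that $\Psi:=\bigoplus_s\Psi_s$ is a morphism into $\Bb(M)$, with $M:=\bigoplus_s M_s$, whose restriction to $L_e$ is a $*$-representation and which is multiplicative across fibers. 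Injectivity of $\Psi$ on each $L_t$ is established by feeding in $(u,0)$ and $(0,c)$, using approximate units of $B_e$ to kill $x$ and $b$, axiom (5R) to kill $y$ (via $\lab y,y\rab=0$), and (7L) together with $a\laa u,v\raa=\laa au,v\raa$ to kill $a$. Setting $\|\ell\|:=\|\Psi(\ell)\|$ then gives a C*-norm on each fiber automatically satisfying the C*-identity, the positivity $\ell^*\ell\geq 0$ in $L_e$, and submultiplicativity; an entry-wise comparison with the direct-sum norm yields completeness of $L_t$.

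\textbf{Bundle topology and continuity of the operations.}
I apply \cite[II~13.18]{FlDr88} with the family $\Gamma$ of sections from~(3) to equip $\bigsqcup_t L_t$ with a Banach bundle topology. Density of $\{\sigma(t):\sigma\in\Gamma\}$ in $L_t$ is immediate from density of $C_c$-sections in each of $\Acal,\Bcal,\Xcal$. Continuity of $t\mapsto\|\sigma(t)\|$ follows from the C*-identity $\|\sigma(t)\|^2=\|\sigma(t)^*\sigma(t)\|_{L_e}$ plus the observation that the four entries of $\sigma(t)^*\sigma(t)\in L_e$ are continuous functions of $t$, built via bundle operations of $\Acal,\Bcal,\Xcal$ from $\xi,f,g,\eta$. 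Uniqueness of the topology is part of the same Fell--Doran theorem. Continuity of the bundle product is then a direct application of Proposition~\ref{prop continuity}: the family $\Gamma$ plays the role of $\Gamma_{\Ucal}=\Gamma_{\Vcal}=\Gamma_{\Wcal}$, the bound comes from the already-proved submultiplicativity, and hypothesis~(4) reduces entrywise to continuity of the bundle operations of $\Acal,\Bcal,\Xcal$. The involution is handled analogously via the map $\sigma\mapsto(t\mapsto\sigma(\tmu)^*)$, which again sends $\Gamma$ to $\Gamma$ and is isometric.

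\textbf{Main obstacle.}
The delicate step is the injectivity of $\Psi$ on each fiber $L_t$: without it, $\ell\mapsto\|\Psi(\ell)\|$ would be only a C*-seminorm and the C*-structure of the fibers could collapse. This is precisely where the fullness axioms~(7R) and~(7L)---the defining feature of equivalence bundles---enter in a serious way, since they ensure $A_e$ and $B_e$ act faithfully enough on $\bigoplus_s X_s$ and $\bigoplus_s B_s$ respectively to let us recover the diagonal entries $a,b$ of an element $\ell\in L_t$ from $\Psi(\ell)$. Once injectivity is secured, every remaining verification is routine.
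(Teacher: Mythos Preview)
Your proof is correct and follows essentially the same route as the paper's. The only cosmetic difference is in how the faithful representation is packaged: you act on $M=\bigoplus_s(X_s\oplus B_s)$ directly, whereas the paper splits this as $\ell^2(\Xcal)\oplus\ell^2(\Bcal)$ and writes the representation as a $2\times 2$ matrix of operators $\phi\begin{pmatrix}a&x\\ \widetilde y&b\end{pmatrix}=\begin{pmatrix}\Lambda'_a&\Omega_x\\ \Omega_y^*&\Lambda_b\end{pmatrix}$; under the obvious identification these are the same map. One minor economy in the paper's version is that it only checks faithfulness on the unit fiber $L_e$: since $\phi(\ell)=0$ forces $\phi(\ell^*\ell)=0$ and hence $\ell^*\ell=0$, and the diagonal entries of $\ell^*\ell$ are $a^*a+\laa y,y\raa$ and $\lab x,x\rab+b^*b$ (sums of positives), vanishing of $\ell^*\ell$ already kills all four entries of $\ell$. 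This spares the separate fiberwise injectivity argument you give---though your argument is perfectly fine and makes the role of (7L) explicit. The topological part (bundle topology via \cite[II~13.18]{FlDr88}, continuity of operations via Proposition~\ref{prop continuity}) matches the paper exactly.
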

\begin{proof}
  All the necessary algebraic verifications follow from Lemma
  \ref{lemma first tools}, and will be ommited.
  To define a $C^*$-norm on $\mathbb{L}(\Xcal)$ and to (automatically) show $u^*u\geq 0$ in $L_e,$ for all $u\in \mathbb{L}(\Xcal),$ we will provide a representation of $\mathbb{L}(\Xcal)$.
  We proceed as follows. First consider the Hilbert $B_e$-module direct sums:
\[ \ell^2(\mathcal{B}):=\oplus_{t\in G}B_t\qquad\textrm{ and }\qquad
\ell^2(\mathcal{X}):=\oplus_{t\in G}X_t.\] 
Given $x_r\in X_r$ and
a section $\xi\in \ell^2(\Bcal),$ define the section 
$\Omega_{x_r}\xi \colon G\to \Xcal$ by 
$\Omega_{x_r}\xi(s):=x_r\xi(r^{-1}s)$. Note that $x_r\xi(r^{-1}s)\in
X_rB_{r^{-1}s}\subseteq X_s$, as needed.
Besides, 
\begin{align*}
\pr{\Omega_{x_r}\xi}{\Omega_{x_r}\xi}_{\ell^2(\Xcal)}
  &  =\sum_s\pr{\Omega_{x_r}\xi(s)}{\Omega_{x_r}\xi(s)}_{\Bcal}
     =\sum_s\pr{x_r\xi(r^{-1}s)}{x_r\xi(r^{-1}s)}_{{\Bcal}}\\
  & =\sum_s\xi(r^{-1}s)^*\pr{x_r}{x_r}_{{\Bcal}}\xi(r^{-1}s)
  \leq\norm{x_r}^2\,\pr{\xi}{\xi}_{\ell^2(\Bcal)}.
\end{align*}
Therefore there exists a unique bounded operator $\Omega_{x_r}\colon
\ell^2(\Bcal)\to \ell^2(\Xcal)$, $\xi \mapsto \Omega_{x_r}\xi$, which
is easily seen to be adjointable with adjoint given by:
\[\Omega_{x_r}^*\eta(t)=\pr{x_r}{\eta(rt)}_{\Bcal}, \qquad\forall
\eta\in\ell^2(\Xcal), \,t\in G.\]   
In the particular case $\Xcal=\Bcal$, for each
$b_r\in\Bcal$ we have an adjointable map
$\Lambda_{b_r}\in\Bb(\ell^2(\Bcal))$, such that
$\Lambda_{b_r}\xi(s)=b_r\xi(r^{-1}s)$, $\forall \xi\in\ell^2(\Bcal)$,
$s\in G$ (note that $\Lambda$ so defined is nothing but the left 
regular representation of $\Bcal$ as a Fell bundle over the group $G$
with the discrete topology). Similarly, we also have a map 
$\Lambda':\mathcal{A}\to\Bb(\ell^2(\mathcal{X}))$, such that
$\Lambda'_{a_r}\eta(s)=a_r\eta(r^{-1}s)$. Now it is easy to check that
the following relations hold:       
\begin{gather*}
\Omega_{ax}=\Lambda'_a\Omega_x,\qquad \textrm{and}\qquad
   \Omega_{xb}=\Omega_x\Lambda_b.\\ 
 \Omega_x^*\Omega_y=\Lambda_{\pr{x}{y}_{\mathcal{B}}}\in\Bb (\ell^2(\mathcal{B})),\  
   \forall x,y\in\mathcal{X}\\   
 \Omega_x\Omega_y^*=\Lambda'_{\laa x ,y
   \raa}\in\Bb(\ell^2(\mathcal{X})), \   
   \forall x,y\in\mathcal{X}
\end{gather*}
\par Define 
$\phi:\mathbb{L}(\Xcal) \to \mathbf{C}:=
\begin{pmatrix}\Bb(\ell^2(\mathcal{X}))&\Bb(\ell^2(\mathcal{B}),\ell^2(\mathcal{X}))\\
  \Bb(\ell^2(\mathcal{X}),\ell^2(\mathcal{B}))&\Bb(\ell^2(\mathcal{B}))\end{pmatrix}$
by\[\phi\begin{pmatrix}a&x\\ \tilde{y}&b\end{pmatrix}
=\begin{pmatrix}\Lambda'_{a}&\Omega_{x}\\
  \Omega_{y}^*&\Lambda_{b}\end{pmatrix}\] 

Then $\phi$ is multiplicative, preserves adjoints and is linear in each
fiber of $\mathbb{L}(\Xcal)$. Moreover, $\phi|_{L_e}$ is a faithful
homomorphism into the $C^*$-algebra $\mathbf{C}$. Thus if we define 
 $\norm{\ell}:=\norm{\phi(\ell)}_{\mathbf{C}}$, we get a C*-norm on
 $\mathbb{L}(\Xcal).$ Of course this $C^*$-norm is equivalent to
the norm $\norm{\ }_\infty$, defined as
$\norm{\ell}_\infty:=\max\{\norm{a},\norm{x},\norm{\tilde{y}},\norm{b}\}$
for $\ell=\begin{pmatrix}a&x\\ \tilde{y}&b\end{pmatrix}$.  
\par  To endow $\Lb(\Xcal)$ with a bundle topology, note that 
   $$\Gamma:=\Big\{\left( \begin{array}{cc}
    \xi & f\\
    \widetilde{g} & \eta
   \end{array} \right)\colon f,g\in C_c(\Xcal),\xi\in
 C_c(\Acal),\eta\in C_c(\Bcal)\Big\}$$ 
   is a subspace of sections such that $\{u(t)\colon u\in \Gamma\}=L_t,$
   for all $t\in G$ (recall the definition of $\left( \begin{array}{cc}
    \xi & f\\
    \widetilde{g} & \eta
   \end{array} \right)$ made in part (3) of the statement of the
 theorem).    
   Moreover, for every $u\in \Gamma$ the entries of $t\mapsto
   u(t)^*u(t)\in L_e$ are continuous functions. 
   Then $t\mapsto \|u(t)\|=\|u(t)^*u(t)\|^{1/2}$ is continuous, so 
   there is a unique Banach bundle structure on $\Lb(\Xcal)$ such that
   $\Gamma\subset C_c(\Lb(\Xcal)).$ 
   \par Using similar arguments it can be shown that, for $u,v,w\in
   \Gamma,$ the functions $ G\times G\to \R:\ (r,s)\mapsto \|
   u(r)v(s)-w(rs)\| $ and $G\times G\to \R: \ (r,s)\mapsto
   \|u(s)^*-v(\smu)\| $ are continuous. 
   For example, note that 
   $$(r,s)\mapsto (u(r)v(s)-w(rs))^*(u(r)v(s)-w(rs))$$
   has continuous entries.
   Then Proposition \ref{prop continuity} implies the involution and
   multiplication of $\Lb(\Xcal)$ are continuous. 
\end{proof}

It can be shown that the continuous sections described in condition
(3) of the previous theorem are all of the continuous sections of compact
support of $\Lb(\Xcal).$ 

\begin{remark}\label{remark direct sums}
We can construct the direct sum $\Acal\oplus \Xcal$ as the Banach
subbundle 
$$\left\{ \left( \begin{array}{cc} a & x\\ 0 & 0 \end{array}
  \right)\colon a\in A_t,\ x\in X_t,\ t\in G \right\}\subset
\Lb(\Xcal).$$ 

Then $\Acal\oplus \Xcal$ becomes an $\Acal\dsh
\Lb(\Xcal)\dsh$equivalence bundle. Moreover, in a similar way we can
define $\Xcal\oplus \Bcal$ and make it into a $\Lb(\Xcal)\dsh
\Bcal\dsh$equivalence bundle.  
\end{remark}

As we expected, linking partial actions give rise to linking bundles: 

\begin{proposition}
  Let $X$ be an equivalence $A\dsh B$ bimodule and $\gamma$ a partial
  action on $X$ (see section \ref{subsection Morita equivalence of
    partial actions}). 
  If $\Xcal_\gamma$ is the Fell bundle associated to $\gamma,$ then
  $\Lb(\Xcal_\gamma)$ is isomorphic to $\Bcal_{\Lb(\gamma)}.$ 
\end{proposition}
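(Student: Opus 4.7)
The plan is to exhibit an explicit fiberwise isomorphism $\Phi\colon \Lb(\Xcal_\gamma)\to\Bcal_{\Lb(\gamma)}$ and verify it preserves the Fell bundle structure. Recall that in the setup of \ref{subsection Morita equivalence of partial actions}, the Fell subbundles $\Bcal_\al,\Xcal_\gamma,\widetilde{\Xcal_\gamma}$ and $\Bcal_\be$ all sit inside $\Bcal_{\Lb(\gamma)}$; moreover, if $D_t^{\Lb(\gamma)}$ denotes the domain of $\Lb(\gamma)_t$, then by the very construction of the linking partial action we have
\[
D_t^{\Lb(\gamma)} \;=\; \begin{pmatrix} D_t^{\al} & X_t \\ \widetilde{X}_{\tmu} & D_t^{\be} \end{pmatrix}\subset \Lb(X),
\]
so that the fiber of $\Bcal_{\Lb(\gamma)}$ over $t$ is naturally identified, as a Banach space, with
$\bigl(\begin{smallmatrix}A_t & X_t\\ \widetilde{X}_{\tmu} & B_t\end{smallmatrix}\bigr)\delta_t$. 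I would therefore define $\Phi$ on each fiber $L_t$ by
\[
\Phi\begin{pmatrix} a & x \\ \widetilde{y} & b \end{pmatrix} \;:=\; \bigl(a+x+\widetilde{y}+b\bigr)\delta_t\in\Bcal_{\Lb(\gamma)},
\]
which is clearly bijective and linear on each fiber.

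Next I would verify the algebraic compatibility. For multiplication, given two elements $L\delta_t$ and $M\delta_s$ in $\Bcal_{\Lb(\gamma)}$, the product is $\Lb(\gamma)_t(\Lb(\gamma)_{\tmu}(L)M)\delta_{ts}$; expanding block-wise using that $\Lb(\gamma)$ restricts to $\al$ on the upper-left corner, to $\be$ on the lower-right corner, and to $\gamma$ (respectively, its dual version) on the off-diagonal corners, and then using the identifications
\[
\gamma_t(\gamma_{\tmu}(x)d)=xd,\qquad \al_t(\al_{\tmu}(a)u)=au,\qquad \gamma_t(\gamma_{\tmu}(x)\be_{\tmu}(b))\cdot\dots
\]
(and the analogues for the $\widetilde{X}$-corner), one obtains term by term the block matrix product prescribed in Theorem~\ref{theorem construction of linking bundle}(2), with the off-diagonal cross-products turning into the inner products $\laa x,v\raa$ and $\lab y,u\rab$ because, by the definition of $\gamma$ (cf.\ the comparison \eqref{equation inspiration left \FH bundle}--\eqref{equation inspiration right \FH bundle}), the corner products $xv^*$ and $y^*u$ inside $\Lb(X)$ coincide with $\laa x,v\raa$ and $\lab y,u\rab$. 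The involution check is similar and shorter: $(L\delta_t)^*=\Lb(\gamma)_{\tmu}(L^*)\delta_{\tmu}$ reduces corner-wise to the $*$-transpose in Theorem~\ref{theorem construction of linking bundle}(2).

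For continuity, I would invoke Theorem~\ref{theorem construction of linking bundle}(3): the matrix-valued sections
$\bigl(\begin{smallmatrix}\xi & f\\ \widetilde{g} & \eta\end{smallmatrix}\bigr)$ with $\xi\in C_c(\Bcal_\al),\eta\in C_c(\Bcal_\be),f,g\in C_c(\Xcal_\gamma)$ form a set of continuous sections of $\Lb(\Xcal_\gamma)$ whose values span each fiber. Their images under $\Phi$ are sections of $\Bcal_{\Lb(\gamma)}$ of the form $t\mapsto (\xi(t)+f(t)+\widetilde{g(\tmu)}+\eta(t))\delta_t$, which are continuous because each summand is a continuous section of a Fell subbundle of $\Bcal_{\Lb(\gamma)}$. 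By the uniqueness part of \cite[II 13.18]{FlDr88}, the bundle topologies match and $\Phi$ is a continuous bundle map; symmetry (its inverse is of the same shape) then makes it a topological isomorphism, hence an isomorphism of Fell bundles.

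The only delicate step is the bookkeeping in the middle: one must unwind the definition of the linking partial action $\Lb(\gamma)$ corner by corner and check that each block product in $\Bcal_{\Lb(\gamma)}$ reproduces exactly the corresponding entry of the matrix product from Theorem~\ref{theorem construction of linking bundle}(2); in particular the matching between the off-diagonal $*$-products $xv^*,y^*u$ inside $\Lb(X)$ and the equivalence-bundle inner products $\laa\,,\raa,\lab\,,\rab$ is what makes the two multiplicative structures coincide.
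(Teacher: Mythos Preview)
Your approach is correct but takes a genuinely different route from the paper. You build the isomorphism $\Phi$ directly on fibers and then check by block-wise computation that it intertwines the products, involutions, and topologies. This works, though the multiplication check you flag as ``the only delicate step'' really does require unwinding the partial action $\Lb(\gamma)$ corner by corner, and you have only sketched it; the identifications you list are the right ones, and the off-diagonal cross-terms do reduce to the inner products precisely because of \eqref{equation inspiration left \FH bundle}--\eqref{equation inspiration right \FH bundle}.

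The paper, by contrast, avoids all of this bookkeeping by exploiting the machinery just developed. It observes that $\Ycal:=\Xcal_\gamma\oplus\Bcal_\beta$ is an $\Lb(\Xcal_\gamma)\dsh\Bcal_\beta$ equivalence bundle (Remark~\ref{remark direct sums}) and that the ``column'' subbundle $\Zcal\subset\Bcal_{\Lb(\gamma)}$ is a $\Bcal_{\Lb(\gamma)}\dsh\Bcal_\beta$ equivalence bundle; the obvious identification $\rho\colon\Ycal\to\Zcal$ is then a bijective morphism of equivalence bundles, and Theorem~\ref{theorem isomorphism of equivalence bundles}\,(\ref{lateral isos of morphism}) delivers the Fell bundle isomorphism $\rho^l\colon\Lb(\Xcal_\gamma)\to\Bcal_{\Lb(\gamma)}$ for free. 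This is shorter and illustrates the point that the ``left'' Fell bundle of an equivalence bundle is determined up to isomorphism by its right Hilbert-bundle structure, so one need only match up the right-hand sides. Your approach, on the other hand, is more self-contained and does not appeal to the functorial results of Section~2; it would be the natural argument if one were proving this proposition in isolation.
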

\begin{proof}
  Define $\beta:=\gamma^r$ as in Section \ref{subsection Morita
    equivalence of partial actions}, and let 
  $ \Ycal:= \Xcal_\gamma\oplus \Bcal_{\be}$ \mbox{ and }
  $$\Zcal:=\left\{ \left( \begin{array}{cc} 0 & x\\0 & b \end{array}
 \right)\delta_t\colon x\in X_t,\ b\in B_t,\ t\in G \right\} \subset
\Bcal_{\Lb(\gamma)}.$$ 
 Then $\Ycal$ is a $\Lb(\Xcal_\gamma)\dsh \Bcal_\beta\dsh$equivalence
 bundle and $\Zcal$ a $\Bcal_{\Lb(\gamma)}\dsh
 \Bcal_\beta\dsh$equivalence bundle. 
 Since the map 
 $$\rho\colon \Ycal\to \Zcal \mbox{ given by }\rho
 \left( \begin{array}{cc} 0 & x\delta_t\\0 & b\delta_t \end{array} 
 \right)=\left( \begin{array}{cc} 0 & x\\0 & b \end{array}
 \right)\delta_t$$
 satisfies the hypotheses of Theorem \ref{theorem isomorphism of
   equivalence bundles} (\ref{lateral isos of morphism}), 
 $\rho^l\colon\Lb(\Xcal)\to \Bcal_{\Lb(\gamma)} $ is an isomorphism of
 Fell bundles. 
\end{proof}
\begin{theorem}\label{theorem the linking of a morphism}
Let $\rho:\Xcal\to\Ycal$ be a morphism from the $\Acal-\Bcal$
equivalence bundle $\Xcal$ to the $\Ccal-\Dcal\dsh$equivalence bundle
$\Ycal$. Let $\Lb(\rho):\Lb(\Xcal)\to\Lb(\Ycal)$ be the map given by (recall
 $\rho^l$ and $\rho^r$ were defined in \ref{theorem isomorphism of
   equivalence bundles}): 
\[\Lb(\rho)\begin{pmatrix}
    a & x\\
    \widetilde{y} & b
   \end{pmatrix}
  :=\begin{pmatrix}
    \rho^l(a) & \rho(x)\\
    \widetilde{\rho(y)} & \rho^r(b)
   \end{pmatrix},\quad\forall \begin{pmatrix}
    a & x\\
    \widetilde{y} & b
   \end{pmatrix}\in \Lb(\Xcal).
\]
Then $\Lb(\rho)$ is a morphism of Fell bundles, and
$(\Xcal\stackrel{\rho}{\to}\Ycal)\longmapsto
\Big(\Lb(\Xcal)\stackrel{\Lb(\rho)}{\longrightarrow}\Lb(\Ycal) \Big)$ is a functor
from the category $\mathscr{E}$ of equivalence bundles to the category
$\mathscr{F}$ of Fell bundles.    
\end{theorem}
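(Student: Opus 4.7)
The plan is to verify the four defining properties of a morphism of Fell bundles for $\Lb(\rho)$ (fiberwise linearity, multiplicativity, $*$-preservation, continuity), and then check functoriality via the uniqueness clauses of Theorem~\ref{theorem isomorphism of equivalence bundles}. The algebraic parts are straightforward once the identities of Theorem~\ref{theorem isomorphism of equivalence bundles} are put to work; the only delicate point is continuity.

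First, $\Lb(\rho)$ sends $L_t$ of $\Lb(\Xcal)$ into $L_t$ of $\Lb(\Ycal)$, because $\rho^l(A_t)\subset C_t$, $\rho(X_t)\subset Y_t$ and $\rho^r(B_t)\subset D_t$, and the restriction to $L_t$ is linear since its four components are. The $*$-preservation is immediate from the definition of the involution in Theorem~\ref{theorem construction of linking bundle}(2) combined with $\rho^l(a^*)=\rho^l(a)^*$ and $\rho^r(b^*)=\rho^r(b)^*$. For multiplicativity, one expands the product of two matrices according to Theorem~\ref{theorem construction of linking bundle}(2) and applies $\Lb(\rho)$; the diagonal entries require $\rho^l(\laa x,v\raa)={}_\Ccal\la\rho(x),\rho(v)\ra$ and $\rho^r(\lab y,u\rab)=\la\rho(y),\rho(u)\ra_\Dcal$, while the off-diagonal entries require $\rho(ax)=\rho^l(a)\rho(x)$ and $\rho(xd)=\rho(x)\rho^r(d)$; all of these are exactly parts (2) and (3) of Theorem~\ref{theorem isomorphism of equivalence bundles}.

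Continuity is the only place requiring actual work. The strategy is to use the family
\[
\Gamma:=\Big\{\begin{pmatrix}\xi & f\\ \widetilde{g}&\eta\end{pmatrix}:\xi\in C_c(\Acal),\,\eta\in C_c(\Bcal),\,f,g\in C_c(\Xcal)\Big\}
\]
introduced in the proof of Theorem~\ref{theorem construction of linking bundle}, which yields a dense subset of each fiber of $\Lb(\Xcal)$. For any $u\in\Gamma$, the composition $\Lb(\rho)\circ u$ is the section $\begin{pmatrix}\rho^l\circ\xi & \rho\circ f\\ \widetilde{\rho\circ g}&\rho^r\circ\eta\end{pmatrix}$, which lies in the analogous family $\Gamma'$ for $\Lb(\Ycal)$ because $\rho^l$, $\rho^r$, and $\rho$ are continuous. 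Hence $\Lb(\rho)$ sends a set of continuous sections whose values span the fibers into continuous sections; together with the fiberwise contractivity (which follows from $\|\rho^l\|\le 1$, $\|\rho^r\|\le 1$, and $\|\rho\|\le 1$ by Theorem~\ref{theorem isomorphism of equivalence bundles}(1)), the criterion \cite[II 13.16]{FlDr88} applies and gives continuity of $\Lb(\rho)$ on all of $\Lb(\Xcal)$. I expect this to be the main technical obstacle, but it is entirely analogous to the continuity argument used in the proof of Theorem~\ref{theorem isomorphism of equivalence bundles} for $\rho^r$.

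Finally, for functoriality: $\Lb(\id_\Xcal)=\id_{\Lb(\Xcal)}$ is immediate from the definition together with $(\id_\Xcal)^l=\id_\Acal$ and $(\id_\Xcal)^r=\id_\Bcal$, which hold by uniqueness in Theorem~\ref{theorem isomorphism of equivalence bundles}(2). Given composable morphisms $\Xcal\stackrel{\sigma}{\to}\Ycal\stackrel{\rho}{\to}\Zcal$ of equivalence bundles, the composite $\rho\circ\sigma$ is again a morphism, and the same uniqueness clause forces $(\rho\circ\sigma)^l=\rho^l\circ\sigma^l$ and $(\rho\circ\sigma)^r=\rho^r\circ\sigma^r$. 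Plugging these into the matrix definition of $\Lb(\rho\circ\sigma)$ yields $\Lb(\rho\circ\sigma)=\Lb(\rho)\circ\Lb(\sigma)$, completing the proof that we have a functor $\mathscr{E}\to\mathscr{F}$.
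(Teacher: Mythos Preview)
Your proof is correct and follows essentially the same route as the paper's: leave the algebraic identities to Theorem~\ref{theorem isomorphism of equivalence bundles}, push the family $\Gamma$ of matrix sections through $\Lb(\rho)$ to get continuous sections of $\Lb(\Ycal)$, and invoke \cite[II 13.16]{FlDr88}.

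One small imprecision worth flagging: you justify fiberwise contractivity by $\|\rho^l\|\le 1$, $\|\rho^r\|\le 1$, $\|\rho\|\le 1$, but these only give contractivity for the entrywise norm $\|\cdot\|_\infty$, whereas the norm on $\Lb(\Xcal)_t$ is the $C^*$-norm (the two are merely equivalent). The paper's fix is cleaner and available to you at this point: since you have already shown $\Lb(\rho)$ is multiplicative, $*$-preserving and fiberwise linear, its restriction to $\Lb(\Xcal)_e$ is a $*$-homomorphism of $C^*$-algebras, hence contractive; then for any $S\in\Lb(\Xcal)_t$,
\[
\|\Lb(\rho)S\|^2=\|\Lb(\rho)(S^*S)\|\le\|S^*S\|=\|S\|^2.
\]
Either way the continuity criterion applies, so this does not affect the validity of your argument. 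Your explicit verification of functoriality via the uniqueness clause in Theorem~\ref{theorem isomorphism of equivalence bundles}(2) is correct; the paper leaves that step implicit.
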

\begin{proof}
  We just need to verify that the map $\Lb(\rho)$ defined in the
  statement is a morphism of Fell bundles over $G$.   
  The routine algebraic verifications are left to the reader. 
  Now note that the map $\Lb(\Xcal)_e\to \Lb(\Ycal)_e,\ S\mapsto
  \Lb(\rho)S,$ is contractive because it is a homomorphism 
  of $C^*$-algebras. 
  Then, for all $S\in \Lb(\Xcal),$ we have
  $$\|\Lb(\rho)S\|=\|\Lb(\rho)[S^*S]\|^{1/2}\leq \|S^*S\|^{1/2}=\|S\|.$$
  Now observe that given $\left( \begin{array}{cc}
    \xi & f\\
    \widetilde{g} & \eta
   \end{array} \right)\in C_c(\Lb(\Xcal))$ as in \ref{theorem construction of linking bundle},  we have
   \begin{equation}\label{equation linking rho composed with a special section}
    \Lb(\rho)\circ \left( \begin{array}{cc}
    \xi & f\\
    \widetilde{g} & \eta
   \end{array} \right) = \left( \begin{array}{cc}
    \rho^l\circ \xi & \rho\circ f\\
    \widetilde{\rho\circ g} & \rho^r\circ \eta
   \end{array} \right)\in C_c(\Lb(\Ycal)).
   \end{equation}
   Using \cite[II 13.16]{FlDr88} we conclude $\Lb(\rho)$ is continuous, so it is a morphism of Fell bundles over $G$.
\end{proof}

\subsection{The bundle of generalized compact operators}

Assume $\Xcal$ is a right \FH $\Bcal\dsh$bundle. 
We will construct a Fell bundle $\Kb(\Xcal)$ in such a way that
$\Xcal$ is a $\Kb(\Xcal)\dsh \Bcal\dsh$equivalence bundle. 
In view of Theorem \ref{theorem isomorphism of equivalence bundles},
this Fell bundle is uniquely determined by the right \FH bundle
structure of $\Xcal.$ 

\begin{definition}\label{defi:adjointable operator of equivalence bundle}
  An adjointable operator of order $t\in G$ of $\Xcal$ is a continuous
  map $S:\Xcal\to\Xcal$ with the following properties:  
  \begin{itemize}
    \item there exists $c\in \R$ such that $\norm{Sx}\leq
      c\norm{x},$ for all $x\in \Xcal.$ 
    \item $S(X_r)\subset X_{tr}$, for all $r\in G.$
    \item There exists $S^*:\Xcal\to\Xcal$ such that
      $\pr{Sx}{y}_{\Bcal}=\pr{x}{S^*y}_{\Bcal}$, $\forall x,y\in
      \Xcal$.  
  \end{itemize}
  The set of adjointable operators of order $t$ will be denoted
  $\Bb_t(\Xcal).$  
\end{definition}
\par If $S_1,S_2\in \Bb_t(\Xcal)$ and
$\alpha\in\mathbb{C}$, is clear that
$\alpha S_1+S_2$ is also an adjointable operator of order
$t$. We define a norm on $\Bb_t(\Xcal)$ by 
$\norm{S}:=\sup\{\norm{Sx}:\,\norm{x}\leq 1\}$. Then we have an
isometric map $\Bb_t(\Xcal)\to\oplus_{r\in G}\Bb(X_r,X_{tr})$, given
by $S\mapsto(S_r)_{r\in G}$, where $S_r:X_r\to X_{tr}$ is such that
$S_r(x)=S(x)$, $\forall x\in X_r$. Since for all $x\in X_r$ and $y\in 
X_{tr}$ we have
$\pr{Sx}{y}_{\Bcal}=\pr{S_rx}{y}_{\Bcal}=\pr{x}{S_r^*y}_{\Bcal}$, we
see that the map $S^*$ is determined by $S$, and $(S^*)_r=(S_r)^*$,
$\forall r\in G$. We call $S^*$ the adjoint of $S$. 
 Note that, using Proposition \ref{prop continuity} and
 Definition \ref{defi:adjointable operator of equivalence bundle}, it
 can be shown that $S^*$ is continuous.
Then $S^*$ is an adjointable map of order $t^{-1}$. 
Conversely, for
every $(T_r)\in \oplus_{r\in G}\Bb(X_r,X_{tr})$ there exists a unique
$S:\Xcal\to\Xcal$ such that $S_r=T_r$, $\forall r\in G$. However, this
$S$ does not need to be a continuous map. Despite this fact we have:    
\begin{lemma}
 $\Bb_t(\Xcal)$ is a Banach space.
\end{lemma}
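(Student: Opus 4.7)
The plan is to show completeness via the isometric embedding $\Bb_t(\Xcal)\hookrightarrow\oplus_{r\in G}\Bb(X_r,X_{tr})$ (where the right-hand side carries the supremum norm), by taking fiberwise limits and then verifying that the assembled map satisfies the extra conditions in Definition~\ref{defi:adjointable operator of equivalence bundle}. Let $(S_n)\subset\Bb_t(\Xcal)$ be a Cauchy sequence. Since $\|S_n-S_m\|=\sup_{r\in G}\|(S_n)_r-(S_m)_r\|$, for each $r\in G$ the sequence $((S_n)_r)$ is Cauchy in the Banach space $\Bb(X_r,X_{tr})$ of adjointable maps between Hilbert $B_e$-modules, and the Cauchy condition is uniform in $r$. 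Let $T_r\in\Bb(X_r,X_{tr})$ be its limit and define $S\colon\Xcal\to\Xcal$ by $S|_{X_r}:=T_r$. Boundedness is automatic: since the numerical sequence $(\|S_n\|)$ is bounded by some constant $C$, we get $\|T_r\|\leq C$ for every $r$, hence $\|Sx\|\leq C\|x\|$, for all $x\in\Xcal$.

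Next, I would construct the adjoint. Because $\|S_n^*-S_m^*\|=\|S_n-S_m\|$, the sequence $(S_n^*)$ is Cauchy in $\Bb_{t^{-1}}(\Xcal)$ and the same procedure gives operators $U_s\in\Bb(X_s,X_{t^{-1}s})$ with $(S_n^*)_s\to U_s$. Passing to the limit in $\pr{(S_n)_r x}{y}_\Bcal=\pr{x}{(S_n^*)_{tr}y}_\Bcal$ for $x\in X_r$, $y\in X_{tr}$ yields $\pr{T_r x}{y}_\Bcal=\pr{x}{U_{tr}y}_\Bcal$, so the map $S'\colon\Xcal\to\Xcal$ given by $S'|_{X_s}:=U_s$ formally satisfies $\pr{Sx}{y}_\Bcal=\pr{x}{S'y}_\Bcal$.

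The main obstacle is verifying that $S$ (and, by the same argument, $S'$) is continuous as a map $\Xcal\to\Xcal$, which, as noted right before the lemma, is not automatic from fiberwise continuity. The idea is to exploit the fact that $S_n\to S$ \emph{uniformly in the fiber}, i.e.\ $\sup_{r\in G}\|(S_n)_r-T_r\|\to 0$. Suppose $x_\lambda\to x$ in $\Xcal$ with $x_\lambda\in X_{r_\lambda}$ and $x\in X_r$; then $r_\lambda\to r$, $tr_\lambda\to tr$, and $\|x_\lambda\|\to\|x\|$, so $\|x_\lambda\|$ is eventually bounded by some $M$. Given $\varepsilon>0$, choose $N$ with $\|S_N-S_m\|<\varepsilon$ for all $m\geq N$, which upon letting $m\to\infty$ gives
\[
\|S(x_\lambda)-S_N(x_\lambda)\|\leq\varepsilon M\qquad\text{and}\qquad\|S_N(x)-S(x)\|\leq\varepsilon\|x\|,
\]
for $\lambda$ eventually. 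Since $S_N$ is continuous by hypothesis, $S_N(x_\lambda)\to S_N(x)$ in $\Xcal$. Two applications of \cite[II 13.12]{FlDr88}, one in the fiber over $tr_\lambda$ and one in the fiber over $tr$, then force $S(x_\lambda)\to S(x)$, proving continuity of $S$. The same argument applied to $(S_n^*)$ shows $S'$ is continuous, so $S^*=S'$ exists in the sense of the definition, placing $S\in\Bb_t(\Xcal)$.

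Finally, convergence $S_n\to S$ in norm follows directly from the construction: $\|S_n-S\|=\sup_{r\in G}\|(S_n)_r-T_r\|\to 0$ because the Cauchy condition in $\Bb_t(\Xcal)$ is uniform in $r$. This establishes completeness of $\Bb_t(\Xcal)$.
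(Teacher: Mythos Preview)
Your proof is correct and follows essentially the same route as the paper's: pass to the completion $\oplus_{r\in G}\Bb(X_r,X_{tr})$ to get fiberwise limits, then use the uniformity of convergence together with \cite[II 13.12]{FlDr88} to verify that the assembled map is continuous on $\Xcal$. You add a bit more detail than the paper (explicitly building the adjoint $S'$ and checking norm convergence $S_n\to S$), whereas the paper offloads the adjoint's continuity to the remark preceding the lemma invoking Proposition~\ref{prop continuity}; both are fine.
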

\begin{proof}
Let $(S^{(n)})$ be a Cauchy sequence in $\Bb_t(\Xcal)$. Then
$(S^{(n)}_r)_{r\in G}$ is a Cauchy sequence in the complete space
$\oplus_{r\in G}\Bb(X_r,X_{tr})$, so it has a limit
$(S_r)\in\Bb(X_r,X_{tr})$. Let $S:\Xcal\to\Xcal$ be given by
$Sx:=S_rx$, $\forall x\in X_r$, $r\in G$. It is enough to show that
$S$ is continuous, what we do next. Take a net
$\{x_\lambda\}_{\lambda\in \Lambda}\subset 
\Xcal$ converging to $x_0\in \Xcal.$ Suppose $x_\lambda\in
X_{r_\lambda}$ and $x_0\in X_{r_0}$ and fix $\varepsilon >0.$  
 We can find $n\in \N$ such that $\|Sx-S_nx\|<\varepsilon,$ for
 all $x\in \Xcal$ with $\norm{x}\leq 1$.  
 Since $S^{(n)}$ is continuous, $S^{(n)}x_\lambda\to_\lambda S^{(n)}x_0.$
 Hence \cite[II 13.12]{FlDr88} implies $Sx_\lambda\to Sx_0.$   
\end{proof}
\begin{corollary}
Let $G_d$ be the group $G$ endowed with the discrete topology, and let
$\Bb(\Xcal):=(\Bb_t(\Xcal))_{t\in G_d}$. Then $\Bb(\Xcal)$ is a Fell
bundle over $G_d$, with the product given by the composition of maps. 
\end{corollary}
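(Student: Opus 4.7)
The plan is to verify the Fell bundle axioms for $\Bb(\Xcal)$ over the discrete group $G_d$. Because $G_d$ is discrete, the Banach bundle structure reduces to having a Banach space at each fiber (provided by the preceding lemma) and continuity of the bundle operations is automatic, so only the algebraic and norm-theoretic axioms remain to be checked.

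First I would verify the grading and norm estimates for composition and involution. If $S\in\Bb_s(\Xcal)$ and $T\in\Bb_t(\Xcal)$, the composite $ST$ is continuous as a composition of continuous maps, sends $X_r$ into $X_{str}$ via $S_{tr}T_r$, and has $T^*S^*$ as its adjoint by direct verification of $\pr{STx}{y}_\Bcal=\pr{x}{T^*S^*y}_\Bcal$; hence $ST\in\Bb_{st}(\Xcal)$, with $\|ST\|\leq\|S\|\|T\|$ following fiberwise from $\|S_{tr}T_r\|\leq\|S_{tr}\|\|T_r\|$. The involution $S\mapsto S^*$ satisfies $(S^*)_r=(S_r)^*$, so conjugate linearity, the involutive identity, antimultiplicativity, and the isometry $\|S^*\|=\|S\|$ all reduce to the corresponding facts in each $\Bb(X_r,X_{tr})$.

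The heart of the argument is the C*-identity and positivity of $S^*S$ in $\Bb_e(\Xcal)$. The key observation is that the map $S\mapsto (S_r)_{r\in G}$ embeds $\Bb_e(\Xcal)$ isometrically into the C*-algebra $\prod_{r\in G}\Bb(X_r)$ (with coordinatewise product, adjoint, and supremum norm) as a $*$-subalgebra, since all operations are defined fiberwise. Being norm-closed by the preceding lemma, $\Bb_e(\Xcal)$ inherits the structure of a C*-algebra. In particular, positivity of $S^*S$ for $S\in\Bb_t(\Xcal)$ follows from positivity of each $(S_r)^*S_r$ in $\Bb(X_r)$, and the C*-identity reads
\[\|S^*S\|=\sup_r\|(S_r)^*S_r\|=\sup_r\|S_r\|^2=\|S\|^2.\]
The only step requiring care is checking that the fiberwise decomposition $S\mapsto(S_r)_r$ is compatible with composition and adjunction, but this is essentially tautological from the definition of $\Bb_t(\Xcal)$, so I do not anticipate any substantive obstacle.
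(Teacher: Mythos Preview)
Your proof is correct and proceeds along the same overall lines as the paper's: both rely on the fiberwise decomposition $S\mapsto(S_r)_r$ into $\oplus_{r\in G}\Bb(X_r,X_{tr})$ and check the Fell-bundle axioms componentwise. The one point of difference is the justification that $S^*S$ is positive in $\Bb_e(\Xcal)$. The paper writes each $S_r^*S_r=T_r^*T_r$ for some $T_r\in\Bb(X_r)$ and asserts that $S^*S=T^*T$ with $T\in\Bb_e(\Xcal)$; but the text had just remarked that a fiberwise-defined operator need not be continuous on $\Xcal$, so the membership $T\in\Bb_e(\Xcal)$ is left unaddressed. Your approach---observing that $\Bb_e(\Xcal)$ is, by the preceding lemma, a norm-closed $*$-subalgebra of the C*-algebra $\prod_r\Bb(X_r)$, hence itself a C*-algebra in which positivity of $S^*S$ is inherited from the ambient algebra---sidesteps this issue and is the cleaner argument.
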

\begin{proof}
Observe that if $S_1$ is of order $t_1$ and $S_2$ is of order
$t_2$, then $S_1S_2$ is of order $t_1t_2$. On the other hand, if $S\in
\Bb_t(\Xcal)$ corresponds to $(S_r)\in \oplus_{r\in
  G}\Bb(X_r,X_{tr})$, then $S^*S$ corresponds to $(S_r^*S_r)\in
\oplus_{r\in G}\Bb(X_r)$. Since $S_r^*S_r$ is a positive element of
$\Bb(X_r)$, we have $S_r^*S_r=T_r^*T_r$, for some $T_r\in\Bb(X_r)$,
$\forall r\in G$. Thus $S^*S=T^*T$, where $T\in \Bb_e(\Xcal)$
corresponds to the element $(T_r)\in \oplus_{r\in
  G}\Bb(X_r)$. Therefore $S^*S$ is a positive element of the C*-algebra
$\Bb_e(\Xcal)$. The remaining verifications are routine and we ommit
them.        
\end{proof}

\begin{theorem}\label{thm:compacts bundle}
  Let $\Bcal$ be a Fell bundle over the group $G$. Given a right
  \FH $\Bcal\dsh$bundle $\Xcal$ there exists 
  a unique Fell bundle over $G$, which we denote by $\Kb(\Xcal)$, such
  that:  
  \begin{enumerate}[(1)]
   \item For all $t\in G$ the fiber $\Kb(\Xcal)_t$ is, as a Banach
     space, the closure in $\Bb_t(\Xcal)$ of 
   $$\spn \{ [x,y]\colon x\in X_{ts},\  y\in X_s,\ s\in G  \}$$
where $[x,y]:\Xcal\to\Xcal$ is defined to be $[x,y]z:=x\lab y,z\rab$. 
   \item Given $f,g\in C_c(\Xcal)$ and $s\in G,$ the function
     $[f,g,s]\colon G\to \Kb(\Xcal)$ given by $[f,g,s](t)=
     [f(ts),g(s)],$  is a continuous section of $\Kb(\Xcal)$. 
  \end{enumerate}
\end{theorem}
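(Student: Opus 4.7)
My approach is to realize $\Kb(\Xcal)$ as a sub-Fell-bundle of the discrete-group Fell bundle $\Bb(\Xcal)$ furnished by the previous corollary, topologize it as a continuous Banach bundle over $G$ via \cite[II 13.18]{FlDr88} with the family $\{[f,g,s]:f,g\in C_c(\Xcal),\,s\in G\}$ as the generating set of sections, and then verify continuity of the bundle operations via Proposition \ref{prop continuity}.

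For the algebraic skeleton, a direct computation using Lemma \ref{lemma first tools} gives the identities
\[[x,y]^*=[y,x]\quad\text{and}\quad [x_1,y_1]\,[x_2,y_2]=[x_1\lab y_1,x_2\rab,\,y_2],\]
whence $\Kb(\Xcal)_r\,\Kb(\Xcal)_t\subseteq\Kb(\Xcal)_{rt}$ and $\Kb(\Xcal)_t^*=\Kb(\Xcal)_{t^{-1}}$. Thus $\bigsqcup_t\Kb(\Xcal)_t$ is a $*$-subalgebra of $\Bb(\Xcal)$, $\Kb(\Xcal)_e$ is a $C^*$-subalgebra of $\Bb_e(\Xcal)$, and the Fell-bundle positivity $S^*S\geq 0$ is automatic.

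To produce the topology, let $\Gamma:=\spn\{[f,g,s]:f,g\in C_c(\Xcal),\,s\in G\}$, viewed as a vector space of sections of the fiber family $\{\Kb(\Xcal)_t\}_{t\in G}$. Given any generator $[x,y]\in\Kb(\Xcal)_t$ with $x\in X_{ts}$, $y\in X_s$, by \cite[II 13.19]{FlDr88} we can pick $f,g\in C_c(\Xcal)$ with $f(ts)=x$, $g(s)=y$, so that $[f,g,s](t)=[x,y]$; hence $\{\gamma(t):\gamma\in\Gamma\}$ is dense in $\Kb(\Xcal)_t$ for every $t$. To invoke \cite[II 13.18]{FlDr88} I also need $t\mapsto\|\gamma(t)\|$ continuous for each $\gamma\in\Gamma$. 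The $C^*$-identity $\|\gamma(t)\|^2=\|\gamma(t)^*\gamma(t)\|$ in $\Bb_e(\Xcal)$, together with the product formula above, reduces this to continuity of
\[t\mapsto\sum_{i,j}\bigl[g_i(s_i)\lab f_i(ts_i),f_j(ts_j)\rab,\,g_j(s_j)\bigr]\in\Bb_e(\Xcal).\]
Each summand is of the form $[m(t),n]$ with $n=g_j(s_j)$ constant and $m(t)=g_i(s_i)\lab f_i(ts_i),f_j(ts_j)\rab$ continuous in $t$ by continuity of the operations on $\Xcal$ and $\Bcal$; since $\|[m,n]-[m',n]\|\leq\|m-m'\|\,\|n\|$ (the standard Lipschitz estimate for rank-one-type operators on a Hilbert module), the map is norm-continuous, and \cite[II 13.18]{FlDr88} delivers a unique Banach bundle topology on $\Kb(\Xcal)$ with $\Gamma\subset C(\Kb(\Xcal))$.

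Finally, continuity of multiplication and involution on $\Kb(\Xcal)$ is obtained from Proposition \ref{prop continuity} applied to the family $\Gamma$: hypothesis (4) of that proposition becomes, via the product and adjoint identities, a statement about the continuity of $\Xcal$- and $\Bcal$-valued expressions such as $f_1(rs_1)\lab g_1(s_1),f_2(ts_2)\rab$, which holds by continuity of the bundle operations on $\Xcal$ and $\Bcal$. Uniqueness of $\Kb(\Xcal)$ is part of the conclusion of \cite[II 13.18]{FlDr88}. The principal technical obstacle is the norm-continuity of $t\mapsto\gamma(t)^*\gamma(t)$ in $\Bb_e(\Xcal)$, which I handle via the $C^*$-identity and the Lipschitz dependence of rank-one-type operators on their arguments.
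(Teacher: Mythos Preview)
Your proof is correct and follows the same overall strategy as the paper: you establish the algebraic identities $[x,y]^*=[y,x]$ and $[x_1,y_1][x_2,y_2]=[x_1\lab y_1,x_2\rab,y_2]$, invoke \cite[II~13.18]{FlDr88} with the family $\Gamma$ to topologize the bundle, and then use Proposition~\ref{prop continuity} for continuity of the operations.

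There is one small but genuine simplification in your argument. To prove $t\mapsto\|\gamma(t)\|$ is continuous, you compute $\gamma(t)^*\gamma(t)$, whereas the paper computes $k(t)k(t)^*$. Your choice yields summands $[m_{ij}(t),n_j]$ in which the second argument $n_j=g_j(s_j)$ is constant and the first argument $m_{ij}(t)=g_i(s_i)\lab f_i(ts_i),f_j(ts_j)\rab$ stays in the \emph{fixed} fiber $X_{s_j}$; continuity then follows from the elementary Lipschitz bound $\|[m,n]-[m',n]\|\leq\|m-m'\|\,\|n\|$. The paper's choice $k(t)k(t)^*=\sum_j[k(t)g_j(s_j),f_j(ts_j)]$ instead has both arguments depending on $t$, with the second one moving through different fibers $X_{ts_j}$, which forces the more elaborate estimate displayed there. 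Your route is cleaner; the paper's works too but costs a paragraph of inequalities.
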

\begin{proof}
  Note first that, if $x\in X_{ts}$ and $y\in X_s$, then
  $[x,y]X_r=x\pr{y}{X_r}_\Bcal\subseteq X_{ts}\Bcal_{s^{-1}r}\subseteq
  X_{tr}$, so $[x,y]\in\Bb_t(\Xcal)$. Since $[x,y][z,w]=[x\lab
  y,z\rab,w]$ and $[x,y]^*=[y,x]$, 
  $\Kb(\Xcal)$ is closed under multiplication and involution. 
  We want to define a topology on $\Kb(\Xcal)$ such that $\Kb(\Xcal)$
  is a Banach bundle with it, and the space $\Gamma:= \spn
  \{[f,g,s]\colon f,g\in C_c(\Xcal),\ s\in G\}$ is contained in the
  subspace of continuous sections of that bundle. By \cite[II
  13.18]{FlDr88} there exists at most one such 
  topology and, to prove its existence we must show that given 
  $n\in \N,$ $f_1,g_1,\ldots,f_n,g_n\in C_c(\Xcal)$, and
  $s_1,\ldots,s_n\in G,$  the function $h\colon G\to \R$, given by
  $h(t)=\|\sum_{j=1}^n[f_j,g_j,s_j](t)\|$, is continuous. Now if
  $k(t):=\sum_{j=1}^n[f_j,g_j,s_j](t)$, we have   
  $h(t)=\|k(t)k(t)^*\|^{1/2}$, so it suffices to show that the map $G\to 
  \Bb_e(\Xcal)$ $t\mapsto k(t)k(t)^*,$ is continuous. 
  In fact we just need to show that $t\mapsto
  [k(t)(g_j(s_j)),f_j(ts_j)]$ is continuous (for all $j=1,\ldots,n$)
  because $$ k(t)k(t)^* = \sum_{j=1}^n [k(t)(g_j(s_j)),f_j(ts_j)].$$ 
  Fix $j=1,\ldots, n$ and let $u,v\colon G\to \Xcal$ be defined as 
  $u(t):=k(t)(g_j(s_j))$ and $v(t):=f_j(ts_j).$ 
  Then $u$ and $v$ are continuous and, for all $z\in \Xcal$ with
  $\|z\|\leq 1,$ we have 
  \begin{align*}
    \|u(t) & \lab v(t),z\rab - u(r)\lab v(r),z\rab\|^2 \\
      & = \Big\|  \Big\lab u(t)\lab v(t),z\rab - u(r)\lab
      v(r),z\rab,u(t)\lab v(t),z\rab - u(r)\lab v(r),z\rab
      \Big\rangle_{\Bcal}  \Big\|\\ 
      & \leq \Big\| \lab z,v(t)\rab  \Big\lab v(t)\lab
      u(t),u(t)\rab - v(r)\lab u(r),u(t)\rab,z\Big\rangle_{\Bcal}
      \Big\| + 
       \\  
      & \qquad \qquad \Big\|\lab z,v(r)\rab\Big\lab v(t)\lab
      u(t),u(r)\rab - v(r)\lab u(r),u(r)\rab\Big\rangle_{\Bcal} \Big\|
       \\
      & \leq \|v(t)\|\|v(t)\lab u(t),u(t)\rab - v(r)\lab u(r),u(t)\rab \| +\\
      & \qquad \qquad \|v(r)\|\| v(t)\lab u(t),u(r)\rab - v(r)\lab u(r),u(r)\rab \|
  \end{align*}
  The right member of the above inequality is the sum of two terms
  that do not depend on $z$ and have limit $0$ when $r\to
  t$. Hence $t\mapsto [k(t)(g(s_j)),f_j(ts_j)]$ is continuous.  
\par We still have to show that multiplication and involution are 
continuous, for which we use Proposition \ref{prop continuity}. 
  As for the multiplication we need to show that, given $u,v,w\in
  \Gamma,$ the function $G\times G\to \R,\ (r,s)\mapsto
  \|u(r)v(s)-w(rs)\|,$ is continuous. 
  It is enough to prove that $(r,s)\mapsto
  (u(r)v(s)-w(rs))^*(u(r)v(s)-w(rs))$ is a continuous function from
  $G\times G$ to $\Kb(\Xcal)_e$, and this can be done by using the same 
  arguments we have used in the previous paragraphs.  
  \par To prove the involution is continuous,  
  let $\Vcal$ be the Banach bundle over $\{e\}$ with fiber 
  $\C$, and define $\Phi\colon \Kb(\Xcal)\times \Vcal\to \Kb(\Xcal)$  
  such that $\Phi(b,\lambda)=\lambda b^*.$ 
  The map $\Phi$ is continuous because of Proposition \ref{prop
    continuity}. Then the involution $\Kb(\Xcal)\to \Kb(\Xcal),\
  b\mapsto \Phi(b,1),$ also is continuous.   
\end{proof}

\begin{corollary}\label{corollary right \FH bundles are equivalence bundles}
  Every right \FH $\Bcal\dsh$bundle, $\Xcal,$ is a
  $\Kb(\Xcal)\dsh\Bcal\dsh$equivalence bundle with the action  
  $\Kb(\Xcal)\times \Xcal\to \Xcal$ given by $(b,x)\mapsto b(x),$ and the left
  inner product $\Xcal\times \Xcal\to \Kb(\Xcal)$ given by
  $(x,y)\mapsto[x,y].$  
\end{corollary}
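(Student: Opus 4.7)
The verification consists in checking axioms (1L)–(7L) of a left Hilbert $\Kb(\Xcal)\dsh$bundle together with the compatibility identity $\laa x,y\raa z = x\lab y,z\rab$. The compatibility identity is definitional, since $[x,y]z := x\lab y,z\rab$. Axioms (2L) and (3L) are formal from bilinearity, and axiom (1L) amounts to the degree bookkeeping $[X_r,X_s]\subset\Kb(\Xcal)_{rs^{-1}}$ and $\Kb(\Xcal)_r\cdot X_s\subset X_{rs}$, both immediate from the construction in Theorem~\ref{thm:compacts bundle}. The identity $[x,y]^*=[y,x]$ (the $*$-part of (4L)) is built into the definition of $\Kb(\Xcal)$, while the remaining half $[ax,y]=a\circ[x,y]$ of (4L) follows by continuity and linearity from the elementary case $a=[u,v]$, in which both sides coincide with the operator $z\mapsto u\lab v,x\rab\lab y,z\rab$. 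Axiom (7L) is essentially definitional: the fiber $\Kb(\Xcal)_e$ is by construction the closed linear span of $\{[x,y]:x,y\in X_s,\ s\in G\}$, and polarization rewrites each such $[x,y]$ as a linear combination of elements $\laa z,z\raa=[z,z]$ with $z\in X_s$.

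The substantive step is axiom (5L), the positivity of $\laa x,x\raa=[x,x]$ in $\Kb(\Xcal)_e$. The plan is to exploit that $[x,x]$ preserves each fiber of $\Xcal$: for $x\in X_r$ and $z\in X_t$,
\[
\lab z,[x,x]z\rab \,=\, \lab z,x\rab\lab x,z\rab \,=\, \lab x,z\rab^{*}\lab x,z\rab \,\ge\, 0 \quad\text{in } B_e,
\]
so $[x,x]|_{X_t}\ge 0$ in $\Bb(X_t)$ for every $t\in G$. Since $\Kb(\Xcal)_e\subset\Bb_e(\Xcal)$ is an inclusion of $C^*$-algebras carrying the same operator norm (by the construction in Theorem~\ref{thm:compacts bundle}) and $\Bb_e(\Xcal)$ embeds isometrically as a $*$-subalgebra of the product $C^*$-algebra $\prod_s\Bb(X_s)$ via $S\mapsto(S|_{X_s})_s$, fiberwise positivity forces $[x,x]\ge 0$ in $\Kb(\Xcal)_e$. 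The implication $[x,x]=0\Rightarrow x=0$ is obtained by specializing the display to $z=x$, forcing $\lab x,x\rab^2=0$, hence $\lab x,x\rab=0$, hence $x=0$ by (5R). For the norm identity (6L), $\|[x,x]\|\le\|x\|^2$ is immediate from $\|x\lab x,z\rab\|\le\|x\|^2\|z\|$, and testing at $z=x/\|x\|$ together with the iterated $C^*$-identity $\|\lab x,x\rab^n\|=\|x\|^{2n}$ gives $\|x\lab x,x\rab\|=\|x\|^3$, hence $\|[x,x]\|\ge\|x\|^2$. As a consequence the two norms $\|x\|$ and $\|[x,x]\|^{1/2}$ on $X_t$ coincide, so completeness in (5L) is inherited from (5R).

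It remains to verify continuity of the left action $(b,x)\mapsto b(x)$ and of the left inner product $(x,y)\mapsto[x,y]$; both go through Proposition~\ref{prop continuity}, with $\Gamma_\Xcal=C_c(\Xcal)$ and $\Gamma_{\Kb(\Xcal)}$ taken to be the linear span of the distinguished sections $[f,g,s]$ from Theorem~\ref{thm:compacts bundle}. The only non-trivial hypothesis to be checked is the continuity of
\[
(t_1,t_2)\mapsto \bigl\|[\xi(t_1),\eta(t_2)] - [f,g,s](t_1 t_2^{-1})\bigr\|;
\]
both brackets lie in the same fiber $\Kb(\Xcal)_{t_1 t_2^{-1}}$, so their difference $P(t_1,t_2)$ satisfies $\|P\|^2=\|PP^*\|$, and $PP^*$ expands via the multiplication rule $[a,b][c,d]=[a\lab b,c\rab,d]$ into a finite sum of brackets of continuous sections of $\Xcal$, whose norm is controlled by the same norm-squaring device already used in the proof of Theorem~\ref{thm:compacts bundle}. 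Continuity of the left action is argued analogously, using that $[f(rs),g(s)](\zeta(t)) = f(rs)\lab g(s),\zeta(t)\rab$ depends continuously on $(r,t)$ through the operations of $\Xcal$ and $\Bcal$.
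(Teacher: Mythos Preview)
Your proof is correct and follows the same approach the paper intends: the paper's own proof is the single sentence ``It is a straightforward consequence of Theorem~\ref{thm:compacts bundle},'' and you have carefully carried out that straightforward verification of axioms (1L)--(7L), the compatibility identity, and the continuity of the two structure maps. Your handling of (5L) via the fiberwise embedding $\Kb(\Xcal)_e\subset\Bb_e(\Xcal)\hookrightarrow\prod_s\Bb(X_s)$ and of (6L) via the $C^*$-identity $\|\lab x,x\rab^n\|=\|x\|^{2n}$ are the natural arguments, and your continuity sketch correctly reduces to maps into the fixed fiber $\Kb(\Xcal)_e$ by passing to $PP^*$, where the explicit estimates from the proof of Theorem~\ref{thm:compacts bundle} apply verbatim.
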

\begin{proof}
 It is a straightforward consequence of Theorem~\ref{thm:compacts
   bundle}. 
\end{proof}

\begin{corollary}
  If $\Xcal$ is an $\Acal\dsh \Bcal\dsh$equivalence bundle, then there
  exists a unique isomorphism of Fell bundles $\pi\colon \Acal\to
  \Kb(\Xcal)$ such that $\pi(\laa x,y\raa)=[x,y].$ 
\end{corollary}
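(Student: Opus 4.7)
The plan is to realize this corollary as a direct application of Theorem~\ref{theorem isomorphism of equivalence bundles}, by viewing the identity map on $\Xcal$ as a morphism between two different equivalence bundle structures on the same underlying Banach bundle.

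First I would recall that by Corollary~\ref{corollary right \FH bundles are equivalence bundles}, the underlying right \FH $\Bcal$-bundle structure of $\Xcal$ makes it into a $\Kb(\Xcal)\dsh\Bcal\dsh$equivalence bundle, with left inner product $\la x,y\ra=[x,y]$. Thus $\Xcal$ carries two equivalence bundle structures that share exactly the same right Hilbert $\Bcal$-bundle structure: the original $\Acal\dsh\Bcal$ structure and the $\Kb(\Xcal)\dsh\Bcal$ structure just mentioned.

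Next I would consider $\rho:=\id_\Xcal$, regarded as a map from the $\Acal\dsh\Bcal\dsh$equivalence bundle $\Xcal$ to the $\Kb(\Xcal)\dsh\Bcal\dsh$equivalence bundle $\Xcal$. It is continuous, fibrewise linear, and the required compatibility
\[\rho(x\lab y,z\rab)=\rho(x)\la\rho(y),\rho(z)\ra_\Bcal\]
holds trivially because the right $\Bcal$-valued inner products in the two structures coincide. Hence $\rho$ is a morphism of equivalence bundles in the sense of the preceding subsection.

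Applying Theorem~\ref{theorem isomorphism of equivalence bundles} to $\rho$ then produces a unique morphism of Fell bundles $\rho^l\colon\Acal\to\Kb(\Xcal)$ characterized by
\[\rho^l(\laa x,y\raa)={}_{\Kb(\Xcal)}\la\rho(x),\rho(y)\ra=[x,y],\qquad \forall x,y\in\Xcal.\]
Setting $\pi:=\rho^l$ gives both existence and uniqueness (uniqueness being inherited from that of $\rho^l$ in Theorem~\ref{theorem isomorphism of equivalence bundles}). Finally, since $\rho=\id_\Xcal$ is bijective, part~(\ref{lateral isos of morphism}) of the same theorem guarantees that $\rho^l$, hence $\pi$, is an isomorphism of Fell bundles. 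There is no real obstacle here beyond correctly identifying the two equivalence bundle structures on $\Xcal$; the work has already been done in Theorem~\ref{theorem isomorphism of equivalence bundles} and Corollary~\ref{corollary right \FH bundles are equivalence bundles}.
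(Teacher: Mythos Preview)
Your proof is correct and follows exactly the same approach as the paper: both view the identity on $\Xcal$ as a morphism from the $\Acal\dsh\Bcal$ structure to the $\Kb(\Xcal)\dsh\Bcal$ structure of Corollary~\ref{corollary right \FH bundles are equivalence bundles}, and then invoke Theorem~\ref{theorem isomorphism of equivalence bundles} (in particular its part~(\ref{lateral isos of morphism})) to obtain $\pi=\id^l$. Your write-up is in fact slightly more detailed than the paper's, which compresses the argument into two lines.
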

\begin{proof}
  Let $\Ycal$ be the bundle $\Xcal$ considered as a $\Kb(\Xcal)\dsh  
  \Bcal\dsh$equivalence bimodule and let $\id\colon \Xcal\to \Ycal$ be
  the identity. 
  Then, by Theorem \ref{theorem isomorphism of equivalence bundles},
  $\pi:=\id^l\colon \Kb(\Xcal)\to \Acal$ is the isomorphism we are
  looking for. 
\end{proof}

\begin{remark}\label{remark linking bundle}
  With the notation of the previous Corollary, $\Kb(\Xcal\oplus \Bcal)$ is isomorphic to $\Lb(\Xcal)$ because $\Xcal\oplus \Bcal$ is a $\Lb(\Xcal)\dsh \Bcal\dsh$equivalence bundle.
\end{remark}

\section{Morita-Rieffel equivalence of cross-sectional
  \texorpdfstring{$C^*$}{C*}-algebras}\label{section Morita equiva
  of cross-sectional Cast algebras} 

It is well known (see \cite{combes1984crossed}) that equivalent
actions on $C^*$-algebras have Morita-Rieffel equivalent crossed
products (full and reduced), and the same can be said about 
equivalent partial actions (\cite{Ab03} and \cite{AbMr09}). 
We will show in this section that, more generally, any $\Acal\dsh
\Bcal\dsh$equivalence bundle $\Xcal$ gives rise to 
a $C^*(\Acal)\dsh C^*(\Bcal)$ and also a $C^*_r(\Acal)\dsh
C^*_r(\Bcal)\dsh$equivalence bimodule. We consider first the case
of the full $C^*$-algebras, for which we will construct 
an equivalence bimodule contained in $C^*(\Lb(\Xcal))$. 
We will make use of the fact that $\Acal$ and $\Bcal$ are hereditary 
in $\Lb(\Xcal)$ in the following sense:  

\begin{definition}
  Given a Fell bundle $\Ccal$ and a Fell subbundle $\Acal\subset
  \Ccal,$ we say $\Acal$ is hereditary (in $\Ccal$) if
  $\Acal\Ccal\Acal\subset \Acal.$  
\end{definition}

The condition $A_e\Ccal A_e\subset \Acal$ may look weaker than
$\Acal\Ccal\Acal\subset \Acal,$ but in fact they are equivalent. 
Indeed, suppose the former condition holds and take $a,c\in  \Acal$
and $b\in \Ccal.$ 
Let $\{d_\lambda\}_{\lambda \in \Lambda}\subset A_e$ be an approximate
unit of $A_e$ and assume $abc\in C_t.$ 
Then the net $\{ ad_\lambda b d_\lambda c \}_{\lambda\in \Lambda}$
converges to $abc$ and is contained in $A_t.$ 
This implies $abc\in \Acal$ because $A_t$ is closed in $C_t.$

\begin{proposition}[Example of hereditary subbundles]
  Let $\beta=\left(\{B_t\}_{t\in G},\{\beta_t\}_{t\in G}\right)$ be a
  partial action of the group $G$ on the $C^*$-algebra $B.$ 
  Then for every ideal $A$ of $B$ there exists a unique partial action
  of $G$ on $A,$ $\beta|_A:=\left(\{A_t\}_{t\in G},\{\alpha_t\}_{t\in
      G}\right),$ such that: 
   $A_\tmu=A\cap \beta_\tmu(B_t\cap A)$ and $\alpha_t(b)=\beta_t(b),$
   for all $t\in G$ and $b\in A_\tmu.$ 
   Moreover, $\Bcal_{\be|_A}$ is hereditary in $\Bcal_\be.$
\end{proposition}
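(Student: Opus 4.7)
The plan is to define, for each $t\in G$,
\[A_t := A \cap \beta_t(A\cap B_{t^{-1}}), \qquad \alpha_t := \beta_t|_{A_{t^{-1}}},\]
which makes uniqueness immediate since the stated formulas pin down both the ideals and the isomorphisms. It then remains to show that $(\{A_t\},\{\alpha_t\})$ is a partial action and that the associated Fell subbundle $\Bcal_{\beta|_A}$ is hereditary in $\Bcal_\beta$.

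First I would check that each $A_t$ is a closed two-sided ideal of $A$: $A\cap B_{t^{-1}}$ is a closed ideal of $B_{t^{-1}}$, so $\beta_t(A\cap B_{t^{-1}})$ is a closed ideal of $B_t$, and a closed ideal of a closed ideal in a $C^*$-algebra is itself an ideal of the ambient algebra (factor $x=yz$ with $y,z$ inside, then $bx=(by)z\in B_tA\subseteq A$), so $\beta_t(A\cap B_{t^{-1}})$ is an ideal of $B$ and intersecting with $A$ gives an ideal of $A$. Then $\alpha_t$ is a $*$-isomorphism onto $A_t$, being the restriction of $\beta_t$ to a closed subspace, and the definitions of $A_t$ and $A_{t^{-1}}$ are symmetric under $t\leftrightarrow t^{-1}$. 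The axioms $A_e=A$ and $\alpha_e=\id_A$ are immediate; the composition axiom $\alpha_s(A_{s^{-1}}\cap A_t)=A_s\cap A_{st}$ I would verify by lifting to $\beta$: writing $x=\beta_t(y) \in A_{s^{-1}}\cap A_t$ with $y\in A\cap B_{t^{-1}}$, the condition $x\in B_{s^{-1}}$ forces $y\in B_{(st)^{-1}}$ via the corresponding axiom for $\beta$, so $\alpha_s(x)=\beta_s(\beta_t(y))=\beta_{st}(y)\in\beta_{st}(A\cap B_{(st)^{-1}})$; combined with $\alpha_s(x)\in A_s\subseteq A$ this gives $\alpha_s(x)\in A_{st}$, and the reverse inclusion is symmetric using injectivity of $\beta_s$. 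Continuity of the bundle operations for $\beta|_A$ is inherited from those of $\beta$, since $\Bcal_{\beta|_A}$ carries the subspace topology from $\Bcal_\beta$.

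For the hereditary property, by the equivalence observed immediately before the proposition it suffices to show $A_e\,\Bcal_\beta\, A_e\subseteq \Bcal_{\beta|_A}$, i.e.\ that for $a,a'\in A$ and $b\in B_t$ the triple product $(e,a)(t,b)(e,a')$ in $\Bcal_\beta$ lies in $A_t$. The key move is to compute this product in two ways using associativity of the Fell bundle multiplication:
\[\bigl(t,\,\beta_t(\beta_{t^{-1}}(ab)a')\bigr)\;=\;(e,a)(t,b)(e,a')\;=\;\bigl(t,\, a\,\beta_t(\beta_{t^{-1}}(b)a')\bigr).\]
From the left-hand expression, since $\beta_{t^{-1}}(ab)a'\in B_{t^{-1}}\cdot A\subseteq A\cap B_{t^{-1}}$ (both factors being ideals), the product sits in $\beta_t(A\cap B_{t^{-1}})$; from the right-hand expression, it is $a$ times an element of $B_t\subseteq B$, hence in $A$ since $A$ is a left ideal of $B$. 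Intersecting gives membership in $A\cap\beta_t(A\cap B_{t^{-1}})=A_t$.

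The main obstacle is the bookkeeping in the composition axiom for $\alpha$, which chains two applications of axiom (b) of the partial action $\beta$ at different indices; once the domain-matching is done, the rest of the verifications are straightforward, and the hereditary property falls out cleanly from the two-sided reading of the triple product.
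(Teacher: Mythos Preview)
Your algebraic verification that $(\{A_t\},\{\alpha_t\})$ is a set-theoretic partial action is correct, and your hereditary argument---reading the triple product $(e,a)(t,b)(e,a')$ in two ways to land simultaneously in $A$ and in $\beta_t(A\cap B_{t^{-1}})$---is essentially the paper's argument, just made more explicit (the paper compresses this into the single line ``$a\beta_r(\beta_{r^{-1}}(b)c)\in A\cap\beta_r(B_{r^{-1}}\cap A)=A_r$'', using implicitly that an ideal of an ideal in a $C^*$-algebra is an ideal of the ambient algebra).

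There is, however, a genuine gap in your treatment of continuity. Saying that ``$\Bcal_{\beta|_A}$ carries the subspace topology from $\Bcal_\beta$'' does not by itself establish that $\Bcal_{\beta|_A}$ is a Banach bundle: for that one must show that the family $\{A_t\}_{t\in G}$ is continuous, i.e., that through every point $(t,b)$ with $b\in A_t$ there passes a continuous section $G\to\Bcal_\beta$ taking values in $\Bcal_{\beta|_A}$. The paper supplies this step explicitly via Cohen--Hewitt factorization: writing $b=x\beta_t(yz)$ with $x,y\in A$ and $z\in B_{t^{-1}}$, one picks a continuous section $f$ of $\Bcal_\beta$ with $f(t^{-1})=z$ and sets $g(r)=x\beta_r(yf(r^{-1}))$; then $g$ is continuous, $g(t)=b$, and $g(r)\in A\cap\beta_r(A\cap B_{r^{-1}})=A_r$ for all $r$. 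Without this (or an equivalent argument), you have not shown that $\beta|_A$ is a \emph{continuous} partial action, hence not that $\Bcal_{\beta|_A}$ is a Fell subbundle rather than merely a fiberwise collection of subspaces.
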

\begin{proof}
  Note that $\beta_t(A_\tmu) = \beta_t(B_\tmu \cap A\cap
  \beta_\tmu(B_t\cap A)) = \beta_t(B_\tmu\cap A)\cap A=A_t,$ so there
  exists a unique isomorphism of $C^*$-algebras $\alpha_t\colon
  A_\tmu\to A_t$ such that $\alpha_t(a)=\beta_t(a).$ 
  Clearly, $\al_e$ is the identity on $A.$
  If $a\in A_\tmu$ and $\alpha_t(a)\in A_\smu,$ then
  $$  a\in \beta_\tmu ( B_t \cap \beta_\smu(B_s\cap A)  )\subset
  \beta_\tmu(B_t\cap B_\smu)\subset B_\tmu\cap B_{\tmu\smu}. $$ 
  This implies
  $\beta_{st}(a)=\beta_s(\beta_t(a))=\beta_s(\alpha_t(a))\in \beta_s(
  \beta_\smu(B_s\cap A)  )\subset A.$ 
  Putting all this together we conclude that $a\in A\cap 
  \beta_{\tmu\smu}(B_{st}\cap A)=A_{\tmu\smu}$ and
  $\alpha_{st}(a)=\beta_{st}(a)=\beta_s(\beta_t(a))=\alpha_s(\alpha_t(a)).$ 
  Thus $\beta|_A$ is a set theoretic partial action.
\par To show that $\beta|_A$ is a continuous partial action on $A,$ it
suffices to prove that $\{A_t\}_{t\in G}$ is a continuous family.   
  To see this fix $t\in G$ and $b\in A_t.$ 
  It suffices to find $g\in C(G,A)$ such that $g(t)=b$ and $g(r)\in
  A_r,$ for all $r\in G$. The Cohen-Hewitt Theorem provides $x,y\in A$
  and $z\in B_\tmu$ such that $b=x\beta_t(yz).$ 
  Now pick $f\in C(G,B)$ such that $f(r)\in B_r$ (for all $r\in G$)
  and $f(\tmu)=z,$ what we can do because $\{B_r\}_{r\in G}$ is a
  continuous family. 
  Then the function $g\colon G\to A$ defined as
  $g(r)=x\beta_r(yf(\rmu))$ is continuous, $g(t)=b$ and $g(r)\in A_r$
  for all $r\in G.$ 
  
  To show $\Bcal_{\be|_A}$ is hereditary in $\Bcal_\be$ fix $a,c\in
  A_e$ and $b\in B_r$ and observe that 
  \begin{align*}
   a\delta_eb\delta_rc\delta_e
    & = a\delta_e(b\delta_rc\delta_e)= a\beta_r(\beta_\rmu(b)c) )\delta_r.
  \end{align*}
  Clearly, $a\beta_r(\beta_\rmu(b)c))\in A\cap \be_r( B_\rmu \cap A
  )=A_r,$ so $a\delta_eb\delta_rc\delta_e\in \Bcal_{\be|_A}.$ 
\end{proof}

\begin{theorem}\label{theorem hereditary sub bundle}
  If $\Acal$ is an hereditary Fell subbundle of $\Bcal,$ then
  $C^*(\Acal)$ is the closure of $L^1(\Acal)$ in $C^*(\Bcal)$, and it
  is an hereditary subalgebra of $C^*(\Bcal)$.  
\end{theorem}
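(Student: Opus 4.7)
The first step is to embed $L^1(\Acal)$ inside $L^1(\Bcal)$ as a closed $*$-subalgebra. Since the $L^1$-norm is defined fiberwise via $\|f\|_1=\int_G\|f(t)\|\,dt$ and the fiber inclusions $A_t\hookrightarrow B_t$ are isometric, the map $\iota_1\colon L^1(\Acal)\to L^1(\Bcal)$ induced by $\iota\colon\Acal\hookrightarrow\Bcal$ is isometric. Composing with $L^1(\Bcal)\to C^*(\Bcal)$ endows $L^1(\Acal)$ with a $C^*$-seminorm $\|\cdot\|_{B}$, automatically dominated by $\|\cdot\|_{C^*(\Acal)}$ by universality. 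Hence the induced morphism $\iota^*\colon C^*(\Acal)\to C^*(\Bcal)$ has image equal to the closure of $L^1(\Acal)$ in $C^*(\Bcal)$, and the theorem reduces to the reverse inequality $\|\cdot\|_{C^*(\Acal)}\leq\|\cdot\|_{B}$ together with the hereditary property.

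For the norm equality, I will extend every non-degenerate representation $T\colon\Acal\to\Bb(\Hcal)$ to a representation $\tilde T\colon\Bcal\to\Bb(\tilde\Hcal)$ that factors $T$ through an isometric embedding $\Hcal\hookrightarrow\tilde\Hcal$. For each $t\in G$ put $Y_t:=B_tA_e$, which is closed by the Cohen--Hewitt factorization theorem, and let $\Ycal=\{Y_t\}_{t\in G}$; this is a Banach subbundle of $\Bcal$ that is closed under left multiplication by $\Bcal$. The hereditarity $\Acal\Bcal\Acal\subseteq\Acal$ gives $Y_s^*Y_t\subseteq A_eB_{s^{-1}t}A_e\subseteq A_{s^{-1}t}$, so the direct sum $M:=\bigoplus_{t\in G}Y_t$ is a right Hilbert $A_e$-module, and left multiplication by $\Bcal$ defines an action $\Bcal\to\Bb(M)$ by adjointable operators. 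Setting $\tilde\Hcal:=M\otimes_{A_e}\Hcal$ yields the desired $\tilde T$. The map $a\otimes\xi\mapsto T(a)\xi$ for $a\in A_e$, $\xi\in\Hcal$, is isometric since
\[\Big\|\sum_i a_i\otimes\xi_i\Big\|^2=\sum_{i,j}\la\xi_i,T(a_i^*a_j)\xi_j\ra=\Big\|\sum_i T(a_i)\xi_i\Big\|^2,\]
and non-degeneracy of $T|_{A_e}$ extends it to an isometric embedding $\iota_T\colon\Hcal\to\tilde\Hcal$ intertwining $T$ with $\tilde T$ on all of $\Acal$. Upon integration, $\iota_T T(f)=\tilde T(f)\iota_T$ for $f\in L^1(\Acal)$, so $\|T(f)\|\leq\|\tilde T(f)\|\leq\|f\|_{B}$; taking the supremum over $T$ gives $\|f\|_{C^*(\Acal)}\leq\|f\|_{B}$.

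To see that $C^*(\Acal)$ is a hereditary $C^*$-subalgebra of $C^*(\Bcal)$, I reduce to the bundle-level hypothesis $\Acal\Bcal\Acal\subseteq\Acal$. By continuity of multiplication in $C^*(\Bcal)$ and density of $C_c$-sections, it suffices to verify that $f*g*h\in C_c(\Acal)$ whenever $f,h\in C_c(\Acal)$ and $g\in C_c(\Bcal)$. Computing
\[(f*g*h)(t)=\iint_{G\times G}f(r)g(s)h(s^{-1}r^{-1}t)\,dr\,ds,\]
the integrand lies in $A_r\,B_s\,A_{s^{-1}r^{-1}t}\subseteq A_t$ by hereditarity, and $f*g*h$ is continuous with compact support. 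The main obstacle is the induced-representation construction in step two: verifying that the left action of $\Bcal$ on $M$ extends to a genuine Fell-bundle representation on $\tilde\Hcal$ (well-definedness on the algebraic tensor product, boundedness, multiplicativity, $*$-preservation and, especially, continuity in the bundle topology) is routine but makes up most of the technical content of the argument.
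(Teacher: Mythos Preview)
Your reduction in the first paragraph is correct, and the verification of hereditarity in the third paragraph is fine. The gap is in the induced-representation step: the map $\tilde T\colon\Bcal\to\Bb(\tilde\Hcal)$ you build from the discrete direct sum $M=\bigoplus_{t\in G}Y_t$ is \emph{not} continuous in the bundle topology when $G$ is not discrete, so it is not a Fell-bundle representation and cannot be integrated to a representation of $C^*(\Bcal)$. Concretely, take $y\in Y_t$ and a net $b_\lambda\to b$ in $\Bcal$ with $b_\lambda\in B_{r_\lambda}$, $b\in B_r$, $r_\lambda\neq r$. In $M$ the vectors $b_\lambda y\in Y_{r_\lambda t}$ and $by\in Y_{rt}$ sit in distinct summands, hence are orthogonal for the $A_e$-valued inner product; therefore
\[
\|(b_\lambda y)\otimes\xi-(by)\otimes\xi\|^2
=\langle\xi,T(y^*b_\lambda^*b_\lambda y)\xi\rangle+\langle\xi,T(y^*b^*by)\xi\rangle
\longrightarrow 2\|(by)\otimes\xi\|^2,
\]
which is nonzero in general. (For $G=\R$, $\Bcal=\R\times\C$ this is just the failure of $e_r\to e_0$ in $\ell^2(\R)$.) So the continuity you flag as ``routine'' actually fails, and with it the inequality $\|\tilde T(f)\|\le\|f\|_B$ is not available.

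The idea is salvageable: replace the $\ell^2$-direct sum by the right Hilbert $A_e$-module $L^2(\Ycal)$ obtained by completing $C_c(\Ycal)$ under $\langle f,g\rangle=\int_G f(t)^*g(t)\,dt$; left multiplication by $\Bcal$ then gives the regular-type representation, whose continuity is standard. This essentially reproves the induction machinery the paper imports. The paper itself takes a different route: it builds the ideal subbundle $\Ccal$ generated by $\Acal\Bcal$, uses prior results to get $C^*(\Acal)\subseteq C^*(\Ccal)$, and then invokes \cite[VI~19.11]{FlDr88} (extension of representations from an ideal) to embed $C^*(\Ccal)$ into $C^*(\Bcal)$. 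That last step is exactly what replaces your induced-representation construction.
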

\begin{proof}
  Let $\Xcal$ be the Banach subbundle of $\Bcal$ such that, for each
  $t\in G,$ $X_t=\spncl\{ ab\colon a\in A_r,\ b\in B_{\rmu t},\ r\in G
  \}.$ 
  Now let $\Ccal$ be the Banach subbundle of $\Bcal$ such that, for
  each $t\in G,$ $C_t = \spncl \{ x^*y\colon x\in X_r,\ y\in X_{rt},\
  r\in G \}.$ 
  In fact $\Ccal$ is a Fell subbundle of $\Bcal$ and $\Ccal\Bcal\cup
  \Bcal\Ccal\subset \Ccal$; in other words $\Ccal$ is an ideal of
  $\Bcal.$ 
  Then we can think of $L^1(\Acal)$ as a *-Banach subalgebra of
  $L^1(\Ccal)$ and of $L^1(\Ccal)$ as a closed *-ideal of
  $L^1(\Bcal).$ 
  
  Using \cite[Theorem 1.1]{AbMr09} and \cite[Corollary
  5.3]{abadie2016applications} with $\Ecal=\Xcal,$ we conclude that
  $C^*(\Acal)$ is the closure of $L^1(\Acal)$ in $C^*(\Ccal).$ 
  Let $\pi\colon C^*(\Ccal)\to C^*(\Bcal)$ be the unique
  *-homomorphism extending the natural inclusion of $L^1(\Ccal)$ in
  $L^1(\Bcal).$ 
  To show that $\pi$ is injective take a non-degenerate faithful
  representation $\rho\colon C^*(\Ccal)\to \Bb(\Hcal).$ 
  In this situation we know form \cite[VI 19.11]{FlDr88} that
  $\rho|_{L^1(\Ccal)}$ can be extended in a unique way to a
  representation defined on all of $L^1(\Bcal)$. 
  Then there exists a unique representation $\overline{\rho}\colon
  C^*(\Bcal)\to \Bb(\Hcal)$ such that $\overline{\rho}\circ\pi
  (f)=\rho(f),$ for all $f\in L^1(\Ccal).$ 
  This implies that $\pi$ is injective because $\rho= \overline{\rho}\circ
  \pi.$ 
  Putting all this together we conclude that the maximal $C^*$-norm of 
  $L^1(\Acal)$ is the restriction of the maximal $C^*$-norm of
  $L^1(\Bcal)$. The last assertion of the statement is clear.   
\end{proof}

\begin{corollary}
  If $\Xcal$ is an $\Acal\dsh \Bcal\dsh$equivalence bundle, then
  $C^*(\Acal)$ and $C^*(\Bcal)$ are the closure of $L^1(\Acal)$ and of
  $L^1(\Bcal)$ in $C^*(\Lb(\Xcal)),$ respectively. 
\end{corollary}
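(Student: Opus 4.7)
The plan is to deduce this corollary directly from Theorem \ref{theorem hereditary sub bundle} applied to the two corner embeddings $\Acal\hookrightarrow\Lb(\Xcal)$ and $\Bcal\hookrightarrow\Lb(\Xcal)$. The only real work is to check that these are hereditary subbundles in the sense of the preceding definition; once this is done, Theorem \ref{theorem hereditary sub bundle} gives the conclusion on the nose (twice).

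First I would identify the Fell subbundle structures. By Theorem \ref{theorem construction of linking bundle}, the maps $\iota_\Acal:\Acal\to\Lb(\Xcal)$ and $\iota_\Bcal:\Bcal\to\Lb(\Xcal)$ defined by $\iota_\Acal(a):=\begin{pmatrix}a&0\\ 0&0\end{pmatrix}$ and $\iota_\Bcal(b):=\begin{pmatrix}0&0\\ 0&b\end{pmatrix}$ are morphisms of Fell bundles that are fiberwise isometries (as shown in the proof of that theorem, the $C^*$-norm on $L_e$ is equivalent to the supremum norm, and in fact on each fiber the four corners recover the original norms). So I will silently identify $\Acal$ and $\Bcal$ with their images.

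Next I would verify the hereditary property. As observed in the remark immediately following the definition of hereditary subbundle, it suffices to check that $A_e\,\Lb(\Xcal)\,A_e\subseteq\Acal$ (and the analogous statement for $\Bcal$). Pick $a,a'\in A_e$ and $\ell=\begin{pmatrix}c&x\\ \widetilde{y}&d\end{pmatrix}\in L_t$. A direct application of the multiplication rule in Theorem \ref{theorem construction of linking bundle} gives
\[
\begin{pmatrix}a&0\\ 0&0\end{pmatrix}\begin{pmatrix}c&x\\ \widetilde{y}&d\end{pmatrix}=\begin{pmatrix}ac&ax\\ 0&0\end{pmatrix},
\qquad
\begin{pmatrix}ac&ax\\ 0&0\end{pmatrix}\begin{pmatrix}a'&0\\ 0&0\end{pmatrix}=\begin{pmatrix}aca'&0\\ 0&0\end{pmatrix},
\]
which lies in $\Acal$. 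The symmetric computation for $b,b'\in B_e$ yields
\[
\begin{pmatrix}0&0\\ 0&b\end{pmatrix}\begin{pmatrix}c&x\\ \widetilde{y}&d\end{pmatrix}\begin{pmatrix}0&0\\ 0&b'\end{pmatrix}=\begin{pmatrix}0&0\\ 0&bdb'\end{pmatrix}\in\Bcal,
\]
so both $\Acal$ and $\Bcal$ are hereditary subbundles of $\Lb(\Xcal)$.

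Finally, Theorem \ref{theorem hereditary sub bundle} applied to the pair $(\Acal,\Lb(\Xcal))$ asserts that $C^*(\Acal)$ is (isomorphic to) the closure of $L^1(\Acal)$ inside $C^*(\Lb(\Xcal))$; applying the same theorem to $(\Bcal,\Lb(\Xcal))$ gives the corresponding statement for $\Bcal$. There is really no obstacle here: the only thing one needs to double-check is that the matrix computations above are carried out with the correct signs and involutions from the product formula, but even that is routine once the formula from Theorem \ref{theorem construction of linking bundle} is in hand.
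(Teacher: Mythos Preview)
Your proof is correct and follows exactly the same approach as the paper: verify $A_e\,\Lb(\Xcal)\,A_e\subset\Acal$ and $B_e\,\Lb(\Xcal)\,B_e\subset\Bcal$ (which, as noted after the definition, is equivalent to the hereditary condition), then invoke Theorem~\ref{theorem hereditary sub bundle}. The paper's proof is the one-line ``Just note that $A_e\Lb(\Xcal)A_e\subset\Acal$ and $B_e\Lb(\Xcal)B_e\subset\Bcal$''; you have simply written out the matrix computation that justifies this.
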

\begin{proof}
  Just note that $A_e \Lb(\Xcal)A_e \subset \Acal$ and $B_e \Lb(\Xcal) B_e \subset \Bcal.$
\end{proof}

From now on we will think of $C^*(\Acal)$ and $C^*(\Bcal)$ as $C^*$-subalgebras of $C^*(\Lb(\Xcal)).$

\begin{theorem}\label{theorem construction of the equivalence bimodule}
  For every $\Acal\dsh \Bcal\dsh$equivalence bundle, $\Xcal,$ the
  closure of $C_c(\Xcal)$ in $C^*(\Lb(\Xcal)),$ $C^*(\Xcal),$ is a
  $C^*(\Acal)\dsh C^*(\Bcal)\dsh$equivalence bimodule with the
  bimodule structure inherited from $C^*(\Lb(\Xcal)).$ 
\end{theorem}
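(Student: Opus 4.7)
The approach is to read the Hilbert bimodule structure off the ambient $C^*$-algebra $C^*(\Lb(\Xcal))$. Identifying $f\in C_c(\Xcal)$ with the section $\widetilde f\in C_c(\Lb(\Xcal))$ given by $\widetilde f(t):=\begin{pmatrix}0 & f(t)\\ 0 & 0\end{pmatrix}$, I embed $C_c(\Xcal)$ into $C_c(\Lb(\Xcal))\subseteq C^*(\Lb(\Xcal))$. The matrix product formula of Theorem~\ref{theorem construction of linking bundle}, together with Lemma~\ref{lemma first tools}, shows that convolution and involution in $C_c(\Lb(\Xcal))$ restrict to give
\[C_c(\Acal)\cdot C_c(\Xcal),\ C_c(\Xcal)\cdot C_c(\Bcal)\subseteq C_c(\Xcal),\]
\[C_c(\Xcal)\cdot C_c(\Xcal)^{*}\subseteq C_c(\Acal),\quad C_c(\Xcal)^{*}\cdot C_c(\Xcal)\subseteq C_c(\Bcal).\]
Passing to $C^*$-closures and using Theorem~\ref{theorem hereditary sub bundle} to identify $C^*(\Acal)$ and $C^*(\Bcal)$ as hereditary $C^*$-subalgebras of $C^*(\Lb(\Xcal))$, one endows $C^*(\Xcal)$ with a left $C^*(\Acal)$-action, a right $C^*(\Bcal)$-action, and inner products ${}_{C^*(\Acal)}\langle f,g\rangle:=fg^{*}$ and $\langle f,g\rangle_{C^*(\Bcal)}:=f^{*}g$, all inherited from $C^*(\Lb(\Xcal))$.

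Most of Rieffel's imprimitivity axioms come for free from the ambient $C^*$-algebra: sesquilinearity of the two inner products, the symmetry $\langle f,g\rangle^{*}=\langle g,f\rangle$, the associativity-type compatibility ${}_{C^*(\Acal)}\langle f,g\rangle h=f\langle g,h\rangle_{C^*(\Bcal)}$ (which is just associativity in $C^*(\Lb(\Xcal))$), the norm identity $\|f\|^{2}=\|f^{*}f\|$, and the positivity of $f^{*}f$ and $ff^{*}$, which remain positive in the respective hereditary subalgebras. Definiteness of $\langle f,f\rangle$ is automatic in any $C^*$-algebra. Thus the only substantive step that remains is showing that the two inner products are full.

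This is where I expect the main obstacle, and where I would invoke the specially constructed approximate units of Lemma~\ref{lemma approximate units}(\ref{item approximate units}): there is an approximate unit $\{b_{j}\}$ of $B_{e}$ with $b_{j}=\sum_{k}\lab y_{k}^{(j)},y_{k}^{(j)}\rab$, $y_{k}^{(j)}\in\Xcal$. A standard mollifier argument on $G$ approximates, in $C^*(\Bcal)$-norm, the element $b_j$ (viewed as a multiplier of $C^*(\Bcal)$) by sections of the form $\sum_{k}(g_{k}^{(j)})^{*}g_{k}^{(j)}$, with $g_{k}^{(j)}\in C_{c}(\Xcal)$ concentrated around the fibers where $y_{k}^{(j)}$ lives and appropriately normalized. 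The resulting net lies in the closed linear span $E$ of $\{\langle f,g\rangle_{C^*(\Bcal)}:f,g\in C^*(\Xcal)\}$ and is an approximate unit of $C^*(\Bcal)$; since $E$ is a closed two-sided ideal of $C^*(\Bcal)$ containing an approximate unit, $E=C^*(\Bcal)$. Replacing $\Xcal$ by its adjoint $\widetilde\Xcal$ (Remark~\ref{remark symmetric}) yields fullness of the left inner product in $C^*(\Acal)$. Combined with the earlier verifications, this establishes that $C^*(\Xcal)$ is a $C^*(\Acal)\dsh C^*(\Bcal)\dsh$equivalence bimodule with the bimodule structure inherited from $C^*(\Lb(\Xcal))$.
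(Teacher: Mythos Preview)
Your strategy matches the paper's: embed everything in $C^*(\Lb(\Xcal))$, read off the bimodule operations and inner products from the ambient $C^*$-algebra, observe that most Rieffel axioms are automatic there, and then use the approximate units of Lemma~\ref{lemma approximate units} for fullness.

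The only point where your argument is vaguer than the paper's is the fullness step. The phrase ``approximate $b_j$ in $C^*(\Bcal)$-norm'' is problematic as written, since $b_j\in B_e$ is a multiplier of $C^*(\Bcal)$, not an element of it; what you really need is that the mollified sums $\sum_k(g_k^{(j)})^*g_k^{(j)}$ form an approximate unit of $C^*(\Bcal)$, and the mollifier construction you sketch would require some care to verify this. The paper sidesteps the issue entirely by a direct computation: for arbitrary $f,g\in C_c(\Bcal)$ one has $f^**(b_\lambda g)\to f^**g$ in the inductive limit topology, and each term $f^**(\lab x,x\rab g)$ with $x\in X_s$ is shown by a change of variables to equal the inner product $\langle xf,xg\rangle_{C^*(\Bcal)}$, where $(xf)(r):=xf(s^{-1}r)\in C_c(\Xcal)$. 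This exhibits every $f^**g$ as a limit of sums of inner products, giving fullness without any mollifier or multiplier-approximate-unit argument.
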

\begin{proof}
  We must show that $C^*(\Xcal)C^*(\Bcal)\subset C^*(\Xcal).$
  Given $f\in C_c(\Xcal)$ and $g\in C_c(\Bcal)$ we have $f*u\in C_c(\Xcal)$ because $f*u\in C_c(\Lb(\Xcal)),$  $f*u(t) = \int_G f(r)u(\rmu t)\, dr$ and $u(r)f(\rmu t)\in X_t,$ for all $r,t\in G.$
  Using the continuity of the product we see that
  $C^*(\Xcal)C^*(\Bcal)\subset C^*(\Xcal).$ 
  In a similar way we can show that $C^*(\Xcal)^*C^*(\Xcal)\subset C^*(\Bcal),$ so the right inner product $C^*(\Xcal)\times C^*(\Xcal)\to C^*(\Bcal),$ $(f,g)\mapsto f^**g,$ is defined.
  Moreover, this inner product is positive because $f^**f$ is positive in $C^*(\Lb(\Xcal)).$

  To prove that $C^*(\Xcal)$ is a full Hilbert $C^*(\Bcal)$-module it
  suffices to prove that every element of the form $f^**g$ ($f,g\in
  C_c(\Bcal)$) can be approximated, in the inductive limit topology,
  by a sum of (right) inner products. 
  Given $b\in B_e$ define $bg\in C_c(\Bcal)$ as $[bg](r):=bg(r).$
  Let $\{b_\lambda\}_{\lambda\in \Lambda}$ be an approximate unit of
  $B_e$ as the one given in Lemma \ref{lemma approximate units}. 
  Then $ b_\lambda g \to g$ and $f^**(b_\lambda g) \to f^**g$ in the
  inductive limit topology. 
  For every $\lambda \in \Lambda,$ the function $f^**(b_\lambda g)$ is
  a sum of elements of the form $f^**(\lab x,x\rab g),$ which we will
  prove are inner products. 
  Given $x\in X_s,$ consider $xf\in C_c(\Xcal)$ given by
  $(xf)(r):=xf(\smu r)$, and note that 
  $f^**(\lab x,x\rab g) = \la xf,xg\ra_{C^*(\Bcal)}$ because, for all $t\in G,$
  \begin{align*}
   f^**(\lab x,x\rab g)(t)
    & = \int_G \Delta(r)^{-1} \lab x f(\rmu),xg(\rmu t)\rab \, dr\\
    & = \int_G \lab x f(\smu r),xg(\smu rt)\rab \, dr
      = \la xf,xg\ra_{C^*(\Bcal)}(t).
  \end{align*}
  
  Similar arguments can be used to prove the claims concerning the $C^*(\Acal)\dsh$valued inner product.
  Finally, the compatibility of the operations is immediate because all the computations are performed within $C^*(\Lb(\Xcal)).$
\end{proof}

The construction of a $C^*$-algebra from a Fell bundle \cite{FlDr88}
motivates the following definition. 

\begin{definition}
  The \textit{(full) cross-sectional Hilbert bimodule} of the $\Acal\dsh
  \Bcal\dsh$equivalence bundle $\Xcal$ is the $C^*(\Acal)\dsh
  C^*(\Bcal)\dsh$equivalence bimodule $C^*(\Xcal)$ of Theorem
  \ref{theorem construction of the equivalence bimodule}. 
\end{definition}

It is important to recall that $C^*(\Xcal)$ is the closure of
$C_c(\Xcal)$ in $C^*(\Lb(\Xcal))$ and that we regard $C^*(\Acal)$ and
$C^*(\Bcal)$ as $C^*$-subalgebras of $C^*(\Lb(\Xcal)).$ 
These representations will be used without explicit mention in the
rest of the text. 
\begin{remark}\label{remark:topologies on C_c(X)}
 The inductive limit topology of $C_c(\Xcal), $ $\tau^\Xcal_{ilt},$ contains the topology 
 relative to $\tau^{\Lb(\Xcal)}_{ilt}$ (see the universal property described in \cite[II 14.3]{FlDr88}).
 Besides, the topology of $C_c(\Xcal)$ relative to the norm topology
 of $C^*(\Xcal),$ $\tau^\Xcal_*,$ 
 is the one relative to $\tau^{\Lb(\Xcal)}_*.$
 Since $\tau^{\Lb(\Xcal)}_*\subset \tau^{\Lb(\Xcal)}_{ilt},$ we have $\tau^\Xcal_* \subset \tau^{\Xcal}_{ilt}.$
\end{remark}
\begin{corollary}
  If $\Xcal$ is a right \FH $\Bcal\dsh$bundle and we construct
  $C^*(\Xcal)$ considering $\Xcal$ as a
  $\Kb(\Xcal)\dsh\Bcal\dsh$equivalence bundle, then $\Kb(C^*(\Xcal))$
  is isomorphic to $C^*(\Kb(\Xcal)).$ 
\end{corollary}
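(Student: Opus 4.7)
The plan is to invoke the preceding machinery and a standard fact about imprimitivity bimodules, since almost all of the work is already done. First, by Corollary~\ref{corollary right \FH bundles are equivalence bundles}, the right Hilbert $\Bcal$-bundle $\Xcal$ is automatically a $\Kb(\Xcal)\dsh\Bcal\dsh$equivalence bundle, with left action and left inner product given by $(b,x)\mapsto b(x)$ and $(x,y)\mapsto[x,y]$. Theorem~\ref{theorem construction of the equivalence bimodule}, applied to this equivalence bundle with $\Acal=\Kb(\Xcal)$, then produces a $C^*(\Kb(\Xcal))\dsh C^*(\Bcal)\dsh$equivalence bimodule, namely the closure $C^*(\Xcal)$ of $C_c(\Xcal)$ inside $C^*(\Lb(\Xcal))$, with the left inner product and left action inherited from $C^*(\Lb(\Xcal))$.

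The second ingredient is the standard fact that for any $A\dsh B\dsh$imprimitivity bimodule $M$, the left coefficient algebra $A$ is canonically isomorphic to $\Kb(M)$ via the map $a\mapsto L_a$, where $L_a(m)=am$; equivalently, $\laa \xi,\eta\raa\mapsto \theta_{\xi,\eta}$ where $\theta_{\xi,\eta}(\zeta):=\xi\lab\eta,\zeta\rab_B$ (see e.g.\ \cite{Raeburn1998morita}). Applying this to $M=C^*(\Xcal)$ with $A=C^*(\Kb(\Xcal))$ and $B=C^*(\Bcal)$ gives an isomorphism
\begin{equation*}
C^*(\Kb(\Xcal))\longrightarrow \Kb(C^*(\Xcal)),\qquad a\longmapsto L_a,
\end{equation*}
which is exactly the desired isomorphism.

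The only thing worth pausing on is that Theorem~\ref{theorem construction of the equivalence bimodule} requires $C^*(\Xcal)$ to be \emph{full} on both sides, which uses the density condition (7R) and its left analogue. The right density (7R) is assumed from the start, and the left density holds automatically for the equivalence bundle structure $(\Kb(\Xcal),\Xcal,\Bcal)$ produced by Corollary~\ref{corollary right \FH bundles are equivalence bundles}, since by construction $\Kb(\Xcal)_e$ is the closure of $\spn\{[x,y]\colon x,y\in X_s,\ s\in G\}=\spn\{\la x,y\ra_{\Kb(\Xcal)}\colon x,y\in\Xcal\}$. Hence both sides of $C^*(\Xcal)$ give back the full coefficient algebras, and the isomorphism above is well defined.

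I do not expect a serious obstacle: the statement is a formal combination of Corollary~\ref{corollary right \FH bundles are equivalence bundles}, Theorem~\ref{theorem construction of the equivalence bimodule}, and the Rieffel identification of $\Kb$ with the left algebra of an imprimitivity bimodule. If a more concrete description is desired, one verifies on the dense subspace $C_c(\Xcal)\subset C^*(\Xcal)$ that for $f,g\in C_c(\Xcal)$ the left inner product $\laa f,g\raa$ computed in $C^*(\Lb(\Xcal))$ coincides with the element $[f,g,\cdot]\in C_c(\Kb(\Xcal))$ of Theorem~\ref{thm:compacts bundle}, and then extends to the claimed isomorphism by continuity.
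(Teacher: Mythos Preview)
Your proof is correct and follows essentially the same approach as the paper: apply Theorem~\ref{theorem construction of the equivalence bimodule} to the $\Kb(\Xcal)\dsh\Bcal\dsh$equivalence bundle $\Xcal$ to get the $C^*(\Kb(\Xcal))\dsh C^*(\Bcal)\dsh$imprimitivity bimodule $C^*(\Xcal)$, then invoke the standard identification of the left algebra of an imprimitivity bimodule with the compacts on the right module. The paper's own proof is the one-line version of exactly this.

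One small caveat about your final optional paragraph: the claim that $\laa f,g\raa$ ``coincides with the element $[f,g,\cdot]\in C_c(\Kb(\Xcal))$'' is not well formed as written. The sections $[f,g,s]$ of Theorem~\ref{thm:compacts bundle} depend on an auxiliary parameter $s\in G$, so there is no single element $[f,g,\cdot]$; the left inner product $f*g^*$ computed in $C_c(\Lb(\Xcal))$ is instead an \emph{integral} over $G$ of such data (with a modular factor). This does not affect your main argument, which is complete without that remark, but you should either drop the paragraph or rewrite it with the correct integral expression.
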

\begin{proof}
  By the general theory of Hilbert modules \cite{Raeburn1998morita},
  if ${}_A X_B$ is an $A\dsh B\dsh$equivalence bimodule then $A$ is
  isomorphic to $\Kb(X_B).$ 
  To get the desired result consider the bimodule
  ${}_{C^*(\Kb(\Xcal))}C^*(\Xcal)_{C^*(\Bcal)}.$ 
\end{proof}

The notation adopted for the cross-sectional equivalence bimodule is
justified by the following Corollary of Theorem \ref{theorem
  construction of the equivalence bimodule}. 

\begin{corollary}\label{corollary cross sectional equiv bundle and
    cross-sectional cast algebra of a Fell bundle} 
  If the Fell bundle $\Bcal$ is regarded as a
  $\Bcal\dsh\Bcal\dsh$equivalence bundle (Example \ref{example
    reflexive}) then the cross-sectional equivalence bimodule of
  $\Bcal$ is the cross-sectional $C^*$-algebra of $\Bcal$ (regarded as
  an equivalence bimodule). 
\end{corollary}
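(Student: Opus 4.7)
The plan is to exhibit a canonical isomorphism between the linking Fell bundle $\Lb(\Bcal)$ of $\Bcal$ (viewed as a self-equivalence bundle via Example~\ref{example reflexive}) and the Fell bundle $\Mb_2(\Bcal)$ of $2\times 2$ matrices over $\Bcal$, and then read off the corollary from the well-known fact that $C^*(\Mb_2(\Bcal))\cong \Mb_2(C^*(\Bcal))$. Concretely, I would define $\sigma\colon \Lb(\Bcal)\to\Mb_2(\Bcal)$ on the fiber over $t$ by
\[\sigma\begin{pmatrix} a & x\\ \widetilde{y} & b\end{pmatrix}=\begin{pmatrix} a & x\\ y^* & b\end{pmatrix},\]
using that $y\in B_{t^{-1}}$ forces $y^*\in B_t$, and that the rule $\lambda\widetilde{y}=\widetilde{\bar\lambda y}$ matches the conjugate linearity of the involution $y\mapsto y^*$. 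One then verifies, using Theorem~\ref{theorem construction of linking bundle}(2) together with the products and inner products from Example~\ref{example reflexive} (namely $\laa x,y\raa=xy^*$ and $\lab x,y\rab=x^*y$), that $\sigma$ is multiplicative, $*$-preserving and fiberwise isometric. Continuity is immediate because the sections in part~(3) of Theorem~\ref{theorem construction of linking bundle} are mapped to the evident matrix-valued sections of $\Mb_2(\Bcal)$.

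Next, I would invoke the standard identification $C^*(\Mb_2(\Bcal))\cong \Mb_2(C^*(\Bcal))$, which follows by checking that the universal nondegenerate representations of $\Mb_2(\Bcal)$ are in bijection with those of $\Bcal$ tensored with $M_2(\C)$, or equivalently by passing to $L^1$-completions and noting that $L^1(\Mb_2(\Bcal))=\Mb_2(L^1(\Bcal))$. Under the composite isomorphism $C^*(\Lb(\Bcal))\cong \Mb_2(C^*(\Bcal))$, the $*$-subalgebras $C^*(\Acal)$ and $C^*(\Bcal)$ (both equal to $C^*(\Bcal)$ here) sit as the diagonal corners, and $C_c(\Xcal)=C_c(\Bcal)$ is embedded via $f\mapsto\bigl(\begin{smallmatrix}0 & f\\ 0 & 0\end{smallmatrix}\bigr)$ into the $(1,2)$ corner. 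Since the corner $e_{11}\Mb_2(C^*(\Bcal))e_{22}$ is isometrically identified with $C^*(\Bcal)$, the closure of $C_c(\Bcal)$ under the embedding is exactly $C^*(\Bcal)$.

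Finally, I would verify that the equivalence bimodule structure inherited from $C^*(\Lb(\Xcal))$ on this corner coincides with the canonical structure of $C^*(\Bcal)$ as a $C^*(\Bcal)\dsh C^*(\Bcal)\dsh$equivalence bimodule. This is a matter of matching the integrated formulas of Theorem~\ref{theorem construction of the equivalence bimodule} with the matrix multiplication in $\Mb_2(C^*(\Bcal))$: the left and right actions become $u\cdot f = u*f$ and $f\cdot v = f*v$, while the inner products reduce to ${}_{C^*(\Bcal)}\la f,g\ra = f*g^*$ and $\la f,g\ra_{C^*(\Bcal)} = f^**g$, which are precisely the operations that make a $C^*$-algebra into an equivalence bimodule over itself.

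The computations in all three steps are routine, and the only delicate point I expect is in the first step, namely confirming that the twisted conjugate-linear structure on the lower-left corner of $\Lb(\Bcal)$ is faithfully transported by $\widetilde{y}\mapsto y^*$. Once $\sigma$ is established as a bona fide isomorphism of Fell bundles, the remaining steps are essentially a bookkeeping exercise using already-established results (in particular Theorem~\ref{theorem construction of the equivalence bimodule} and the functoriality of $C^*$).
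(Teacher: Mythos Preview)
Your approach is correct but takes a different route from the paper. The paper does not pass through $\Mb_2(\Bcal)$ at all; instead it argues directly that the identity map $C_c(\Xcal)\to C_c(\Bcal)$ extends to a unitary $C^*(\Xcal)\to C^*(\Bcal)$ by computing the inner product $f^**g$ on both sides. Embedding $b\in\Bcal$ as $\bigl(\begin{smallmatrix}0&0\\0&b\end{smallmatrix}\bigr)$ and $x\in\Xcal$ as $\bigl(\begin{smallmatrix}0&x\\0&0\end{smallmatrix}\bigr)$ in $\Lb(\Xcal)$, one checks that the $C^*(\Lb(\Xcal))$-product $f^**g(t)$ equals $\int_G f(s)^*g(st)\,ds$, which is the same as the convolution $f^**g$ inside $C^*(\Bcal)$. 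That single computation finishes the proof.

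Your structural approach has the advantage of explaining \emph{why} the computation works: the linking bundle really is $\Mb_2(\Bcal)$, so $C^*(\Lb(\Bcal))\cong\Mb_2(C^*(\Bcal))$ and the $(1,2)$ corner is manifestly $C^*(\Bcal)$. The cost is that you must import the identification $C^*(\Mb_2(\Bcal))\cong\Mb_2(C^*(\Bcal))$, which is standard but not established in the paper; the paper's direct computation avoids this dependency entirely and is shorter. Either way the verification that $\widetilde{y}\mapsto y^*$ intertwines the products and involutions, which you flagged as the delicate point, goes through exactly as you describe.
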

\begin{proof}
  Suppose $\Acal: = \Bcal$, and denote $\Xcal$ the bundle
  $\Bcal$ when regarding it as an $\Acal\dsh \Bcal\dsh$equivalence
  bimodule. 
  Then $C^*(\Bcal)$ is the cross-sectional $C^*$-algebra of $\Bcal$
  and $C^*(\Xcal)$ is the cross-sectional equivalence bimodule of
  $\Xcal=\Bcal.$ 
  
  We claim that the identity $\id\colon C_c(\Xcal)\to C_c(\Bcal)$ has
  a unique extension to a unitary $U\colon C^*(\Xcal)\to C^*(\Bcal).$ 
  It suffices to show that, for all $f,g\in C_c(\Bcal),$ the element
  $f^**g$ computed in $C^*(\Lb(\Xcal))$ agrees with $f^**g$ computed
  in $C^*(\Bcal).$ 
  Recall that we may think of $\Bcal$ and $\Xcal$ as Banach subbundles
  of $\Lb(\Xcal).$ 
  To avoid complicated notation we make the following identifications,
  for all $b\in \Bcal$ and $x\in \Xcal,$  
  $$ b =  \left( \begin{array}{cc} 0 & 0\\ 0 & b \end{array} 
 \right)\in \Lb(\Xcal)\qquad \mbox{and}\qquad x =
 \left( \begin{array}{cc} 0 & x\\ 0 & 0 \end{array} 
 \right)\in \Lb(\Xcal).$$
  If we compute $f^**g$ in $C^*(\Lb(\Xcal))$ we obtain, for all $t\in
  G,$ 
  \begin{align*}
   f^**g(t)
    & = \int_G \Delta(s)^{-1}\left( \begin{array}{cc} 0 & 0\\
        \widetilde{f(\smu)} & 0 \end{array} 
 \right)
 \left( \begin{array}{cc} 0 & g(\smu t)\\ 0 & 0 \end{array}
 \right)\, ds\\
    & = \int_G \left( \begin{array}{cc} 0 & 0\\ 0 & f(s)^*g(st) \end{array}
 \right)\, ds
      =  \int_G f(s)^*g(st)\, ds
  \end{align*}
  On the other hand, computing $f^**g$ in $C^*(\Bcal)$ we obtain
  \begin{align*}
   f^**g(t)
    & = \int_G \Delta(s)^{-1}f(\smu)^*  g(\smu t)\, ds
    = \int_G f(s)^*  g(st)\, ds.
  \end{align*}
  Hence the claim follows.
\end{proof}

\begin{corollary}\label{corollary linking algebra of cross-sectional
    equivalence module} 
  If $\Xcal$ is an $\Acal\dsh \Bcal\dsh$equivalence bundle, then
  $\Lb(C^*(\Xcal))$ is isomorphic to $C^*(\Lb(\Xcal)).$ 
\end{corollary}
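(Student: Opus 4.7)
The plan is to exhibit a natural $*$-homomorphism
\[
\phi\colon \Lb(C^*(\Xcal))\longrightarrow C^*(\Lb(\Xcal)),\qquad
\begin{pmatrix} a & x\\ \widetilde{y} & b\end{pmatrix}\longmapsto a+x+\widetilde{y}+b,
\]
where the four summands are regarded as elements of $C^*(\Lb(\Xcal))$ via the inclusions
$C^*(\Acal)\subset C^*(\Lb(\Xcal))$, $C^*(\Bcal)\subset C^*(\Lb(\Xcal))$ (established in the corollary of Theorem~\ref{theorem hereditary sub bundle}) and $C^*(\Xcal)\subset C^*(\Lb(\Xcal))$ (Theorem~\ref{theorem construction of the equivalence bimodule}), together with $\widetilde{C^*(\Xcal)}=C^*(\Xcal)^*$.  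That $\phi$ respects products and involutions is a direct matrix computation: on each fiber $L_t$ of $\Lb(\Xcal)$ the product and the $*$ defined in Theorem~\ref{theorem construction of linking bundle} are literally the matrix operations of a $2\times 2$ linking structure, so these operations are inherited by the completion $C^*(\Lb(\Xcal))$ on the respective pieces.

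Surjectivity is the easy half.  By Theorem~\ref{theorem construction of linking bundle}(3) the sections of the form $\begin{pmatrix}\xi & f\\ \widetilde{g} & \eta\end{pmatrix}$ with $\xi\in C_c(\Acal)$, $\eta\in C_c(\Bcal)$, $f,g\in C_c(\Xcal)$ span a $*$-subalgebra of $C_c(\Lb(\Xcal))$ which, by \cite[II.14.1]{FlDr88}, is uniformly dense in $C_c(\Lb(\Xcal))$, hence dense in $C^*(\Lb(\Xcal))$ in the universal norm.  Each such section lies in $L^1(\Acal)+C_c(\Xcal)+C_c(\Xcal)^*+L^1(\Bcal)\subseteq\operatorname{Im}\phi$.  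Since the image of a $*$-homomorphism of $C^*$-algebras is closed, $\phi$ is surjective.

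For injectivity, use Lemma~\ref{lemma approximate units}(1) to obtain approximate units $\{a_i\}\subset A_e\subset C^*(\Acal)$ and $\{b_j\}\subset B_e\subset C^*(\Bcal)$.  Because in the unit fiber of $\Lb(\Xcal)$ elements of $A_e$ and $B_e$ sit in orthogonal matrix corners, one has $a_ib_j=b_ja_i=0$ in $C^*(\Lb(\Xcal))$ for all $i,j$.  Hence inside the multiplier algebra of $C^*(\Lb(\Xcal))$ the two approximate units converge strictly to orthogonal self-adjoint idempotents $p_\Acal,p_\Bcal$ with $p_\Acal+p_\Bcal$ acting as the identity on the image of $\phi$, and satisfying $p_\Acal C^*(\Acal)p_\Acal=C^*(\Acal)$, $p_\Acal C^*(\Xcal)p_\Bcal=C^*(\Xcal)$, $p_\Bcal C^*(\Xcal)^*p_\Acal=C^*(\Xcal)^*$, $p_\Bcal C^*(\Bcal)p_\Bcal=C^*(\Bcal)$.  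If $\phi\begin{pmatrix}a & x\\\widetilde{y}&b\end{pmatrix}=a+x+\widetilde{y}+b=0$, sandwiching by these projections extracts each of the four summands and shows they vanish individually, proving injectivity.

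The main subtlety, and the only point I expect to require real care, is the construction of the orthogonal ``corner projections'' $p_\Acal$, $p_\Bcal$ in the multiplier algebra of $C^*(\Lb(\Xcal))$ and the verification that the summands in the decomposition $C^*(\Acal)\oplus C^*(\Xcal)\oplus C^*(\Xcal)^*\oplus C^*(\Bcal)$ are truly linearly independent inside $C^*(\Lb(\Xcal))$.  All remaining verifications (compatibility of $\phi$ with products, involution, and the matrix identifications of the fibers of $\Lb(\Xcal)$) are formal, and closely parallel the argument sketched in \ref{subsection Morita equivalence of partial actions} for the linking partial action, now performed in the general Fell-bundle setting.
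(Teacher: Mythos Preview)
Your overall strategy---defining an explicit $*$-homomorphism $\phi\colon\Lb(C^*(\Xcal))\to C^*(\Lb(\Xcal))$ and checking bijectivity---is sound and genuinely different from the paper's route, but there is a real gap in the injectivity step. You write ``$\{a_i\}\subset A_e\subset C^*(\Acal)$'', invoking Lemma~\ref{lemma approximate units}(1); however the unit fiber $A_e$ is \emph{not} contained in $C^*(\Acal)$ when $G$ is non-discrete (elements of $C^*(\Acal)$ are limits of $L^1$-sections, not fiber elements). Consequently the products $a_ib_j$ you form are not taking place in $C^*(\Lb(\Xcal))$ at all, and the strict-limit argument as written does not make sense.

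The repair is easy. The unit fiber $L_e$ of $\Lb(\Xcal)$ embeds nondegenerately in $M(C^*(\Lb(\Xcal)))$ (via $c\cdot f(t)=cf(t)$ for $c\in L_e$, $f\in C_c(\Lb(\Xcal))$), so $M(L_e)\subset M(C^*(\Lb(\Xcal)))$ and the diagonal idempotents $p_\Acal=\left(\begin{smallmatrix}1&0\\0&0\end{smallmatrix}\right)$, $p_\Bcal=\left(\begin{smallmatrix}0&0\\0&1\end{smallmatrix}\right)$ of $M(L_e)$ are already the corner projections you need---no approximate units required. Alternatively, take approximate units of the $C^*$-algebras $C^*(\Acal)$ and $C^*(\Bcal)$ (not of $A_e$, $B_e$) and check their strict convergence on the dense set $C_c(\Lb(\Xcal))$ directly; since $C^*(\Acal)C^*(\Bcal)=0$ in $C^*(\Lb(\Xcal))$ and both act nondegenerately on the appropriate corners, this goes through. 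With either fix your argument is complete.

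For comparison, the paper argues via Hilbert modules rather than corner projections: it identifies $C^*(\Xcal)\oplus C^*(\Bcal)$ with $C^*(\Xcal\oplus\Bcal)$ (both realized inside $C^*(\Lb(\Xcal))$), then uses that $\Xcal\oplus\Bcal$ is an $\Lb(\Xcal)\dsh\Bcal$-equivalence bundle (Remark~\ref{remark direct sums}) together with Theorem~\ref{theorem construction of the equivalence bimodule} to obtain
\[
\Lb(C^*(\Xcal))=\Kb\big(C^*(\Xcal)\oplus C^*(\Bcal)\big)\cong\Kb\big(C^*(\Xcal\oplus\Bcal)\big)\cong C^*(\Lb(\Xcal)).
\]
This avoids multiplier algebras entirely and recycles the machinery already built, whereas your approach (once corrected) has the merit of exhibiting the matrix decomposition of $C^*(\Lb(\Xcal))$ explicitly.
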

\begin{proof}  
  Let $C^*(\Xcal)\oplus C^*(\Bcal)$ be considered as a
  Hilbert $C^*(\Bcal)\dsh$module with the inner product $\la x\oplus 
  y,u\oplus v\ra = \la x,u\ra_{C^*(\Bcal)} + y^**v.$ 
  We identify $C_c(\Xcal)\oplus C_c(\Bcal)$ with $C_c(\Xcal\oplus
  \Bcal)$ in the natural way, and represent by $M$ the closure of
  $C_c(\Xcal\oplus \Bcal)$ in $C^*(\Lb(\Xcal)).$  
  Let $U\colon C^*(\Xcal)\oplus C^*(\Bcal)\to M$ be the unitary
  extending the identification $C_c(\Xcal)\oplus
  C_c(\Bcal)=C_c(\Xcal\oplus \Bcal).$  
  
  We claim that, as a Hilbert module, $M$ is $C^*(\Xcal\oplus \Bcal).$
  In fact, if we take $f\in C_c(\Xcal\oplus \Bcal)$ and compute
  $f^**f$ using the product and involution of $C^*(\Lb(\Xcal))$ and of
  $C^*(\Lb(\Xcal\oplus \Bcal)),$ we obtain the same element of
  $C^*(\Bcal).$ 
  Moreover, at the level of $C_c(\Xcal\oplus \Bcal)$ and $C_c(\Bcal),$
  it does not matter whether we use $C^*(\Lb(\Xcal))$ or
  $C^*(\Lb(\Xcal\oplus \Bcal))$ to compute the right inner products
  and the action. 
  Then $M$ is unitary equivalent, as a right Hilbert module, to
  $C^*(\Xcal\oplus \Bcal).$ 
  
  Finally, recall that $\Xcal\oplus \Bcal$ is an $\Lb(\Xcal)\dsh
  \Bcal\dsh$equivalence bimodule, thus we may think of
  $C^*(\Lb(\Xcal))$ as the algebra of generalized compact operators of 
  $C^*(\Xcal)\oplus C^*(\Bcal).$ Thus (up to canonical isomorphisms) 
  $$\Lb(C^*(\Xcal))=\Kb(C^*(\Xcal)\oplus C^*(\Bcal)) = \Kb(C^*(\Xcal
  \oplus \Bcal)) = C^*(\Lb(\Xcal)). $$ 
\end{proof}

\begin{remark}\label{remark view direct sums inside the linking for hilber modules}
  In the proof above we showed that $C^*(\Xcal\oplus \Bcal)$ can be
  regarded as the completion of $C_c(\Xcal\oplus \Bcal)$ in
  $C^*(\Lb(\Xcal)).$ 
  This representation of $C^*(\Xcal\oplus \Bcal)$ will be used instead
  of the representation in $C^*(\Lb(\Xcal\oplus \Bcal)).$ 
  Similar considerations apply for $C^*(\Acal\oplus \Bcal).$
\end{remark}

\subsection{Induction of ideals through cross-sectional Hilbert
  bimodules}\label{section cross-sectional Hilbert modules} 
All the constructions we have carried out can be performed using reduced cross-sectional $C^*$-algebras.
In fact we can use other quotients of the full cross-sectional $C^*$-algebra, as the ones defined in \cite{BssEffMaximality}.
In fact we will give an alternative (and equivalent) way of extendig exotic crossed products to the realm of Fell bundles.

\par Suppose $\mu:\mathscr{F}\to\mathscr{C}$ is a functor, from the
category of Fell bundles to the category of $C^*$-algebras, 
that associates to each Fell bundle $\Bcal$ a quotient 
$C^*_\mu(\Bcal)$ of $C^*(\Bcal)$, such that $C^*_r(\Bcal)$ is in turn
a quotient of $C^*_\mu(\Bcal)$, in such a way that 
the collections of the corresponding quotient maps
$q_\mu^{\Bcal}:C^*(\Bcal)\to C^*_\mu(\Bcal)$ and
$p_\mu^{\Bcal}:C^*_\mu(\Bcal)\to C^*_r(\Bcal)$ are natural
transformations
$C^*\stackrel{q_\mu}{\to}\mu\stackrel{p_\mu}{\to}C^*_r$ satisfying
$pq=\Lambda$, where $\Lambda$ is the regular representation. In other
words, for every morphism $\phi:\Acal\to\Bcal$ 
the following diagram is commutative: 
\begin{equation}\label{eq:crossed product functor diagram}
 \xymatrix{
C^*(\Acal)\ar@/^{1.8pc}/[urrd]^{\Lambda^{\Acal}}
\ar[r]^{q^{\Acal}_\mu}\ar[d]_{\phi^*}&C^*_\mu(\Acal)\ar[r]^{p^{\Acal}_\mu}\ar[d]_{\phi_\mu}&C^*_r(\Acal)\ar[d]^{\phi_r}\\
C^*(\Bcal)\ar@/_{1.8pc}/[urrd]_{\Lambda^{\Bcal}}
\ar[r]_{q^{\Bcal}_\mu}&C^*_\mu(\Bcal)\ar[r]_{p^{\Bcal}_\mu}&C^*_r(\Bcal)
}
\end{equation}
For instance, the functors $C^*$ and $C^*_r$ given by taking the
universal and the reduced cross-sectional algebras respectively,
satisfy the property above. Following 
\cite{buss2015exotic}, we call any
such functor $\mu$ a \textit{crossed product functor}, and we
refer to $C^*_\mu(\Bcal)$ as the $\mu$-crossed product of
$\Bcal$. When only the functors $C^*$ and $\mu$ are involved, as well
as the natural transformation $q$, and the left square is commutative
in the above diagram, we say that $\mu$ a \textit{pseudo crossed product 
functor}, and we refer to $C^*_\mu(\Bcal)$ as the
$\mu$-pseudo crossed product of 
$\Bcal$. Also following \cite{buss2015exotic}, $\mu$ is said to be  
an exotic crossed product functor when it is 
neither the full crossed product functor $C^*$ nor the reduced crossed 
product functor $C^*_r$.  
\par If we define $I^{\Bcal}_\mu$
to be the kernel of $q_\mu^{\Bcal}$, the assignment $\Bcal\mapsto
I^{\Bcal}_\mu$ is another functor $\mathscr{F}\to\mathscr{C}$,  
because the diagram above implies $\phi_*(I^{\Acal}_\mu)\subseteq
I^{\Bcal}_\mu$. Observe that if $\Acal$ is a Fell subbundle of
$\Bcal$ such that $C^*(\Acal)\subseteq C^*(\Bcal)$ (e.g. $\Acal$ is an
hereditary subbundle of $\Bcal$, according to \ref{theorem hereditary
  sub bundle}),  then $C^*_\mu(\Acal)$ is a $C^*$-subalgebra of
$C^*_\mu(\Bcal)$ if and only if $I^\Acal_{\mu}=C^*(\Acal)\cap
I^\Bcal_{\mu}$. We are interested in those functors which satisfy the
above properties for any hereditary subbundle $\Acal$ of $\Bcal$:  
\begin{definition}\label{definition hereditary crossed product}
  A pseudo crossed product functor $\mu$ is said to have the
  hereditary subbundle property if for every Fell
  bundle $\Bcal$ and every hereditary Fell subbundle $\Acal$ of
  $\Bcal,$ it follows that $I^\Acal_\mu = C^*(\Acal)\cap I^\Bcal_\mu$.
\end{definition}
\par Of course the functor $C^*$ has the hereditary subbundle
property. Since, according to \cite[Proposition 3.2]{Ab03},
$C^*_r(\Acal)\subseteq C^*_r(\Bcal)$ for every Fell subbundle 
$\Acal$ of $\Bcal$, also the reduced crossed product functor
$C^*_r$ has the hereditary subbundle property.


\begin{proposition}\label{prop exotic cross sectional Hilbert module}
Let $\mu$ be a pseudo crossed product functor with the hereditary
subbundle property, and $\Xcal$ an $\Acal\dsh
\Bcal\dsh$equivalence bundle. Then
$C^*_\mu(\Xcal):=q^{\Lb(\Xcal)}_\mu(C^*(\Xcal))$ is a
$C^*_\mu(\Acal)\dsh C^*_\mu(\Bcal)\dsh$equivalence bimodule. 
\end{proposition}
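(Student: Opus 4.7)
The plan is to carry out the construction of Theorem~\ref{theorem construction of the equivalence bimodule} inside the quotient $C^*_\mu(\Lb(\Xcal))$ instead of $C^*(\Lb(\Xcal))$. The first step is to make sense of $C^*_\mu(\Acal)$ and $C^*_\mu(\Bcal)$ as $C^*$-subalgebras of $C^*_\mu(\Lb(\Xcal))$. Since $\Acal$ and $\Bcal$ are hereditary subbundles of $\Lb(\Xcal)$ (by the corollary following Theorem~\ref{theorem hereditary sub bundle}), the hereditary subbundle property of $\mu$ gives $I^\Acal_\mu=C^*(\Acal)\cap I^{\Lb(\Xcal)}_\mu$ and $I^\Bcal_\mu=C^*(\Bcal)\cap I^{\Lb(\Xcal)}_\mu$. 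Consequently $q^{\Lb(\Xcal)}_\mu$ restricts on $C^*(\Acal)$ and $C^*(\Bcal)$ to quotient maps whose images are canonically isomorphic to $C^*_\mu(\Acal)$ and $C^*_\mu(\Bcal)$, and we identify the latter with these subalgebras of $C^*_\mu(\Lb(\Xcal))$.

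Next I would show that $C^*_\mu(\Xcal)$ is a closed subspace of $C^*_\mu(\Lb(\Xcal))$. The space $C^*(\Xcal)$ carries the structure of a $C^*$-tring under the ternary operation $(x,y,z)\mapsto xy^*z$ inherited from the linking algebra, and $q^{\Lb(\Xcal)}_\mu|_{C^*(\Xcal)}$ is a $C^*$-tring homomorphism into $C^*_\mu(\Lb(\Xcal))$. By the general fact recalled in Section~\ref{sub section notation}, the image of a $C^*$-tring homomorphism is closed, so $C^*_\mu(\Xcal)$ is a closed subspace. Then one equips $C^*_\mu(\Xcal)$ with bimodule operations inherited from the multiplication and involution of $C^*_\mu(\Lb(\Xcal))$: for $\xi,\eta\in C^*_\mu(\Xcal)$, $a\in C^*_\mu(\Acal)$, $b\in C^*_\mu(\Bcal)$, we set $a\cdot\xi\cdot b:=a\xi b$, ${}_{C^*_\mu(\Acal)}\la\xi,\eta\ra:=\xi\eta^*$, and $\la\xi,\eta\ra_{C^*_\mu(\Bcal)}:=\xi^*\eta$. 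These take values in the correct subspaces because the analogous containments hold in $C^*(\Lb(\Xcal))$ by Theorem~\ref{theorem construction of the equivalence bimodule}, and $q^{\Lb(\Xcal)}_\mu$ is a $*$-homomorphism; positivity of the inner products is automatic in the $C^*$-algebra $C^*_\mu(\Lb(\Xcal))$; and the algebraic axioms of an equivalence bimodule transfer directly by applying $q^{\Lb(\Xcal)}_\mu$ to the corresponding identities satisfied by $C^*(\Xcal)$.

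The remaining point is fullness. Since $C^*(\Xcal)$ is a $C^*(\Acal)\dsh C^*(\Bcal)\dsh$equivalence bimodule by Theorem~\ref{theorem construction of the equivalence bimodule}, the linear span of $\{x^*y:x,y\in C^*(\Xcal)\}$ is dense in $C^*(\Bcal)$; applying the continuous surjection $q^{\Lb(\Xcal)}_\mu|_{C^*(\Bcal)}\colon C^*(\Bcal)\to C^*_\mu(\Bcal)$ from Step 1, we get the corresponding density in $C^*_\mu(\Bcal)$, and symmetrically on the left. The main obstacle throughout is concentrated in Step 1: without the hereditary subbundle property there is no reason for $C^*_\mu(\Acal)$ and $C^*_\mu(\Bcal)$ to sit inside $C^*_\mu(\Lb(\Xcal))$ as the correct hereditary corners, and this is precisely the hypothesis on $\mu$ that has been isolated to make the quotient construction work.
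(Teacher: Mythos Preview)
Your proof is correct and follows essentially the same approach as the paper's. The paper's argument is just a more compressed version of yours: it first observes (as you do) that $C^*_\mu(\Xcal)$ is a $C^*$-subtring of $C^*_\mu(\Lb(\Xcal))$, then says in one stroke that Theorem~\ref{theorem construction of the equivalence bimodule} forces $C^*_\mu(\Xcal)$ to be a $q^{\Lb(\Xcal)}_\mu(C^*(\Acal))\dsh q^{\Lb(\Xcal)}_\mu(C^*(\Bcal))\dsh$equivalence bimodule (this packages your verification of the bimodule axioms and fullness into the single observation that a surjective $*$-homomorphism of the linking algebra carries the equivalence bimodule structure to the quotient), and finally invokes the hereditary subbundle property exactly as you do to identify the corners with $C^*_\mu(\Acal)$ and $C^*_\mu(\Bcal)$.
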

\begin{proof}
  Recall from Section \ref{sub section notation} that the image of a
  homomorphism of $C^*$-trings is a $C^*$-tring. 
  Then $C^*_\mu(\Xcal)$ is a $C^*$-subtring of $C^*_\mu(\Lb(\Xcal)).$
  Besides, Theorem \ref{theorem construction of the equivalence
    bimodule} implies $C^*_\mu(\Xcal)$ is a
  $q^{\Lb(\Xcal)}_\mu(C^*(\Acal))\dsh
  q^{\Lb(\Xcal)}_\mu(C^*(\Bcal))\dsh$equivalence bimodule. 
  Finally, since $\Acal$ and $\Bcal$ are hereditary Fell subbundles of
  $\Lb(\Xcal),$ 
  $$q^{\Lb(\Xcal)}_\mu(C^*(\Acal))=C^*_\mu(\Acal) \quad
  \mbox{and}\quad q^{\Lb(\Xcal)}_\mu(C^*(\Bcal))=C^*_\mu(\Bcal),$$ 
  which ends the proof.  
\end{proof}

Crossed product functors give rise to new cross-sectional Hilbert
modules:    

\begin{definition}\label{dfn:mucsm}
 In the conditions and notation of Proposition \ref{prop exotic cross
   sectional Hilbert module}, we say that $C^*_\mu(\Xcal)$ is the
 $\mu\dsh$cross-sectional Hilbert bimodule of $\Xcal$; the map
 $q^\Xcal_\mu\colon C^*(\Xcal)\to C^*_\mu(\Xcal)$ is the restriction
 of $q^{\Lb(\Xcal)}_\mu$ to $C^*(\Xcal)$, and the ideal $I^\Xcal_\mu$
 (of $C^*(\Xcal)$ regarded as a $C^*$-tring) is defined to be
 $\kernel(q^\Xcal_\mu).$   
\end{definition}

\begin{proposition}\label{proposition induction of ideals of crossed
    products} 
 Let $\mu$ be a pseudo crossed product functor with the hereditary subbundle
 property, and suppose $\Xcal$ is an $\Acal\dsh
 \Bcal\dsh$equivalence bundle. 
 Then $C^*(\Xcal)$ induces $I^\Bcal_\mu$ to $I^\Acal_\mu.$
 Moreover, the submodule of $C^*(\Xcal)$ corresponding to these ideals \cite[Theorem~3.22]{Raeburn1998morita} is $I^\Xcal_\mu.$
\end{proposition}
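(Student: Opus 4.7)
The plan is to work entirely inside the ambient $C^*$-algebra $C^*(\Lb(\Xcal))$, exploiting the fact that the hereditary subbundle property collapses the three ideals under consideration to intersections with a single ideal $I:=I^{\Lb(\Xcal)}_\mu$ of $C^*(\Lb(\Xcal))$. Once that is done, the two-sidedness of $I$ and the formulas $\la f,g\ra_{C^*(\Bcal)}=f^*g$, ${}_{C^*(\Acal)}\la f,g\ra=fg^*$ inherited from Theorem \ref{theorem construction of the equivalence bimodule} take care of all of the bimodule compatibilities essentially for free.

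First I would record the three identities
\[
I^\Acal_\mu = C^*(\Acal)\cap I,\qquad I^\Bcal_\mu = C^*(\Bcal)\cap I,\qquad I^\Xcal_\mu = C^*(\Xcal)\cap I.
\]
The first two are the hereditary subbundle property applied to the hereditary Fell subbundles $\Acal,\Bcal\subset\Lb(\Xcal)$ (noted before Theorem \ref{theorem construction of the equivalence bimodule}), and the third follows from Definition \ref{dfn:mucsm}, which describes $q^\Xcal_\mu$ as the restriction of $q^{\Lb(\Xcal)}_\mu$ to $C^*(\Xcal)$.

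Next I would prove the key identifications
\[
I^\Xcal_\mu \;=\; C^*(\Xcal)\cdot I^\Bcal_\mu \;=\; I^\Acal_\mu\cdot C^*(\Xcal).
\]
The inclusion $C^*(\Xcal)\cdot I^\Bcal_\mu\subseteq I^\Xcal_\mu$ is immediate: $I^\Bcal_\mu\subseteq I$ and $I$ is a two-sided ideal of $C^*(\Lb(\Xcal))$, so the product lies in $C^*(\Xcal)\cap I=I^\Xcal_\mu$. For the reverse inclusion, take $x\in I^\Xcal_\mu$; then $\la x,x\ra_{C^*(\Bcal)}=x^*x$ lies in $C^*(\Bcal)\cap I=I^\Bcal_\mu$, and a standard Hilbert-module factorization argument (for instance, $x\cdot (x^*x)^{1/n}\to x$ with $(x^*x)^{1/n}\in I^\Bcal_\mu$, or the Cohen--Hewitt factorization) puts $x$ in the closed submodule $C^*(\Xcal)\cdot I^\Bcal_\mu$. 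The symmetric statement for the left side is proved by the same argument using the left inner product.

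Finally, the equality $C^*(\Xcal)\cdot I^\Bcal_\mu= I^\Acal_\mu\cdot C^*(\Xcal)=I^\Xcal_\mu$ exhibits $I^\Xcal_\mu$ as the closed sub-bimodule of the equivalence bimodule $C^*(\Xcal)$ that simultaneously arises as ``$I^\Acal_\mu$-module'' on the left and ``$I^\Bcal_\mu$-module'' on the right. By the Rieffel correspondence \cite[Theorem 3.22]{Raeburn1998morita}, this is exactly the assertion that $C^*(\Xcal)$ induces $I^\Bcal_\mu$ to $I^\Acal_\mu$, with corresponding submodule $I^\Xcal_\mu$. The only genuinely non-trivial point is the factorization step showing that $\la x,x\ra_{C^*(\Bcal)}\in I^\Bcal_\mu$ forces $x\in C^*(\Xcal)\cdot I^\Bcal_\mu$; everything else is bookkeeping powered by the hereditary subbundle hypothesis.
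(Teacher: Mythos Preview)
Your proof is correct and follows essentially the same route as the paper: both arguments work inside $C^*(\Lb(\Xcal))$, use the hereditary subbundle property to write $I^\Acal_\mu$, $I^\Bcal_\mu$, $I^\Xcal_\mu$ as intersections with $I^{\Lb(\Xcal)}_\mu$, and then establish $I^\Xcal_\mu=C^*(\Xcal)I^\Bcal_\mu$ via the easy inclusion plus a Hilbert-module factorization for the reverse one. The only cosmetic difference is that the paper phrases the factorization step as $I^\Xcal_\mu=I^\Xcal_\mu\,\spncl\la I^\Xcal_\mu,I^\Xcal_\mu\ra_{C^*(\Bcal)}$, whereas you spell it out elementwise via $x\,(x^*x)^{1/n}\to x$.
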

\begin{proof}
 It suffices to show that $I^\Xcal_\mu = I^\Acal_\mu
 C^*(\Xcal)=C^*(\Xcal)I^\Bcal_\mu.$ 
 We prove $I^\Xcal_\mu = C^*(\Xcal)I^\Bcal_\mu$ and leave the
 remaining identity to the reader. 
 \par We regard $C^*(\Bcal)$ as a $C^*$-subalgebra of
 $C^*(\Lb(\Xcal))$ (as we are allowed by Theorem~\ref{theorem
   hereditary sub bundle}).  
 Since $I^\Bcal_\mu = C^*(\Bcal)\cap I^{\Lb(\Xcal)}_\mu$, we have 
 $C^*(\Xcal)I^\Bcal_\mu \subset I^\Xcal_\mu$ and $\la
 I^\Xcal_\mu,I^\Xcal_\mu\ra_{C^*(\Bcal)}$ is contained in
 $I^\Bcal_\mu$.  Since $I^\Xcal_\mu$ is closed we have  
$$I^\Xcal_\mu = I^\Xcal_\mu \spncl \la
I^\Xcal_\mu,I^\Xcal_\mu\ra_{C^*_\mu(\Bcal)}\subset
C^*(\Xcal)I^\Bcal_\mu\subset I^\Xcal_\mu.$$ 
\end{proof}

One can guess that the $\mu-$cross-sectional $C^*$-algebra of a
Fell bundle, considered as a bimodule over itself, is the same as its
$\mu-$cross-sectional equivalence bundle. This is precisely the case:    

\begin{corollary}
  Let $\Bcal$ be a Fell bundle, and denote it $\Xcal$
  when regarded as a $\Bcal\dsh\Bcal\dsh$equivalence bundle.  
  If we identify $C^*(\Xcal)$ with $C^*(\Bcal)$ as in Corollary
  \ref{corollary cross sectional equiv bundle and cross-sectional cast
    algebra of a Fell bundle}, then $I^\Bcal_\mu = I^\Xcal_\mu,$ for
  every pseudo crossed product functor $\mu$ with the hereditary subbundle
  property.   
  In particular, $C^*_\mu(\Xcal)$ is $C^*_\mu(\Bcal)$ regarded as an
  equivalence bimodule. 
\end{corollary}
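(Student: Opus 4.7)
The plan is to derive $I^\Bcal_\mu = I^\Xcal_\mu$ directly from Proposition~\ref{proposition induction of ideals of crossed products} via the identification furnished by Corollary~\ref{corollary cross sectional equiv bundle and cross-sectional cast algebra of a Fell bundle}. First I would invoke that earlier corollary to identify $C^*(\Xcal)$ with $C^*(\Bcal)$ as right Hilbert $C^*(\Bcal)$-modules, and note that the same identification carries the left $C^*(\Bcal)$-action and the left inner product over correctly (since the computations inside $C^*(\Lb(\Xcal))$ on cross-sections of $\Xcal=\Bcal$ match those inside $C^*(\Bcal)$, as verified in that corollary's proof).

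Next, I would apply Proposition~\ref{proposition induction of ideals of crossed products} with $\Acal=\Bcal$ and $\Xcal=\Bcal$ viewed as a $\Bcal\dsh\Bcal\dsh$equivalence bundle. That proposition yields
\[
I^\Xcal_\mu \;=\; I^\Bcal_\mu\, C^*(\Xcal) \;=\; C^*(\Xcal)\, I^\Bcal_\mu .
\]
Under the identification $C^*(\Xcal)=C^*(\Bcal)$, the right-hand side becomes $C^*(\Bcal)\,I^\Bcal_\mu$, which equals $I^\Bcal_\mu$ by a one-line approximate-unit argument (or Cohen--Hewitt) in the $C^*$-algebra $C^*(\Bcal)$, since $I^\Bcal_\mu$ is a closed two-sided ideal. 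This gives the first assertion $I^\Xcal_\mu = I^\Bcal_\mu$.

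For the ``in particular'' statement, by Definition~\ref{dfn:mucsm} we have $C^*_\mu(\Xcal)=C^*(\Xcal)/I^\Xcal_\mu$, which, via the identification above together with the equality just established, becomes $C^*(\Bcal)/I^\Bcal_\mu = C^*_\mu(\Bcal)$. Naturality of $q_\mu$ (equivalently, the fact that $q^{\Lb(\Xcal)}_\mu$ is an algebra homomorphism) ensures that the bimodule operations on $C^*_\mu(\Xcal)$ correspond to the $\Bcal\dsh\Bcal$ bimodule operations on $C^*_\mu(\Bcal)$, so the isomorphism is one of equivalence bimodules. I do not anticipate any real obstacle: the content is entirely in Proposition~\ref{proposition induction of ideals of crossed products} and the earlier identification, and the main point requiring care is merely checking that those two compatibilities interact cleanly, which they do by construction.
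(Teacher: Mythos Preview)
Your proposal is correct and follows essentially the same route as the paper: apply Proposition~\ref{proposition induction of ideals of crossed products} to get $I^\Xcal_\mu = C^*(\Xcal)I^\Bcal_\mu$, then use the identification $C^*(\Xcal)=C^*(\Bcal)$ to conclude $I^\Xcal_\mu = C^*(\Bcal)I^\Bcal_\mu = I^\Bcal_\mu$. The paper's proof is just this two-line computation; your added remarks about approximate units and the bimodule compatibility for the ``in particular'' part are fine elaborations but not new ingredients.
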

\begin{proof}
  By Proposition \ref{proposition induction of ideals of crossed
    products}, $C^*(\Xcal)I^\Bcal_\mu = I^\Xcal_\mu.$ 
  Then, considering $C^*(\Xcal)=C^*(\Bcal)$, we get $I^\Xcal_\mu =
  C^*(\Bcal)I^\Bcal_\mu = I^\Bcal_\mu.$ 
\end{proof}

\begin{definition}\label{dfn:cpfext}
  Let $\Xcal$ and $\Ycal$ be an $\Acal\dsh \Bcal$ and a $\Ccal\dsh
  \Dcal\dsh$equivalence bundles respectively. 
  We say that $\Xcal$ is an equivalence subbundle of $\Ycal$ if: 
  \begin{itemize}
   \item $\Xcal$ is a Banach subbundle of $\Ycal,$ $\Acal$ is a Fell
     subbundle of $\Ccal$ and $\Bcal$ a Fell subbundle of $\Dcal.$ 
   \item The equivalence bundle structure of $\Xcal$ agrees with that
     inherited from the equivalence bundle structure of $\Ycal.$  
  \end{itemize}
  Besides, we say $\Xcal$ is hereditary in $\Ycal$ if $\Xcal\la
  \Ycal,\Xcal\ra_{\Dcal}\subset \Xcal$ (note this is equivalent to the
  condition ${}_\Ccal\langle\Xcal,\Ycal\rangle\Xcal\subseteq\Xcal$).    
\end{definition}

\begin{proposition}\label{prop:cpfext}
  Let $\Xcal$ and $\Ycal$ be an $\Acal\dsh \Bcal$ and a $\Ccal\dsh
  \Dcal\dsh$equivalence bundle, respectively, such that $\Xcal$ is an
  equivalence subbundle of $\Ycal$. 
  Then the following are equivalent:
  \begin{enumerate}[(1)]
   \item $\Xcal$ is hereditary in $\Ycal.$
   \item $\Lb(\Xcal)$ is an hereditary Fell subbundle of $\Lb(\Ycal).$ 
  \end{enumerate}
  
  Besides, if $\mu$ is a pseudo crossed product functor with the hereditary subbundle 
  property and the conditions above are satisfied, then $C^*(\Xcal)$
  is 
  (isomorphic to) the closure of $C_c(\Xcal)$ in $C^*(\Ycal)$ and $
  I^\Xcal_\mu = C^*(\Xcal)\cap I^\Ycal_\mu.$ 
  In particular, $C^*_\mu(\Xcal)$ is isomorphic (as a $C^*$-tring) to
  $q^\Ycal_\mu(C^*(\Xcal)).$ 
\end{proposition}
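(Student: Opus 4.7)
The plan is to prove (1) $\iff$ (2) by direct matrix computation in the linking bundles and then deduce the second assertion from Theorem~\ref{theorem hereditary sub bundle} together with the hereditary subbundle property of $\mu$. By the observation following the definition of hereditary Fell subbundle (that hereditariness is equivalent to the unit-fiber condition $A_e\Ccal A_e\subseteq\Acal$), the equivalence reduces to showing that $\Lb(\Xcal)_e\Lb(\Ycal)\Lb(\Xcal)_e\subseteq\Lb(\Xcal)$ is the same as condition (1). The direction (2) $\Rightarrow$ (1) is immediate from the triple product
\[
\begin{pmatrix} 0 & x\\ 0 & 0\end{pmatrix} \begin{pmatrix} 0 & 0\\ \widetilde y & 0\end{pmatrix} \begin{pmatrix} 0 & x'\\ 0 & 0\end{pmatrix} = \begin{pmatrix} 0 & {}_\Ccal\la x,y\ra x'\\ 0 & 0\end{pmatrix},
\]
computed via Theorem~\ref{theorem construction of linking bundle} for $x,x'\in\Xcal$ and $y\in\Ycal$: since the outer factors lie in $\Lb(\Xcal)$ and the middle in $\Lb(\Ycal)$, the hereditary property forces ${}_\Ccal\la x,y\ra x'\in\Xcal$, which is (1).

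For (1) $\Rightarrow$ (2) I would write $\ell_i=\begin{pmatrix} a_i & x_i\\ \widetilde{y_i} & b_i\end{pmatrix}\in\Lb(\Xcal)_e$ and $m=\begin{pmatrix} c & u\\ \widetilde v & d\end{pmatrix}\in\Lb(\Ycal)_t$, expand $\ell_1 m\ell_2$ via the multiplication formula from Theorem~\ref{theorem construction of linking bundle}, and verify each of the four entries lies in the appropriate $A_t$, $X_t$, $\widetilde X_{t^{-1}}$, or $B_t$. The two key ingredients are: (i) Lemma~\ref{lemma approximate units}, allowing me to replace each $a_i\in\Acal$ and $b_i\in\Bcal$ by a finite sum of inner products of $\Xcal$-elements; and (ii) the identities of Lemma~\ref{lemma first tools} combined with the equivalence-bundle identity ${}_\Ccal\la x,y\ra z=x\la y,z\ra_\Dcal$, which recast each term in a form covered by (1). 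For instance, with $a_i={}_\Ccal\la w_i,w_i'\ra$ one computes $a_1 c a_2 = {}_\Ccal\la w_1\la c^*w_1',w_2\ra_\Dcal, w_2'\ra$; since $c^*w_1'\in\Ycal$, the inner factor lies in $\Xcal\la\Ycal,\Xcal\ra_\Dcal\subseteq\Xcal$ by (1), hence $a_1 c a_2\in\Acal$. Similarly, for $b_2=\la z,z'\ra_\Dcal$, the manipulation $a_1 u b_2 = w_1\la w_1', u b_2\ra_\Dcal$ together with $u b_2={}_\Ccal\la u,z\ra z'$ and the identity $\la y,c\zeta\ra_\Dcal = \la c^*y,\zeta\ra_\Dcal$ gives $\la w_1', u b_2\ra_\Dcal = \la {}_\Ccal\la z,u\ra w_1', z'\ra_\Dcal\in\la\Xcal,\Xcal\ra_\Dcal=\la\Xcal,\Xcal\ra_\Bcal\subseteq\Bcal$ (using (1) for ${}_\Ccal\la z,u\ra w_1'$), so $a_1 u b_2\in\Xcal\cdot\Bcal\subseteq\Xcal$. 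The remaining terms across all four matrix entries are handled by analogous manipulations.

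For the second assertion, Theorem~\ref{theorem hereditary sub bundle} applied to the hereditary inclusion $\Lb(\Xcal)\subseteq\Lb(\Ycal)$ identifies $C^*(\Lb(\Xcal))$ with a closed hereditary $C^*$-subalgebra of $C^*(\Lb(\Ycal))$, so the closure $C^*(\Xcal)$ of $C_c(\Xcal)$ in $C^*(\Lb(\Xcal))$ (Theorem~\ref{theorem construction of the equivalence bimodule}) equals its closure in $C^*(\Lb(\Ycal))$, and hence in $C^*(\Ycal)$. For the ideals, Definition~\ref{dfn:mucsm} gives $I^\Xcal_\mu = C^*(\Xcal)\cap I^{\Lb(\Xcal)}_\mu$, while the hereditary subbundle property of $\mu$ applied to $\Lb(\Xcal)\subseteq\Lb(\Ycal)$ gives $I^{\Lb(\Xcal)}_\mu = C^*(\Lb(\Xcal))\cap I^{\Lb(\Ycal)}_\mu$; combining these with $C^*(\Xcal)\subseteq C^*(\Lb(\Xcal))$ and $I^\Ycal_\mu = C^*(\Ycal)\cap I^{\Lb(\Ycal)}_\mu$ yields $I^\Xcal_\mu = C^*(\Xcal)\cap I^\Ycal_\mu$, whence $C^*_\mu(\Xcal)\cong q^\Ycal_\mu(C^*(\Xcal))$ follows by the first isomorphism theorem for $C^*$-trings. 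The main obstacle will be the bookkeeping in (1)$\Rightarrow$(2): each of the roughly sixteen terms in the expanded matrix product must be massaged via the above identities into a form covered by (1), and spotting the correct sequence of manipulations for each is where most of the work lies.
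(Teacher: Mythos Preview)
Your proposal is correct and follows essentially the same route as the paper: reduce the matrix entries of a triple product in $\Lb(\Ycal)$ to expressions involving inner products of $\Xcal$-elements, then use the compatibility identity ${}_\Ccal\la x,y\ra z=x\la y,z\ra_\Dcal$ together with Lemma~\ref{lemma first tools} to rewrite each term in a form covered by~(1). Your use of the unit-fiber criterion $\Lb(\Xcal)_e\Lb(\Ycal)\Lb(\Xcal)_e\subseteq\Lb(\Xcal)$ is a mild simplification over the paper, which checks the full triple product directly; and note that the density you need comes from (7L), (7R) and Lemma~\ref{lemma first tools}~(6) rather than from Lemma~\ref{lemma approximate units} itself. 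The argument for the second assertion is identical to the paper's.
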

\begin{proof}
  Assume $\Xcal$ is hereditary in $\Ycal.$
  We can regard $\Lb(\Xcal)$ as a Banach subbundle of $\Lb(\Ycal)$
  because every continuous section of $\Xcal$ ($\Acal,\Bcal$) is a
  continuous section of $\Ycal$ ($\Ccal,\Dcal,$ respectively). 
  Besides, the product and involution of $\Lb(\Xcal)$ are the ones
  inherited from $\Lb(\Ycal)$ because they are defined in terms of the
  equivalence bundle structure of $\Xcal,$ which is inherited from
  $\Ycal.$   
  Then $\Lb(\Xcal)$ is a Fell subbundle of $\Lb(\Ycal).$
  
  Now fix $x\in \Lb(\Xcal)\Lb(\Ycal)\Lb(\Xcal).$
  Then  
  \begin{align*}
   x_{1,1}\in \Acal \Ccal \Acal + {}_\Ccal\la  \Xcal,\Ycal\ra \Acal +
   \Acal{}_\Ccal\la \Ycal,\Xcal\ra + {}_\Ccal\la \Xcal
   \Dcal,\Xcal\ra. 
  \end{align*}
  
  Firstly, we claim that $\Acal\Ccal\Acal\subset \Acal.$
  By Lemma \ref{lemma first tools}, it suffices to prove that
  ${}_\Ccal\la \Xcal,\Xcal\ra \Ccal {}_\Ccal\la \Xcal,\Xcal\ra \subset
  \Acal$, which is true because 
  \begin{align*}
   {}_\Ccal\la \Xcal,\Xcal\ra \Ccal {}_\Ccal\la \Xcal,\Xcal\ra
      & = {}_\Ccal\la \Xcal,\Ccal \Xcal\ra  {}_\Ccal\la \Xcal,\Xcal\ra
        \subset   {}_\Ccal\la {}_\Ccal\la \Xcal,\Ycal\ra  \Xcal,\Xcal\ra\\
      & = {}_\Ccal\la \Xcal\la \Ycal,  \Xcal\ra_\Dcal ,\Xcal\ra
          \subset {}_\Ccal\la \Xcal,\Xcal\ra
          \subset \Acal.
  \end{align*}
  Secondly, since the computations above also imply that ${}_\Ccal\la
  \Xcal,\Ycal\ra {}_\Ccal\la \Xcal,\Xcal\ra\subset \Acal$,   
  another invocation of Lemma~\ref{lemma first tools} shows that
  ${}_\Ccal\la  \Xcal,\Ycal\ra \Acal\subset \Acal$ and that
  $\Acal{}_\Ccal\la \Ycal,\Xcal\ra \subset \Acal.$ 
  Finally, we can use again Lemma \ref{lemma first tools}, which
  together with the identity $\Xcal=\Xcal\Acal$ allows us to deduce
  that ${}_\Ccal \Xcal\la \Xcal\Dcal,\Xcal\ra\subset \Acal$, because
  $\Xcal\Dcal\la \Xcal,\Xcal\ra_\Dcal\subset \Xcal.$ 
   \par Putting all together we conclude that $x_{1,1}\in \Acal.$
  Using similar arguments it can be shown that $x_{1,2}\in \Xcal,$
  $x_{2,1}\in \widetilde{\Xcal}$ and $x_{2,2}\in \Bcal.$ 
  Thus $\Lb(\Xcal)$ is hereditary in $\Lb(\Ycal).$
  \par Conversely, every continuous section of $\Xcal$ ($\Acal,\Bcal$)
  is a continuous section of $\Lb(\Xcal)$ and so one of $\Lb(\Ycal).$ 
  This implies that $\Xcal$ ($\Acal,\Bcal$) is a Banach subbundle of
  $\Ycal$ ($\Ccal,\Dcal,$ respectively). 
  Also, observe that the equivalence bundle structure of $\Xcal$ is
  the one inherited from $\Ycal$ because it is defined in terms of the
  product and involution of $\Lb(\Xcal)$, whose operations are
  inherited  from  $\Lb(\Ycal).$ 
  Furthermore, $\Xcal$ is hereditary in $\Ycal$ because $\Xcal\la
  \Ycal,\Xcal\ra_\Dcal\subset [\Lb(\Xcal)\Lb(\Ycal)\Lb(\Xcal)]\cap
  \Ycal\subset \Xcal.$

  If $\Lb(\Xcal)$ is an hereditary Fell subbundle of $\Lb(\Ycal),$
  then we can regard $C^*(\Lb(\Xcal))$ as a $C^*$-subalgebra of
  $C^*(\Lb(\Ycal)).$ 
  Considering the canonical representation of $C^*(\Xcal)$ and
  $C^*(\Ycal)$ in $C^*(\Lb(\Xcal))$ and $C^*(\Lb(\Ycal)),$
  respectively, we conclude that $C^*(\Xcal)$ is a $C^*$-subtring of 
  $C^*(\Ycal).$  
  Thus $q^\Ycal_\mu(C^*(\Xcal))$ is a $C^*$-subtring of
  $C^*_\mu(\Ycal).$ 
  Besides, 
  $$ I^\Xcal_\mu = C^*(\Xcal)\cap I^{\Lb(\Xcal)}_\mu = C^*(\Xcal)\cap
  C^*(\Lb(\Xcal))\cap I^{\Lb(\Ycal)}_\mu = C^*(\Xcal)\cap
  I^{\Lb(\Ycal)}_\mu = C^*(\Xcal)\cap I^\Ycal_\mu. $$ 
  Hence $\kernel(q^\Ycal_\mu|_{C^*(\Xcal)})=I^\Xcal_\mu$ and there
  exists a unique isomorphism of $C^*$-trings $C^*_\mu(\Xcal)\to
  q^\Ycal_\mu(C^*(\Xcal)),\ x+I^\Xcal_\mu \mapsto q^\Ycal_\mu(x).$ 
\end{proof}

\begin{definition}
  Given a pseudo crossed product functor $\mu$, with the hereditary subbundle 
  property, and an equivalence bundle $\Xcal$, we say
  that $\Xcal$ is $\mu$-amenable if $I^\Xcal_\mu=\{0\}$. Similarly, a
  Fell bundle $\Bcal$ will be called $\mu$-amenable if
  $I^\Bcal_\mu=\{0\}$.    
\end{definition}

\begin{corollary}
  Let $\Xcal$ be an $\Acal\dsh \Bcal\dsh$equivalence bundle and $\mu$
  a pseudo crossed product functor with the hereditary property. 
  Then the following are equivalent:
  \begin{enumerate}
   \item $\Xcal$ is $\mu-$amenable.
   \item $\Acal$ is $\mu-$amenable.
   \item $\Bcal$ is $\mu-$amenable.
   \item $\Lb(\Xcal)$ is $\mu-$amenable.
   \item $\Kb(\Xcal)$ is $\mu-$amenable.
  \end{enumerate}
  In particular, $\mu-$amenability is preserved by equivalence of Fell
  bundles. 
\end{corollary}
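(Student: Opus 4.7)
My plan is to reduce all five conditions to the single condition (2), using three distinct identifications: the Rieffel-type correspondence provided by Proposition~\ref{proposition induction of ideals of crossed products}, the isomorphism $\Acal\cong\Kb(\Xcal)$ furnished by the corollary to Theorem~\ref{thm:compacts bundle}, and the realization of $C^*(\Lb(\Xcal))$ as the linking algebra $\Lb(C^*(\Xcal))$ from Corollary~\ref{corollary linking algebra of cross-sectional equivalence module}. In each case, the key point is that the correspondence in play is an order isomorphism of ideal lattices, hence sends zero to zero.

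For (1)$\iff$(2)$\iff$(3): Proposition~\ref{proposition induction of ideals of crossed products} asserts that $I^\Xcal_\mu=I^\Acal_\mu\cdot C^*(\Xcal)=C^*(\Xcal)\cdot I^\Bcal_\mu$, and that $I^\Acal_\mu$ and $I^\Bcal_\mu$ are Rieffel-induced from one another through the $C^*(\Acal)\dsh C^*(\Bcal)\dsh$equivalence bimodule $C^*(\Xcal)$. Because Rieffel induction is a lattice isomorphism between ideals of $C^*(\Acal)$, closed submodules of $C^*(\Xcal)$, and ideals of $C^*(\Bcal)$, the vanishing of any one of the three forces the vanishing of the other two.

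For (2)$\iff$(5): The corollary to Theorem~\ref{thm:compacts bundle} yields a Fell bundle isomorphism $\pi\colon\Kb(\Xcal)\to\Acal$ satisfying $\pi([x,y])=\laa x,y\raa$. Applying the functors $C^*$ and $\mu$ and invoking the naturality expressed by diagram \eqref{eq:crossed product functor diagram} (with $\phi=\pi$), we obtain a commutative square in which both $\pi^*$ and $\pi_\mu$ are $C^*$-algebra isomorphisms, whence $\pi^*(I^{\Kb(\Xcal)}_\mu)=I^\Acal_\mu$. Hence the two ideals vanish together.

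For (2)$\iff$(4): Since $\Acal$ is a hereditary Fell subbundle of $\Lb(\Xcal)$, the hereditary subbundle property of $\mu$ gives $I^\Acal_\mu=C^*(\Acal)\cap I^{\Lb(\Xcal)}_\mu$, so (4)$\Rightarrow$(2) is immediate. For the converse, Corollary~\ref{corollary linking algebra of cross-sectional equivalence module} identifies $C^*(\Lb(\Xcal))$ with $\Lb(C^*(\Xcal))$; inside this linking algebra the corner $C^*(\Acal)\cong\Kb(C^*(\Xcal))$ is full, because $C^*(\Xcal)$ is a full equivalence bimodule. The standard ideal correspondence for linking algebras of equivalence bimodules then shows that $J\mapsto J\cap C^*(\Acal)$ is a bijection between closed two-sided ideals of $\Lb(C^*(\Xcal))$ and ideals of $C^*(\Acal)$, so $I^\Acal_\mu=0$ forces $I^{\Lb(\Xcal)}_\mu=0$. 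The main (and only nontrivial) obstacle is invoking this ideal correspondence cleanly; once the five equivalences are in hand, the ``In particular'' clause is nothing but (2)$\iff$(3) restated: whenever two Fell bundles are equivalent via some $\Xcal$, their $\mu$-amenability properties coincide.
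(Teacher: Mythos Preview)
Your proof is correct. The treatment of (1)$\iff$(2)$\iff$(3) via Proposition~\ref{proposition induction of ideals of crossed products} and the Rieffel ideal correspondence is exactly what the paper does.

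For (4) and (5), however, the paper takes a more uniform route than yours. Rather than invoking the Fell-bundle isomorphism $\Acal\cong\Kb(\Xcal)$ and the linking-algebra ideal correspondence separately, the paper simply \emph{reapplies} the already-established equivalence (2)$\iff$(3) to two auxiliary equivalence bundles: for (5) it uses that $\Xcal$ itself is a $\Kb(\Xcal)\dsh\Bcal$-equivalence bundle (Corollary~\ref{corollary right \FH bundles are equivalence bundles}), and for (4) it uses that $\Xcal\oplus\Bcal$ is an $\Lb(\Xcal)\dsh\Bcal$-equivalence bundle (Remark~\ref{remark direct sums}). This bootstrap avoids both the explicit naturality argument you give for (5) and the full-corner ideal-lattice argument you give for (4); the paper never needs Corollary~\ref{corollary linking algebra of cross-sectional equivalence module} here. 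Your route is perfectly sound, and the linking-algebra argument for (2)$\Rightarrow$(4) is a natural instinct, but the paper's approach is shorter and keeps everything internal to the bundle-level equivalence just proved.
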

\begin{proof}
  By Proposition \ref{proposition induction of ideals of crossed products} and the correspondence of ideals via equivalence bimodules, $I^\Acal_\mu=\{0\}$ $\Leftrightarrow$ $I^\Bcal_\mu=\{0\}$ $\Leftrightarrow$ $I^\Xcal_\mu=\{0\}.$
  Then (1), (2) and (3) are equivalent.
  Besides, that equivalence together with Remark \ref{remark view
    direct sums inside the linking for hilber modules} and Corollary
  \ref{corollary right \FH bundles are equivalence bundles} implies
  the last four claims are equivalent to each other. 
\end{proof}

\subsection{Extension of pseudo crossed product functors.} 

\par Every Fell bundle $\Bcal$ can be considered as a
$\Bcal\dsh\Bcal$-equivalence bundle, and every $C^*$-algebra can be
considered as a $C^*$-tring. Then the following
result says that the
crossed product functor 
$C^*:\mathscr{F}\to\mathscr{C}$ can be extended to a 
functor $C^*:\mathscr{E}\to\mathscr{T}$ from the category
$\mathscr{E}$ of equivalence bundles to the category $\mathscr{T}$ of
$C^*$-trings.  

\begin{theorem}\label{thm:universalext}
 Assume $\Xcal$ and $\Ycal$ are $\Acal\dsh \Bcal$ and $\Ccal\dsh
 \Dcal\dsh$equivalence bundles over $G,$ respectively. 
 Then for every morphism of equivalence bundles, $\rho\colon \Xcal\to
 \Ycal,$ there exists a unique homomorphism of $C^*$-trings,
 $\rho^*\colon C^*(\Xcal)\to C^*(\Ycal),$ such that 
 $\rho^*(f)=\rho\circ f,$ for all $f\in C_c(\Xcal).$ 
 This homomorphism satisfies
 \begin{enumerate}[(a)]
  \item\label{item Cast and right operations on a morphism commute}
    $(\rho^*)^r=(\rho^r)^*$ and $(\rho^*)^l=(\rho^l)^*$. 
  \item $\Lb(\rho)^*=\Lb(\rho^*),$ under the isomorphism provided by Corollary \ref{corollary linking algebra of cross-sectional equivalence module}.
  \item If $\mu$ is a pseudo crossed product functor then $\rho^*(I^\Xcal_\mu)\subset
    I^\Ycal_\mu.$  
 \end{enumerate}

\end{theorem}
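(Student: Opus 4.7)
The plan is to build $\rho^*$ by reducing to the already-established case of Fell bundles. By Theorem~\ref{theorem the linking of a morphism} the morphism $\rho\colon\Xcal\to\Ycal$ of equivalence bundles induces a morphism of Fell bundles $\Lb(\rho)\colon\Lb(\Xcal)\to\Lb(\Ycal)$, to which we apply the functor $C^*\colon\mathscr{F}\to\mathscr{C}$ to obtain a $*$-homomorphism $\Lb(\rho)^*\colon C^*(\Lb(\Xcal))\to C^*(\Lb(\Ycal))$. Its defining property on the dense $*$-subalgebra $C_c(\Lb(\Xcal))$ is $\Lb(\rho)^*(F)=\Lb(\rho)\circ F$; combined with the identity \eqref{equation linking rho composed with a special section}, this shows that $\Lb(\rho)^*$ sends $C_c(\Xcal)$, viewed in the ``north-east corner'' of $C_c(\Lb(\Xcal))$, into the corresponding subspace $C_c(\Ycal)\subset C_c(\Lb(\Ycal))$ via the formula $f\mapsto\rho\circ f$. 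Since $C^*(\Xcal)$ and $C^*(\Ycal)$ are by definition the closures of $C_c(\Xcal)$ and $C_c(\Ycal)$ in the respective linking $C^*$-algebras, continuity yields $\Lb(\rho)^*(C^*(\Xcal))\subset C^*(\Ycal)$. I then set $\rho^*:=\Lb(\rho)^*|_{C^*(\Xcal)}$; it is automatically a homomorphism of $C^*$-trings because the ternary product on $C^*(\Xcal)$ inherited from $C^*(\Lb(\Xcal))$ is $(f,g,h)\mapsto fg^*h$ and $\Lb(\rho)^*$ is a $*$-homomorphism. Uniqueness is immediate since $C_c(\Xcal)$ is dense in $C^*(\Xcal)$.

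For (\ref{item Cast and right operations on a morphism commute}), both $(\rho^*)^r$ and $(\rho^r)^*$ are homomorphisms $C^*(\Bcal)\to C^*(\Dcal)$, and it suffices to verify they agree on elements of the dense subspace spanned by inner products $\langle f,g\rangle_{C^*(\Bcal)}=f^**g$ with $f,g\in C_c(\Xcal)$ (density follows as in the proof of Theorem~\ref{theorem construction of the equivalence bimodule}). By the characterization of the right functor applied to a $C^*$-tring morphism, $(\rho^*)^r(f^**g)=\rho^*(f)^**\rho^*(g)=(\rho\circ f)^**(\rho\circ g)$; on the other hand, $(\rho^r)^*(f^**g)=\rho^r\circ(f^**g)$, which at each $t\in G$ is computed pointwise inside $\Lb(\Ycal)$ using the identity $\rho^r(\lab x,y\rab)=\la\rho(x),\rho(y)\ra_\Dcal$ from Theorem~\ref{theorem isomorphism of equivalence bundles}, yielding the same function. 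The statement for $(\rho^*)^l=(\rho^l)^*$ follows by the symmetric argument, or by passing to the adjoint bundles of Remark~\ref{remark symmetric}.

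For (b), both $\Lb(\rho)^*$ and $\Lb(\rho^*)$ are $*$-homomorphisms between the linking algebras, and by Corollary~\ref{corollary linking algebra of cross-sectional equivalence module} we can identify $\Lb(C^*(\Xcal))$ with $C^*(\Lb(\Xcal))$ so that $C_c(\Acal),C_c(\Xcal),C_c(\Bcal)$ sit in the four corners. On a section of the form $\left(\begin{smallmatrix}\xi & f\\ \tilde g & \eta\end{smallmatrix}\right)$ with $\xi\in C_c(\Acal)$, $f,g\in C_c(\Xcal)$, $\eta\in C_c(\Bcal)$, formula \eqref{equation linking rho composed with a special section} gives the entrywise action $(\rho^l\circ\xi,\rho\circ f,\rho\circ g,\rho^r\circ\eta)$, which is precisely what $\Lb(\rho^*)$ produces after using part (a). Density of such sections and continuity of both maps conclude the identification.

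For (c), apply the defining property of a pseudo crossed product functor to the morphism $\Lb(\rho)$: by commutativity of the left square of \eqref{eq:crossed product functor diagram} we have $\Lb(\rho)^*(I^{\Lb(\Xcal)}_\mu)\subset I^{\Lb(\Ycal)}_\mu$. Since $I^{\Xcal}_\mu$ was defined in \ref{dfn:mucsm} as $\ker(q^{\Lb(\Xcal)}_\mu|_{C^*(\Xcal)})=C^*(\Xcal)\cap I^{\Lb(\Xcal)}_\mu$, and similarly for $\Ycal$, restricting to $C^*(\Xcal)$ gives $\rho^*(I^\Xcal_\mu)\subset C^*(\Ycal)\cap I^{\Lb(\Ycal)}_\mu=I^\Ycal_\mu$. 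The main technical obstacle is really confined to step (a)---making sure the ``external'' left/right $C^*$-algebras assigned by the $C^*$-tring functors do coincide with the cross-sectional algebras $C^*(\Acal)$ and $C^*(\Bcal)$ rather than with some a priori larger completions; this is handled by the density argument for inner products, together with the fact (used above) that $C^*(\Acal)$ and $C^*(\Bcal)$ sit as hereditary $C^*$-subalgebras of $C^*(\Lb(\Xcal))$ by Theorem~\ref{theorem hereditary sub bundle}.
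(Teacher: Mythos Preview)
Your proof is correct and follows essentially the same route as the paper: construct $\rho^*$ as the restriction of $\Lb(\rho)^*$ to $C^*(\Xcal)$, verify (a) on the dense span of inner products $f^**g$, deduce (b) by comparing both maps on the matrix sections of $C_c(\Lb(\Xcal))$ via \eqref{equation linking rho composed with a special section} and part (a), and obtain (c) from $\Lb(\rho)^*(I^{\Lb(\Xcal)}_\mu)\subset I^{\Lb(\Ycal)}_\mu$ together with $I^\Xcal_\mu=C^*(\Xcal)\cap I^{\Lb(\Xcal)}_\mu$. The only difference is cosmetic: the paper records the one-line computation $(\rho^*)^r(f^**g)=(\rho\circ f)^**(\rho\circ g)=\Lb(\rho)^*(f^**g)=(\rho^r)^*(f^**g)$ directly, whereas you phrase the same equality via a pointwise check; your closing remark about the ``main technical obstacle'' is not really needed, since fullness of $C^*(\Xcal)$ as a $C^*(\Bcal)$-module already identifies $C^*(\Xcal)^r$ with $C^*(\Bcal)$.
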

\begin{proof}
  Uniqueness is clear because $C_c(\Xcal)$ is dense in $C^*(\Xcal)$
  and every homomorphism of $C^*$-trings is contractive. 
  To prove existence let $\Lb(\rho)\colon \Lb(\Xcal)\to \Lb(\Ycal)$ be
  the morphism given by Theorem \ref{theorem the linking of a
    morphism}. 
  If we think of $C^*(\Xcal)$ as a $C^*$-subtring of
  $C^*(\Lb(\Xcal)),$ then it follows that $\rho\circ f =
  \Lb(\rho)\circ f,$ for all $f\in C_c(\Xcal).$ 
  This implies
  $\Lb(\rho)^*(C^*(\Xcal))=\overline{\Lb(\rho)^*(C_c(\Xcal))}\subset
  \overline{C_c(\Ycal)}=C^*(\Ycal)$, so it is enough to define 
  $\rho^*:=\Lb(\rho)^*|_{C^*(\Xcal)}.$  
   \par Note \eqref{item Cast and right operations on a morphism
     commute} follows from the fact that, for all $f,g\in C_c(\Xcal)$:  
  $$ (\rho^*)^r(f^**g) = (\rho\circ f)^**(\rho\circ g) =
  \Lb(\rho)^*(f^**g) = (\rho^r)^*(f^**g).$$ 
  Similarly $(\rho^*)^l(f*g^*)=(\rho^l)^*(f*g^*).$
  \par To prove the second statement, note first that if 
  $\begin{pmatrix}
    \xi & f\\
    \widetilde{g} & \eta
   \end{pmatrix}\in C_c(\Lb(\Xcal))$, then 
   \[\Lb(\rho)^*\begin{pmatrix}
    \xi & f\\
    \widetilde{g} & \eta
   \end{pmatrix}
   =\Lb(\rho)\circ 
    \begin{pmatrix}
    \xi & f\\
    \widetilde{g} & \eta
   \end{pmatrix}
    =\begin{pmatrix}
    \rho^l\circ \xi & \rho\circ f\\
    \widetilde{\rho\circ g} & \rho^r\circ \eta
     \end{pmatrix}
   =\begin{pmatrix} 
    (\rho^l)^*(\xi) & \rho^*(f)\\
    \widetilde{\rho^*(g)} & (\rho^r)^*(\eta)
    \end{pmatrix}
   ,\] and therefore by (a) we have:   
   \begin{gather*}
   \Lb(\rho)^*\begin{pmatrix}
    \xi & f\\
    \widetilde{g} & \eta
   \end{pmatrix}
   =\begin{pmatrix}
    (\rho^*)^l(\xi) & \rho^*(f)\\
    \widetilde{\rho^*(g)} & (\rho^*)^r(\eta)
   \end{pmatrix}
   =\Lb(\rho^*) \begin{pmatrix}
    \xi & f\\
    \widetilde{g} & \eta
   \end{pmatrix}
\end{gather*}
   Then $\Lb(\rho)^*=\Lb(\rho^*)$ on a dense subset, so they agree.    
   \par As
  for the last statement, we have:  
  $$ \rho^*(I^\Xcal_\mu) = \Lb(\rho)^*( C^*(\Xcal)\cap
  I^{\Lb(\Xcal)}_\mu ) \subset C^*(\Ycal)\cap I^{\Lb(\Ycal)}_\mu =
  I^\Ycal_\mu.$$ 
\end{proof}

\begin{corollary}\label{cor:extmu}
Let $\mu:\mathscr{F}\to\mathscr{C}$ be a pseudo crossed product functor with
the hereditary subbundle property. Then $\mu$
can be extended to a functor $\mu:\mathscr{E}\to\mathscr{T}$, from the category
of equivalence bundles to the category of $C^*$-trings. 
\end{corollary}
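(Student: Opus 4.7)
The plan is to set $\mu(\Xcal):=C^*_\mu(\Xcal)$ on objects and, for a morphism $\rho\colon\Xcal\to\Ycal$ of equivalence bundles, to let $\mu(\rho)$ be the map $C^*_\mu(\Xcal)\to C^*_\mu(\Ycal)$ induced by $\rho^*$ on passing to the quotients. First, $\mu(\Xcal)$ will belong to $\mathscr{T}$ because Proposition~\ref{prop exotic cross sectional Hilbert module} has already exhibited it as a $C^*_\mu(\Acal)\dsh C^*_\mu(\Bcal)\dsh$equivalence bimodule, and such bimodules carry the canonical $C^*$-tring structure $(x,y,z)\mapsto x\la y,z\ra$.

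The decisive input will be Theorem~\ref{thm:universalext}. Its part~(c) yields $\rho^*(I^\Xcal_\mu)\subseteq I^\Ycal_\mu$, so the composition $q^\Ycal_\mu\circ\rho^*\colon C^*(\Xcal)\to C^*_\mu(\Ycal)$ vanishes on $I^\Xcal_\mu=\ker q^\Xcal_\mu$ and therefore factors through a unique $C^*$-tring homomorphism $\mu(\rho)\colon C^*_\mu(\Xcal)\to C^*_\mu(\Ycal)$ determined by $\mu(\rho)\circ q^\Xcal_\mu=q^\Ycal_\mu\circ\rho^*$.

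Once well-definedness is in hand, functoriality will be formal. The maps $\mu(\id_\Xcal)$ and $\id_{C^*_\mu(\Xcal)}$ agree on the dense image of $q^\Xcal_\mu(C_c(\Xcal))$, hence everywhere. For a composition $\Xcal\stackrel{\rho}{\to}\Ycal\stackrel{\sigma}{\to}\Zcal$, the identity $(\sigma\circ\rho)^*=\sigma^*\circ\rho^*$ holds because both sides send $f\in C_c(\Xcal)$ to $\sigma\circ\rho\circ f$ and are continuous; descending to the quotients delivers $\mu(\sigma\circ\rho)=\mu(\sigma)\circ\mu(\rho)$.

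Finally, to confirm that this really extends the original $\mu\colon\mathscr{F}\to\mathscr{C}$ along the embedding that views each Fell bundle $\Bcal$ as a $\Bcal\dsh\Bcal\dsh$equivalence bundle $\Xcal$, I will invoke the corollary preceding Definition~\ref{dfn:cpfext}, which asserts $I^\Bcal_\mu=I^\Xcal_\mu$ under the identification $C^*(\Xcal)=C^*(\Bcal)$ of Corollary~\ref{corollary cross sectional equiv bundle and cross-sectional cast algebra of a Fell bundle}, and then observe that for a morphism $\phi\colon\Acal\to\Bcal$ of Fell bundles the map $\phi^*$ coincides with the one produced by Theorem~\ref{thm:universalext} when $\phi$ is read as a morphism of equivalence bundles. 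No real obstacle is anticipated: the single substantive ingredient is the invariance $\rho^*(I^\Xcal_\mu)\subseteq I^\Ycal_\mu$, which is already supplied by Theorem~\ref{thm:universalext}(c); everything else is a translation of the functoriality of $C^*\colon\mathscr{E}\to\mathscr{T}$ through the natural transformation $q_\mu$.
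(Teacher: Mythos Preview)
Your proposal is correct and follows essentially the same approach as the paper: define $\mu(\Xcal)=C^*_\mu(\Xcal)$ on objects, use Theorem~\ref{thm:universalext}(c) to obtain $\rho^*(I^\Xcal_\mu)\subseteq I^\Ycal_\mu$, and pass to the quotients to define $\mu(\rho)$. Your write-up is in fact more detailed than the paper's, which dispatches functoriality and the extension check with ``it is easy to check.''
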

\begin{proof}
Given an $\Acal-\Bcal\dsh$equivalence bundle $\Xcal$, we have
$C^*_\mu(\Xcal)=C^*(\Xcal)/I_\mu^\Xcal$, where $I^\Xcal_\mu = I^\Acal_\mu 
 C^*(\Xcal)=C^*(\Xcal)I^\Bcal_\mu$ (recall \ref{dfn:mucsm} and the
 proof of \ref{proposition induction of ideals of crossed products}). 
\par Now if $\rho:\Xcal\to\Ycal$ is a morphism of equivalence bundles, 
we have $\rho^*(I^\Xcal_\mu)\subseteq I^\Ycal_\mu$ by the third
statement of  \ref{thm:universalext}, so $\rho^*$ induces a 
unique homomorphism of $C^*$-trings $\rho_\mu:C^*_\mu(\Xcal)\to
C^*_\mu(\Ycal)$. It is easy to check that
$\Big(\Xcal\stackrel{\rho}{\to}\Ycal\Big)\longmapsto
\Big(C^*_\mu(\Xcal)\stackrel{\rho_\mu}{\to}C^*_\mu(\Ycal)\Big)$ is a
functor that extends $\mu$. 
\end{proof}
\begin{remark}  Note that the extended functor also has
the hereditary subbundle property, in the sense that if $\Xcal$ is
hereditary in $\Ycal$, then $C^*_\mu(\Xcal)$ is hereditary in
$C^*_\mu(\Ycal)$ (recall Definition~\ref{dfn:cpfext}, and also note
that any (positive) $C^*$-tring can be thought of as an equivalence bundle over
the trivial group).     
\end{remark}

\begin{remark}
 After Corollary \ref{cor:extmu} a natural question arises: given the
 extension of a crossed product functor with the hereditary subbundle
 property, $\mu,$ do we obtain commutative diagrams like
 \eqref{eq:crossed product functor diagram} (see diagram
 \eqref{eq:commutative diagram equivalence bundles} below) if we
 consider morphisms of equivalence bundles instead of morphism of Fell
 bundles? 
 To answer this question affirmatively we need to choose the
 map $p^\Xcal_\mu \colon C^*_\mu(\Xcal)\to C^*_r(\Xcal),$ for every
 equivalence bundle $\Xcal.$ 
 
 Given a morphism of equivalence bundles, $\phi\colon \Xcal\to \Ycal, $ the commutative diagram associated to $\Lb(\phi)\colon \Lb(\Xcal)\to  \Lb(\Ycal)$ is
\begin{equation}\label{eq:commutative diagram for linking bundles}
\xymatrix{
C^*(\Lb(\Xcal))
  \ar@/^{1.8pc}/[urrd]^{q^{\Lb(\Xcal)}_r=\Lambda^{\Lb(\Xcal)}}
  \ar[r]^{q^{\Lb(\Xcal)}_\mu}\ar[d]_{\Lb(\phi)^*} &
C^*_\mu(\Lb(\Xcal))
  \ar[r]^{p^{\Lb(\Xcal)}_\mu}
  \ar[d]_{\Lb(\phi)_\mu} &
C^*_r(\Lb(\Xcal))
  \ar[d]^{\Lb(\phi)_r}\\
C^*(\Lb(\Ycal))
  \ar@/_{1.8pc}/[urrd]_{q^{\Lb(\Ycal)}_r = \Lambda^{\Lb(\Ycal)}}
  \ar[r]_{q^{\Lb(\Ycal)}_\mu} &
C^*_\mu(\Lb(\Ycal))
  \ar[r]_{p^{\Lb(\Ycal)}_\mu} &
C^*_r(\Lb(\Ycal))
}
\end{equation}

Since $C^*_\bullet(\Xcal)=q^\Xcal_\bullet (C^*(\Xcal)),$ for $\bullet = r,\mu,$ we have
$$p^{\Lb(\Xcal)}_\mu(C^*_\mu(\Xcal)) = p^{\Lb(\Xcal)}_\mu\circ q^{\Lb(\Xcal)}_\mu(C^*(\Xcal))=  q^{\Lb(\Xcal)}_r(C^*(\Xcal))=C^*_r(\Xcal).$$
Then it is natural to define $p^\Xcal_\mu\colon C^*_\mu(\Xcal)\to
C^*_r(\Xcal)$ to be the restriction of $p^{\Lb(\Xcal)}_r$ to
$C^*_\mu(\Xcal)$. 
Adding to the commutative diagram \eqref{eq:commutative diagram for
  linking bundles} the fact that $\phi_\bullet \colon
C^*_\bullet(\Xcal)\to C^*_\bullet(\Ycal)$ is the corresponding
restriction of $\Lb(\phi)_\bullet$ (for $\bullet = r,\mu$) we obtain
the commutative diagram  
\begin{equation}\label{eq:commutative diagram equivalence bundles}
\xymatrix{
C^*(\Xcal)\ar@/^{1.8pc}/[urrd]^{q^{\Xcal}_r}
\ar[r]^{q^{\Xcal}_\mu}\ar[d]_{\phi^*}&C^*_\mu(\Xcal)\ar[r]^{p^{\Xcal}_\mu}\ar[d]_{\phi_\mu}&C^*_r(\Xcal)\ar[d]^{\phi_r}\\
C^*(\Ycal)\ar@/_{1.8pc}/[urrd]_{q^{\Ycal}_r}
\ar[r]_{q^{\Ycal}_\mu}&C^*_\mu(\Ycal)\ar[r]_{p^{\Ycal}_\mu}&C^*_r(\Ycal)
},
\end{equation}
which is the diagram mentioned in our question.
\end{remark}

\section{Internal tensor products and transitivity}
\par This last section is devoted to proving that equivalence of Fell
bundles is an equivalence relation. To this end we will define internal tensor products of Fell bundles. 

\par Suppose we are given an $\Acal\dsh \Bcal\dsh$equivalence bundle, $\Xcal$, and a 
$\Bcal\dsh \Ccal\dsh$equivalence bundle, $\Ycal.$  
Then we can form the internal tensor product
$C^*(\Xcal)\otimes_{C^*(\Bcal)}C^*(\Ycal),$ which establishes a
Morita-Rieffel equivalence between $C^*(\Acal)$ and $C^*(\Ccal).$ 
One could expect this equivalence to come from an $\Acal\dsh \Ccal$-equivalence 
bundle $\Zcal$ in such a way that $C^*(\Zcal)$ is isomorphic 
to $C^*(\Xcal)\otimes_{C^*(\Bcal)}C^*(\Ycal).$ 
This bundle $\Zcal$ should then be denoted $\Xcal\otimes_\Bcal\Ycal,$
for then we would have 
$$ C^*(\Xcal\otimes_\Bcal\Ycal) = C^*(\Xcal)\otimes_{C^*(\Bcal)}C^*(\Ycal).$$
\par In what follows we will construct such a bundle $\Zcal$. 
The construction is a bit complicated, and will be done along several
steps. The final part of the process 
will consist in obtaining an equivalence bundle from a kind of a pre
equivalence bundle. The following two results will serve to this
purpose.   

\begin{proposition}\label{theorem construction of right bundle}
  Let $\Bcal$ be a Fell bundle over $G.$
  Assume there is a bundle of normed vector spaces
  $\Xcal:=\{X_t\}_{t\in G},$ sets of sections $\Gamma_\Xcal$ and
  $\Gamma_\Bcal$ (of $\Xcal$ and $\Bcal$ respectively) and maps 
  \begin{equation}\label{equ B operations}
    \Xcal\times \Bcal\to \Xcal: \ (x,b)\mapsto xb,\mbox{ and }
    \Xcal\times \Xcal\to \Bcal: \ (x,y)\mapsto \lab x,y\rab, 
  \end{equation}
  with the following properties:
  \begin{enumerate}
   \item Conditions (1R)-(7R) from Definition \ref{definition right
       \FH} hold. 
   \item $\Gamma_\Xcal$ is a complex vector space with pointwise
     operations and $\Gamma_\Bcal\subset C_c(\Bcal)$. 
    \item For all $t\in G$ and $\Fcal\in \{\Bcal,\Xcal\},$
      $\{u(t)\colon u\in \Gamma_{\Fcal}\}$ is dense in $F_t$, where
      the norm considered on $X_t$ is $\|x\|=\|\lab x,x\rab\|^{1/2}.$ 
   \item\label{item continuity of norms of operations} For all $u,v\in
     \Gamma_\Xcal$ and $f,g\in \Gamma_\Bcal$ the maps 
   $$G\times G\to \R,\  (r,s)\mapsto \| \lab u(r),v(s)\rab - f(\rmu s)
   \|$$ 
   $$G\times G\to B_e,\  (r,s)\mapsto \lab u(r)f(s) - g(rs),u(r)f(s) -
   g(r s)\rab,$$ 
   and $G\to B_e, \ r\mapsto \lab u(r),u(r)\rab ,$ are continuous.
  \end{enumerate}

  Then there exists a unique right \FH $\Bcal\dsh$bundle
  $\overline{\Xcal}=\{\overline{X}_t\}_{t\in G}$ such that 
  \begin{itemize}
   \item For all $t\in G,$ $\overline{X}_t$ is the completion of $X_t.$
   \item $\Gamma_\Xcal$ is a set of continuous sections of $\overline{\Xcal}.$
   \item The inner product and action of $\overline{\Xcal}$ extend those of $\Xcal.$ 
  \end{itemize}
\end{proposition}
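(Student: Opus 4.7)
The plan is to build $\overline{\Xcal}$ in three stages: first complete each fiber, then extend the algebraic operations by continuity, and finally construct the bundle topology and check continuity using the tools already developed in the paper.

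\textbf{Step 1 (fiberwise completion).} For each $t\in G$, condition (6R) says that $\|x\|^2=\|\lab x,x\rab\|$ defines a (not necessarily complete) pre-Hilbert $B_e$-module structure on $X_t$. Let $\overline{X}_t$ be its Banach space completion; (6R) together with (4) and the positivity in (5R) guarantees that $\overline{X}_t$ is a right Hilbert $B_e$-module in a standard way. The action of any fixed $b\in B_s$ on $X_t$ is bounded by $\|b\|$ (proof as in Lemma~\ref{lemma first tools}(2)), so it extends uniquely to a bounded map $\overline{X}_t\to\overline{X}_{ts}$. Similarly $\lab\,,\,\rab$ extends to $\overline{X}_r\times\overline{X}_s\to B_{r^{-1}s}$ by jointly continuous bilinearity. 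The axioms (1R)--(7R) pass to the completion because they are continuous identities in $b$, $x$, $y$; (7R) is immediate since $\lab \overline{X}_s,\overline{X}_s\rab\supset\lab X_s,X_s\rab$.

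\textbf{Step 2 (bundle topology).} I will produce the Banach bundle structure on $\overline{\Xcal}$ via \cite[II~13.18]{FlDr88}, which requires a vector space of sections $\Gamma\subset\prod_t\overline{X}_t$ that is (i) pointwise dense in each fiber and (ii) such that $t\mapsto\|u(t)\|$ is continuous for each $u\in\Gamma$. Take $\Gamma:=\Gamma_\Xcal$. Density in $\overline{X}_t$ comes from hypothesis (3) together with the definition of $\overline{X}_t$ as a completion of $X_t$. Continuity of the norm is exactly
\[
t\mapsto \|u(t)\|^{2} = \|\lab u(t),u(t)\rab\|,
\]
which is continuous by the last assertion of hypothesis~(4) combined with continuity of the norm on $\Bcal$. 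Thus \cite[II~13.18]{FlDr88} yields a unique Banach bundle topology on $\overline{\Xcal}$ making every element of $\Gamma_\Xcal$ a continuous section, and the first two bulleted properties of the statement hold by construction. Uniqueness of $\overline{\Xcal}$ follows from \cite[II~13.18]{FlDr88}.

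\textbf{Step 3 (continuity of action and inner product).} It remains to verify that the two extended maps in \eqref{equ B operations} are continuous on $\overline{\Xcal}\times\Bcal$ and $\overline{\Xcal}\times\overline{\Xcal}$ respectively. I will apply Proposition~\ref{prop continuity}, using as base maps $f(r,s)=rs$ and $f(r,s)=r^{-1}s$. Hypotheses (1) and (2) of that proposition are immediate from (1R) and Lemma~\ref{lemma first tools}-type norm bounds (already contained in (2R)--(6R)). For hypothesis (3) I use $\Gamma_\Xcal$ for the $\Xcal$-factors, $\Gamma_\Bcal$ for $\Bcal$, and any local set of continuous sections of $\overline{\Xcal}$ or $\Bcal$ as codomain witnesses. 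Hypothesis (4) is precisely what is demanded by the two continuity statements in \eqref{item continuity of norms of operations}: rewriting
\[
\|\lab u(r),v(s)\rab - f(r^{-1}s)\|
\]
gives continuity for the inner product map, and
\[
\|u(r)f(s) - g(rs)\|^{2} = \bigl\|\lab u(r)f(s)-g(rs),\, u(r)f(s)-g(rs)\rab\bigr\|
\]
together with continuity of $\|\cdot\|$ on $B_e$ gives continuity for the action map.

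\textbf{Main obstacle.} The only delicate point is checking that hypothesis (3) of Proposition~\ref{prop continuity} holds with the \emph{same} section spaces $\Gamma_\Xcal,\Gamma_\Bcal$ given in the data, since Proposition~\ref{prop continuity} requires density in every fiber of the codomain as well. This is handled by observing that $\Gamma_\Xcal$ is pointwise dense in $\overline{\Xcal}$ by construction and that condition (3) on $\Gamma_\Bcal$ is given; once the topology on $\overline{\Xcal}$ is in place, one can always enlarge the section spaces to any set of local sections of $\overline{\Xcal}$ without losing hypothesis (4), since the latter is polynomial in the values of the sections and can be verified by a standard $\varepsilon/3$-argument reducing to elements of $\Gamma_\Xcal$ and $\Gamma_\Bcal$. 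This completes the verification of the axioms (1R)--(7R) for $\overline{\Xcal}$ as a right Hilbert $\Bcal$-bundle.
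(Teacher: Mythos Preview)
Your proof is correct and follows essentially the same approach as the paper: complete the fibers, invoke \cite[II~13.18]{FlDr88} using hypothesis~(3) and the last map in~(4) to obtain the Banach bundle structure, extend the operations by continuity, and then apply Proposition~\ref{prop continuity} with hypothesis~(4) to get continuity of the action and the inner product. Your ``main obstacle'' is not really an obstacle: hypothesis~(3) already gives fiberwise density of $\Gamma_\Xcal$ in $\overline{\Xcal}$ and of $\Gamma_\Bcal$ in $\Bcal$, which is exactly what condition~(3) of Proposition~\ref{prop continuity} requires for both maps, so no enlargement or $\varepsilon/3$ argument is needed.
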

\begin{proof}
  First note that each fiber $X_t$ is a right pre Hilbert 
  $B_e\dsh$module with positive definite inner product $(x,y)\mapsto
  \lab x,y\rab.$ 
  \par Given $t\in G,$ let $\overline{X}_t$ be the completion of
  $X_t$, and consider $\overline{\Xcal}=\{\overline{X}_t\}_{t\in
    G}$ as an untopologized bundle over $G.$ 
  It follows from \cite[II 13.18]{FlDr88} and condition \eqref{item continuity of norms of operations} that there exists a unique Banach bundle structure on $\overline{\Xcal}$ such that $\Gamma_\Xcal$ is a set of continuous $\overline{\Xcal}$ sections. 
  Using linearity and continuity arguments we can easily prove that
  the action of $\Bcal$ on $\Xcal$, as well as the inner product, can be
  extended in a unique way to an action and an inner product on
  $\overline{\Xcal}.$ 
  The same sort of arguments can be used to prove these new operations satisfy conditions (1R)-(7R) from Definition~\ref{definition right \FH}.
  Finally, by Proposition \ref{prop continuity} and condition \eqref{item continuity of norms of operations}, the inner product and the action are continuous.
\end{proof}

\par We also have a bilateral version of the previous result: we just
need to show that the norms coming from the left and right structures
agree: 

\begin{proposition}\label{theorem construction of equivalence bundle}
  Let $\Acal$ and $\Bcal$ be a Fell bundles over $G.$
  Assume there is a bundle of complex vector spaces $\Xcal:=\{X_t\}_{t\in G},$ sets of sections $\Gamma_\Fcal$ for $\Fcal\in \{\Acal,\Bcal,\Xcal\}$ and maps
  \begin{equation}
    \Acal\times \Xcal\to \Xcal, \ (a,x)\mapsto ax,\ \Xcal\times \Xcal\to \Acal, \ (x,y)\mapsto \laa x,y\raa
  \end{equation}
  \begin{equation}
    \Xcal\times \Bcal\to \Xcal, \ (x,b)\mapsto xb,\ \Xcal\times \Xcal\to \Bcal, \ (x,y)\mapsto \lab x,y\rab
  \end{equation}
  with the following properties:
  \begin{enumerate}
   \item Conditions (1R)-(5R), (1L)-(5L), (7R) and (7L) of
     Definitions~\ref{definition right \FH} hold and, for all
     $x,y,z\in \Xcal,$  
   $\laa x,y\raa z=x\lab y,z\rab.$
   \item $\Gamma_\Xcal$ is a complex vector space with pointwise
     operations and $\Gamma_\Fcal\subset C_c(\Fcal),$ for $\Fcal\in
     \{\Acal,\Bcal\}.$ 
   \item For all $t\in G$ and $\Fcal \in \{\Acal,\Bcal,\Xcal\},$
     $\{u(t)\colon u\in \Gamma_{\Fcal}\}$ is dense in $F_t$, where the
     norm considered on $X_t$ is $\|x\|_\Bcal=\|\lab x,x\rab\|^{1/2}$.  
   \item Condition (4) of Proposition~\ref{theorem construction of
       right bundle} holds and, analogously, for all $u,v\in
     \Gamma_\Xcal$ and $f,g\in \Gamma_\Acal$ the maps 
   $$G\times G\to \R,\ (r,s)\mapsto \| \laa u(r),v(s)\raa - f(\rmu s) \|$$
   $$G\times G\to A_e,\  (r,s)\mapsto \laa u(r)f(s) - g(\rmu
   s),u(r)f(s) - g(\rmu s)\raa,$$ 
   and $G\to A_e, \ r\mapsto \laa u(r),u(r)\raa,$ are continuous.
  \end{enumerate}
  \par Then there exists a unique $\Acal\dsh \Bcal\dsh$equivalence bundle
  $\overline{\Xcal}=\{\overline{X}_t\}_{t\in G}$ such that 
  \begin{itemize}
   \item For all $t\in G,$ $\overline{X}_t$ is the completion of $X_t.$
   \item $\Gamma_\Xcal$ is a set of continuous sections of $\overline{\Xcal}.$
   \item The inner products and actions of $\overline{\Xcal}$ extend those of $\Xcal.$ 
  \end{itemize}
\end{proposition}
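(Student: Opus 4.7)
The plan is to reduce this bilateral statement to Proposition~\ref{theorem construction of right bundle} applied both on the right and, via the adjoint construction of Remark~\ref{remark symmetric}, on the left, gluing the two one-sided results on the common completion. The crucial preliminary step is the norm equality
\[
\|x\|_\Acal := \|\laa x,x\raa\|^{1/2} \;=\; \|\lab x,x\rab\|^{1/2} =: \|x\|_\Bcal
\]
on each fiber $X_t$; only after securing it do we know that the left and right constructions produce the same Banach bundle.

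To establish the norm equality I would first derive from the algebraic hypotheses the imprimitivity identities $\lab ax,y\rab = \lab x,a^*y\rab$ and $\laa p,ax\raa = \laa p,x\raa a^*$; both follow by first pairing via the remaining inner product through the associativity $\laa\,,\,\raa z=z'\lab\,,\,\rab$, then using (7R) (respectively (7L)) together with an approximate identity of $B_e$ (respectively $A_e$) to pass from the paired identity to the identity itself. The first identity implies that $\theta_{y,z}(x):=y\lab z,x\rab$ defines a $*$-homomorphism
$\psi_r\colon\spn\laa\Xcal,\Xcal\raa\to\Bb(X_r)$,
sending $\laa y,z\raa$ to $\theta_{y,z}$, and $\spn\laa y,z\raa\mapsto\sum y_i\lab z_i,\,\cdot\,\rab$ is well-defined because of the identity $\laa y,z\raa x=y\lab z,x\rab$; by (7L) this extends to $\psi_r\colon A_e\to\Bb(X_r)$ with $\psi_r(a)x=ax$. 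The direct sum $\psi:=\oplus_{r\in G}\psi_r$ is injective: if $\psi(a)=0$ then $ax=0$ for every $x\in\Xcal$, so $\laa y,x\raa a^*=0$ for all $y,x$, hence $ba^*=0$ for every $b\in A_e$ by (7L) and continuity, and finally $a=0$ via an approximate identity of $A_e$. Being an injective $*$-homomorphism of $C^*$-algebras, $\psi$ is isometric; applied to $a=\laa x,x\raa$ with $x\in X_t$, a standard rank-one estimate gives $\|\theta_{x,x}|_{X_r}\|\leq\|x\|_\Bcal^2$, and direct computation $\|\theta_{x,x}(x)\|_\Bcal=\|x\lab x,x\rab\|_\Bcal=\|x\|_\Bcal^3$ shows equality is attained at $r=t$. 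Thus $\|x\|_\Acal^2=\|\laa x,x\raa\|=\|x\|_\Bcal^2$.

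Once this coincidence is in hand, Proposition~\ref{theorem construction of right bundle} applied to the right-hand data produces $\overline{\Xcal}:=\{\overline{X}_t\}_{t\in G}$ as a right Hilbert $\Bcal$-bundle, with $\Gamma_\Xcal$ as a distinguished set of continuous sections; its left-handed analogue (equivalently, Proposition~\ref{theorem construction of right bundle} applied to the adjoint bundle $\widetilde{\Xcal}$ as in Remark~\ref{remark symmetric}) produces a left Hilbert $\Acal$-bundle structure on the same completion. Since the fiber norms and the distinguished set $\Gamma_\Xcal$ are common to both sides, the uniqueness clause of \cite[II 13.18]{FlDr88} forces the two Banach-bundle topologies to coincide. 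The compatibility $\laa x,y\raa z=x\lab y,z\rab$ then transfers from $\Xcal$ to $\overline{\Xcal}$ by joint continuity of the four operations, themselves guaranteed by the one-sided propositions, and uniqueness of $\overline{\Xcal}$ follows from the density of $\{u(t):u\in\Gamma_\Xcal\}$ in each $\overline{X}_t$. The principal obstacle throughout is the norm-equality step; once it is established, everything else reduces to a formal gluing of the two one-sided constructions.
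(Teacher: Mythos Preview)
Your approach is correct and matches the paper's: reduce to Proposition~\ref{theorem construction of right bundle} on each side (via the adjoint bundle for the left), the crux being the equality $\|x\|_\Acal=\|x\|_\Bcal$ on every fiber. The paper's argument for this equality is a minor variant of yours: instead of proving that $\oplus_r\psi_r$ is injective (hence isometric) and reading off equality directly, it just uses that the $*$-homomorphism $\spn\laa X_t,X_t\raa\to\Kb(\overline{X_t})$, $\laa y,z\raa\mapsto\theta_{y,z}$, is automatically contractive (\cite[VI~19.11]{FlDr88}) to get $\|x\|_\Bcal\le\|x\|_\Acal$, and then obtains the reverse inequality by passing to $\widetilde{\Xcal}$. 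One point to tighten in your write-up: the target of $\psi_r$ should be $\Bb(\overline{X_r})$ so that it is a $C^*$-algebra, and the passage from the dense $*$-ideal $\spn\{\laa X_s,X_s\raa:s\in G\}$ to all of $A_e$ needs precisely that automatic-continuity citation before you can invoke ``injective $*$-homomorphism of $C^*$-algebras is isometric''.
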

\begin{proof}
  Forgetting the left structure, and defining
  $\|x\|_\Bcal=\|\lab x,x\rab\|^{1/2}$, $\forall x\in\Xcal$,  we are
  in the hypotheses of 
  Proposition~\ref{theorem construction of right bundle}. 
  Let $\overline{\Xcal}$ be the right \FH $\Bcal\dsh$bundle given by
  Proposition~\ref{theorem construction of right bundle}. We will show
  that this bundle can be made into an 
  $\Acal\dsh \Bcal\dsh$equivalence bundle preserving the right
  structure. 
 \par Given $t\in G$ define $I_t:=\spn \laa X_t,X_t\raa\subseteq A_e.$ 
  Note that conditions (1L)-(5L) imply $I_t$ is an algebraic $*$-ideal 
  of $A_e.$ The compatibility of the left and right structure on
  $\Xcal$ ensures that given $a\in I_t$ there exists a unique operator
  $\rho_t(a)\in \Kb(\overline{X_t})$ such that $\rho_t(a)x=ax,$ for
  all $x\in X_t.$ 
  Then there exists a unique *-homomorphism $\phi_t\colon I_t\to \Kb(\overline{X_t}),\ a\mapsto \rho_t(a).$
  According to \cite[VI 19.11]{FlDr88}  the homomorphism $\phi_t$ is norm continuous, so it has a unique extension to a *-representation of the $C^*$-ideal $\overline{I_t}.$ 
  As this last representation is contractive, for all $x\in X_t$ we have
  $$ \| \lab x,x\rab \| = \| {}_{\Kb(\overline{X_t})}\la x,x\ra \| = \|\rho_t( \laa x,x\raa )\|\leq \|\laa x,x\raa\|. $$
  
  Now reverse the arguments: take the bundle of complex conjugate normed spaces $\widetilde{\Xcal}=\{\widetilde{X_\tmu}\}_{t\in G}$ with the natural action of $\Acal$ on the right and the $\Acal\dsh$valued inner product, letting $\Bcal$ act on the left.
  Then we conclude that for all $x\in \Xcal:$
  $$ \|\laa x,x\raa \|=\| \la \widetilde{x},\widetilde{x}\ra_{\Acal} \|\leq \|{}_\Bcal\la \widetilde{x},\widetilde{x}\ra\|=\|\lab x,x\rab\|. $$
  
  Moreover the adjoint bundle of
  $\overline{\widetilde{\Xcal}}$ is equal to $\overline{\Xcal}$ (as a Banach bundle).   
  Then $\overline{\Xcal}$ is, at the same time, a left \FH
  $\Acal\dsh$bundle and a right \FH $\Bcal\dsh$bundle. 
  We also know that $\laa x,y\raa z=x\lab y,z\rab$ holds for all
  $x,y,z\in \Xcal$, and a simple continuity argument implies the same 
  identity also holds for all $x,y,z\in \overline{\Xcal}.$   
\end{proof}
 
\subsection{A tensor product of equivalence bundles}\label{subsection a tensor product of bundles}
Fix, for the rest of this section, three Fell bundles $\Acal,\Bcal$
and $\Ccal$ over $G$, an $\Acal\dsh \Bcal\dsh$equivalence bundle,
$\Xcal,$ and a $\Bcal\dsh \Ccal\dsh$equi\-va\-lence bundle, $\Ycal.$ 
From these data we want to construct an $\Acal\dsh 
\Ccal\dsh$equivalence bundle.  
\par There are three bundles we will use in our construction of the $\Acal\dsh 
\Ccal\dsh$equivalence bundle. 
First we define a bundle $\Zcal$ over $G\times G$, whose fibers are
tensor products of the form $X_r\otimes_{B_e}Y_s.$ 
Then we construct a bundle $\Ucal$ over $G$ by defining $U_t$ as the
set of continuous sections of compact support of the reduction of
$\Zcal$ to $ \{(r,s)\in G\times G\colon rs=t\}.$ 
Finally, the fibers of the bundle $[\Ucal]$ will be quotients of the
fibers of $\Ucal.$ 
The desired $\Acal\dsh\Ccal\dsh$equivalence bundle will then be obtained from
$[\Ucal]$ using Proposition~\ref{theorem construction of equivalence
  bundle}. 
\bigskip
\paragraph{{\bf The Banach bundle $\mathbf{\Zcal}$.}}{\mbox{\quad}} 
\par Given $(r,s)\in G\times G$ let $Z_{(r,s)}$ be the
Hilbert $C_e\dsh$module $X_r\otimes_{B_e}Y_s$ and consider the bundle
$\Zcal:=\{Z_w\}_{w\in G\times G}.$ 
We want to endow $\Zcal $ with a Banach bundle structure such that every function of the form
$f\boxtimes g\colon G\times G\to \Zcal,$ $f\boxtimes
g(r,s)=f(r)\otimes g(s)$  ($f\in C_c(\Xcal)$, $g\in
C_c(\Ycal)$), is a continuous section of $\Zcal$. To this end 
we use \cite[II 13.18]{FlDr88}.
Let $\Gamma_\Zcal:=\spn \{ f\boxtimes g\colon f\in C_c(\Xcal),\ g\in
C_c(\Ycal) \}.$ 
It is clear that for each $(r,s)\in G\times G$ the set $\{ \xi(s,t)\colon
\xi\in \Gamma_\Zcal \}$ is dense in $X_r\otimes Y_s$.  
For $\xi = \sum_{j=1}^{n} f_j\boxtimes g_j\in \Gamma_\Zcal,$ the
map $(r,s)\mapsto \|\xi(r,s)\|$ is continuous because so is the
map  
$$ G\times G \to C_e,\ (r,s)\mapsto \la \xi(r,s),\xi(r,s)\ra_{C_e} =
\sum_{j,k=1}^n \lac g_j(s),\lab f_j(r),f_k(s)\rab g_k(s)\rac.$$ 
Then, by \cite[II 13.18]{FlDr88}, there exists a unique
topology on $\mathcal{Z}$ making it a Banach bundle such that every
$f\boxtimes g$ is a continuous section of $\Zcal$.   
\bigskip
\paragraph{{\bf The bundle $\mathbf{\Ucal}$.}}{\mbox{\quad}}  
\par To construct $\Ucal$, for each $t\in G$ let $\Zcal^t$ be the
reduction \cite[II 13.3]{FlDr88} of $\Zcal$ to $H_t:=\{ (r,s)\in
G\times G\colon rs=t \}$ and set $U_t:=C_c(\Zcal^t)$. 
Let $\Ucal$ be the (untopologized) bundle $\{U_t\}_{t\in G}.$
Every section $\xi\in C_c(\Zcal)$ defines a section of $\Ucal,$
$\xi|\colon G\to \Ucal$, given by  $t\mapsto \xi|_t,$ where $\xi|_t$
is the restriction of $\xi$ to $H_t.$   

\begin{lemma}\label{lemma density in Ut}
  For each $t\in G$ and $u\in U_t,$ there exists a compact set
  $K\subset G$ such that: for all $\varepsilon >0$ there exists
  $\xi\in \Gamma_\Zcal$ with $\|u-\xi|_t\|_\infty<\varepsilon$ and
  $\supp(\xi)\subset K\times K.$ 
  In particular, $\{\xi|_t\colon \xi\in \Gamma_\Zcal\}$ is dense in
  $U_t$ in the inductive limit topology of $C_c(\Zcal^t).$ 
\end{lemma}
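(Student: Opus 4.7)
The plan is to approximate $u$ locally by elements of $\Gamma_\Zcal$, glue the local approximations using a partition of unity on $G\times G$, and then replace that partition of unity by one whose pieces are sums of tensor products of functions on $G$ so that the final patch remains in $\Gamma_\Zcal$.

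First I would fix the compact support $L:=\supp(u)\subseteq H_t$, and choose a compact set $K\subseteq G$ whose interior contains both $\pi_1(L)$ and $\pi_2(L)$. This $K$ is the one advertised in the statement and will not depend on $\varepsilon$. Now fix $\varepsilon>0$. For each $p=(r_0,s_0)\in L$, the element $u(p)\in X_{r_0}\otimes_{B_e}Y_{s_0}$ can be approximated within $\varepsilon/4$ by a finite sum $\sum_j a_j\otimes b_j$ of elementary tensors; choose $f_j\in C_c(\Xcal)$ and $g_j\in C_c(\Ycal)$ with $f_j(r_0)=a_j$, $g_j(s_0)=b_j$, and (after multiplying by scalar cutoffs in $C_c(G)$) with supports in $K^\circ$. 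The section $\xi_p:=\sum_j f_j\boxtimes g_j\in \Gamma_\Zcal$ then satisfies $\|\xi_p(p)-u(p)\|<\varepsilon/4$, and since both $\xi_p|_{H_t}$ and $u$ are continuous sections of $\Zcal^t$, the function $q\mapsto \|\xi_p(q)-u(q)\|$ is continuous on $H_t$; thus there is an open $W_p\subseteq K^\circ\times K^\circ$ with $p\in W_p$ and $\|\xi_p(q)-u(q)\|<\varepsilon/2$ for every $q\in W_p\cap H_t$.

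By compactness of $L$ I extract a finite subcover $W_1,\dots,W_n$ with corresponding $\xi_1,\dots,\xi_n\in\Gamma_\Zcal$, and choose $\psi_1,\dots,\psi_n\in C_c(G\times G)$ subordinate to this cover with $\supp(\psi_i)\subseteq W_i\subseteq K^\circ\times K^\circ$, $\sum_i\psi_i\le 1$ everywhere, and $\sum_i\psi_i=1$ on an open neighborhood of $L$. Then $\eta:=\sum_i\psi_i\xi_i$ approximates $u$ on $H_t$ within $\varepsilon/2$: for $q\in H_t$ with $\sum\psi_i(q)=1$ the difference is $\sum_i\psi_i(q)(\xi_i(q)-u(q))$, while for the remaining $q\in H_t$ one has $u(q)=0$ and the identity $\xi_i(q)-u(q)=\xi_i(q)$ still gives the same bound termwise. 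The obstacle is that $\eta$ need not belong to $\Gamma_\Zcal$, since the $\psi_i$ are arbitrary elements of $C_c(G\times G)$ rather than functions of the form $\phi(r)\chi(s)$.

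To overcome this I invoke the Stone--Weierstrass theorem for $C(K\times K)$: the subalgebra $C(K)\odot C(K)$ is uniformly dense, so each $\psi_i$ can be approximated uniformly on $K\times K$ by $\widetilde{\psi}_i=\sum_k \phi_{i,k}\otimes\chi_{i,k}$ with $\phi_{i,k},\chi_{i,k}\in C_c(G)$ supported in $K$, within any prescribed tolerance. Taking the tolerance $<\varepsilon/(2nM)$ with $M:=\max_i\|\xi_i\|_\infty$, the section
\[\xi:=\sum_{i}\widetilde{\psi}_i\,\xi_i=\sum_{i,k,j}(\phi_{i,k}f_{i,j})\boxtimes(\chi_{i,k}g_{i,j})\]
lies in $\Gamma_\Zcal$, has support contained in $K\times K$, and satisfies $\|\xi|_t-\eta|_t\|_\infty<\varepsilon/2$, whence $\|\xi|_t-u\|_\infty<\varepsilon$ by the triangle inequality. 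This yields the first assertion; the "in particular" then follows because inductive-limit convergence in $C_c(\Zcal^t)$ is exactly uniform convergence of sequences whose supports are jointly contained in a fixed compact subset of $H_t$, and $(K\times K)\cap H_t$ is such a set. The main difficulty is precisely the last paragraph — reconciling the arbitrary partition of unity on $G\times G$ with the restrictive "elementary tensor" form of $\Gamma_\Zcal$ — which the Stone--Weierstrass step resolves without enlarging the support.
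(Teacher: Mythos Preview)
Your argument is correct. The partition-of-unity construction together with the Stone--Weierstrass step does the job, and the support of $\xi$ is indeed contained in $K\times K$ because it is controlled by the supports of the $\xi_i$ (which you arranged to lie in $K^\circ\times K^\circ$), so you do not in fact need the $\phi_{i,k},\chi_{i,k}$ themselves to be supported in $K$---though that too can be arranged by first multiplying by a cutoff $\rho\otimes\rho$ with $\rho\in C_c(G)$, $\supp\rho\subset K^\circ$, equal to $1$ on the projections of $\supp\psi_i$.

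Your route, however, is genuinely different from the paper's. The paper avoids the explicit partition-of-unity patching by first invoking Tietze's extension theorem for Banach bundles to extend $u$ to some $\eta\in C_c(\Zcal)$ with $\eta|_t=u$, and then appeals to a general density lemma (\cite[Lemma~5.1]{Ab03}) asserting that $\Gamma_\Zcal$ is dense in $C_c(\Zcal)$ in the inductive limit topology, the relevant hypothesis being that $\big(C_c(G)\otimes C_c(G)\big)\Gamma_\Zcal\subset\Gamma_\Zcal$ together with fiberwise density. Support control is then obtained a posteriori by multiplying the approximants by a single $\phi\otimes\phi$. In effect, your Stone--Weierstrass step reproves, in this concrete instance, the content of that cited lemma, and your local-approximation-plus-partition-of-unity replaces the Tietze extension. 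Your proof is thus more self-contained and elementary, at the cost of being longer; the paper's is shorter but leans on two external results about Banach bundles.
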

\begin{proof}
  From Tietze's Extension Theorem for Banach bundles \cite[II
  14.8]{FlDr88} we know that there exists $\eta\in C_c(\Zcal)$ such that
  $\eta|_t = u.$ 
  Since $C_c(G)\otimes C_c(G)\Gamma_\Zcal\subset \Gamma_\Zcal,$ we can
  use \cite[Lemma 5.1]{Ab03} to deduce that $\Gamma_\Zcal$ is dense in
  $C_c(\Zcal)$ in the inductive limit topology. 
  Thus there exists a net $\{ \xi_j \}_{j\in J}\subset \Gamma_\Zcal$
  converging to $\eta$ in the inductive limit topology, and so
  uniformly on compact sets. 
  Take a compact set $K\subset G$ such that $K\times K$ contains the
  support of $\eta$ in its interior, and take $\phi\in C_c(G)$ with:
  $0\leq \phi \leq 1,$ $\phi\otimes\phi|_{\supp \eta}\equiv 1$ and
  $\phi|_{G\setminus K}\equiv 0.$ 
  Then $\{\phi\otimes \phi \xi_j\}_{j\in J}\subset \Gamma_\Zcal$, and
  $\{(\phi\otimes \phi \xi_j)|_t\}_{j\in J}$ converges uniformly to
  $\eta|_t=u.$ 
  Then there exists $j_0\in J$ such that $\| (\phi\otimes \phi
  \xi_{j_0})|_t -u\|<\varepsilon.$ 
  Finally, note that $\supp (\phi\otimes \phi \xi_{j_0})\subset K\times K.$
\end{proof}

The $\Ccal\dsh$valued inner product of $\Ucal,$ and so the seminorm of
$\Ucal,$ will be described as the integral of a kind of inner product
defined on $\Zcal,$ which we now construct. Recall that a map between
vector bundles is said to be quasi-linear if it is linear when
restricted to each fiber \cite[page 790]{FlDr88}. Quasi-bilinear
maps are defined analogously. 

\begin{lemma}
  There exist unique continuous maps  
  $$ \Zcal\times \Zcal \to \Ccal:\ (u,v)\mapsto u\triangleright v
  \mbox{ and }\ \Zcal \times \Zcal \to \Acal:\ (u,v)\mapsto
  u\triangleleft v,\ $$ 
  such that:
  \begin{enumerate}
   \item $\triangleright$ ($\triangleleft$) is quasi-linear in the second
     (first) variable and conjugate quasi-linear in the first (second)
     variable. 
   \item $Z_{(r,s)}\triangleright Z_{(p,q)}\subset C_{(rs)^{-1}pq}$ and $Z_{(r,s)}\triangleleft Z_{(p,q)}\subset A_{rs(pq)^{-1}},$ for all $r,s,p,q\in G.$
   \item $\|u\triangleright v\|\leq \|u\|\|v\|$ and $\|u\triangleleft v\|=\|u\|\|v\|,$ for all $u,v\in \Zcal.$
   \item $(x\otimes y)\triangleright (z\otimes w) = \la y,\lab x,z\rab
     w\ra_{\Ccal}$ and $(x\otimes y)\triangleleft (z\otimes w) = \laa
     x{}_\Bcal\la y,w\ra ,z\raa,$ for all $x\otimes y,z\otimes w\in \Zcal.$
  \end{enumerate}
\end{lemma}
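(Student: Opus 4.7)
The plan is to establish uniqueness first, then construct $\triangleright$ and $\triangleleft$ fibrewise through a sequence of algebraic, analytic, and compatibility steps, and finally promote the fibrewise maps to continuous maps on $\Zcal\times\Zcal$ via Proposition~\ref{prop continuity}. Uniqueness is immediate: the quasi-(conjugate-)linearity requirements of~(1), the continuity assumption, and the prescribed values on elementary tensors in (4) determine both maps, since elementary tensors span a dense subspace of each fibre $Z_{(r,s)}$.

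For the fibrewise construction, fix $(r,s,p,q)\in G^4$ and define $\triangleright$ on the algebraic tensor products $X_r\odot Y_s$ and $X_p\odot Y_q$ by bilinear extension of the formula in~(4). The first step is to verify that this descends to the $B_e$-balanced tensor products: for $b\in B_e$, the identity $(xb\otimes y)\triangleright(z\otimes w) = (x\otimes by)\triangleright(z\otimes w)$ reduces to $\lab xb,z\rab = b^*\lab x,z\rab$ (Lemma~\ref{lemma first tools}(1)) together with $\lac by,y'\rac = \lac y,b^*y'\rac$ (Lemma~\ref{lemma first tools}(9) applied to $\Ycal$), while the balance in the second tensor factor follows from condition~(4R) of Definition~\ref{definition right \FH}. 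The analogous checks for $\triangleleft$ are symmetric.

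The main obstacle is the norm bound in~(3). A direct computation using the $*$-symmetries of $\lab \cdot,\cdot\rab$ and $\lac \cdot,\cdot\rac$ yields the adjoint relation $(u\triangleright v)^* = v\triangleright u$. With $\tb := (rs, pq)$, the $2\times 2$ matrix
\[
M(u,v) := \begin{pmatrix} u\triangleright u & u\triangleright v \\ v\triangleright u & v\triangleright v \end{pmatrix}
\]
has entries whose gradings fit those of the $C^*$-algebra $\Mb_\tb(\Ccal)$ provided by Lemma~\ref{lemma approximate units}(2). The crux of the proof is to show that $M(u,v)$ is a positive element of $\Mb_\tb(\Ccal)$. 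I plan to do this by realizing the pair $(u,v)$ as a single vector whose self-inner-product equals $M(u,v)$ in a suitable Hilbert $\Mb_\tb(\Ccal)$-module, obtained by first endowing $X_r\oplus X_p$ with a right Hilbert $\Mb_{(r,p)}(\Bcal)$-module structure as in the proof of Lemma~\ref{lemma approximate units}(3) (whose key ingredient is precisely the positivity of the matrix $(\lab x_i,x_j\rab)$) and then taking an internal tensor product with an analogous $\Mb_{(r,p)}(\Bcal)$-$\Mb_\tb(\Ccal)$ bimodule assembled from $Y_s\oplus Y_q$. Once $M(u,v)\geq 0$ is in hand, the standard inequality that bounds the norm of an off-diagonal entry of a positive $2\times 2$ matrix over a $C^*$-algebra by the geometric mean of the diagonal norms yields $\|u\triangleright v\|^2 \leq \|u\triangleright u\|\cdot\|v\triangleright v\| = \|u\|^2\|v\|^2$. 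The corresponding estimate for $\triangleleft$ is obtained by the symmetric argument through the adjoint-bundle construction of Remark~\ref{remark symmetric}. With the bounds in place, both maps extend by continuity from the algebraic tensor products to the completed fibres $Z_{(r,s)}$ and $Z_{(p,q)}$.

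To obtain the global continuity of $\triangleright$ and $\triangleleft$ as maps on $\Zcal\times\Zcal$, I will apply Proposition~\ref{prop continuity}. The dense family of sections on the source side is $\Gamma_\Zcal = \spn\{f\boxtimes g : f\in C_c(\Xcal),\, g\in C_c(\Ycal)\}$, already introduced in the construction of $\Zcal$. On the target side, the relevant continuous sections of $\Ccal$ are those of the form $(r,s,p,q)\mapsto \lac g_1(s),\lab f_1(r),f_2(p)\rab g_2(q)\rac$ with $f_i\in C_c(\Xcal)$ and $g_i\in C_c(\Ycal)$, and analogously for $\Acal$. Joint continuity of these expressions on $G^4$ follows from continuity of the inner products and actions in the equivalence bundles $\Xcal$ and $\Ycal$, and together with quasi-bilinearity and the fibrewise norm bound already established, this supplies every hypothesis of Proposition~\ref{prop continuity}, delivering the continuity of $\triangleright$ and, by the same argument, of $\triangleleft$, completing the plan.
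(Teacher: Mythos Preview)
Your plan for uniqueness, for the balance check on the algebraic tensor products, and for the continuity argument via Proposition~\ref{prop continuity} is sound and essentially matches the paper. The gap is in your proposed proof of the norm bound~(3).

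The bimodule you want does not exist in general. You propose to let $\Mb_{(r,p)}(\Bcal)$ act on the left of something ``assembled from $Y_s\oplus Y_q$'', but the gradings do not close up: the off-diagonal entry of $\Mb_{(r,p)}(\Bcal)$ lies in $B_{r^{-1}p}$, and $B_{r^{-1}p}\cdot Y_q\subset Y_{r^{-1}pq}$, which is \emph{not} $Y_s$ unless $rs=pq$. The same obstruction kills any attempt to put a right $\Mb_{\tb}(\Ccal)$-module structure (with $\tb=(rs,pq)$) directly on $Y_s\oplus Y_q$: the inner product $\lac Y_s,Y_q\rac$ lands in $C_{s^{-1}q}$, not in $C_{(rs)^{-1}pq}$. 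So the internal tensor product you describe is not available, and the positivity of $M(u,v)$ cannot be obtained this way.

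The paper circumvents exactly this grading mismatch, and the device it uses is the special approximate unit of Lemma~\ref{lemma approximate units}(1). Writing $a_\lambda=\sum_l \laa \xi^\lambda_l,\xi^\lambda_l\raa$ with $\xi^\lambda_l\in X_{t^\lambda_l}$, one inserts $a_\lambda$ to obtain
\[
\lac y_j,\lab a_\lambda x_j,z_k\rab w_k\rac
=\sum_l \lac \lab \xi^\lambda_l,x_j\rab y_j,\ \lab \xi^\lambda_l,z_k\rab w_k\rac,
\]
and now, for each fixed $l$, the vectors $\eta^\lambda_l:=\sum_j\lab\xi^\lambda_l,x_j\rab y_j$ and $\zeta^\lambda_l:=\sum_k\lab\xi^\lambda_l,z_k\rab w_k$ lie in the single fibres $Y_{(t^\lambda_l)^{-1}rs}$ and $Y_{(t^\lambda_l)^{-1}pq}$, which differ by the fixed right shift $(rs)^{-1}pq$. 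This is precisely the configuration in which the ``hard bound'' \eqref{equation the hard bound} of Lemma~\ref{lemma approximate units}(3) applies, yielding
\[
\Big\|\sum_l\lac\eta^\lambda_l,\zeta^\lambda_l\rac\Big\|^2
\le \Big\|\sum_l\lac\eta^\lambda_l,\eta^\lambda_l\rac\Big\|\,
\Big\|\sum_l\lac\zeta^\lambda_l,\zeta^\lambda_l\rac\Big\|,
\]
and passing to the limit in $\lambda$ gives $\|u\triangleright v\|\le\|u\|\,\|v\|$. The $\triangleleft$ case is then handled by the adjoint-bundle symmetry. In short, the missing idea in your plan is this approximate-unit reduction, which collapses the two $X$-fibres into $\Ycal$ before any matrix argument is attempted.
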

\begin{proof}
  Take $u=\sum_{i=1}^n x_i\otimes y_i\in Z_{(r,s)}$ and
  $v=\sum_{j=1}^n z_j\otimes w_j\in Z_{(p,q)}$. To satisfy (1) and
  (4),  $u\triangleright v$ must be given by: $u\triangleright v:= 
  \sum_{j,k=1}^{n}  \lac y_j,\lab x_j,z_k\rab w_k\rac$. To see that
  this is really a definition, it suffices to show that
  $\norm{\sum_{j,k=1}^{n}  \lac y_j,\lab x_j,z_k\rab
    w_k\rac}\leq\norm{u}\,\norm{v}$.   
  Now if $\{ a_\lambda \}_{\lambda \in \lambda}$ is an approximate
  unit of $A_e$ as the one given by Lemma \ref{lemma approximate
    units}, then 
  \begin{equation}\label{equation norm of inner product}
    \|\sum_{j,k=1}^{n}  \lac y_j,\lab x_j,z_k\rab w_k\rac\|
       = \lim_\lambda \|\sum_{j,k=1}^{n}  \lac y_j,\lab a_\lambda x_j,z_k\rab w_k\rac\|
  \end{equation}
  where $a_\lambda = \sum_{l=1}^{n_\lambda}  \laa \xi^\lambda_l,\xi^\lambda_l\raa,$ for some $\xi^\lambda_l\in X_{t^\lambda_l}$ ($l=1,\ldots,n_\lambda$).
  
  From Lemma \ref{lemma first tools} it follows that
  \begin{equation}\label{equation rewrite inner products}
    \begin{split}
    \lac y_j,\lab a_\lambda x_j,z_k\rab w_k\rac 
     & = \sum_{l=1}^{n_\lambda}\lac \lab \xi^\lambda_l, x_j\rab y_j,\lab \xi^\lambda_l,z_k\rab w_k\rac.
  \end{split}
  \end{equation}

  Given $l=1,\ldots,n_\lambda,$ for all $j=1,\ldots,n$ we have $\lab
  \xi^\lambda_l, x_j\rab y_j\in Y_{(t^\lambda_l)^{-1}rs}$ and $\lab
  \xi^\lambda_l, z_j\rab w_j\in Y_{(t^\lambda_l)^{-1}pq}.$ 
  Define
  $$\eta^\lambda_l:= \sum_{j=1}^n \lab \xi^\lambda_l,x_j\rab y_j\qquad
  \zeta^\lambda_l:=\sum_{j=1}^n \lab \xi^\lambda_l,z_j\rab w_j.$$ 
  From \eqref{equation rewrite inner products} and \eqref{equation norm of inner product} we obtain
  \begin{equation}\label{norm of triangleleft as limit}
   \norm{\sum_{j,k=1}^{n}  \lac y_j,\lab x_j,z_k\rab
    w_k\rac}=\lim_\lambda \|\sum_{k=1}^n \lac \eta^\lambda_k,\zeta^\lambda_k\rac\|. 
  \end{equation}
  
  When $v=u$ we get $\|u\|^2= \norm{\sum_{j,k=1}^{n}  \lac
    y_j,\lab x_j,x_k\rab y_k\rac} =\lim_\lambda \|\sum_{k=1}^n \lac
  \eta^\lambda_k,\eta^\lambda_k\rac\|$ and, analogously, we obtain $\|
  v\|^2=\lim_\lambda \|\sum_{k=1}^n \lac
  \zeta^\lambda_k,\zeta^\lambda_k\rac\|.$ 
  Therefore the inequality $\norm{\sum_{j,k=1}^{n}  \lac y_j,\lab x_j,z_k\rab
    w_k\rac}\leq \|u\|\|v\|$ follows from the inequality
  \begin{equation*}
     \|\sum_{k=1}^{n_\lambda} \lac
     \eta^\lambda_k,\zeta^\lambda_k\rac\|\leq \|\sum_{k=1}^{n_\lambda}
     \lac \eta^\lambda_k,\eta^\lambda_k\rac\|\|\sum_{k=1}^{n_\lambda}
     \lac \zeta^\lambda_k,\zeta^\lambda_k\rac\|, 
  \end{equation*}
  which holds (for all $\lambda$) by Lemma \ref{lemma approximate units}.
  
  To show $\triangleright$ is continuous we use Proposition \ref{prop
    continuity}. 
  Take $\xi,\eta\in \Gamma_\Zcal$ and $f\in C_c(\Ccal).$
  It suffices to show that $(r,s,p,q)\mapsto \| \xi(r,s)\triangleright
  \eta(p,q) - f( (rs)^{-1}pq ) \|$ is continuous. 
  But, since $(r,s,p,q))\mapsto f( (rs)^{-1}pq )$ is continuous, it
  suffices to show that $(r,s,p,q)\mapsto \xi(r,s)\triangleright
  \eta(p,q)$ is continuous. 
  It is enough to consider $\xi=f\boxtimes g$
  and $\eta=h\boxtimes k.$ 
  In this case we have $ \xi(r,s)\triangleright \eta(p,q) = \lac
  g(s),\lab f(r), h(p)\rab k(q)\rac, $ which is clearly a continuous
  function of $(r,s,p,q).$ 
  
  The existence of the operator $\triangleleft$ can be inferred from
  the previous arguments applied to the adjoint bundles of $\Xcal$ and
  $\Ycal.$ 
\end{proof}

Now we define two maps we will use to construct the actions of
$\Acal$ and $\Ccal$ on the (still not precisely defined) bundle
$[\Ucal].$  

\begin{lemma}
  There are unique quasi-bilinear and continuous maps
  $$ \Acal\times \Zcal\to \Zcal,\ (a,z)\mapsto az,\quad
  \mbox{and}\quad \Zcal\times \Ccal\to \Zcal,\ (z,c)\to zc, $$ 
  such that
  \begin{enumerate}
   \item $A_rZ_{(s,t)}\subset Z_{(rs,t)}$ and $Z_{(s,t)}C_r\subset
     Z_{(s,tr)},$ for all $r,s,t\in G.$ 
   \item $\|az\|\leq \|a\|\|z\|$ and $\|zc\|\leq \|z\|\|c\|,$ for all
     $a\in \Acal,$ $z\in \Zcal$ and $c\in \Ccal.$ 
   \item $a(x\otimes y)=(ax)\otimes y$ and $(x\otimes y)c=x\otimes
     (yc),$ for all $a\in \Acal,$ $x\in \Xcal,$ $y\in \Ycal$ and $c\in
     \Ccal.$ 
  \end{enumerate}
\end{lemma}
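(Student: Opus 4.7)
The plan is to build the left action $\Acal\times\Zcal\to\Zcal$ in two stages: first a fibrewise definition via the universal property of the internal tensor product of Hilbert modules, then a global continuity argument through Proposition~\ref{prop continuity}. The right action $\Zcal\times\Ccal\to\Zcal$ will be completely symmetric.

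For the fibrewise construction, fix $a\in A_r$ and $s,t\in G$. By Lemma~\ref{lemma first tools}(7), the map $L_a\colon X_s\to X_{rs}$, $L_a(x)=ax$, is right $B_e$-linear, and Lemma~\ref{lemma first tools}(9) identifies its adjoint as $L_{a^*}$, so $L_a$ is adjointable between the right Hilbert $B_e$-modules $X_s$ and $X_{rs}$, with $\|L_a\|\leq\|a\|$ (indeed $\|ax\|^2=\|\lab x,a^*ax\rab\|\leq\|a\|^2\|x\|^2$, using that the representation of $A_e$ on $X_s$ is contractive). Since $Y_t$ carries a left $B_e$-action compatible with $\lab\,,\rab$, the map $L_a\otimes\id_{Y_t}$ on the algebraic tensor product is $B_e$-balanced and extends, by the standard Hilbert module internal tensor product construction, to a bounded $C_e$-linear map $Z_{(s,t)}=X_s\otimes_{B_e}Y_t\to X_{rs}\otimes_{B_e}Y_t=Z_{(rs,t)}$ of norm at most $\|a\|$, acting on simple tensors by $x\otimes y\mapsto(ax)\otimes y$. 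Defining $az$ as the image of $z$ under this map simultaneously produces the operation and verifies (1), (2), and (3); uniqueness follows because the simple tensors span a dense subspace of each fibre.

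The main obstacle is continuity, which we obtain by invoking Proposition~\ref{prop continuity} with $\Ucal=\Acal$, $\Vcal=\Wcal=\Zcal$, base map $f\colon G\times(G\times G)\to G\times G$, $f(r,(s,t)):=(rs,t)$, constant $k=1$, and section sets $\Gamma_\Ucal=C_c(\Acal)$, $\Gamma_\Vcal=\Gamma_\Wcal=\Gamma_\Zcal$. Hypotheses (1)--(3) of the proposition are immediate from the fibrewise construction above and from the topology on $\Zcal$ produced earlier in the section. The delicate point is hypothesis (4): for $\xi\in C_c(\Acal)$ and $\eta,\zeta\in\Gamma_\Zcal$, the function
\[
(r,s,t)\longmapsto\|\xi(r)\eta(s,t)-\zeta(rs,t)\|
\]
must be continuous on $G\times G\times G$. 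By bilinearity one reduces to $\eta=F_1\boxtimes g_1$ and $\zeta=F_2\boxtimes g_2$ with $F_i\in C_c(\Xcal)$, $g_i\in C_c(\Ycal)$. Writing
\[
w(r,s,t):=\xi(r)F_1(s)\otimes g_1(t)-F_2(rs)\otimes g_2(t),
\]
we have $\|w\|^2=\|\la w,w\ra_{C_e}\|$, and expanding the $C_e$-valued inner product yields four terms of the form $\lac g_i(t),\lab h_i(r,s),h_j(r,s)\rab g_j(t)\rac$ with $h_1(r,s)=\xi(r)F_1(s)$ and $h_2(r,s)=F_2(rs)$. Each such term depends continuously on $(r,s,t)$ because the action $\Acal\times\Xcal\to\Xcal$, the inner products $\lab\,,\rab$ and $\lac\,,\rac$, and the action $\Bcal\times\Ycal\to\Ycal$ are all continuous; hence so is $(r,s,t)\mapsto\|w\|$.

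The right action $\Zcal\times\Ccal\to\Zcal$ is obtained by the same argument applied to $R_c\colon Y_t\to Y_{tu}$, $R_c(y)=yc$, for $c\in C_u$: this map is left $B_e$-linear (because the $B_e$-action on $Y$ commutes with the right $\Ccal$-action by Lemma~\ref{lemma first tools}(7)), so $\id_{X_s}\otimes R_c$ descends to a bounded map $Z_{(s,t)}\to Z_{(s,tu)}$ of norm at most $\|c\|$, and the continuity step repeats verbatim with the roles of the two sides interchanged (equivalently, one may work in the adjoint bundles of Remark~\ref{remark symmetric}).
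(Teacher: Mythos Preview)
Your proof is correct and follows essentially the same route as the paper: establish the fibrewise norm bound $\|az\|\leq\|a\|\,\|z\|$ (you invoke the standard $L_a\otimes\id$ construction for adjointable maps, while the paper carries out the equivalent direct computation $\|\sum(ax_i)\otimes y_i\|^2=\|\la\psi(a^*a)u,u\ra_{C_e}\|\leq\|a\|^2\|u\|^2$ using the representation $\psi\colon A_e\to\Bb(Z_{(s,t)})$), then obtain global continuity via Proposition~\ref{prop continuity}. One small remark: the phrase ``by bilinearity one reduces to $\eta=F_1\boxtimes g_1$'' is slightly imprecise since the norm is not linear---what you actually do (and what your next sentence makes clear) is expand $\la w,w\ra_{C_e}$ sesquilinearly into continuous summands, which handles general $\eta,\zeta\in\Gamma_\Zcal$ directly.
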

\begin{proof}
  Take $u=\sum_{i=1}^n x_i\otimes y_i\in Z_{(r,s)}$ and $a\in \Acal.$
  Recall that there exists a natural representation of $A_e,$
  $\psi\colon A_e\to \Bb(Z_{(r,s)}),$ such that $\psi(b)(x\otimes y) =
  (bx)\otimes y.$ 
  Now observe that
  \begin{align*}
   \| \sum_{i=1}^n (ax_i)\otimes y_i \|^2 
      & = \| \sum_{i,j=1}^n \lac y_i, \la ax_i,ax_j\ra_\Bcal y_j\rac \|
        = \| \sum_{i,j=1}^n \lac y_i, \la x_i,a^*ax_j\ra_\Bcal y_j\rac \|\\
      &= \|\la \psi(a^*a)u,u\ra_{C_e}\|\leq \|a\|^2\|u\|^2.
  \end{align*}
  On the other hand, for every $c\in \Ccal$ we have 
  \begin{align*}
   \| \sum_{i=1}^n x_i\otimes (y_ic) \|^2 
      & = \| \sum_{i,j=1}^n \lac y_ic, \la x_i,x_j\ra_\Bcal y_jc\rac \|
        = \|c^* \sum_{i,j=1}^n \lac y_i, \la x_i,x_j\ra_\Bcal y_j\rac c \|\\
      &\leq \|c\|^2\|\la u,u\ra_{C_e}\|=\|c\|^2\|u\|^2.
  \end{align*}
  
  With these inequalities we can define the left action of $\Acal$ and
  the right action of $\Ccal$ on $\Zcal$ on each product of fibers
  ($A_r\times Z_{(s,t)}$ and $Z_{(s,t)}\times C_r$). 
  To prove that the resulting map is continuous it suffices to use
  Proposition \ref{prop continuity} (adapting the arguments we gave
  during the construction of $\triangleright $ and $\triangleleft$). 
\end{proof}

The identities we prove in the following Lemma will be used to show
the compatibility of the left and right \FH bundle structures of our
$\Acal \dsh \Ccal\dsh$equivalence bundle. 

\begin{lemma}\label{lemma some identities}
  For all $z_1,z_2,z_3,z_4\in \Zcal,$ $a\in \Acal$ and $c\in \Ccal$ we
  have 
    \begin{enumerate}
     \item $ a(z_1\triangleleft z_2) = (az_1)\triangleleft z_2 $ and $
       (z_1\triangleright z_2)c = z_1\triangleright (z_2c). $ 
     \item $(z_1\triangleleft z_2)^* = z_2\triangleleft z_1$ and
       $(z_1\triangleright z_2)^* = z_2\triangleright z_1.$ 
     \item $ ((z_1\triangleleft z_2)z_3)\triangleright z_4 = (z_1(z_2
       \triangleright z_3))\triangleright z_4.$ 
    \end{enumerate}
\end{lemma}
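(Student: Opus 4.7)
My plan is to prove all three identities by reducing to the case of elementary tensors. Every element of $Z_{(r,s)}=X_r\otimes_{B_e}Y_s$ is a limit of finite sums of elementary tensors $x\otimes y$ with $x\in X_r$, $y\in Y_s$. Since $\triangleright$, $\triangleleft$, the left $\Acal$-action and the right $\Ccal$-action on $\Zcal$ are all continuous and quasi-(bi)linear by the previous two lemmas, and since both $*$ on $\Acal$ and $\Ccal$ are continuous and conjugate-linear on fibers, it suffices to establish each identity when every $z_i$ is an elementary tensor. Then the computations reduce to manipulations in the equivalence bundles $\Xcal$ and $\Ycal$, governed by the properties collected in Lemma~\ref{lemma first tools} and by the compatibility relation $\laa u,v\raa w=u\lab v,w\rab$.

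For (1), writing $z_1=x_1\otimes y_1$, $z_2=x_2\otimes y_2$, I expand $z_1\triangleleft z_2=\laa x_1\,{}_\Bcal\la y_1,y_2\ra,\,x_2\raa$ and use item~(2L) and the $A$-linearity of $\laa\,,\,\raa$ to push $a$ inside; similarly $(z_1\triangleright z_2)c$ rewrites as $\lac y_1,\lab x_1,x_2\rab y_2\rac c=\lac y_1,\lab x_1,x_2\rab y_2c\rac$, which equals $z_1\triangleright(z_2c)$. Identity~(2) follows by applying $\laa u,v\raa^*=\laa v,u\raa$ and $\lab u,v\rab^*=\lab v,u\rab$ together with ${}_\Bcal\la y_1,y_2\ra^*={}_\Bcal\la y_2,y_1\ra$ and the swap identity $\laa xb,y\raa=\laa x,yb^*\raa$ (item~(8) of Lemma~\ref{lemma first tools}).

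The main work is in (3). Again I take $z_i=x_i\otimes y_i$. On the left hand side I first compute $(z_1\triangleleft z_2)z_3$ by substituting the formula for $\triangleleft$ and then applying the compatibility $\laa u,v\raa w=u\lab v,w\rab$, which gives $(z_1\triangleleft z_2)z_3 = x_1\bigl({}_\Bcal\la y_1,y_2\ra\lab x_2,x_3\rab\bigr)\otimes y_3$. Pairing this with $z_4$ via $\triangleright$ and using $\lab xb,y\rab=b^*\lab x,y\rab$ to move the scalar out yields
\[((z_1\triangleleft z_2)z_3)\triangleright z_4 = \lac y_3,\lab x_3,x_2\rab\,{}_\Bcal\la y_2,y_1\ra\lab x_1,x_4\rab y_4\rac.\]
On the right hand side I set $c:=\lac y_2,\lab x_2,x_3\rab y_3\rac\in\Ccal$, so that $z_1(z_2\triangleright z_3)=x_1\otimes(y_1c)$, and $\lac y_1c,w\rac=c^*\lac y_1,w\rac$ together with $\lac by,z\rac=\lac y,b^*z\rac$ transform $c^*$ into $\lac y_3,\lab x_3,x_2\rab y_2\rac$. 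The two sides then match after one more application of the compatibility identity, now for $\Ycal$: ${}_\Bcal\la y_2,y_1\ra w=y_2\lac y_1,w\rac$, invoked with $w=\lab x_1,x_4\rab y_4$.

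The hard part is keeping track of which equivalence bundle each identity is being applied to, and being careful with the order of the arguments after each swap; the underlying principle, however, is simply that the compatibility axiom $\laa\cdot,\cdot\raa z=x\lab\cdot,\cdot\rab$ for both $\Xcal$ and $\Ycal$, combined with the two sesquilinearity identities from Lemma~\ref{lemma first tools}, propagates scalars back and forth between the left and right slots of the inner products, allowing the two chains of substitutions to meet in the middle.
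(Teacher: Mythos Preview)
Your proof is correct and follows essentially the same approach as the paper: reduce to elementary tensors by quasi-(bi)linearity, density, and continuity, then compute using the axioms of the equivalence bundles $\Xcal$ and $\Ycal$ together with Lemma~\ref{lemma first tools}. The only difference is organizational: in part~(3) you compute both sides down to the common expression $\lac y_3,\lab x_3,x_2\rab y_2\rac\,\lac y_1,\lab x_1,x_4\rab y_4\rac$, whereas the paper transforms the left-hand side step by step directly into the right-hand side via the intermediate form $\lac y_1\lac y_2,\lab x_2,x_3\rab y_3\rac,\lab x_1,x_4\rab y_4\rac$; both routes use exactly the same ingredients (the compatibility identity for $\Xcal$ and for $\Ycal$, and items (1), (8), (9) of Lemma~\ref{lemma first tools}).
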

\begin{proof}
  By linearity it suffices to consider elementary tensors $z_i$. 
  Assume $z_i =x_i\otimes y_i,$ for $i=1,2,3,4.$
  Then
  $$ a(z_1\triangleleft z_2)
    = a\laa x_1 {}_\Bcal\la y_1,y_2\ra,x_2\raa
    = \laa ax_1 {}_\Bcal\la y_1,y_2\ra,x_2\raa
    = (az_1)\triangleleft z_2.$$
  The second identity in (1) and the two of (2) are left to the reader.
  Besides, (3) follows from
  \begin{align*}
    ((z_1\triangleleft z_2)z_3)\triangleright z_4
      & = (\laa x_1 {}_\Bcal\la y_1,y_2\ra,x_2\raa x_3\otimes y_3)\triangleright z_4\\
      & = \la y_3 ,  \lab \laa x_1 {}_\Bcal\la y_1,y_2\ra,x_2\raa x_3  ,x_4\rab y_4\rac \\
      & = \la y_3 ,  \lab x_1 {}_\Bcal\la y_1,y_2\ra \lab x_2,x_3\rab,x_4 \rab y_4\rac \\
      & = \la {}_\Bcal\la y_1,y_2\ra \lab x_2,x_3\rab y_3 ,  \lab x_1 ,x_4 \rab y_4\rac \\
      & = \la y_1\la y_2, \lab x_2,x_3\rab y_3\rac ,  \lab x_1 ,x_4 \rab y_4\rac \\
      & = (z_1 \la y_2, \lab x_2,x_3\rab y_3\rac) \triangleright z_4 \\
      & = (z_1 (z_2\triangleright z_3)) \triangleright z_4 \\
  \end{align*}
\end{proof}

To define the pre-inner products and actions on $\Ucal$ take $u\in
U_r,$  $v\in U_s,$ $a\in A_t$ and $c\in C_t.$ 
Note that $G\times G\to C_{\rmu s}:\ (p,q)\mapsto u(p,\pmu
r)\triangleleft v(q,\qmu s),$ and $G\times G\to A_{r\smu}:\
(p,q)\mapsto u(p,\pmu r)\triangleright v(q,\qmu s)$ are continuous
maps. 
Then we can define
\begin{align}
\plaua u,v \praua & :=\iint_{G\times G} u(p,\pmu r)\triangleleft
v(q,\qmu s)\, dpdq; \\ 
\plauc u,v \prauc &:=\iint_{G\times G} u(p,\pmu r)\triangleright
v(q,\qmu s)\, dpdq; \\ 
au\in U_{tr} &\mbox{ by the formula } (au)(p,\pmu tr):= au(\tmu p,\pmu
tr)\mbox{ and }\\ 
uc\in U_{rt} & \mbox{ by the formula } (uc)(p,\pmu r t):= u(p,\pmu
r)c
\end{align}

\begin{remark}\label{remark good operations}
  Some straightforward arguments together with Lemma \ref{lemma some
    identities} imply $\plaua \ ,\ \praua$ behaves like a left
  pre-inner product, that is: it is quasi-linear in the first
  variable, 
  $\plaua a u ,v \praua = a\plaua  u ,v \praua$ and $\plaua  u ,v
  \praua^* = \plaua v ,u \praua.$  
  Also $\plauc \ ,\ \prauc$ behaves like a right pre-inner product
  with respect to $\Ccal.$  
\end{remark}

\begin{remark}\label{remark ilt continuity of pre inner product}
  For every compact set $K\subset G$ and $u,v\in \Ucal$ supported in 
  $K\times K,$ we have $\| \plauc u,v\prauc \|\leq M^2 \|
  u \|_\infty \|v\|_\infty,$ where $M$ is the measure of $K.$  
\end{remark}

\begin{lemma}\label{lemma the key}
 For all $u,v,w,x\in \Ucal$ and $a\in \Acal$ we have
 \begin{align}
  \plauc \plaua u,v \praua w ,x \prauc &= \plauc u\plauc v,w \prauc ,x \prauc \\ \label{equ quaternary identity}
  0 & \leq \plauc u,u\prauc\\
  \plauc au,au\prauc & \leq \|a\|^2\plauc u,u\prauc \label{equ left action bounded}
 \end{align}
\end{lemma}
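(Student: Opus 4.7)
Statement (1) is an algebraic associativity identity, (2) is the core analytic content (positivity of the pre-inner product), and (3) is a Cauchy--Schwarz-type bound which will follow from (2) by a standard spectral argument. The main obstacle is (2).

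\emph{For (1):} Expand both sides using the definitions of $\plaua\ ,\ \praua$, $\plauc\ ,\ \prauc$, and the actions of $\Acal$ and $\Ccal$ on $\Ucal$. After invoking Fubini (legal by the compact supports of $u,v,w,x$) and a change of variables using the left-invariance of Haar measure on $G$, both sides reduce to the same iterated integral over $G^{4}$ of
\[
\bigl((u(p_1,p_1^{-1}r)\triangleleft v(p_2,p_2^{-1}s))\,w(p_3,p_3^{-1}t)\bigr)\triangleright x(p_4,p_4^{-1}u);
\]
the integrands coincide pointwise by Lemma~\ref{lemma some identities}(3), which is precisely the quaternary identity needed.

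\emph{For (2):} By Lemma~\ref{lemma density in Ut}, one can approximate any $u\in U_r$ (with a uniform support bound) by finite sums of the form $\sum_{i=1}^n (f_i\boxtimes g_i)|_r$ with $f_i\in C_c(\Xcal)$, $g_i\in C_c(\Ycal)$, and it then suffices to prove positivity for such sums and pass to the limit. For a sum of this form,
\[
\plauc u,u\prauc \;=\; \sum_{i,j=1}^n \iint_{G\times G}\lac g_i(p^{-1}r),\lab f_i(p),f_j(q)\rab\,g_j(q^{-1}r)\rac\,dp\,dq.
\]
Discretize the double integral by a Riemann sum over a fine partition $\{p_1,\ldots,p_N\}$ of the relevant compact subset of $G$, with positive weights $w_k$. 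Absorbing the weights into a ``column vector'' $Y$ with entries $Y_{(i,k)}:=w_k\,g_i(p_k^{-1}r)\in Y_{p_k^{-1}r}$, the Riemann sum equals $\la Y,MY\rac_{C_e}$, where $M$ is the $nN\times nN$ matrix with entries $M_{(i,k),(j,l)}:=\lab f_i(p_k),f_j(p_l)\rab\in B_{p_k^{-1}p_l}$. Setting $\tb:=(p_1,\ldots,p_1,p_2,\ldots,p_N,\ldots,p_N)\in G^{nN}$ with each $p_k$ repeated $n$ times, we have $M\in\Mb_{\tb}(\Bcal)$ and $M=F^{*}F$ for $F_{(i,k)}:=f_i(p_k)\in X_{p_k}$; hence $M\geq 0$ in the $C^{*}$-algebra of Lemma~\ref{lemma approximate units}(2). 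Representing $\Mb_{\tb}(\Bcal)$ on the Hilbert $C_e$-module $\bigoplus_{i,k}Y_{p_k^{-1}r}$ by matrix multiplication---a $*$-representation, by the same verification as in the proof of Lemma~\ref{lemma approximate units}(3)---yields $\la Y,MY\rac_{C_e}\geq 0$. Passing to the Riemann limit (legitimate by uniform continuity on a common compact set) and then to the approximation limit of $u$ gives $\plauc u,u\prauc\geq 0$.

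\emph{For (3):} Lemma~\ref{lemma first tools}(9), applied to elementary tensors in $\Zcal$, gives the pointwise identity $(az_1)\triangleright z_2=z_1\triangleright(a^{*}z_2)$ for $z_1,z_2\in\Zcal$, $a\in\Acal$. Integrating this (and performing a left translation in the integration variable using left-invariance of Haar measure) yields the adjointness relation $\plauc au,v\prauc=\plauc u,a^{*}v\prauc$; in particular $\plauc au,au\prauc=\plauc u,a^{*}au\prauc$. In the unitization $\widetilde{A_e}$ of $A_e$, the element $\|a\|^{2}\mathbf{1}-a^{*}a$ is positive and admits a self-adjoint square root $c$; the $A_e$-action on $U_r$ extends naturally to $\widetilde{A_e}$ by scalar multiplication, and we obtain
\[
\|a\|^{2}\plauc u,u\prauc-\plauc au,au\prauc \;=\; \plauc u,c^{*}cu\prauc \;=\; \plauc cu,cu\prauc \;\geq\; 0
\]
by (2), proving \eqref{equ left action bounded}.
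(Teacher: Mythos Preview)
Your argument is correct, and for parts (1) and (2) it follows the paper's strategy closely: expand, change variables, discretize the double integral, and interpret the resulting finite sum as $\langle Y,MY\rangle_{C_e}$ with $M$ a positive matrix in the $C^*$-algebra $\Mb_\tb(\Bcal)$ acting on a column Hilbert module built from fibers of $\Ycal$. Two points of precision are worth flagging. First, on a general locally compact group ``Riemann sums'' should be implemented via a partition of unity subordinate to a fine cover (this is exactly what the paper does with the nets $\eta_W,\theta_W$); your convergence claim is fine once phrased this way. Second, the shorthand ``$M=F^*F$'' is a little misleading, since $F$ does not live in $\Mb_\tb(\Bcal)$; what you actually need is that the Gram matrix $(\lab f_i(p_k),f_j(p_l)\rab)$ is positive in $\Mb_\tb(\Bcal)$, which follows from viewing $\bigoplus_{i,k}X_{p_k}$ as a left-$A_e$ right-$\Mb_\tb(\Bcal)$ Hilbert bimodule (the mirror of the argument in Lemma~\ref{lemma approximate units}(3)). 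The paper makes this bimodule structure explicit.

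Your treatment of (3) is genuinely different from the paper's and, in fact, more streamlined. The paper proves (2) and (3) \emph{simultaneously}: it writes $\plauc au,au\prauc$ as the integral of a second kernel $\theta$, discretizes both $\eta$ and $\theta$ at once, and observes that the discretized $\theta$-sum equals $\langle \gb, \langle D_{a^*a}\fb,\fb\rangle\,\gb\rangle_{C_e}$ in a suitable matrix bimodule, whence the bound by $\|a\|^2$ follows from $D_{a^*a}\le\|a\|^2$. You instead decouple (3) from the discretization entirely: establish the adjointness $\plauc au,v\prauc=\plauc u,a^*v\prauc$ directly from Lemma~\ref{lemma first tools}(9) at the level of $\Zcal$, then derive \eqref{equ left action bounded} from (2) by the standard square-root trick in $\widetilde{A_e}$. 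This is cleaner and makes the logical dependence of (3) on (2) transparent; the paper's approach has the advantage of getting both inequalities out of a single matrix computation, at the cost of carrying the kernel $\theta$ through the whole argument.
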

\begin{proof}
  To prove the first identity assume $u\in U_r,\ v\in U_s,\ w\in U_t$ and $x\in U_q.$
  Then, by Lemma \ref{lemma some identities},
\begin{align*}
   \plauc \plaua u, v\praua w,x\prauc
     & = \int_{G^2} \left[ \plaua u, v\praua w \right](p_1,{p_1}^{-1}r\smu t)\triangleright x(p_2,{p_2}^{-1}q)\, d(p_1,p_2)\\
     & = \int_{G^2} \left[ \plaua u, v\praua w(s\rmu p_1,{p_1}^{-1}r\smu t) \right]\triangleright x(p_2,{p_2}^{-1}q)\, d(p_1,p_2)\\
     & = \int_{G^4} \left[  (u(p_3,{p_3}^{-1}r)\triangleleft v(p_4,{p_4}^{-1}s))w(s\rmu p_1,{p_1}^{-1}r\smu t) \right]\triangleright \\
     &         \qquad \qquad \qquad \qquad \qquad \qquad\qquad \qquad  x(p_2,{p_2}^{-1}q)\, d(p_1,p_2,p_3,p_4)\\
     & = \int_{G^4} \left[  u(p_3,{p_3}^{-1}r) (v(p_4,{p_4}^{-1}s)\triangleright w(s\rmu p_1,{p_1}^{-1}r\smu t)) \right]\triangleright \\
     &         \qquad \qquad \qquad \qquad \qquad \qquad\qquad \qquad  x(p_2,{p_2}^{-1}q)\, d(p_1,p_2,p_3,p_4)\\
     & = \int_{G^4} \left[  u(p_3,{p_3}^{-1}r) (v(p_4,{p_4}^{-1}s)\triangleright w(p_1,{p_1}^{-1}t)) \right]\triangleright \\
     &         \qquad \qquad \qquad \qquad \qquad \qquad\qquad \qquad  x(p_2,{p_2}^{-1}q)\, d(p_1,p_2,p_3,p_4)\\
     & = \int_{G^2} \left[  u(p_3,{p_3}^{-1}r) \la v,w\ra^\Ucal_\Ccal
     \right]\triangleright  x(p_2,{p_2}^{-1}q)\, d(p_2,p_3) 
         \\
     & = \int_{G^2} \left[  u\la v,w\ra^\Ucal_\Ccal
       \right](p_3,{p_3}^{-1}r\smu t)\triangleright  x(p_2,{p_2}^{-1}q)\,
       d(p_2,p_3)   
         \\
     & = \plauc u\plauc v,w \prauc ,x \prauc  
  \end{align*}
  
  From Remarks \ref{remark ilt continuity of pre inner product} and \ref{lemma density in Ut} we conclude that it suffices to show \eqref{equ quaternary identity} and \eqref{equ left action bounded} hold for  $u=\xi|_r,$ with $\xi\in \Gamma_\Zcal.$
  Assume $\xi = \sum_{i=1}^n f_i\boxtimes g_i.$
  Then $\plauc u,u\prauc$ and $\plauc au,au\prauc$ are respectively
  the integrals in $G\times G$ of the functions $\eta,\theta\colon
  G\times G\to C_e$ given by 
  $$ \eta(p,q) = \sum_{i,j=1}^n \la g_i (\pmu r),\lab f_i(p),f_j(q)\rab
  g_j (\qmu r)\rac $$ 
  $$ \theta(p,q) = \sum_{i,j=1}^n \la g_i (\pmu r),\lab
  af_i(\tmu p),af_j(\tmu q)\rab g_j(\qmu r)\rac .$$
  
  Take a compact set $K\subset G$ such that
  $\supp(\eta)\cup\supp(\theta)\subset K\times K.$ 
  Given a compact neighborhood $W$ of $e\in G,$ take
  $p^W_1,\ldots,p^W_{n_W}\in G$ such that $K$ is contained in the interior
  of $p^W_1 W\cup\cdots\cup p^W_{n_W} W$. 
  Now let $\{\phi^W_1,\ldots,\phi_{n_W}^W\}\subset C_c(G)^+$ be a
  partition of the unit of $K$ subordinated to the covering $\{p^W_1
  W,\ldots,p^W_{n_W} W\}$.  
  Define
  \begin{align*}
   \eta_W(p,q)&:=\sum_{j,k=1}^{n_W}\eta(p^W_j,p^W_k)\phi_j^W(p)\phi_k^W(q),\\
  \theta_W(p,q)&:=\sum_{j,k=1}^{n_W}\theta(p^W_j,p^W_k)\phi_j^W(p)\phi_k^W(q).
  \end{align*}
  We order the family $\Ncal$ of compact neighborhoods of $e$ by
  decreasing inclusion.  
  Then we have nets $\{\eta_W\}_{W\in \Ncal}$ and $\{\theta_W\}_{W\in
    \Ncal}$ that can be shown to converge to $\eta$ and $\theta$, 
  respectively, in the inductive limit topology.
  The inequalities $0\leq \plauc u,u\prauc$ and $\plauc au,au\prauc \leq \|a\|^2\plauc u,u\prauc$ follow by taking limit in $W$ after we show that
  \begin{equation}\label{equation THE INEQUALITY I}
       0\leq \int_G\int_G \eta_W(p,q)\, dpdq.
  \end{equation}  
  \begin{equation}\label{equation THE INEQUALITY II}
       \int_G\int_G \theta_W(p,q)\, dpdq\leq \|a\|^2\int_G\int_G \eta_W(p,q)\, dpdq.
  \end{equation}

  Consider $W$ fixed and put $m:=n_\lambda,$ $\lambda_j:=\int_G \phi^W_j(p)\, dp$ and $p_k:=p^W_k.$
  Then
  \begin{align*}
     \int_G\int_G \theta_W&(p,q)\, dpdq
       = \sum_{k,l=1}^m \theta(p_k,p_l)\lambda_k\lambda_l\\
      & = \sum_{k,l=1}^m \sum_{i,j=1}^n  \la \lambda_k g_i({p_k}^{-1} r),\lab af_i(\tmu p_k),af_j(\tmu p_l)\rab \lambda_l g_j({p_l}^{-1} r)\rac
  \end{align*}
  The key is to interpret the latter sum as an inner product, what we
  do next.  
  
  Let $\Mb_{\pb}(\Bcal)$ be the $C^*$-algebra  provided by 
  Lemma~\ref{lemma approximate units} for $\pb=(p_1,\ldots,p_m)\in
  G^m$, and let 
  $\Xb_{\pb}'=X_{p_1}\oplus\cdots \oplus X_{p_m}$, where the direct
  sum is as left Hilbert $A_e\dsh$modules. The left
  Hilbert $A_e$-module $\Xb_{\pb}'$ can be 
  given an $A_e\dsh \Mb_{\pb}(\Bcal)$ Hilbert bimodule structure in
  the following way.  Write the elements of $\Xb_\pb'$ as row matrices
  and define the right action by matrix multiplication; the right
  inner product is defined to be   
  $$\la (x_1,\ldots,x_m),(y_1,\ldots,y_m)\ra_{\Mb_{\pb}(\Bcal)}=(\lab
  x_i,y_j\rab)_{i,j=1}^m .$$ 
  (the positivity of this inner product is shown in the same way as
  done in the proof of Lemma~\ref{lemma approximate units} for the
  inner product ${}_{\Mb_{\rb^{-1}}(\Acal)}\la \,,\,\ra$). 
  
  Now let $\Yb_{\pb^{-1}t}$ be the direct sum $Y_{{p_1}^{-1}t}\oplus
  \cdots\oplus Y_{{p_m}^{-1}t},$ considered as a right Hilbert
  $C_e\dsh$module. 
  Writing the elements of $\Yb_{\pb^{-1}t}$ as column matrices,
  the matrix multiplication by elements of $\Mb_{\pb}(\Bcal)$ defines
  a *-homomorphism of $\Mb_{\pb}(\Bcal)$ into
  $\Bb(\Yb_{\pb^{-1}t})$.

  Now, if we define
  $$\fb_i :=  (f_i(\tmu p_1),\ldots,f_i(\tmu p_m))  \in \Xb_{\tmu \pb}' ,$$
  $$\gb_j:= (\lambda_1g_j({p_1}^{-1}r),\ldots,\lambda_mg_j({p_m}^{-1}r))  \in 
  \Yb_{\pb^{-1}r},$$
  then we have
  $$ \int_G\int_G \theta_W(p,q)\, dpdq = \sum_{i,j=1}^n \la \gb_i,\la a\fb_i,a\fb_j\ra_{\Mb_{\pb}(\Bcal)}\gb_j\ra_{C_e}. $$
  
  We shall interpret the latter double sum as an inner product.
  Consider $\Xb_{\tmu\pb}^n$ as a $\Mb_n(A_e)\dsh
  \Mb_n(\Mb_\pb(\Bcal))$ Hilbert bimodule in the usual way.
  Considering $\Yb_{\pb^{-1}r}^n$ as a Hilbert $C_e\dsh$module, and 
  thinking of its elements as column matrices, matrix multiplication
  provides us with a representation $\Mb_n(\Mb_\pb(\Bcal))\to
  \Bb(\Yb_{\pb^{-1}r}^n).$  

  If $\fb=(\fb_1,\ldots,\fb_n),$ $\gb=(\gb_1,\ldots,\gb_n)$ and $D_{a^*a}\in \Mb_n(A_e)$ is the diagonal matrix with value $a^*a$ in the diagonal, then
  $$\int_G\int_G \theta_W(p,q)\, dpdq = \la \gb,\la D_{a^*a} \fb, \fb\ra_{\Mb_n(\Mb_\pb(\Bcal))}\gb\ra_{C_e} \leq \|a\|^2 \la \gb,\la \fb,\fb\ra_{\Mb_n(\Mb_\pb(\Bcal))}\gb\ra_{C_e}.$$
  
  Using the interpretation of the double integral of $\eta_W$ as an inner product we conclude that
  $$0\leq  \la \gb,\la \fb,\fb\ra_{\Mb_n(\Mb_\pb(\Bcal))}\gb\ra_{C_e} = \int_G\int_G \eta_W(p,q)\, dpdq. $$
  Putting the last two inequalities together we get \eqref{equation THE
    INEQUALITY I} and \eqref{equation THE INEQUALITY II}. 
\end{proof}

\begin{lemma}\label{lemma continuity of operations tensor product bundle}
  For every $\xi,\eta \in \Gamma_\Zcal$ the following maps are continuous:
  \begin{enumerate}
   \item $ G\times G\to \Ccal, \ (p,q)\mapsto \plauc \xi|_p,\eta|_q\prauc. $
   \item $G\times G\to C_e,\ (p,q)\mapsto  \plauc g(p)\xi|_q - \eta|_{pq},g(p)\xi|_q - \eta|_{pq}\prauc  .$
  \end{enumerate}
\end{lemma}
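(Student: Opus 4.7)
The plan is to express both maps as parameterized integrals of continuous, compactly supported bundle-valued functions on $G^4$, and then reduce their continuity to standard facts about parameterized integrals in Banach bundles. By linearity and the definition of $\Gamma_\Zcal$, I may assume $\xi=f\boxtimes h$ and $\eta=f'\boxtimes h'$ with $f,f'\in C_c(\Xcal)$ and $h,h'\in C_c(\Ycal)$, and fix a compact set $K\subset G$ containing the supports of $f,f',h,h'$; throughout, $g$ denotes a continuous compactly supported section of $\Acal$.

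For item~(1), the first step is to unfold the definition,
$$\plauc \xi|_p,\eta|_q\prauc=\iint_{G\times G}\xi(\alpha,\alpha^{-1}p)\triangleright\eta(\beta,\beta^{-1}q)\, d\alpha\, d\beta,$$
and to observe that the integrand $F(p,q,\alpha,\beta):=\xi(\alpha,\alpha^{-1}p)\triangleright\eta(\beta,\beta^{-1}q)$ defines a continuous map $G^4\to\Ccal$ (by continuity of $\xi$, $\eta$, and $\triangleright$) taking values in $C_{p^{-1}q}$, whose $(\alpha,\beta)$-support is contained in a fixed compact set $K'\times K'$ whenever $(p,q)$ stays in a prescribed compact neighborhood $L$ of a point $(p_0,q_0)$. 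To promote this to continuity of $S(p,q):=\plauc \xi|_p,\eta|_q\prauc$ in the bundle topology of $\Ccal$, I plan to invoke the criterion \cite[II~13.12]{FlDr88}: for a net $(p_\lambda,q_\lambda)\to(p_0,q_0)$, lift $S(p_0,q_0)$ to a continuous section $\sigma\in C_c(\Ccal)$ via Tietze's theorem \cite[II~14.8]{FlDr88}, and then show $\|S(p_\lambda,q_\lambda)-\sigma(p_\lambda^{-1}q_\lambda)\|\to 0$ by combining the norm bound of Remark~\ref{remark ilt continuity of pre inner product} with the uniform continuity of $F$ on the compact set $L\times K'\times K'$.

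For item~(2), all values lie in the single fiber $C_e$, so continuity reduces to a norm statement in the $C^*$-algebra $C_e$. Using the quasi-linearity of $\plauc\,,\,\prauc$ from Remark~\ref{remark good operations}, I expand
\begin{align*}
&\plauc g(p)\xi|_q-\eta|_{pq},\,g(p)\xi|_q-\eta|_{pq}\prauc\\
&\quad=\plauc g(p)\xi|_q,g(p)\xi|_q\prauc-\plauc g(p)\xi|_q,\eta|_{pq}\prauc\\
&\quad\quad-\plauc\eta|_{pq},g(p)\xi|_q\prauc+\plauc\eta|_{pq},\eta|_{pq}\prauc.
\end{align*}
Each summand unfolds to a double integral over $G\times G$ of a continuous $C_e$-valued function. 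The key manoeuvre is the left-invariant change of variable $\alpha\mapsto p\alpha$, which absorbs the shift introduced by the formula $(g(p)\xi|_q)(\alpha,\alpha^{-1}pq)=g(p)\xi(p^{-1}\alpha,\alpha^{-1}pq)$. After this substitution the integrand is jointly continuous in $(p,q,\alpha,\beta)$ and compactly supported in $(\alpha,\beta)$ uniformly as $(p,q)$ varies over compacta, so continuity follows from uniform continuity on compact sets together with a routine domination argument.

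The main obstacle will be item~(1): because $S(p,q)\in C_{p^{-1}q}$ varies over different fibers of $\Ccal$, norm convergence is not directly available and one must work in the Banach bundle topology via an explicit lifting section. Producing this section and controlling the error uniformly in $(\alpha,\beta)$ is the technical heart of the argument; by contrast item~(2), living in the fixed $C^*$-algebra $C_e$, reduces to a routine dominated-convergence argument once the support has been stabilized by the change of variable.
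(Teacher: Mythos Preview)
Your treatment of item~(2) is correct and essentially coincides with the paper's: since all values lie in the single $C^*$-algebra $C_e$, the map is an integral of a jointly continuous, compactly supported $C_e$-valued function, and continuity follows from uniform continuity on compacta. The paper phrases this as integrating a $C(W,C_e)$-valued function, but the content is the same.

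For item~(1), however, your proposed mechanism has a genuine gap. You correctly identify the obstacle: $S(p,q)\in C_{p^{-1}q}$ lives in a fiber that moves with $(p,q)$, so one cannot simply subtract values at nearby points. But your fix --- lift the single value $S(p_0,q_0)$ to a section $\sigma$ via Tietze and then show $\|S(p_\lambda,q_\lambda)-\sigma(p_\lambda^{-1}q_\lambda)\|\to 0$ from uniform continuity of $F$ and the bound of Remark~\ref{remark ilt continuity of pre inner product} --- does not work. The Tietze section $\sigma$ carries no information about $S$ away from $(p_0,q_0)$, so there is nothing forcing $\sigma(p_\lambda^{-1}q_\lambda)$ to be close to $S(p_\lambda,q_\lambda)$; uniform continuity of $F$ is unhelpful here because $F(p,q,\alpha,\beta)$ and $F(p_0,q_0,\alpha,\beta)$ sit in different fibers of $\Ccal$ and cannot be compared, while the Remark only supplies a uniform bound on $\|S\|$, not any modulus of continuity. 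In the language of \cite[II~13.12]{FlDr88}, you would need, for each $\varepsilon$, a section that stays $\varepsilon$-close to $S$ on a whole neighborhood of $(p_0,q_0)$, and a single Tietze lift of one value cannot provide that.

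The paper resolves this by passing to a compact neighborhood $W$ of $(p_0,q_0)$, forming the retraction $\Vcal$ of $\Ccal$ along $W\to G,\ (p,q)\mapsto p^{-1}q$, and viewing $(\alpha,\beta)\mapsto F(\cdot,\cdot,\alpha,\beta)$ as a continuous, compactly supported map into the Banach space $C(\Vcal)$. Integrating in $C(\Vcal)$ yields $S|_W$ directly as a continuous section of $\Vcal$, i.e., $S$ is continuous. Equivalently, one can approximate $S$ uniformly on $W$ by Riemann sums $\sum_i m_i F(\cdot,\cdot,\alpha_i,\beta_i)$, each of which \emph{is} a continuous section of $\Vcal$; this supplies exactly the family of sections needed in \cite[II~13.12]{FlDr88}. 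Either formulation is what is missing from your sketch.
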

\begin{proof}
 Let $\theta$ be the map in (1).
 It suffices to consider the case $\xi = f\boxtimes g$ and $\eta = h\boxtimes k.$
 Then 
 $$ \theta(p,q) = \iint_{G\times G} \lac g(\rmu p),\lab f(r),h(s)\rab k(\smu q)\rac \, drds. $$
 Fix $(p_0,q_0)\in G\times G$ and take a compact neighborhood $V$ of $e\in G$.  
 We show $\theta$ is continuous in $W:=p_0V\times q_0 V.$
 
 Let $\Vcal$ be the retraction of $\Ccal$ by $W\to G, \ (p,q)\mapsto
 \pmu q,$ and define $\eta\colon G\times G\to C(\Vcal)$ as
 $\eta(r,s)(p,q)=\lac g(\rmu p),\lab f(r),h(s)\rab k(\smu q)\rac.$ 
 Note that $\eta$ has compact support and $C(\Vcal)$ is a Banach space
 with the supremum norm because $W$ is compact. 
 Moreover, if $\{(r_i,s_i,p_i,q_i)\}_{i\in I}\subset G\times G\times
 W$ is a net converging to $(r,s,p,q),$ then
 $\eta(r_i,s_i)(p_i,q_i)\to \eta(r,s)(p,q).$ 
 This implies $\eta$ is continuous.
 Then $\theta$ is continuous because $\theta = \iint_{G\times G}
 \eta(r,s)\, drds.$ 
 
 We just give an indication of how to prove the map defined in (2) is
 continuous. 
 The trick here is to think of that map as the integral of a
 continuous map from $G\times G$ to the space of continuous sections
 of the trivial Banach bundle over $W$ with constant fiber $C_e.$  
\end{proof}

\begin{remark}\label{remark mirror image}
  The last two Lemmas and their proofs can be carried out with $\plauc \ ,\ \prauc$ replaced by $\plaua\ ,\ \praua$ and the actions of $\Acal$ and $\Ccal$ on $\Ucal$ interchanged.
\end{remark}
\paragraph{{\bf The bundle $\mathbf{[\Ucal]}$}}\mbox{{\quad}}
\par
Now we enter to the final phase of our construction of a tensor
product bundle. 
Each fiber $U_t$ is a pre Hilbert $C_e\dsh$module with the inner product $\plauc\ ,\ \prauc,$ so $\|u\|_\Ccal:= \|\plauc u,u\prauc\|^{1/2}$ defines a seminorm on $U_t.$
We also know that ${}_\Acal\|u\|:=\|\plaua u,u\praua\|^{1/2}$ is a seminorm on $U_t.$
If in \eqref{equ left action bounded} we put $a=\plaua u,u\praua,$
then use Remark \ref{remark good operations}, the relations \eqref{equ quaternary identity}-\eqref{equ left action bounded} and, finally, take
norms in $\Ccal$, we get 
$$  {\|u\|_\Ccal}^6=\|\plauc u,u\prauc^3 \|\leq {}_\Acal\|u\|^4\|\plauc u,u\prauc\| = {}_\Acal\|u\|^4{\|u\|_\Ccal}^2. $$
Then $\|u\|_\Ccal\leq {}_\Acal\|u\|$ and, by symmetry, it must be $\|u\|_\Ccal={}_\Acal\|u\|.$

Let $U^0_t:=\{u\in U_t\colon \|u\|_\Ccal=0\}$ and $[U]_t:=U_t/U^0_t.$
We denote $u\mapsto [u]$ the quotient map of all fibers.
Then form a bundle $[\Ucal]:=\{[U]_t\}_{t\in G}$ and consider the set of sections
\begin{equation}\label{equ definition of sections of tensor product}
  \Gamma_{[\Ucal]}:=\{ [\xi]\colon \xi\in \Gamma_\Zcal \}, \mbox{ where } [\xi](t):=[\xi|_t],\ \forall \ \xi\in  \Gamma_\Zcal,\ t\in G.
\end{equation}

The action of $\Ccal$ on the left is
$$ [\Ucal]\times \Ccal\to [\Ucal],\ ([u],c)\mapsto [uc],$$
which is defined because for all $u\in  \Ucal$ and $c\in C:$
$$ {\|uc\|_\Ccal}^2 = \| c^*\plaua u,u\praua c\|\leq \|c\|^2{\|u\|_\Ccal}^2.$$

The $\Ccal\dsh$valued inner product is 
$$[\Ucal]\times [\Ucal]\to \Ccal,\ ([u],[v])\mapsto \plauc u,v\prauc.$$
To show this operation is defined note that for $u\in U_r$ and $v\in U_s$ we have $v,u \plauc u,v\prauc\in U_s.$
So it follows that
$$ \|\plauc u,v\prauc\|^2 = \| \plauc u\plauc u,v\prauc,v \prauc \|\leq \| u \|_\Ccal \|\plauc u,v\prauc\| \|v\|_\Ccal, $$
which in turn implies $\|\plauc u,v\prauc\|\leq \|u\|_\Ccal\|v\|_\Ccal.$

The left hand side operations are
$$ \Acal\times [\Ucal] \to [\Ucal],\ (a,[u])\mapsto [au]\quad \mbox{ and }\quad   [\Ucal]\times [\Ucal]\to \Acal,\ ([u],[v])\mapsto \plaua u,v\praua .$$

Now we use Proposition \ref{theorem construction of equivalence bundle} to construct an $\Acal\dsh \Ccal\dsh$equivalence bundle from $[\Ucal].$
We already have the operations and inner products. Take $\Gamma_\Acal=C_c(\Acal),$ $\Gamma_\Ccal=C_c(\Ccal)$ as sets of sections, and $\Gamma_{[\Ucal]}$ as we have defined in \eqref{equ definition of sections of tensor product} above.

Conditions (1R-5R) and (1L-5L) follow by construction, Remark \ref{remark good operations} and symmetry.
Now we show (7R), and by symmetry we will have (7L).
Take $x_1,x_2\in X_r,$ $y_1,y_2\in Y_s$ and $\varepsilon >0.$
It suffices to find $u,v\in \Ucal$ such that 
$$\| \lac y_1,\lab x_1,x_2\rab y_2\rac - \plauc u,v\prauc  \|<\varepsilon.$$

Choose $f_i\in C_c(\Xcal)$ and $g_i\in C_c(\Ycal)$ such that $f_i(r)=x_1$ and $g_i(s)=y_i$ ($i=1,2$).
Now choose a compact neighborhood of $e\in G,$ $W,$ and $\phi_W\in C_c(G)^+$ with $\int_G \phi_W(t)\phi_W(\smu \tmu s)\, dt=1.$
Let $\xi^W_i\in \Gamma_\Zcal$ be defined as 
$$\xi^W_i(p,q):=  (\phi_W(\rmu p )f_i(p) )\boxtimes (\phi_W(\smu q)g_i(q)).$$
Then
\begin{align*}
  \plauc \xi^W_1|_{rs}, \xi^W_2|_{rs}\prauc
  & = \iint_{G\times G}  \phi_W(\rmu p)\phi_W(\smu \pmu rs)\phi_W(\rmu q)\phi_W(\smu \qmu rs)\\
  & \qquad \qquad \qquad \qquad  \lac g_1(\pmu rs),\lab f_1(p),f_2(p)\rab g_2(\qmu rs)\rac \, dpdq.
\end{align*}

The function inside the integral is zero outside $rW\times sW.$
With $W$ small enough we can arrange the expression in the bottom of the equation (without $dpdq$) to be at most at distance $\varepsilon /2$ from $c:=\lac g_1(s),\lab f_1(r),f_2(r)\rab g_2(s)\rac=\lac y_1,\lab x_1,x_2\rab y_2\rac,$ for all $(p,q)\in W.$
Using the identity
$$  \iint_{G\times G}  \phi_W(\rmu p)\phi_W(\smu \pmu rs)\phi_W(\rmu q)\phi_W(\smu \qmu rs)c\, dpdq  =c ,$$
it follows that $ \|\plauc \xi^W_1|_{rs}, \xi^W_2|_{rs}\prauc -c\|<\varepsilon. $
\par Once we have verified (1R-5R), (7R), (1L-5L), and (7L), we deal
with the compatibility of the left and right operations. We have  
$ \plaua [u],[u]\praua [w]=[u]\plauc [v],[w]\prauc $
because, if $x:= \plaua [u],[u]\praua [w]-[u]\plauc [v],[w]\prauc,$
 then Lemma~\ref{lemma the key} implies
$$ \plauc x,x\prauc =  \plauc \plaua [u],[u]\praua [w],x\prauc - \plauc [u]\plauc [v],[w]\prauc,x\prauc = 0.$$
Thus $x=0.$
\par Note that hypothesis (2) of Proposition~\ref{theorem construction
  of equivalence bundle} is immediate in the present situation. 
Besides, hypothesis (3) follows immediately from Remark~\ref{remark
  ilt continuity of pre inner product} and Lemma~\ref{lemma density in
  Ut}. 
Finally (4) follows from Lemma \ref{lemma continuity of operations
  tensor product bundle} and symmetry. 

\begin{definition}\label{definition tensor product bundle}
  The internal tensor product of the equivalence bundles $\Xcal$ and
  $\Ycal$ is the equivalence bundle given by
  Proposition~\ref{theorem construction of equivalence bundle} for
  $[\Ucal].$ 
  This tensor product bundle is denoted $\Xcal\otimes_\Bcal\Ycal$.
\end{definition}

The existence of $\Xcal\otimes_\Bcal\Ycal$ proves the transitivity of
the relation of equivalence of Fell bundles. So we get:

\begin{theorem}\label{theorem Morita equiv is an equiv relation}
  Equivalence of Fell bundles is an equivalence relation.
\end{theorem}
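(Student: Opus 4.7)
The plan is to verify the three properties of an equivalence relation on Fell bundles over $G$, all of which are immediate given the constructions developed earlier in the paper.

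First, reflexivity: for any Fell bundle $\Acal$, taking $\pi=\id\colon\Acal\to\Acal$ in Example~\ref{example reflexive} exhibits $\Acal$ itself as an $\Acal\dsh\Acal\dsh$equivalence bundle, so $\Acal$ is equivalent to $\Acal$.

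Second, symmetry: if $\Xcal$ is an $\Acal\dsh\Bcal\dsh$equivalence bundle, then by the construction recalled in Remark~\ref{remark symmetric}, the adjoint bundle $\widetilde{\Xcal}$ is a $\Bcal\dsh\Acal\dsh$equivalence bundle. Hence $\Bcal$ is equivalent to $\Acal$ whenever $\Acal$ is equivalent to $\Bcal$.

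Third, transitivity is the substantive content of the theorem, and everything needed for it has just been set up: given an $\Acal\dsh\Bcal\dsh$equivalence bundle $\Xcal$ and a $\Bcal\dsh\Ccal\dsh$equivalence bundle $\Ycal$, the internal tensor product $\Xcal\otimes_\Bcal\Ycal$ produced in Definition~\ref{definition tensor product bundle} is an $\Acal\dsh\Ccal\dsh$equivalence bundle, so $\Acal$ is equivalent to $\Ccal$. The genuine work occurs precisely here: one had to build the fiberwise tensor products $X_r\otimes_{B_e}Y_s$ into a Banach bundle $\Zcal$ over $G\times G$, restrict and cross-sectionalize to produce $\Ucal$ over $G$, define compatible $\Acal\dsh$ and $\Ccal\dsh$valued pre-inner products via $\triangleleft$ and $\triangleright$, prove the crucial inequality $\plauc au,au\prauc\leq\|a\|^2\plauc u,u\prauc$ (Lemma~\ref{lemma the key}), pass to the quotient $[\Ucal]$ on which the two seminorms coincide, and finally apply Proposition~\ref{theorem construction of equivalence bundle} to complete $[\Ucal]$ into the desired equivalence bundle. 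Thus the proof of the theorem is simply the assembly of these three facts, with the tensor product construction supplying the only non-trivial ingredient.
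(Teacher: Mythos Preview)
Your proof is correct and follows exactly the same approach as the paper: reflexivity from Example~\ref{example reflexive}, symmetry from Remark~\ref{remark symmetric}, and transitivity from the tensor product construction of Definition~\ref{definition tensor product bundle}. The additional paragraph recapping the tensor product construction is accurate but not strictly necessary, since all of that work was already completed before the theorem.
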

\begin{proof} From Example \ref{example reflexive} and Remark
  \ref{remark symmetric} 
we know that equivalence of Fell bundles is reflexive and symmetric. 
It is also transitive because of the above construction: if $\Xcal$ is
an $\Acal \dsh \Bcal \dsh$equivalence bundle and $\Ycal$ is a $\Bcal \dsh \Ccal$
equivalence bundle, then $\Xcal\otimes_\Bcal\Ycal$ is an $\Acal \dsh \Ccal$
equivalence bundle. 
\end{proof}

\begin{corollary}
 If $\Xcal$ is an $\Acal\dsh \Bcal\dsh$equivalence bundle, then $\Acal,$ $\Bcal,$ $\Kb(\Xcal)$ and $\Lb(\Xcal)$ are equivalent Fell bundles.
\end{corollary}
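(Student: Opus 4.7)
The plan is to assemble the corollary directly from equivalences already established earlier in the paper, using Theorem~\ref{theorem Morita equiv is an equiv relation} (the transitivity of equivalence) to glue them together. Concretely, I would exhibit for each of the four bundles an explicit equivalence bundle connecting it to one of the others, and then invoke the fact that equivalence of Fell bundles is an equivalence relation to conclude that all four lie in a single equivalence class.

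First, by hypothesis $\Xcal$ itself is an $\Acal\dsh\Bcal\dsh$equivalence bundle, so $\Acal$ and $\Bcal$ are equivalent. Next, Corollary~\ref{corollary right \FH bundles are equivalence bundles} shows that the right Hilbert $\Bcal\dsh$bundle underlying $\Xcal$ is automatically a $\Kb(\Xcal)\dsh\Bcal\dsh$equivalence bundle, so $\Kb(\Xcal)$ is equivalent to $\Bcal$. (Alternatively, the subsequent corollary provides an explicit isomorphism $\Acal\cong\Kb(\Xcal)$, and by Example~\ref{example reflexive} isomorphic Fell bundles are equivalent.) Finally, for the linking bundle, Remark~\ref{remark direct sums} tells us that $\Xcal\oplus\Bcal$ is an $\Lb(\Xcal)\dsh\Bcal\dsh$equivalence bundle, so $\Lb(\Xcal)$ is equivalent to $\Bcal$ as well; equally, $\Acal\oplus\Xcal$ witnesses the equivalence of $\Acal$ and $\Lb(\Xcal)$.

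Having established these pairwise equivalences, transitivity (Theorem~\ref{theorem Morita equiv is an equiv relation}) immediately implies that all four Fell bundles $\Acal$, $\Bcal$, $\Kb(\Xcal)$, and $\Lb(\Xcal)$ are equivalent to one another. There is no real obstacle here: the entire content of the corollary is a bookkeeping exercise collecting results from the previous two sections, and the only nontrivial input is the transitivity theorem whose proof occupied all of the tensor-product construction above.
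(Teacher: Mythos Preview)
Your proof is correct and follows essentially the same approach as the paper's: both collect the previously established equivalence bundles ($\Xcal$, the direct sums $\Acal\oplus\Xcal$ and $\Xcal\oplus\Bcal$ from Remark~\ref{remark direct sums}, and the isomorphism $\Acal\cong\Kb(\Xcal)$) and invoke Theorem~\ref{theorem Morita equiv is an equiv relation} to conclude. The paper's proof is slightly terser but uses exactly the same ingredients you identified.
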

\begin{proof}
  Since equivalence of Fell bundles is an equivalence relation, it
  suffices to note that $\Kb(\Xcal)$ is isomorphic to $\Acal,$
  $\Acal\oplus \Xcal$ is an $\Acal\dsh \Lb(\Xcal)\dsh$equivalence
  bundle and that $\Xcal\oplus \Bcal$ is a $\Lb(\Xcal)\dsh
  \Bcal\dsh$equivalence bundle. 
\end{proof}
\begin{corollary}\label{cor:partial-saturated}
Every Fell bundle associated to a partial action is equivalent to a
Fell bundle associated to a global action, thus to a
saturated Fell bundle. 
\end{corollary}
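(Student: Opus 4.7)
Proof proposal. The key input is a structural fact from \cite{Ab03}: every partial action $\alpha$ of $G$ on a $C^*$-algebra $A$ admits a \emph{Morita enveloping action}. That is, one can find a partial action $\alpha'$ of $G$ on some $C^*$-algebra $A'$ together with an $A\dsh A'\dsh$equivalence bimodule carrying a partial action $\gamma$ of $G$ with $\gamma^l=\alpha$ and $\gamma^r=\alpha'$, and such that $\alpha'$ itself admits a (globally defined, continuous) enveloping action $\beta$ on some $C^*$-algebra $B$ containing $A'$ as an ideal with $B=\spncl\{\beta_t(A'):t\in G\}$. This is the content of the constructions in \cite{Ab03,AbMr09} that were recalled in Examples~\ref{subsection enveloping actions} and \ref{subsection Morita equivalence of partial actions}.

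With this in hand, the plan is to chain together two equivalences and invoke transitivity. First, since $\gamma$ is a partial action on the $A\dsh A'\dsh$equivalence bimodule $X$, Example~\ref{example morita equivalence of pa 2} provides a $\Bcal_\alpha\dsh\Bcal_{\alpha'}\dsh$equivalence bundle, namely $\Xcal_\gamma$. Hence $\Bcal_\alpha$ is equivalent to $\Bcal_{\alpha'}$. Second, because $\beta$ is an enveloping action of $\alpha'$, Example~\ref{subsection enveloping actions 2} provides a $\Bcal_{\alpha'}\dsh\Bcal_\beta\dsh$equivalence bundle (the bundle $A'\times G$ sitting inside $\Bcal_\beta$). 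Hence $\Bcal_{\alpha'}$ is equivalent to $\Bcal_\beta$. Applying Theorem~\ref{theorem Morita equiv is an equiv relation}, equivalence of Fell bundles is transitive, so $\Bcal_\alpha$ is equivalent to $\Bcal_\beta$.

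Finally, $\beta$ is a global action of $G$ on $B$, so for every pair $r,s\in G$, $\beta_r$ is an automorphism and $B_rB_s=B_r\beta_r(B_e)B_s$ has dense linear span equal to $B_{rs}$; equivalently, $\Bcal_\beta$ is saturated. This exhibits $\Bcal_\alpha$ as equivalent to the Fell bundle of a global action, which is automatically saturated, completing the proof.

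The main (and essentially only) obstacle is invoking the existence of the Morita enveloping action, which is not constructed in the present paper but is the central tool imported from \cite{Ab03}. Once that is granted, the two equivalences from the motivating examples fit together by transitivity with no further work.
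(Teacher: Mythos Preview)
Your proof is correct and follows essentially the same route as the paper: invoke the existence of a Morita enveloping action from \cite{Ab03}, use Examples~\ref{example morita equivalence of pa 2} and~\ref{subsection enveloping actions 2} to obtain the two equivalence bundles, and combine them via transitivity. The paper phrases the last step by explicitly forming the tensor product $\Xcal\otimes_{\Bcal_{\alpha'}}\Ycal$, but since Theorem~\ref{theorem Morita equiv is an equiv relation} is proved precisely by that tensor product construction, your invocation of transitivity amounts to the same thing.
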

\begin{proof}
By \cite[Theorem~6.1]{Ab03}, every partial action $\alpha$ has a
Morita enveloping action $\beta$, that is, the partial action $\alpha$
is equivalent to a partial action $\alpha'$ that has an enveloping
action $\beta$.
The equivalence between $\alpha$ and $\alpha'$
provides a $\Bcal_\alpha \dsh \Bcal_{\alpha'}\dsh$equivalence bundle $\Xcal$
(see examples \ref{subsection Morita equivalence of partial actions} and 
\ref{example morita equivalence of pa 2}), and
the enveloping action $\beta$ of $\alpha'$ provides a
$\Bcal_{\alpha'}\dsh \Bcal_{\beta}\dsh$equivalence bundle $\Ycal$  
(see examples \ref{subsection enveloping actions} and  
\ref{subsection enveloping actions 2}). Therefore
$\Xcal\otimes_{\Bcal_{\alpha'}}\Ycal$ is a 
$\Bcal_{\alpha}\dsh \Bcal_{\beta}\dsh$equivalence bundle. 
\end{proof}
\subsection{Tensor products and cross-sectional Hilbert bimodules}

\begin{theorem}\label{theorem tensor products of bundles and modules}
  Assume $\Xcal$ is an $\Acal\dsh \Bcal\dsh$equivalence bundle and
  $\Ycal$ a $\Bcal\dsh \Ccal\dsh$equivalence bundle. 
  Let $\Xcal\otimes_\Bcal\Ycal$ be the equivalence bundle of
  Definition \ref{definition tensor product bundle} (see also the
  construction in Section \ref{subsection a tensor product of
    bundles}). 
  Then there exists a unique unitary 
  $$U\colon C^*(\Xcal)\otimes_{C^*(\Bcal)}C^*(\Ycal)\to
  C^*(\Xcal\otimes_\Bcal\Ycal)$$ 
  such that
  $U(f\otimes g) = [f\boxtimes g],$ for all $f\in C_c(\Xcal)$ and
  $g\in C_c(\Ycal).$ 
\end{theorem}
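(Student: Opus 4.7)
The plan is to first define $U$ on the algebraic (balanced) tensor product, verify that it preserves the $C^*(\Ccal)$-valued inner product, extend by continuity, and finally argue that its image is dense.

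\textbf{Step 1 (definition on elementary tensors).} Define $V\colon C_c(\Xcal)\times C_c(\Ycal)\to C^*(\Xcal\otimes_\Bcal\Ycal)$ by $V(f,g):=[f\boxtimes g]$, viewed through the identification of $\Gamma_{[\Ucal]}$ with a subspace of continuous sections of $\Xcal\otimes_\Bcal\Ycal$ (compactly supported, by construction). Since $V$ is bilinear, it induces a linear map on the algebraic tensor product $C_c(\Xcal)\odot C_c(\Ycal)$.

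\textbf{Step 2 ($C^*(\Bcal)$-balance).} I would show that for all $f\in C_c(\Xcal)$, $g\in C_c(\Ycal)$ and $u\in C_c(\Bcal)$,
\[ [ (f*u)\boxtimes g ] \;=\; [ f\boxtimes (u*g) ] \quad \text{in } C^*(\Xcal\otimes_\Bcal\Ycal). \]
This reduces, after unwinding the definition of $\boxtimes$ and of the integrated product in $C_c(\Lb(\Xcal))$ and $C_c(\Lb(\Ycal))$, to the fiberwise identity $xb\otimes y = x\otimes by$ in $X_r\otimes_{B_e}Y_s$, once one has localized via $\Gamma_\Zcal$ and taken classes in $[\Ucal]$. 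A straightforward computation using Fubini and continuity of the operations gives the equality. Hence $V$ descends to $C_c(\Xcal)\odot_{C_c(\Bcal)} C_c(\Ycal)$, and the induced map makes sense on the pre-inner product space used to build $C^*(\Xcal)\otimes_{C^*(\Bcal)}C^*(\Ycal)$.

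\textbf{Step 3 (isometry via inner products).} This is the main technical step. I would verify that for all $f_1,f_2\in C_c(\Xcal)$ and $g_1,g_2\in C_c(\Ycal)$,
\[ \la [f_1\boxtimes g_1],[f_2\boxtimes g_2]\ra_{C^*(\Ccal)} \;=\; \la g_1, \la f_1,f_2\ra_{C^*(\Bcal)}\!\cdot\! g_2\ra_{C^*(\Ccal)}.\]
On the left, both sides live in $C^*(\Ccal)$, and one evaluates the inner product as a continuous section of $\Ccal$ in the fashion of \ref{corollary cross sectional equiv bundle and cross-sectional cast algebra of a Fell bundle}: the value at $t\in G$ is computed from the formula for $\plauc\ ,\ \prauc$, an iterated integral of $(f_1\boxtimes g_1)(p,p^{-1}r)\triangleright (f_2\boxtimes g_2)(q,q^{-1}s)$, which by definition of $\triangleright$ equals $\lac g_1(p^{-1}r),\lab f_1(p),f_2(q)\rab g_2(q^{-1}s)\rac$. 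On the right, unwinding the convolution formulas \eqref{equ left action}--\eqref{equ rigt inner product} applied within $\Lb(\Xcal)$ and $\Lb(\Ycal)$, together with the identification of $\la f_1,f_2\ra_{C^*(\Bcal)}$ as the convolution $f_1^* * f_2$, yields precisely the same iterated integral after a change of variables. Having checked this, the standard polarization computation (or direct substitution) shows that $V$, viewed on the algebraic balanced tensor product with its natural $C^*(\Bcal)$-inner product, is inner-product preserving into $C^*(\Xcal\otimes_\Bcal\Ycal)$; in particular it is isometric. Thus $V$ extends uniquely to an isometric linear map $U$ on the completion $C^*(\Xcal)\otimes_{C^*(\Bcal)}C^*(\Ycal)$.

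\textbf{Step 4 (surjectivity).} The image of $U$ contains every $[f\boxtimes g]$ with $f\in C_c(\Xcal)$ and $g\in C_c(\Ycal)$, hence contains all linear combinations $\{[\xi]\colon \xi\in\Gamma_\Zcal\}$. By Lemma~\ref{lemma density in Ut}, these linear combinations are dense in $C_c(\Xcal\otimes_\Bcal\Ycal)$ in the inductive limit topology, so by Remark~\ref{remark:topologies on C_c(X)} they are dense in $C^*(\Xcal\otimes_\Bcal\Ycal)$. Therefore the image of $U$ is dense, and being closed (as a complete isometric image), $U$ is surjective. Uniqueness of $U$ is automatic from density of the elementary tensors.

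\textbf{Main obstacle.} The genuinely laborious step is Step~3: one must match the iterated integral defining $\plauc\ ,\ \prauc$ on $[f_1\boxtimes g_1]$ and $[f_2\boxtimes g_2]$ with the double convolution $\la g_1,(f_1^**f_2)*g_2\ra_{C^*(\Ccal)}$ computed inside $C^*(\Lb(\Ycal))$. This is essentially a Fubini-plus-change-of-variables argument, but one has to handle the modular function, the definitions of $\triangleright$, of $\boxtimes|_t$, and the convolution-based left action of $C^*(\Bcal)$ on $C^*(\Ycal)$ carefully to see that the two expressions agree. All other steps are formal consequences once this identity is in hand.
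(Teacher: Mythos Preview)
Your overall strategy matches the paper's: verify the inner-product identity on elementary tensors (your Step~3 is exactly what the paper does, via an explicit change of variables in the triple integrals), then extend and check surjectivity. Your Step~2 is harmless but unnecessary, since once the inner products agree the balance relation is automatic.

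The genuine gap is in Step~4. Lemma~\ref{lemma density in Ut} does \emph{not} assert that $\Gamma_{[\Ucal]}=\spn\{[f\boxtimes g]\}$ is dense in $C_c(\Xcal\otimes_\Bcal\Ycal)$ in the inductive limit topology; it only says that for each fixed $t\in G$ the set $\{\xi|_t:\xi\in\Gamma_\Zcal\}$ is dense in the fiber $U_t$. Fiberwise density of values of a family of sections is strictly weaker than density of the family in $C_c$ for the inductive limit topology. The standard criterion (from \cite[II~14.6]{FlDr88} or \cite[Lemma~5.1]{Ab03}) requires in addition that the subspace be stable under multiplication by $C_c(G)$, and that is exactly what the paper must work to establish: it shows that for $\phi\in C_c(G)$, $f\in C_c(\Xcal)$, $g\in C_c(\Ycal)$ the section $\phi\,[f\boxtimes g]$ lies in the inductive-limit closure $\overline{S}$ of $S=\spn\{[f\boxtimes g]\}$, via an approximation of $(p,q)\mapsto \psi(p)\psi(q)\phi(pq)$ by sums $\sum_j\varphi^f_j\otimes\varphi^g_j$ in $C_c(G)\otimes C_c(G)$, together with the norm estimate of Remark~\ref{remark ilt continuity of pre inner product}. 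Without this $C_c(G)$-stability argument your surjectivity claim is unjustified.
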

\begin{proof}
  To prove the existence of the linear isometry $U$ it is enough to show
  that, for all $f,f'\in C_c(\Xcal)$ and $g,g'\in C_c(\Ycal)$, we have: 
  $$  \la f\otimes g, f'\otimes g'\ra_{C^*(\Ccal)} = \la
  [f\boxtimes g],[f'\boxtimes g']\ra_{C^*(\Ccal)}, $$ 
  where the inner product in the left member of the equality above
  corresponds to $C^*(\Xcal)\otimes_{C^*(\Bcal)}C^*(\Ycal)$, while the
  inner product in the right member is that of
  $C^*(\Xcal\otimes_\Bcal\Ycal)$. 
  On the one hand we have, for $r\in G\times G:$
  \begin{align*}
   \la f\otimes g, f'\otimes g'\ra_{C^*(\Ccal)}(r)
    & = \iiint_{G^3}  \lac g(s),\lab f(p),f'(psrt)\rab g'(\tmu)\rac\, dpdtds.  
  \end{align*}
  On the other hand
  \begin{align*}
   \la [f\boxtimes g],[f'\boxtimes g']\ra_{C^*(\Ccal)}(r)
    & = \iiint_{G^3}  \lac g(\pmu s),\lab f(p),f'(t)\rab g'(\tmu
    sr)\rac\, dpdtds.   
  \end{align*}
  These triple integrals agree because the second one is obtained form
  the first one with the following substitutions (consecutively):
  $s=\pmu s'$ and $t'=s'rt.$ 
  \par A procedure analogous to the preceding one allows us to see that $U$
  also preserves the left inner product.  
  \par Let us show that $U$ is surjective by proving that 
  $$S:=\spn \{[f\boxtimes g]\colon f\in C_c(\Xcal),\ g\in C_c(\Ycal)\}$$
  is dense in the inductive limit topology in
  $C_c(\Xcal\otimes_\Bcal\Ycal)$ (see Remark \ref{remark:topologies on C_c(X)}).
  Let $\overline{S}$ be the closure of $S$ in the inductive limit
  topology. 
  We already know that $\{u(t)\colon u\in \overline{S}\}$ is dense in
  $(\Xcal\otimes_\Bcal \Ycal)_t,$ for all $t\in G$ (Lemma \ref{lemma
    density in Ut}). 
  Then from \cite{FlDr88} we conclude it suffices to show that
  $C_c(G)\overline{S}\subset \overline{S}$ or, equivalently, that $\phi
  [f\boxtimes g]\in \overline{S}$ for all $\phi\in C_c(G),$ $f\in
  C_c(\Xcal)$ and $g\in C_c(\Ycal).$ 
  
  Choose compact sets $K_1,K_2\subset G$ such that $K_1$ is contained
  in the interior of $K_2$ and $K_1$ contains the supports of $f$ and
  $g$ in its interior. 
  Then take $\psi\in C_c(G)$ such that $\psi f=f,$ $\psi g=g$ and $\supp \psi \subset K_1.$
  The function $\Phi:G\times G\to \C,\ (p,q)\mapsto \psi
  (p)\psi(q)\phi(pq),$ is continuous, has compact support and vanishes
  outside $K_1\times K_1.$ 
  Then for every $\varepsilon >0$ there exist $\varphi^{h,\varepsilon}_j\in C_c(G)$ ($h=f,g$ and $j=1,\ldots,n_\varepsilon$) such that $\|\Phi - \sum_{j=1}^{n_\varepsilon} \varphi^{f,\varepsilon}_j\otimes \varphi^{g,\varepsilon}_j  \|_\infty <\varepsilon.$
  Moreover, we may assume $\supp (\varphi^{h,\varepsilon}_j)\subset K_2,$ for all $h,j,\varepsilon.$ 
  Now set $\xi^\varepsilon :=\sum_{j=1}^{n_\varphi} (\varphi^{f,\varepsilon}_jf)\boxtimes (\varphi^{g,\varepsilon}_jg).$
  
  Note that $[\xi^\varepsilon]\in S$ and  $\supp(\xi^\varepsilon)\subset K_2\times K_2,$ for all $\varepsilon>0.$
  Also $\supp (\phi[f\boxtimes g])\subset K_2\times K_2.$
  Besides, if $M_2$ is the measure of $K_2,$ then Remark \ref{remark ilt continuity of pre inner product} implies that for all $t\in G$ we have
  $$ \| \phi(t)[f\boxtimes g](t)  -[\xi^\varepsilon](t) \|_{\Xcal\otimes_\Bcal\Ycal} \leq M_2 \|\phi(t) f\boxtimes g|_{H_t} -   \xi^\varepsilon|_{H_t}\|_\infty. $$
  For all $r\in G$ we have
  $$\phi(t) f\boxtimes g|_{H_t}(r,\rmu t)= \phi(t) f(r)\boxtimes g(\rmu t) = \Phi(r,\rmu t)f(r)\otimes g(\rmu t).$$
  Then
  \begin{align*}
    \| \phi(t) f(r)\boxtimes g(\rmu t) -   \xi^\varepsilon(r,\rmu t)\|
      &\leq  \|\Phi - \sum_{j=1}^{n_\varepsilon} \varphi^{f,\varepsilon}_j\otimes \varphi^{g,\varepsilon}_j  \|_\infty \|f\|_\infty\|g\|_\infty\\
      &\leq \varepsilon \|f\|_\infty\|g\|_\infty.
  \end{align*}
  Putting all together we conclude that $\| \phi [f\boxtimes g] - [\xi^\varepsilon] \|_\infty\leq M_2\varepsilon \|f\|_\infty\|g\|_\infty.$
  Thus we have that $\phi [f\boxtimes g] \in \overline{S}$.

\end{proof}

\begin{corollary}
 In the hypotheses of Theorem \ref{theorem tensor products of bundles
   and modules} and for every pseudo crossed product $\mu$ with the hereditary subbundle property, there exists a unique unitary 
 $$U_\mu\colon C^*_\mu(\Xcal)\otimes_{C^*_\mu(\Bcal)}C^*_\mu(\Ycal)\to
 C^*_\mu(\Xcal\otimes_\Bcal\Ycal)$$ 
  such that
  $U_\mu(q^\Xcal_\mu(f)\otimes q^\Ycal_\mu(g)) =
  q^{\Xcal\otimes_\Bcal\Ycal}_\mu[f\boxtimes g],$ for all $f\in
  C_c(\Xcal)$ and $g\in C_c(\Ycal).$ 
\end{corollary}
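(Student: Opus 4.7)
The plan is to obtain $U_\mu$ by descending the unitary $U$ of Theorem \ref{theorem tensor products of bundles and modules} from the universal level to the $\mu$-level. First I would observe that $U$ is not merely an isomorphism of right Hilbert modules but in fact an isomorphism of $C^*(\Acal)\dsh C^*(\Ccal)\dsh$equivalence bimodules: the proof of \ref{theorem tensor products of bundles and modules} shows that $U$ preserves the right inner product, and by the symmetric computation (which the proof defers to ``symmetry'') it also preserves the left inner product, hence intertwines the left $C^*(\Acal)$-action as well.

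Next I would identify the two candidate submodules to quotient by. On the target side, by Proposition \ref{proposition induction of ideals of crossed products} applied to the equivalence bundle $\Xcal\otimes_\Bcal\Ycal$, the kernel of $q^{\Xcal\otimes_\Bcal\Ycal}_\mu$ is
\[
I^{\Xcal\otimes_\Bcal\Ycal}_\mu \;=\; I^\Acal_\mu\cdot C^*(\Xcal\otimes_\Bcal\Ycal) \;=\; C^*(\Xcal\otimes_\Bcal\Ycal)\cdot I^\Ccal_\mu .
\]
On the source side, $C^*(\Xcal)\otimes_{C^*(\Bcal)}C^*(\Ycal)$ is an $C^*(\Acal)\dsh C^*(\Ccal)\dsh$equivalence bimodule, and by the standard Rieffel correspondence for internal tensor products, quotienting by the closed submodule $M := I^\Acal_\mu\cdot(C^*(\Xcal)\otimes_{C^*(\Bcal)}C^*(\Ycal)) = (C^*(\Xcal)\otimes_{C^*(\Bcal)}C^*(\Ycal))\cdot I^\Ccal_\mu$ yields a canonical identification
\[
\bigl(C^*(\Xcal)\otimes_{C^*(\Bcal)}C^*(\Ycal)\bigr)\big/ M \;\cong\; C^*_\mu(\Xcal)\otimes_{C^*_\mu(\Bcal)}C^*_\mu(\Ycal),
\]
the standard fact that tensor product of Hilbert modules is compatible with quotienting by ideals on either side (applied first to $\Xcal$ and $\Ycal$ separately via Proposition~\ref{proposition induction of ideals of crossed products}, then assembled).

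Since $U$ intertwines both bimodule actions, it sends $M$ precisely onto $I^{\Xcal\otimes_\Bcal\Ycal}_\mu$, and therefore descends to the desired unitary $U_\mu$ making the square
\[
\xymatrix{
C^*(\Xcal)\otimes_{C^*(\Bcal)}C^*(\Ycal)\ar[r]^-{U}\ar[d]&C^*(\Xcal\otimes_\Bcal\Ycal)\ar[d]^{q^{\Xcal\otimes_\Bcal\Ycal}_\mu}\\
C^*_\mu(\Xcal)\otimes_{C^*_\mu(\Bcal)}C^*_\mu(\Ycal)\ar[r]_-{U_\mu}&C^*_\mu(\Xcal\otimes_\Bcal\Ycal)
}
\]
commute. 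The stated formula $U_\mu(q^\Xcal_\mu(f)\otimes q^\Ycal_\mu(g))=q^{\Xcal\otimes_\Bcal\Ycal}_\mu[f\boxtimes g]$ is immediate from $U(f\otimes g)=[f\boxtimes g]$ and the commutativity of the square, and uniqueness follows because such elementary tensors (with $f\in C_c(\Xcal)$, $g\in C_c(\Ycal)$) span a dense subspace of $C^*_\mu(\Xcal)\otimes_{C^*_\mu(\Bcal)}C^*_\mu(\Ycal)$.

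The main obstacle is justifying the canonical identification $\bigl(C^*(\Xcal)\otimes_{C^*(\Bcal)}C^*(\Ycal)\bigr)/M \cong C^*_\mu(\Xcal)\otimes_{C^*_\mu(\Bcal)}C^*_\mu(\Ycal)$; this is where one needs to carefully verify that both sides carry the same inner products (using that $q^\Bcal_\mu$ intertwines the $C^*(\Bcal)$-valued inner product on $C^*(\Xcal)$ with the $C^*_\mu(\Bcal)$-valued inner product on $C^*_\mu(\Xcal)$), and that the balanced tensor product relations on both sides are compatible. Once this is in place, everything else is bookkeeping.
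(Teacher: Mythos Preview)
Your proposal is correct and follows essentially the same route as the paper's proof: both descend the unitary $U$ of Theorem~\ref{theorem tensor products of bundles and modules} to the $\mu$-level by quotienting on each side by the submodule corresponding to $I^\Ccal_\mu$ (equivalently $I^\Acal_\mu$). The paper makes the identification you flag as the ``main obstacle'' completely explicit: it constructs the unitary $W\colon \bigl(C^*(\Xcal)\otimes_{C^*(\Bcal)}C^*(\Ycal)\bigr)/I \to C^*_\mu(\Xcal)\otimes_{C^*_\mu(\Bcal)}C^*_\mu(\Ycal)$ by directly computing $\|\sum_i q^\Xcal_\mu(f_i)\otimes q^\Ycal_\mu(g_i)\|^2$ in terms of $C^*_\mu(\Ccal)$-valued inner products and showing it equals $\|\sum_i f_i\otimes g_i + I\|^2$, which is precisely the inner-product verification you outline; then $U_\mu = V\circ W^*$.
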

\begin{proof}
 Uniqueness is clear, we deal with existence.
 The submodule of $C^*(\Xcal)\otimes_{C^*(\Bcal)}C^*(\Ycal)$
 corresponding to $I^\Ccal_\mu$ is
 $I:=C^*(\Xcal)\otimes_{C^*(\Bcal)}C^*(\Ycal) I^\Ccal_\mu$ 
 and, by Proposition \ref{proposition induction of ideals of crossed
   products}, 
 $$ U(I) = C^*(\Xcal\otimes_\Bcal\Ycal)I^\Ccal_\mu =
 I^{\Xcal\otimes_\Bcal\Ycal}_\mu.$$ 
 Then there exists a unique unitary
 $$V\colon  C^*(\Xcal)\otimes_{C^*(\Bcal)}C^*(\Ycal)/I \to
 C^*_\mu(\Xcal\otimes_\Bcal\Ycal), \ \xi+I\mapsto
 q^{\Xcal\otimes_\Bcal\Ycal}_\mu\circ U(\xi),$$ 
 where we have identified $C^*_\mu(\Xcal\otimes_\Bcal\Ycal)$ with the
 quotient of $C^*(\Xcal\otimes_\Bcal\Ycal)$ by
 $I^{\Xcal\otimes_\Bcal\Ycal}_\mu.$ 
 
 To prove the existence of a unitary 
 $$W\colon  C^*(\Xcal)\otimes_{C^*(\Bcal)}C^*(\Ycal)/I  \to
 C^*_\mu(\Xcal)\otimes_{C^*_\mu(\Bcal)}C^*_\mu(\Ycal) $$ 
 such that $W(f\otimes g+I) = q^\Xcal_\mu(f)\otimes q^\Ycal_\mu(g),$
 it suffices to prove that 
 $$\|\sum_{i=1}^n q^\Xcal_\mu(f_i)\otimes q^\Ycal_\mu(g_i)\|=\| \sum_{i=1}^n f_i\otimes g_i + I\|$$
 for all $f_1,\ldots,f_n\in C_c(\Xcal),$ $g_1,\ldots,g_n\in C_c(\Ycal)$ and $n\in \N.$
 But, thinking of $C^*_\mu(\Zcal)$ as $C^*(\Zcal)/I^\Zcal_\mu$ for $\Zcal = \Xcal, \Ycal,\Ccal,$ we obtain
 \begin{align*}
  \|\sum_{i=1}^n q^\Xcal_\mu(f_i)\otimes q^\Ycal_\mu(g_i)\|^2
    & = \|\sum_{i,j=1}^n\la q^\Ycal_\mu(g_i),\la q^\Xcal_\mu(f_i),q^\Xcal_\mu(f_j)\ra_{C^*_\mu(\Bcal)}q^\Ycal_\mu(g_j)\ra_{C^*_\mu(\Ccal)}\|\\
    & = \|\sum_{i,j=1}^n\la q^\Ycal_\mu(g_i),(\la f_i,f_j\ra_{C^*(\Bcal)} + I^\Bcal_\mu)q^\Ycal_\mu(g_j)\ra_{C^*_\mu(\Ccal)}\|\\
    & = \|\sum_{i,j=1}^n\la q^\Ycal_\mu(g_i),q^\Ycal_\mu(\la f_i,f_j\ra_{C^*(\Bcal)} g_j)\ra_{C^*_\mu(\Ccal)}\|\\
    & = \|\sum_{i,j=1}^n\la g_i,\la f_i,f_j\ra_{C^*(\Bcal)} g_j\ra_{C^*(\Ccal)} + I^\Ccal_\mu \|\\
    & = \|\sum_{i=1}^n f_i\otimes g_i + I\|^2.
 \end{align*}

 Then the unitary $U_\mu$ we are looking for is $V\circ W^*$ because, for all $f\in C_c(\Xcal)$ and $g\in C_c(\Ycal),$ 
 $$ V\circ W^*( q^\Xcal_\mu(f)\otimes q^\Ycal_\mu(g) ) =V( f\otimes g
 + I ) = q^{\Xcal\otimes_\Bcal\Ycal}_\mu\circ U(f\otimes g) =
 q^{\Xcal\otimes_\Bcal\Ycal}_\mu [f\boxtimes g].$$ 
\end{proof}

\bibliographystyle{amsplain}
\bibliography{equivalence_Fell_bundles_bibliography}

\providecommand{\bysame}{\leavevmode\hbox to3em{\hrulefill}\thinspace}
\providecommand{\MR}{\relax\ifhmode\unskip\space\fi MR }
\providecommand{\MRhref}[2]{%
  \href{http://www.ams.org/mathscinet-getitem?mr=#1}{#2}
}
\providecommand{\href}[2]{#2}
\begin{thebibliography}{10}

\bibitem{Ab03}
Fernando Abadie, \emph{Enveloping actions and {T}akai duality for partial
  actions}, J. Funct. Anal. \textbf{197} (2003), no.~1, 14--67.

\bibitem{abadie2016applications}
Fernando Abadie and Dami{\'a}n Ferraro, \emph{Applications of ternary rings to
  ${C}^*$-algebras}, Advances in Operator Theory \textbf{2} (2017), no.~3,
  293--317 (electronic).

\bibitem{AbMr09}
Fernando Abadie and Laura Mart{\'{\i}}~P{\'e}rez, \emph{On the amenability of
  partial and enveloping actions}, Proc. Amer. Math. Soc. \textbf{137} (2009),
  no.~11, 3689--3693.

\bibitem{bmz}
Alcides Buss, \emph{A higher category approach to twisted actions on
  {$C^*$}-algebras}, Proc. Edinb. Math. Soc. (2) \textbf{56}, no.~2, 387--426.

\bibitem{BssEffMaximality}
Alcides Buss and Siegfried Echterhoff, \emph{Maximality of dual coactions on
  sectional {$C^*$}-algebras of fell bundles and applications}, Studia Math.
  \textbf{229}, no.~3, 233--262.

\bibitem{buss2015exotic}
Alcides Buss, Siegfried Echterhoff, and Rufus Willett, \emph{Exotic crossed
  products}, pp.~61--108, Springer International Publishing, Cham, 2016.

\bibitem{combes1984crossed}
Fran{\c{c}}ois Combes, \emph{Crossed products and {Morita} equivalence},
  Proceedings of the London Mathematical Society \textbf{3} (1984), no.~2,
  289--306.

\bibitem{Ex94Circle}
Ruy Exel, \emph{Circle actions on {$C^*$}-algebras, partial automorphisms, and
  a generalized {P}imsner-{V}oiculescu exact sequence}, J. Funct. Anal.
  \textbf{122} (1994), no.~2, 361--401. \MR{1276163 (95g:46122)}

\bibitem{Ex97}
\bysame, \emph{Twisted partial actions: a classification of regular
  {$C^*$}-algebraic bundles}, Proc. London Math. Soc. (3) \textbf{74} (1997),
  no.~2, 417--443.

\bibitem{FlDr88}
J.~M.~G. Fell and R.~S. Doran, \emph{Representations of {$^*$}-algebras,
  locally compact groups, and {B}anach {$^*$}-algebraic bundles.}, Pure and
  Applied Mathematics, vol. 125--126, Academic Press Inc., Boston, MA, 1988.

\bibitem{lance1995hilbert}
E~Christopher Lance, \emph{Hilbert {$C^*$}-modules: a toolkit for operator
  algebraists}, vol. 210, Cambridge University Press, 1995.

\bibitem{MC95}
Kevin McClanahan, \emph{{$K$}-theory for partial crossed products by discrete
  groups}, J. Funct. Anal. \textbf{130} (1995), no.~1, 77--117. \MR{1331978
  (96i:46083)}

\bibitem{muhly2001bundles}
Paul~S. Muhly, \emph{Bundles over groupoids}, Groupoids in analysis, geometry,
  and physics ({B}oulder, {CO}, 1999), Contemp. Math., vol. 282, Amer. Math.
  Soc., Providence, RI, 2001, pp.~67--82. \MR{1855243}

\bibitem{muhly1987equivalence}
Paul~S. Muhly, Jean~N. Renault, and Dana~P. Williams, \emph{Equivalence and
  isomorphism for groupoid {$C^\ast$}-algebras}, J. Operator Theory \textbf{17}
  (1987), no.~1, 3--22. \MR{873460}

\bibitem{muhly2008equivalence}
Paul~S. Muhly and Dana~P. Williams, \emph{Equivalence and disintegration
  theorems for {F}ell bundles and their {$C^*$}-algebras}, Dissertationes Math.
  (Rozprawy Mat.) \textbf{456} (2008), 1--57. \MR{2446021}

\bibitem{Mpy90}
Gerard~J Murphy, \emph{{$C^*$}-algebras and operator theory}, vol. 288,
  Academic press Boston, 1990.

\bibitem{Raeburn1998morita}
Iain Raeburn and Dana~P Williams, \emph{Morita equivalence and continuous-trace
  {$C^*$}-algebras}, no.~60, American Mathematical Soc., 1998.

\bibitem{Zl83}
Heinrich Zettl, \emph{A characterization of ternary rings of operators},
  Advances in Mathematics \textbf{48} (1983), no.~2, 117--143.

\end{thebibliography}

\end{document}